\newtheorem{condition**}{A*}
\newtheorem{condition***}{C*}
\newtheorem{condition*}{C}
\newtheorem{example}{Example}[section]
\newtheorem{proposition}{Proposition}[section]
\newtheorem{corollary}{Corollary}[section]
\newtheorem{definition}{Definition}[section]
\newtheorem{theorem}{Theorem}[section]
\newtheorem{lemma}{Lemma}[section]
\newtheorem{remark}{Remark}[section]
\newcommand{\ignore}[1]{}
\def\proof{\noindent{\bf Proof} }
\def\w{\widetilde}
\def\b{\mathbb}
\def\c{\check}
\def\bE{\mathbb E}
\newenvironment{keywords}{{\bf Key words: }}{}
\begin{document}

\title{Backward Stackelberg Differential Game with Constraints: a Mixed Terminal-Perturbation and Linear-Quadratic Approach}

\author{Xinwei Feng$\;^{a}$, Ying Hu$\;^{b}$, Jianhui Huang$\;^{c}$\bigskip\\{\small ~$^{a}$Zhongtai Securities Institute for Financial Studies, Shandong University, Jinan, Shandong
250100, China}\\{\small ~$^{b}$Univ Rennes, CNRS, IRMAR-UMR 6625, F-35000 Rennes, France}\\{\small $^{c}$Department of Applied Mathematics, The Hong Kong Polytechnic University, Hong Kong, China}}
\maketitle

\begin{abstract}
We discuss an open-loop backward Stackelberg differential game involving single leader and single follower. Unlike most Stackelberg game literature, the state to be controlled is characterized by a \emph{backward} stochastic differential equation (BSDE) for which the terminal- instead initial-condition is specified as a priori; the decisions of leader consist of a static \emph{terminal-perturbation} and a dynamic \emph{linear-quadratic} control. In addition, the terminal control is subject to (convex-closed) \emph{pointwise} and (affine) \emph{expectation} constraints. Both constraints are arising from real applications such as mathematical finance. For information pattern: the leader announces both terminal and open-loop dynamic decisions at the initial time while takes account the best response of follower. Then, two interrelated optimization problems are sequentially solved by the follower (a backward linear-quadratic (BLQ) problem) and the leader (a mixed terminal-perturbation and backward-forward LQ (BFLQ) problem). Our open-loop Stackelberg equilibrium is represented by some coupled backward-forward stochastic differential equations (BFSDEs) with mixed initial-terminal conditions. Our BFSDEs also involve nonlinear projection operator (due to pointwise constraint) combining with a Karush-Kuhn-Tucker (KKT) system (due to expectation constraint) via Lagrange multiplier. The global solvability of such BFSDEs is also discussed in some nontrivial cases. Our results are applied to one financial example.
\end{abstract}

\begin{keywords}Backward stochastic differential equation, Karush-Kuhn-Tucker
(KKT) system, pointwise and affine constraints, Stackelberg game, backward linear-quadratic control, terminal perturbation.
\end{keywords}

%\footnotetext[1]{{\scriptsize Corresponding author:}}
\renewcommand{\thefootnote}{\arabic{footnote}}
\footnotetext[1]{{\scriptsize
       The work of Ying Hu is partially supported  by  Lebesgue Center of Mathematics ``Investissementsd'avenir''program-ANR-11-LABX-0020-01, by ANR CAESARS (Grant No. 15-CE05-0024) and by ANR MFG (Grant No. 16-CE40-0015-01).}}
\renewcommand{\thefootnote}{\fnsymbol{footnote}}
\footnotetext{\textit{{\scriptsize E-mail:}}
  {\scriptsize xwfeng@sdu.edu.cn (Xinwei\ Feng); ying.hu@univ-rennes1.fr (Ying\ Hu); majhuang@polyu.edu.hk (Jianhui\ Huang).}}

\section{Introduction}
Let $(\Omega, \mathcal F, \mathbb F, \mathbb P)$ be a complete filtered probability space on which a standard one-dimensional Brownian motion $W=\{W(t),0 \leq t <\infty\}$ is defined, where $\mathbb F=\{\mathcal F_t\}_{t\geq 0}$ is the natural filtration of $W$ augmented by all the $\mathbb P$-null sets in $\mathcal F$. Consider the following controlled linear backward stochastic differential equation (BSDE) on a finite time horizon $[0,T]$:
\begin{equation}\label{e state equation}
\begin{aligned}
dX(s)=\Big[A(s)X(s)+B_1(s)u_1(s)+B_2(s)u_2(s)+C(s)Z(s)\Big]ds+Z(s)dW(s),\quad
X(T)=\xi,
\end{aligned}
\end{equation}
where $A(\cdot),B_1(\cdot),B_2(\cdot),C(\cdot)$ are $\mathbb F$-progressively measurable processes defined on $\Omega\times[0,T]$ with proper dimensions. Unlike forward stochastic differential equation (SDE), solution of BSDE \eqref{e state equation} consists of a pair of adapted processes $(X(\cdot),Z(\cdot))\in \mathbb R^n\times\mathbb R^n$ where the second component $Z(\cdot)$ is necessary to ensure the adaptiveness of $X(\cdot)$ when propagating from terminal- backward to initial-time. In \eqref{e state equation}, $u_1(\cdot)$ and $u_2(\cdot)$ are dynamic decision processes employed by Player 1 (the leader, denoted by $\mathcal{A}_{L}$) and Player 2 (the follower, denoted by $\mathcal{A}_{F}$) in the game with values in $\mathbb R^{m_1}$ and $\mathbb R^{m_2}$ respectively. Moreover, unlike SDE, the terminal condition $\xi$ is specified in BSDE \eqref{e state equation} by the leader $\mathcal{A}_{L}$ at the initial time, and committed to be steered together with the follower by dynamic $u_2(\cdot)$. For some illustrating example, $\xi$ acts as some terminal hedging payoff on $T$, while $u_1(\cdot), u_2(\cdot)$ represent the possible dynamic portfolio selection or consumption process on $[0, T]$. The terminal $\xi$ to be steered may capture some appropriate approximation for quadratic deviation $K|X_{T}-\xi|^{2}$ with penalty index $K \longrightarrow +\infty$ (see \cite{LZ2001}).

Furthermore, let $\mathcal K$ be a nonempty closed convex subset in $\mathbb R^n$. Then, for a deterministic scalar $\beta$ and vector $\alpha \in \mathbb R^n$, we can define the following two constraints on admissible terminal payoff $\xi$:
 \begin{equation}\label{constraints}\left\{\begin{aligned}
\text{Pointwise constraint: }&\mathcal U_{\mathcal K}=L^2_{\mathcal F_T}(\Omega;\mathcal K);\\
\text{Affine expectation constraint: }&\mathcal U_{\alpha,\beta}=\Big\{\xi\Big| \xi \in L^2_{\mathcal F_T}(\Omega;\mathbb R^n),\langle\alpha, \mathbb E\xi\rangle\geq \beta \Big\}.
\end{aligned}\right.\end{equation}
Constraints of such kinds arise naturally in financial applications (e.g., see \cite{BJPZ2005} for expectation constraint, \cite{ET2015,HZ2005,LZ18} for pointwise one). In particular, the mean-variance portfolio selection with no-shorting yield such constraints both. Now, we define $\mathcal U(\mathcal K,\alpha,\beta)\triangleq\mathcal U_{\mathcal K}\bigcap\mathcal U_{\alpha,\beta}$ for the admissible terminal control set. Detailed discussion on feasibility of $\mathcal U(\mathcal K,\alpha,\beta)$ is deferred in Section  \ref{feasibility of constraint}. In addition, the following Hilbert spaces are introduced for dynamic admissible controls:\begin{equation}\nonumber
\mathcal U_i[0,T]\triangleq\Big\{u_i:[0,T]\times \Omega\rightarrow\mathbb R^{m_i}\Big| u_i(\cdot) \text{ is $\mathbb F$-progressively measurable, }\mathbb E\int^T_0|u_i(s)|^2ds<\infty\Big\},\quad i=1,2.
\end{equation}
Any element $(\xi,u_1(\cdot))\in\mathcal U(\mathcal K,\alpha,\beta)\times\mathcal U_1[0,T]$ is called an admissible control of $\mathcal{A}_{L}$, and any element $u_2(\cdot)\in\mathcal U_2[0,T]$ is called an admissible (dynamic) control of $\mathcal{A}_{F}$. Under some mild conditions on coefficients, for any $(\xi,u_1(\cdot),u_2(\cdot))\in \mathcal U(\mathcal K,\alpha,\beta)\times\mathcal U_1[0,T]\times \mathcal U_2[0,T]$, state equation \eqref{e state equation} admits a unique square-integrable adapted solution $(X(\cdot),Z(\cdot))\equiv( X(\cdot;\xi,u_1(\cdot),u_2(\cdot)),Z(\cdot;\xi,u_1(\cdot),u_2(\cdot))).$
To evaluate the performance of decisions $\xi,u_1(\cdot)$ and $u_2(\cdot)$, we introduce the following cost functionals:
\begin{equation}\label{cost functionals}\left\{\begin{aligned}
J_1(\xi,u_1(\cdot),u_2(\cdot))
\triangleq&\frac{1}{2}\mathbb E\Big\{\int^T_0\Big[\langle Q_1(s)X(s),X(s)\rangle+\langle S_1(s)Z(s),Z(s)\rangle+\left\langle R^1_{11}(s) u_{1}(s),u_{1}(s)\right\rangle\Big]ds\\&\qquad+\langle G_1\xi,\xi\rangle+\langle H_1X(0),X(0)\rangle\Big\},\\
J_2(\xi,u_1(\cdot),u_2(\cdot))
\triangleq&\frac{1}{2}\mathbb E\Big\{\int^T_0\Big[\langle Q_2(s)X(s),X(s)\rangle+\langle S_2(s)Z(s),Z(s)\rangle+\left\langle R^2_{22}(s)u_{2}(s),u_{2}(s)\right\rangle\Big]ds\\&\qquad+\langle H_2X(0),X(0)\rangle\Big\},
\end{aligned}\right.\end{equation}
where $Q_1(\cdot),Q_2(\cdot),S_1(\cdot),S_2(\cdot),R^1_{11}(\cdot)$, and $R^2_{22}(\cdot)$ are all $\mathbb F$-progressively measurable symmetric matrix valued processes, defined on $\Omega\times[0,T]$, of proper dimensions, $G_1$ is $\mathcal F_T$-measurable symmetric matrix valued random variable of proper dimension and $H_1,H_2$ are deterministic symmetric matrices of proper dimensions.  For $i=1,2$, $J_i(\xi,u_1(\cdot),u_2(\cdot))$ is the cost functional for agent $i$.

Let us now explain the Stackelberg differential game in some mixed backward linear quadratic (BLQ) and terminal-perturbation pattern.

At initial time, leader $\mathcal{A}_{L}$ announces some terminal (random) target $\xi\in\mathcal U(\mathcal K,\alpha,\beta)$ (to be reachable at terminal time $T$) and his planned dynamic strategy $u_1(\cdot)\in\mathcal U_1[0,T]$ over entire horizon $[0,T]$. $\xi$ is treated in a hard-constraint case, or in a limiting soft-constraint case (see \cite{BO1999}) when the soft-penalty on quadratic deviation $K|X_{T}-\xi|^{2}$ is endowed with sufficiently large attenuation level $K>0$. In both cases, the state dynamics becomes \eqref{e state equation} (see \cite{LZ2001}). Actually, $\xi$ may be interpreted as specific requirement of contractual or regulatory nature to reflect some risky position concern at terminal time $T$. Then, given the knowledge of leader's strategy, the follower $\mathcal{A}_{F}$ determines his best response strategy $\bar u_2(\cdot)\in\mathcal U_2[0,T]$ over entire horizon to minimize $J_2(\xi,u_1(\cdot),u_2(\cdot))$. Noticing state $X$ is steered imperatively towards the predetermined random target $\xi$ at maturity $T$. Since the follower's optimal response depends on the leader's strategy, the leader can take it into account as a priori before announcing his committed strategy to minimize $J_1(\xi, u_1(\cdot),\bar u_2(\cdot))$ over $(\xi ,u_1(\cdot))\in\mathcal U(\mathcal K,\alpha,\beta)\times \mathcal U_1[0,T]$.

\textbf{A principal-agent framework}. The above procedure might fit into some \emph{principal-agent} problem (see \cite{CZ2012}) but in a \emph{backward} framework:
$\mathcal{A}_{L}$ is the principal (owner of given firm) who specifies, at initial contract concluding time, some terminal achievement target $\xi$ to be realized by the agent in contractual manner together with his decision process $u_1(\cdot)$. Noticing $u_1$ may be interpreted as his committed consumption/capital withdraw process, an outflow on state dynamics $X$ as firm's wealth process. Meanwhile, $\mathcal{A}_F$ acts as the agent (manager) who is stimulated to reach such target by utilizing his investment/management/wage process $u_2(\cdot)$. When setting contract, $\mathcal{A}_L$ may set some constraints on $\xi$ with business concerns, while $\mathcal{A}_F$ is pushed to realize the terminal level $\xi$ once contract is executed due to some guarantee or breach clause. Thus, a BSDE state with $\xi$ follows through the contractual force.

Rigorously speaking, $\mathcal{A}_{F}$ aims to find a map $\bar \alpha:\mathcal U(\mathcal K,\alpha,\beta)\times\mathcal U_1[0,T]\rightarrow\mathcal U_2[0,T]$ and $\mathcal{A}_{L}$ aims to find a control $(\xi,u_1(\cdot))\in\mathcal U(\mathcal K,\alpha,\beta)\times\mathcal U_1[0,T]$ such that
\begin{equation*}
\left\{\begin{aligned}
&J_2(\xi,u_1(\cdot), \bar\alpha[\xi,u_1(\cdot)](\cdot))=\min_{u_2(\cdot)\in\mathcal U_2[0,T] }J_2(\xi,u_1(\cdot),u_2(\cdot)),\quad\forall (\xi,u_1(\cdot))\in\mathcal U(\mathcal K,\alpha,\beta)\times\mathcal U_1[0,T],\\
&J_1(\bar \xi,\bar u_1(\cdot),\bar \alpha[\bar \xi,\bar u_1](\cdot))=\min\limits_{\xi\in\mathcal U(\mathcal K,\alpha,\beta),u_1\in\mathcal U_1[0,T]}J_1(\xi,u_1(\cdot),\bar\alpha[\xi,u_1(\cdot)](\cdot)).
\end{aligned}\right.
\end{equation*}If the above pair $(\bar \xi,\bar u_1(\cdot),\bar \alpha(\bar \xi,\bar u_1(\cdot)))$ exists, we refer to it as an \emph{open-loop} Stackelberg equilibrium.

The setup in (1)-(3) above is especially motivated by optimal trading and quadratic hedging problem in financial mathematics when combining with terminal payoff subject to pointwise and integral constraints (see example in Section \ref{motivation}). Accordingly, the main novelties of our contribution are triple: (i) introduction of a new class of backward Stackelberg differential games with (pointwise and expectation affine) constraints and a mixed combination of terminal-perturbation and linear quadratic (LQ) control (both in backward sense); (ii) the characterization of open-loop Stackelberg equilibrium via new class of backward-forward stochastic differential equations (BFSDEs) with Karush-Kuhn-Tucker
(KKT) qualification condition; (iii) global solvability for above BFSDEs and some related Riccati equations.

To highlight above novelties, it is helpful to have some literature review comparing to some relevant existing works, especially to BLQ control, (forward) Stackelberg differential games, and various control problems with constraints imposed.

{\textbf{LQ control and game of backward state dynamics.}}
Nonlinear BSDE was initially introduced in \cite{PP1990} and is a well-formulated stochastic system hence it has been found various applications, for example, on stochastic recursive utility in economics by \cite{DE1992}. Interested readers may refer \cite{EPQ1997} for more BSDE applications in financial mathematics. Moreover, the relationship between BSDE and forward LQ optimal control is studied in \cite{KZ2000}. Based on it, \cite{LZ2001} discussed a BLQ optimal control problem motivated by quadratic hedging. \cite{LSX17} studied the BLQ optimal control problem with mean-field type. \cite{HWX2009} studied BLQ optimal control with partial information and gave some applications in pension fund optimization problems. Furthermore, some recent literature on games of BSDE can be found in \cite{WY2012,HWW2016}.

{\textbf{Stackelberg game.}}
The Stackelberg game (also termed as leader-follower game) was first introduced by \cite{S1934}. It differs from Nash game in its decision hierarchy of involved agents. Stackelberg games have been extensively explored from various settings. We list few works more relevant to ours: for deterministic Stackelberg game, see \cite{BO1999,L2010}, etc. For stochastic cases, \cite{BB1981} studied LQ Stackelberg differential game, but the state and control variables do not enter the diffusion coefficient. \cite{Y02} studied a more general Stackelberg game with random coefficients, control enters diffusion terms and control weight may be indefinite. \cite{BCS2015} investigated Stackelberg differential game in various different information structures, whereas the diffusion coefficient does not contain the control variables. \cite{OSU2013} studied stochastic Stackelberg differential game with time-delayed information. Notice that all above Stackelberg game works are framed in \emph{forward }sense with underlying state as a forward SDE that differs substantially from our backward one here.

\textbf{Constrained control and game.}
Naturally, control or game problems are always subject to possible constraints during its decision making. Such constraints may be posed on underlying state indirectly or decision input directly, or both in some mixed sense. From another viewpoint, these constraints may be structured as soft- or hard-constraint. In soft-constraint, a penalization depending on the deviation from constraints should be implemented in cost functional with some attenuation parameter indicating the softness. Hard-constraint might be viewed as limiting case of soft-constraint with \emph{attenuation index} tends to infinity. Thus, hard-constraint should be strictly followed in decision process to avoid any cost blow-up.
There exist considerable works on constrained stochastic control or games and we name a few more relevant. For example, \cite{HZ2005} studied stochastic LQ control constrained in general convex-closed cone, and some extended Riccati method is proposed; \cite{CZ2006} extends \cite{HZ2005} to infinite time horizon case. \cite{HZ2005,CZ2006} are both structured as hard constraint and include no-shorting of mean-variance problem as their special case. Moreover, \cite{LZL2002} studied LQ control problems with general input
constraint and its applications in financial portfolio selection with no-shorting
constraints. Some linear constraints are also treated therein. \cite{LZ1999,LM1997} studied various classes of integral affine and quadratic constraints.

\textbf{Terminal-perturbation with constraints.}
There arise various scenarios from mathematical finance with constraints on terminal payoffs that are static, e.g., the Markowitz mean-variance model poses some expectation constraint on terminal return. Thereby, it can convert to a family of indefinite stochastic LQ optimal controls with terminal constraints (\cite{ZL2000,LZL2002}). \cite{BJPZ2005} first employed backward approach to solve mean-variance problem by Lagrange method and obtained the optimal replicating portfolio strategy by solving some BSDE. To deal with state constraints of dynamic optimization problem, \cite{EPQ2001} (see also \cite{Q1993}) introduced the backward perturbation method and terminal variable of BSDE is regarded as some ``control variable". The terminal-perturbation method is well studied in financial mathematics and stochastic control (see e.g. \cite{JP2008,JZ2006,JZ2010}).

Compared with the above literature reviewed, main contributions of the present paper maybe summarized along the following lines:
\begin{itemize}
\item We introduce a new class of backward stochastic Stackelberg differential games featured by a mixed terminal-perturbation and BLQ control pattern. Other technical features include: backward-forward state system, random coefficients and Riccati equations, indefinite control weights.
\item Terminal-perturbation is subject to two (pointwise and affine expectation) constraints, some duality approach is invoked to tackle such constraints.
\item The open-loop Stackelberg equilibrium is represented by a coupled BFSDEs with mixed initial-terminal conditions, projection operator and constraint qualification conditions. To our knowledge, it is the first time to derive such \emph{constrained forward-backward systems}. Related global wellposedness is also studied in some special but nontrivial cases.
\end{itemize}

The rest of the paper is organized as follows.  In Section 2, we give some preliminaries and formulate the Stackelberg game in backward sense. The BLQ problem for follower is studied in Section 3, the mixed terminal-perturbation/backward-forward linear-quadratic (BFLQ) problem for leader is discussed in Section 4. In particular, Stackelberg equilibrium strategy is represented by some coupled BFSDEs with mixed initial-terminal conditions and constrained Karush-Kuhn-Tucker
(KKT) system. The global solvability of such BFSDEs is further discussed in Sections 5 in nontrivial cases. As the application, one example is discussed in Section 6.

\section{Preliminary and BLQ Stackelberg game formulation}\label{BLQ Stackelberg game}
The following notations will be used throughout the paper. Let $\mathbb{R}^n$ denote the $n$-dimensional Euclidean space with standard Euclidean norm $|\cdot|$ and standard Euclidean inner product $\langle\cdot,\cdot\rangle$. The transpose of a vector (or matrix) $x$ is denoted by $x^{\top}$. $\textrm{Tr}(A)$ denotes the trace of a square matrix $A$. Let $\mathbb{R}^{n\times m}$ be the Hilbert space consisting of all ($n\times m$)-matrices with the inner product $\langle A,B\rangle\triangleq\textrm{Tr}(AB^{\top})$ and the norm $||A||\triangleq\langle A,A\rangle^{\frac{1}{2}}$. Denote the set of symmetric $n\times n$ matrices with real elements by $\mathbb S^n$ and $n\times n$  identity matrices by $I_n$. If $M\in \mathbb S^n$ is positive (semi-)definite, we write $M>$ ($\geq$) $0$. If there exists a constant $\delta>0$ such that $M\geq\delta I$, we write $M\gg0$. Let $\mathbb S^n_+$ be the space of all positive semi-definite matrices of $\mathbb S^n$ and $\hat{\mathbb S}^n_+$ be the space of all positive definite matrices of $\mathbb S^n$.

Consider a finite time horizon $[0,T]$ for a fixed $T>0$. Let $H$ be a given Hilbert space. The set of $H$-valued continuous functions is denoted by $C([0,T];H)$. If $N(\cdot)\in C([0,T];\mathbb S^n)$ and $N(t)>$ ($\geq$) $0$ for every $t\in[0,T]$, we say that $N(\cdot)$ is positive (semi-)definite, which is denoted by $N(\cdot)>$ ($\geq$) $0$. For any $t\in[0,T)$ and Euclidean space $\mathbb H$, let(for the deterministic process, the subscripts $\mathcal F_t$ or $\mathbb{F}$ will be omitted)
\begin{equation}\nonumber
\begin{aligned}
&L^2_{\mathcal F_t}(\Omega;\mathbb H)=\{\xi:\Omega\rightarrow\mathbb H| \xi \text{ is  $\mathcal F_t$-measurable, } \mathbb{E}|\xi|^{2}<\infty\},\\
&L^\infty_{\mathcal F_t}(\Omega;\mathbb H)=\{\xi:\Omega\rightarrow\mathbb H| \xi \text{ is  $\mathcal F_t$-measurable, } \mbox{esssup}_{\omega\in\Omega}|\xi(\omega)|<\infty\},\\
&L^2_{\mathbb{F}}(0,T;\mathbb H)=\{\phi:[0,T]\times\Omega\rightarrow\mathbb H\Big|\text{$\phi$ is $\mathbb F$-progressively measurable, $\mathbb E \int^T_0|\phi(s)|^2ds<\infty$}\},\\
&L^\infty_{\mathbb{F}}(0,T;\mathbb H)=\{\phi:[0,T]\times\Omega\rightarrow\mathbb H|\text{$\phi$ is $\mathbb F$-progressively measurable, $\mbox{esssup}_{s\in[0,T]}\mbox{esssup}_{\omega\in\Omega}|\phi(s)|<\infty$}\},\\
&L^2_{\mathbb{F}}(\Omega;C([0,T];\mathbb H))=\{\phi:[0,T]\times\Omega\rightarrow\mathbb H|\text{$\phi$ is $\mathbb F$-adapted, continuous,  $\mathbb E [\sup\limits_{s\in[0,T]}|\phi(s)|^2]<\infty$}\}.\\
%&L^2_{\mathbb{F}}(\Omega;L^1(t,T;\mathbb H))=\Big\{\phi:[t,T]\times\Omega\rightarrow\mathbb H\Big|\text{$\phi$ is $\mathbb F$-progressively measurable, $\mathbb E \Big(\int^T_0|\phi(s)|ds\Big)^2<\infty$}\Big\}.
\end{aligned}
\end{equation}
 Recall the sets $\mathcal U_i[0,T]=L^2_{\mathbb{F}}(0,T;\mathbb R^{m_i})$. For notational simplicity, let $m = m_1 + m_2$ and denote
\begin{equation}\nonumber
B(\cdot) = (B_1(\cdot), B_2(\cdot)),\quad R_1(\cdot)=\left(\begin{matrix}R^1_{11}(\cdot)&0\\0 & 0\end{matrix}\right),\quad R_2(\cdot)=\left(\begin{matrix}0&0\\0 & R^2_{22}(\cdot)\end{matrix}\right).
\end{equation}
Naturally, we identify $u(\cdot)=(u_1(\cdot)^\top,u_2(\cdot)^\top)^\top\in\mathcal U[0, T ] = \mathcal U_1[0, T ] \times \mathcal U_2[0, T ]$. With such notations, the state equation \eqref{e state equation} becomes
\begin{equation}\label{e state equation 1}
\begin{aligned}
dX(s)=\Big[A(s)X(s)+B(s)u(s)+C(s)Z(s)\Big]ds+Z(s)dW(s),\quad
X(T)=\xi,
\end{aligned}
\end{equation}
where the terminal condition $\xi$ is a control variable with the constraints \eqref{constraints}.
The cost functionals become
\begin{equation}\nonumber\left\{\begin{aligned}
&J_1(\xi,u(\cdot))=\frac{1}{2}\mathbb E\Big\{\int^T_0\Big[\langle Q_1(s)X(s),X(s)\rangle+\langle S_1(s)Z(s),Z(s)\rangle+\left\langle R_1(s)u(s),u(s)\right\rangle\Big]ds\\
&\quad+\langle G_1\xi,\xi\rangle+\langle H_1X(0),X(0)\rangle\Big\},\\
&J_2(\xi,u(\cdot))=\frac{1}{2}\mathbb E\Big\{\int^T_0\Big[\langle Q_2(s)X(s),X(s)\rangle+\langle S_2(s)Z(s),Z(s)\rangle+\left\langle R_2(s)u(s),u(s)\right\rangle\Big]ds+\langle H_2X(0),X(0)\rangle\Big\}.
\end{aligned}\right.\end{equation}
Let us introduce the following assumptions, which will be used later.
\begin{description}
	\item[(H1)] The coefficients of the state equation satisfy the following:
\begin{equation}\nonumber
\begin{aligned}
&A(\cdot)\in L^\infty_{\mathbb F}(0,T;\mathbb R^{n\times n}),\quad B(\cdot)\in L^\infty_{\mathbb F}(0,T;\mathbb R^{n\times m}),\quad C(\cdot)\in L^\infty_{\mathbb F}(0,T;\mathbb R^{n\times n}).
\end{aligned}
\end{equation}
	\item[(H2)] The weighting coefficients of cost functional satisfy the following:
\begin{equation}\nonumber
\begin{aligned}
&G_1\in L^\infty_{\mathcal F_T}(\Omega; \mathbb S^n), H_1,H_2\in \mathbb S^n,  Q_1(\cdot),Q_2(\cdot),S_1(\cdot),S_2(\cdot)\in L^\infty_{\mathbb F}(0,T;\mathbb S^{n}), R_1(\cdot),R_2(\cdot)\in L^\infty_{\mathbb F}(0,T;\mathbb S^{m}).
\end{aligned}
\end{equation}
\end{description}Under (H1),  by \cite[Theorem 3.1]{PP1990}, for any $\xi\in\mathcal U(\mathcal K,\alpha,\beta)$ and $u(\cdot)\in\mathcal U[0,T]$, \eqref{e state equation 1} admits a unique strong solution $(X(\cdot),Z(\cdot))\in L^2_{\mathbb F}(\Omega;C([0,T];\mathbb R^n))\times L^2_{\mathbb F}(0,T;\mathbb R^n).$
Moreover, the following estimation holds:
\begin{equation}\label{estimation of BSDE}
\mathbb E\Big[\sup\limits_{s\in[0,T]}|X(s)|^2+\int^T_0|Z(s)|^2ds\Big]\leq L\mathbb E\Big[|\xi|^2+\int^T_0|u(s)|^2ds\Big],
\end{equation}
where $L>0$ is a constant which depends on the coefficients of \eqref{e state equation 1}. Therefore, under (H1)-(H2), the functionals $J_i(\xi,u(\cdot))=J_i(\xi,u_1(\cdot),u_2(\cdot))$ are well-defined for all $\xi\in\mathcal U(\mathcal K,\alpha,\beta)$ and $u_i(\cdot)\in\mathcal U_i[0,T]$, $i=1,2$.
If the coefficients in \eqref{e state equation 1} are deterministic, by \cite[Proposition 2.1]{SY2014}, \eqref{e state equation 1} admits a unique strong solution under the following relaxed assumption:
\begin{description}
	\item[(H1$^\prime$)] The coefficients of the state equation satisfy the following:
\begin{equation}\nonumber
\begin{aligned}
&A(\cdot)\in L^1(0,T;\mathbb R^{n\times n}),\quad B(\cdot)\in L^\infty(0,T;\mathbb R^{n\times m}),\quad C(\cdot)\in L^2(0,T;\mathbb R^{n\times n}).
\end{aligned}
\end{equation}
\end{description}
Moreover, \eqref{estimation of BSDE} still holds.
Hereafter, time variable $s$ will often be suppressed to simplify notations. We briefly state the procedure of finding an open-loop Stackelberg equilibrium: first, for any given $(\xi, u_1(\cdot))$,  $\mathcal A_F$ should solve a BLQ control problem with $\bar \alpha(\xi, u_1(\cdot))$ as the best response; second, given best response, $\mathcal A_L$ then solves a BFLQ control and terminal-perturbation with optimal $\bar \xi$ and $\bar u_1(\cdot).$ The Stackelberg equilibrium follows by $(\bar \xi, \bar u_1(\cdot),\bar \alpha(\bar \xi,\bar  u_1(\cdot)))$.

\section{Backward LQ problem for $\mathcal{A}_{F}$}\label{follower}
For given $(\xi,u_1(\cdot))\in\mathcal U(\mathcal K,\alpha,\beta)\times\mathcal U_1[0,T]$,  the follower $\mathcal A_F$ should solve the following
BLQ Problem: \\
\begin{equation*}\textbf{(BLQ):} \text{ Minimize}\quad J_2(\xi,u_1(\cdot), u_2(\cdot))\text{\qquad subject to}\quad\eqref{e state equation 1}, \quad u_2(\cdot)\in\mathcal U_2[0,T].
\end{equation*}
\begin{definition}
\emph{(a)} For given $(\xi,u_1(\cdot))\in\mathcal U(\mathcal K,\alpha,\beta)\times\mathcal U_1[0,T]$,
problem \emph{\textbf{(BLQ)}} is said to be finite if cost functional $J_2(\xi,u_1(\cdot), u_2(\cdot))$ is bounded from below, that is, $\inf_{u_2(\cdot)\in\mathcal U_2[0,T]} J_2(\xi,u_1(\cdot),u_2(\cdot))>-\infty$\emph{;}

\emph{(b)} Problem \emph{\textbf{(BLQ)}} is said to be \emph{(}uniquely\emph{)} solvable if there exists a \emph{(}unique\emph{)} $ u_2^{*}(\cdot) \in \mathcal U_2[0,T]$ such that $J_2(\xi,u_1(\cdot),u_2^{*}(\cdot))=\inf_{u_2(\cdot)\in\mathcal U_2[0,T]}J_2(\xi,u_1(\cdot),u_2(\cdot)).$ In this case, $u_2^{*}(\cdot)$ is called minimizer of \emph{\textbf{(BLQ)}}.
\end{definition}We now give a representation of cost functional for {(BLQ)} which helps us to study its solvability. Its proof is straightforward based on duality theory thus we omit details here.
\begin{proposition}\label{presentation of cost-follower}
Let \textbf{\emph{(H1)}}-\textbf{\emph{(H2)}} hold. There exist two bounded self-adjoint linear operators $M_2:\mathcal U_2[0,T]\rightarrow \mathcal U_2[0,T]$, $M_1: L_{\mathcal F_T}^2(\Omega;\mathbb R^n)\times\mathcal U_1[0,T]\rightarrow\mathcal U_2[0,T]$ and some $M_0\in\mathbb R$ depending on $(\xi,u_1(\cdot))$ such that
\begin{equation*}\label{quadratic form of cost-follower}
J_2(\xi,u_1(\cdot),u_2(\cdot))=\frac{1}{2}\Big[\mathbb E\langle M_2(u_2)(\cdot),u_2(\cdot)\rangle+2\mathbb E\langle M_1(\xi,u_1)(\cdot),u_2(\cdot)\rangle+M_0\Big],
\end{equation*}with
\begin{equation}\label{coefficient of quadratic functional-follower}\begin{aligned}
&M_2(u_2)(\cdot)=R_{22}^2(\cdot)u_2(\cdot)- B_2^\top(\cdot) Y_1(\cdot),\qquad M_1(\xi,u_1)(\cdot)=-B_2^\top(\cdot) Y_2(\cdot)-B_2^\top(\cdot) Y_3(\cdot),\\
&M_0=-\mathbb E\int_0^T\langle B_1^\top(s) Y_3(s),u_1(s)\rangle ds+\mathbb E\langle Y_2(T),\xi\rangle+2\mathbb E\langle Y_3(T),\xi\rangle,
\end{aligned}\end{equation}
where $Y_1, Y_2, Y_3$ satisfy the following backward-forward systems:\begin{equation}\label{eq:8}\left\{\begin{aligned}
&dY_1(s)=\Big[-A^\top(s) Y_1(s)+Q_2(s)X_1(s)\Big]ds+\Big[-C^\top(s) Y_1(s)+S_2(s)Z_1(s)\Big]dW(s),\\
&dX_1(s)=\Big[A(s)X_1(s)+B_2(s)u_2(s)+C(s)Z_1(s)\Big]ds+Z_1(s)dW(s),\\
&X_1(T)=0,\qquad Y_1(0)=H_2X_1(0),
\end{aligned}\right.\end{equation}
\begin{equation*}\left\{\begin{aligned}
&dY_2(s)=\Big[-A^\top(s) Y_2(s)+Q_2(s)X_2(s)\Big]ds+\Big[-C^\top (s)Y_2(s)+S_2(s)Z_2(s)\Big]dW(s),\\
&dX_2(s)=\Big[A(s)X_2(s)+C(s)Z_2(s)\Big]ds+Z_2(s)dW(s),\\
&X_2(T)=\xi,\qquad Y_2(0)=H_2X_2(0),
\end{aligned}\right.\end{equation*}
\begin{equation*}\left\{\begin{aligned}
&dY_3(s)=\Big[-A^\top(s) Y_3(s)+Q_2(s)X_3(s)\Big]ds+\Big[-C^\top(s) Y_3(s)+S_2(s)Z_3(s)\Big]dW(s),\\
&dX_3(s)=\Big[A(s)X_3(s)+B_1(s)u_1(s)+C(s)Z_3(s)\Big]ds+Z_3(s)dW(s),\\
&X_3(T)=0,\qquad Y_3(0)=H_2X_3(0).
\end{aligned}\right.\end{equation*}
\end{proposition}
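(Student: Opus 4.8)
The plan is to exploit the linearity of the state BSDE \eqref{e state equation 1} in its three inputs $(u_2(\cdot),\xi,u_1(\cdot))$ together with the quadratic structure of $J_2$, reducing everything to a finite number of It\^o dualities. First I would decompose the state response by superposition: letting $(X_1,Z_1)$, $(X_2,Z_2)$, $(X_3,Z_3)$ denote the solutions of \eqref{e state equation 1} driven respectively by $u_2(\cdot)$ alone (with $\xi=0$, $u_1(\cdot)=0$), by the terminal datum $\xi$ alone, and by $u_1(\cdot)$ alone, uniqueness from \cite[Theorem 3.1]{PP1990} gives $X=X_1+X_2+X_3$ and $Z=Z_1+Z_2+Z_3$. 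Substituting into $J_2$ and using the symmetry of $Q_2,S_2,H_2$, the quadratic integrand splits into three groups: the part quadratic in $(X_1,Z_1)$ (together with the penalty $\langle R^2_{22}u_2,u_2\rangle$), the bilinear cross terms coupling $(X_1,Z_1)$ with $(X_2+X_3,Z_2+Z_3)$, and the part depending only on $(X_2+X_3,Z_2+Z_3)$. These correspond precisely to the $M_2$, $M_1$, and $M_0$ terms in the claimed representation.

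The key step is to introduce the adjoint forward SDEs $Y_1,Y_2,Y_3$ coupled to the BSDEs for $X_1,X_2,X_3$ exactly as in the statement, and to compute $\mathbb{E}\big[\langle Y_i(T),X_j(T)\rangle-\langle Y_i(0),X_j(0)\rangle\big]$ by It\^o's formula for each relevant pairing. The drift and diffusion coefficients $-A^\top$ and $-C^\top$ of the $Y_i$-equations are chosen so that, in the expansion of $d\langle Y_i,X_j\rangle$, the $A$-terms cancel against the $A^\top$-terms and the $C$-terms cancel against the $C^\top$-terms (the latter through the $dW$-covariation), leaving only $\langle Q_2X_i,X_j\rangle+\langle S_2Z_i,Z_j\rangle$ plus a control contribution of the form $\langle B_\bullet^\top Y_i,u_\bullet\rangle$. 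Feeding in the boundary data $X_1(T)=X_3(T)=0$, $X_2(T)=\xi$ and $Y_i(0)=H_2X_i(0)$ then converts each state-dependent form into an expression involving only $Y_i$, $\xi$, and the controls: the pairing $\langle Y_1,X_1\rangle$ yields $M_2(u_2)=R^2_{22}u_2-B_2^\top Y_1$; the pairings $\langle Y_2,X_1\rangle$ and $\langle Y_3,X_1\rangle$ yield $M_1(\xi,u_1)=-B_2^\top Y_2-B_2^\top Y_3$; and $\langle Y_2,X_2\rangle$, $\langle Y_3,X_3\rangle$, $\langle Y_3,X_2\rangle$ yield the three constant terms $\mathbb{E}\langle Y_2(T),\xi\rangle$, $-\mathbb{E}\int_0^T\langle B_1^\top Y_3,u_1\rangle ds$, and $2\mathbb{E}\langle Y_3(T),\xi\rangle$ constituting $M_0$.

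Finally I would verify boundedness and self-adjointness. Boundedness of $M_2$ and $M_1$ follows from the a priori estimate \eqref{estimation of BSDE} under (H1)--(H2): the maps $u_2(\cdot)\mapsto(X_1,Z_1,Y_1)$ and $(\xi,u_1(\cdot))\mapsto(Y_2,Y_3)$ are bounded and linear, so the composed operators are bounded on the stated Hilbert spaces. Self-adjointness of $M_2$ amounts to $\mathbb{E}\langle M_2u_2,v_2\rangle=\mathbb{E}\langle M_2v_2,u_2\rangle$ for all $u_2(\cdot),v_2(\cdot)\in\mathcal U_2[0,T]$; I would obtain this by running the same It\^o duality with $X_1$ generated by $u_2(\cdot)$ and the adjoint by $v_2(\cdot)$ (and conversely), invoking the symmetry of $R^2_{22},Q_2,S_2,H_2$.

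The main obstacle I expect is not any single estimate but the careful bookkeeping across the nine pairings $\langle Y_i,X_j\rangle$: one must track which boundary terms survive under the mixed conditions $X_1(T)=X_3(T)=0$, $X_2(T)=\xi$, $Y_i(0)=H_2X_i(0)$, ensure each cross term is counted once with the correct factor of $2$, and confirm that every $u_2$-independent contribution lands in $M_0$ in exactly the stated form. The symmetry-driven cancellation of the $A$- and $C$-couplings is what makes the adjoint choice work, and the same symmetry, used consistently, is what delivers self-adjointness of $M_2$.
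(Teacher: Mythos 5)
Your proposal is correct and is exactly the argument the paper has in mind: the paper omits the proof, saying only that it is ``straightforward based on duality theory,'' and your superposition into $(X_1,Z_1)$, $(X_2,Z_2)$, $(X_3,Z_3)$, the It\^o pairings $\langle Y_i,X_j\rangle$ with the $A$/$A^\top$ and $C$/$C^\top$ cancellations, and the symmetry bookkeeping (which correctly produces the factor $2$ on $M_1$ and on $\mathbb E\langle Y_3(T),\xi\rangle$ via the duality identities such as $\mathbb E\langle Y_1(T),\xi\rangle=-\mathbb E\int_0^T\langle B_2^\top Y_2,u_2\rangle ds$) reproduce the stated operators exactly. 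The boundedness and self-adjointness checks you outline also go through under (H1)--(H2), so this fills in precisely the details the paper leaves out.
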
In the above, we use $\langle\cdot,\cdot\rangle$ to denote inner products in different Hilbert spaces, which can be identified from the context. Based on Proposition \ref{presentation of cost-follower}, we have the following result for the solvability of problem (BLQ), whose proof is similar to that of \cite[Theorem 6.2.2]{YZ1999}.
\begin{proposition}\label{solvability of follower}
Let \textbf{\emph{(H1)}}-\textbf{\emph{(H2)}} hold.
 \begin{description}
  \item[(a)] Problem \textbf{\emph{(BLQ)}} is finite only if \textbf{\emph{(BLQ)}} is convex \emph{(}i.e., $M_2\geq0$\emph{)};

  \item[(b)] Problem \textbf{\emph{(BLQ)}} is \emph{(}uniquely\emph{)} solvable if and only if \emph{(}iff\emph{)} \textbf{\emph{(BLQ)}} is convex \emph{(}$M_2\geq0$\emph{) } and the following stationary condition holds true: there exists a \emph{(}unique\emph{)} $\bar u_2(\cdot)\in\mathcal U_2[0,T]$ such that
\begin{equation}\label{stationary condition-1}
 M_2(\bar u_2)(\cdot)+M_1(\xi,u_1)(\cdot)=0.
\end{equation}
Moreover, \eqref{stationary condition-1} implies that $\mathcal R(M_1(\xi,u_1))\subset\mathcal R(M_2(\bar u_2)),$
where $\mathcal R (\mathcal{S})$ stands for the range of operator \emph{(}matrix\emph{)} $\mathcal{S}$.
   \item[(c)]  If \textbf{\emph{(BLQ)}} is uniformly convex \emph{(}i.e., $M_2\gg0$\emph{)}, then problem \textbf{\emph{(BLQ)}} admits a unique optimal control given by $$\bar u_2(\cdot)=-M_2^{-1}(M_1(\xi,u_1))(\cdot).$$
 \end{description}\end{proposition}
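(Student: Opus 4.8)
The plan is to reduce the solvability of (BLQ) to an abstract minimization of a quadratic functional
$$J_2(u_2) = \tfrac{1}{2}\big[\mathbb E\langle M_2(u_2),u_2\rangle + 2\mathbb E\langle M_1(\xi,u_1),u_2\rangle + M_0\big]$$
over the Hilbert space $\mathcal U_2[0,T]$, using the representation already established in Proposition \ref{presentation of cost-follower}. The key observation is that $M_2$ is a bounded self-adjoint operator and $M_1(\xi,u_1)$ is a fixed element of $\mathcal U_2[0,T]$ (since $(\xi,u_1(\cdot))$ is frozen), so everything is classical convex-analysis on Hilbert space. First I would prove \textbf{(a)}: if (BLQ) is finite, then $M_2 \geq 0$. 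I would argue by contradiction via homogeneity. Suppose $\langle M_2 v, v\rangle < 0$ for some direction $v \in \mathcal U_2[0,T]$; then evaluating $J_2$ along the ray $\lambda v$ gives $J_2(\lambda v) = \tfrac{1}{2}[\lambda^2 \mathbb E\langle M_2 v, v\rangle + 2\lambda\, \mathbb E\langle M_1, v\rangle + M_0]$, whose leading coefficient is negative, so $J_2(\lambda v)\to -\infty$ as $\lambda\to+\infty$, contradicting finiteness. Hence $M_2 \geq 0$.

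Next I would prove \textbf{(b)}. For the first-order (stationary) condition, I would compute the Gateaux derivative of $J_2$ at a candidate $\bar u_2$ in an arbitrary direction $v$: using self-adjointness of $M_2$,
$$\frac{d}{d\lambda}\Big|_{\lambda=0} J_2(\bar u_2 + \lambda v) = \mathbb E\langle M_2(\bar u_2) + M_1(\xi,u_1), v\rangle.$$
If $\bar u_2$ is a minimizer this must vanish for every $v$, giving exactly the stationary condition \eqref{stationary condition-1}. Conversely, when $M_2 \geq 0$ the functional is convex, so any stationary point is a global minimizer; indeed for any $u_2$ one writes $u_2 = \bar u_2 + w$ and expands $J_2(\bar u_2 + w) = J_2(\bar u_2) + \mathbb E\langle M_2\bar u_2 + M_1, w\rangle + \tfrac{1}{2}\mathbb E\langle M_2 w, w\rangle = J_2(\bar u_2) + \tfrac{1}{2}\mathbb E\langle M_2 w, w\rangle \geq J_2(\bar u_2)$, using that the linear term vanishes by \eqref{stationary condition-1} and that $\langle M_2 w, w\rangle \geq 0$. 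This shows existence of a minimizer is equivalent to convexity plus solvability of the linear equation \eqref{stationary condition-1}. For the range inclusion $\mathcal R(M_1(\xi,u_1))\subset\mathcal R(M_2)$, I would simply note that \eqref{stationary condition-1} says $M_1(\xi,u_1) = -M_2(\bar u_2) \in \mathcal R(M_2)$; uniqueness of $\bar u_2$ corresponds to injectivity of $M_2$ on the relevant subspace, equivalently the quadratic term being strictly positive in every nonzero admissible direction modulo the kernel.

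Finally, for \textbf{(c)}, under uniform convexity $M_2 \gg 0$ the operator is boundedly invertible by Lax--Milgram (the bilinear form $(u,v)\mapsto \mathbb E\langle M_2 u, v\rangle$ is coercive: $\mathbb E\langle M_2 u,u\rangle \geq \delta \|u\|^2$), so the stationary equation \eqref{stationary condition-1} has the unique solution $\bar u_2 = -M_2^{-1}(M_1(\xi,u_1))$, and by part \textbf{(b)} this is the unique global minimizer. The main obstacle, such as it is, lies less in any single estimate than in the careful bookkeeping between the abstract operator-theoretic statements ($M_2\geq 0$, range inclusion, invertibility) and their meaning; in particular the subtle point in \textbf{(b)} is that solvability requires not just $M_1 \in \mathcal R(M_2)$ but solvability within the Hilbert space together with convexity, and one must be attentive to whether $M_2$ has nontrivial kernel (governing uniqueness versus mere existence). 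Since the entire argument is the Hilbert-space quadratic-functional theory, I would invoke the structure of \cite[Theorem 6.2.2]{YZ1999} directly, the only genuine content being the explicit identification of $M_2$ and $M_1$ via the adjoint BSDE systems in Proposition \ref{presentation of cost-follower}, which has already been carried out.
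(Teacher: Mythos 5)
Your proposal is correct and follows essentially the same route as the paper: the paper omits the proof and simply refers to the standard Hilbert-space quadratic-functional argument of \cite[Theorem 6.2.2]{YZ1999}, which is precisely what you carry out (homogeneity ray for finiteness $\Rightarrow M_2\geq0$, Gateaux derivative plus convex expansion for the stationary-condition equivalence, and coercivity/bounded invertibility of $M_2$ under uniform convexity). The details you supply — including reading the range condition as $M_1(\xi,u_1)=-M_2(\bar u_2)\in\mathcal R(M_2)$ and tying uniqueness of the minimizer to uniqueness of the solution of \eqref{stationary condition-1} — are exactly the intended content.
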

 (a)-(c) in Proposition \ref{solvability of follower} can be summarized by the following inclusion relation diagram:
 \begin{equation*}\begin{aligned}
  &\text{uniform convexity} \Longrightarrow \text{unique solvability} \Longrightarrow \text{solvability}(\Longleftrightarrow \text{convexity, stationary condition}) \\
  &\Longrightarrow \text{finiteness} \Longrightarrow \text{convexity}.
  \end{aligned}\end{equation*}
  Given representation \eqref{coefficient of quadratic functional-follower}, \eqref{stationary condition-1} takes the following form:
$$R_{22}^2(\cdot)\bar u_2(\cdot)- B_2^\top(\cdot) Y_1(\cdot)-B_2^\top(\cdot) Y_2(\cdot)-B_2^\top(\cdot) Y_3(\cdot)=0.$$
Therefore, if we define $\bar Y=Y_1+Y_2+Y_3, \bar X=X_1+X_2+X_3, \bar Z=Z_1+Z_2+Z_3$, we have the following solvability result in terms of BFSDEs.
 %which is a variant of Proposition \ref{solvability of follower} in which the conditions are given in terms of a new FBSDE.
\begin{theorem}\label{th1}
Under \textbf{\emph{(H1)}}-\textbf{\emph{(H2)}}, for any $u_2(\cdot)\in\mathcal U_2[0,T]$, suppose that
\begin{equation}\label{convexity-follower}
\mathbb E\langle M_2(u_2)(\cdot),u_2(\cdot)\rangle=\mathbb E\int_0^T\langle R_{22}^2(s)u_2(s)- B_2^\top(s) Y_1(s),u_2(s)\rangle ds\geq0,
\end{equation}where $(Y_1,X_1,Z_1)$ is the solution of \eqref{eq:8} with respect to $u_2(\cdot)$. Then problem \textbf{\emph{(BLQ)}} is \emph{(}uniquely\emph{)} solvable with an \emph{(}the\emph{)} optimal pair $(\bar X(\cdot),\bar Z(\cdot),\bar u_2(\cdot))$ iff there \emph{(}uniquely\emph{)} exists a 4-tuple $(\bar Y(\cdot),\bar X(\cdot),\bar Z(\cdot),\bar u_2(\cdot))$ satisfying BFSDEs
\begin{equation}\label{optimal system}
\left\{\begin{aligned}
&d\bar Y(s)=\Big[-A^\top(s) \bar Y(s)+Q_2(s) \bar X(s)\Big]ds+\Big[-C^\top(s) \bar Y(s)+S_2(s) \bar Z(s)\Big]dW(s),\\
&d\bar X(s)=\Big[A(s)\bar X(s)+B_1(s)u_1(s)+B_2(s)\bar u_2(s)+C(s)\bar  Z(s)\Big]ds+\bar Z(s)dW(s),\\
& \bar Y(0)=H_2\bar  X(0),\quad \bar  X(T)=\xi,
\end{aligned}\right.
\end{equation}such that
\begin{equation}\label{stationary condition-2}
R_{22}^2(s)\bar u_2(s)-B_2^\top(s) \bar Y(s)=0,\qquad s\in[0,T],\quad \mathbb P-a.s.
\end{equation}
\end{theorem}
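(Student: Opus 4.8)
The plan is to read Theorem \ref{th1} as nothing more than a concrete backward--forward reformulation of the abstract characterization already recorded in Proposition \ref{solvability of follower}(b). I would observe first that the standing hypothesis \eqref{convexity-follower} is exactly the convexity $M_2\geq 0$, written out through $M_2(u_2)=R^2_{22}u_2-B_2^\top Y_1$ from \eqref{coefficient of quadratic functional-follower}. Hence, by Proposition \ref{solvability of follower}(b), under \eqref{convexity-follower} problem (BLQ) is (uniquely) solvable if and only if the abstract stationary condition \eqref{stationary condition-1}, namely $M_2(\bar u_2)+M_1(\xi,u_1)=0$, admits a (unique) solution $\bar u_2(\cdot)$. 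So it suffices to show that \eqref{stationary condition-1} is equivalent to the solvability of the coupled system \eqref{optimal system} together with \eqref{stationary condition-2}, and that the correspondence between $\bar u_2$ and the 4-tuple $(\bar Y,\bar X,\bar Z,\bar u_2)$ is a bijection preserving uniqueness.

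The key structural remark, which I would isolate at the outset, is that each of the three auxiliary systems in Proposition \ref{presentation of cost-follower} is only one-directionally coupled and therefore well posed without any smallness or monotonicity hypothesis: the $X_i$-line is a self-contained linear BSDE (its terminal datum is prescribed and it does not see $Y_i$), solvable and uniquely so by \cite[Theorem 3.1]{PP1990}, after which the $Y_i$-line is a self-contained linear forward SDE whose initial value $H_2X_i(0)$ and inhomogeneities $Q_2X_i,S_2Z_i$ are already fixed. Consequently, for a given $\bar u_2$ the triples $(Y_i,X_i,Z_i)$, $i=1,2,3$, exist and are unique, and by linearity of these equations in their data the sums $\bar Y=\sum_i Y_i$, $\bar X=\sum_i X_i$, $\bar Z=\sum_i Z_i$ satisfy precisely \eqref{optimal system}: the drifts $B_2\bar u_2$ and $B_1u_1$ and the terminal value $X_1(T)+X_2(T)+X_3(T)=\xi$ add up correctly, and the three initial conditions combine into $\bar Y(0)=H_2\bar X(0)$.

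Granting this, the forward implication is immediate: if $\bar u_2$ solves \eqref{stationary condition-1}, then by the explicit forms in \eqref{coefficient of quadratic functional-follower} the condition reads $R^2_{22}\bar u_2-B_2^\top(Y_1+Y_2+Y_3)=R^2_{22}\bar u_2-B_2^\top\bar Y=0$, which is exactly \eqref{stationary condition-2}, while $(\bar Y,\bar X,\bar Z,\bar u_2)$ solves \eqref{optimal system}. For the converse I would start from a 4-tuple satisfying \eqref{optimal system} and \eqref{stationary condition-2}, freeze its control $\bar u_2$, and rebuild the sums $\sum_i Y_i,\sum_i X_i,\sum_i Z_i$ from the three auxiliary systems driven by this same $\bar u_2$; these sums also solve \eqref{optimal system} with the same control. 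Invoking uniqueness of \eqref{optimal system} for fixed $\bar u_2$ --- again by its triangular structure, the $\bar X$-line being a BSDE and the $\bar Y$-line a forward SDE driven by it --- I conclude $\bar Y=\sum_i Y_i$, whence \eqref{stationary condition-2} becomes $R^2_{22}\bar u_2-B_2^\top\sum_i Y_i=M_2(\bar u_2)+M_1(\xi,u_1)=0$, i.e. \eqref{stationary condition-1}. This two-way passage also shows the map $\bar u_2\mapsto(\bar Y,\bar X,\bar Z,\bar u_2)$ is a bijection onto the solutions of \eqref{optimal system} with \eqref{stationary condition-2}, so uniqueness of $\bar u_2$ in Proposition \ref{solvability of follower}(b) transfers verbatim to uniqueness of the 4-tuple, and the optimal pair $(\bar X,\bar Z,\bar u_2)$ is read off from it.

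The only genuinely delicate point, and the one I would state cleanly, is the well-posedness and especially the uniqueness of the combined system \eqref{optimal system} for a fixed control: at first sight it is a fully coupled forward--backward SDE with mixed initial--terminal data, the regime where existence and uniqueness can fail over long horizons. The resolution is precisely the one-way coupling noted above, which lets me solve $(\bar X,\bar Z)$ as an ordinary linear BSDE and then $\bar Y$ as an ordinary linear SDE, bypassing any monotonicity or small-horizon conditions. Everything else is linear superposition together with the convexity/stationarity dictionary already supplied by Propositions \ref{presentation of cost-follower} and \ref{solvability of follower}.
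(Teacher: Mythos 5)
Your proposal is correct and takes essentially the same route as the paper: the paper obtains Theorem \ref{th1} directly from Propositions \ref{presentation of cost-follower} and \ref{solvability of follower} by rewriting the stationary condition \eqref{stationary condition-1} via the representation \eqref{coefficient of quadratic functional-follower} and summing the three auxiliary systems, $\bar Y=Y_1+Y_2+Y_3$, $\bar X=X_1+X_2+X_3$, $\bar Z=Z_1+Z_2+Z_3$, exactly as you do. Your explicit treatment of the one-way (triangular) coupling for well-posedness and of the uniqueness transfer in the converse direction merely fills in details the paper leaves implicit.
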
Let us give the following inverse assumption.
\begin{description}
	\item[(H3)] $R_{22}^2(\cdot)$ is invertible and $(R_{22}^2(\cdot))^{-1}\in L_{\mathbb F}^\infty(0,T;\mathbb R^{m_2})$.
\end{description}Clearly, under (H3), optimal control $\bar u_2(\cdot)$ can be further represented as\begin{equation}\label{e1F optimal control of the follower}
\bar u_2(s)=(R^2_{22}(s))^{-1}B_2(s)^\top\bar Y(s),
\end{equation}
and \eqref{optimal system}-\eqref{stationary condition-2} are equivalent to the following BFSDEs:
\begin{equation}\label{Hamiltonian system-follower}
\left\{\begin{aligned}
&d\bar Y(s)=\Big[-A^\top(s) \bar Y(s)+Q_2(s) \bar X(s)\Big]ds+\Big[-C^\top(s) \bar Y(s)+S_2(s) \bar Z(s)\Big]dW(s),\\
&d\bar X(s)=\Big[A(s)\bar X(s)+B_1(s)u_1(s)+B_2(s)(R^2_{22}(s))^{-1}B_2^\top(s)\bar Y(s)+C(s)\bar  Z(s)\Big]ds+\bar Z(s)dW(s),\\
& \bar Y(0)=H_2\bar  X(0),\quad \bar  X(T)=\xi.
\end{aligned}\right.
\end{equation}
BFSDEs \eqref{Hamiltonian system-follower} differs from classical forward-backward stochastic differential equations (FBSDEs) because forward state $\bar Y(\cdot)$ depends on backward state $\bar X(\cdot)$ via initial $\bar X(0)$ instead terminal $\bar X(T)$. Unlike Yong \cite{Y02}, the state \eqref{Hamiltonian system-follower} is not \emph{decoupled} thus its global solvability is not straightforward.
In Section \ref{solva-Hamil-follwer}, we will establish global solvability under some suitable conditions on the coefficients. Moreover, regarding the relation between \eqref{Hamiltonian system-follower} and \textbf{(BLQ)}, we have the following statement:
\begin{corollary}\label{coro1}
Under \textbf{\emph{(H1)}}-\textbf{\emph{(H3)}}, let \eqref{convexity-follower} hold. Then Problem \textbf{\emph{(BLQ)}} is \emph{(}pathwise uniquely\emph{)} solvable iff BFSDEs \eqref{Hamiltonian system-follower} admits a \emph{(}unique\emph{)} strong solution $(\bar Y(\cdot),\bar X(\cdot),\bar Z(\cdot))\in L^2_{\mathbb F}(\Omega;C([0,T];\mathbb R^n))\times L^2_{\mathbb F}(\Omega;C([0,T];\mathbb R^n))\times L^2_{\mathbb F}(0,T;\mathbb R^n)$.
\end{corollary}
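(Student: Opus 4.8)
The plan is to derive this corollary directly from Theorem \ref{th1} by using the invertibility hypothesis \textbf{(H3)} to eliminate the control variable $\bar u_2(\cdot)$ from the characterization, thereby reducing the $4$-tuple system \eqref{optimal system}--\eqref{stationary condition-2} to the pure state system \eqref{Hamiltonian system-follower}. The essential point is that under \textbf{(H3)} the stationary condition \eqref{stationary condition-2} can be solved uniquely for $\bar u_2(\cdot)$, so that the control becomes slaved to the adjoint-type process $\bar Y(\cdot)$.

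First I would set up the bijective correspondence. Under \textbf{(H3)}, the pointwise relation $R_{22}^2(s)\bar u_2(s)-B_2^\top(s)\bar Y(s)=0$ in \eqref{stationary condition-2} is equivalent to the explicit feedback representation $\bar u_2(s)=(R_{22}^2(s))^{-1}B_2^\top(s)\bar Y(s)$ in \eqref{e1F optimal control of the follower}. Substituting this expression into the forward equation of \eqref{optimal system} produces exactly the drift appearing in \eqref{Hamiltonian system-follower}; conversely, any triple $(\bar Y,\bar X,\bar Z)$ solving \eqref{Hamiltonian system-follower}, together with $\bar u_2$ defined by \eqref{e1F optimal control of the follower}, solves \eqref{optimal system}--\eqref{stationary condition-2}. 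Thus the map $(\bar Y,\bar X,\bar Z,\bar u_2)\mapsto(\bar Y,\bar X,\bar Z)$ is a bijection between the solution set of \eqref{optimal system}--\eqref{stationary condition-2} and that of \eqref{Hamiltonian system-follower}.

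Next I would check that this correspondence respects the function spaces. Given a strong solution $(\bar Y,\bar X,\bar Z)$ of \eqref{Hamiltonian system-follower} in the stated spaces, the control $\bar u_2(\cdot)$ defined by \eqref{e1F optimal control of the follower} lies in $\mathcal U_2[0,T]=L^2_{\mathbb F}(0,T;\mathbb R^{m_2})$, since $(R_{22}^2)^{-1}$ is essentially bounded by \textbf{(H3)}, $B_2$ is essentially bounded by \textbf{(H1)}, and $\bar Y\in L^2_{\mathbb F}(\Omega;C([0,T];\mathbb R^n))\subset L^2_{\mathbb F}(0,T;\mathbb R^n)$. In the reverse direction, the $4$-tuple furnished by Theorem \ref{th1} automatically has its state component in the required continuous, square-integrable spaces by the a priori estimate \eqref{estimation of BSDE}. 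Combining this with the bijection, existence of an optimal control for \textbf{(BLQ)} is equivalent to existence of a strong solution of \eqref{Hamiltonian system-follower}, and, because the forward BSDE is well posed once $(\xi,\bar u_2)$ is fixed, uniqueness of the optimal pair (pathwise unique solvability of \textbf{(BLQ)}) transfers through the bijection to uniqueness of the solution triple.

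The step I expect to require the most care is the matching of the two uniqueness notions: \emph{pathwise unique solvability} of the control problem must be shown to coincide with \emph{unique strong solution} of the coupled BFSDEs. This amounts to verifying that the bijection above is in fact an identification up to indistinguishability, i.e., that two optimal controls agreeing $ds\otimes d\mathbb P$-a.e. produce pathwise-identical state trajectories via \eqref{estimation of BSDE} and the same $\bar Y$, and vice versa. Beyond this bookkeeping there is no genuinely hard analytic step, since the whole reduction rests on the explicit invertibility supplied by \textbf{(H3)}; this is precisely why the statement appears as a corollary to Theorem \ref{th1} rather than as an independent result.
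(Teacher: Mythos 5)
Your proposal is correct and follows essentially the same route as the paper: the paper's own justification is precisely the observation preceding the corollary that, under \textbf{(H3)}, the stationary condition \eqref{stationary condition-2} is solved by \eqref{e1F optimal control of the follower}, so that \eqref{optimal system}--\eqref{stationary condition-2} and \eqref{Hamiltonian system-follower} are equivalent, and the corollary then reads off directly from Theorem \ref{th1}. Your additional bookkeeping on function spaces and on matching $ds\otimes d\mathbb P$-a.e.\ uniqueness of the control with uniqueness of the solution triple is exactly the (implicit) content of that reduction.
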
If uniformly convexity holds, i.e., there exists a constant $\gamma>0$ such that for any $u_2(\cdot)\in\mathcal U_2[0,T]$,
\begin{equation}\label{uniformly convexity-follower}
\mathbb E\langle M_2(u_2)(\cdot),u_2(\cdot)\rangle=\mathbb E\int_0^T\langle R_{22}^2(s)u_2(s)- B_2^\top(s) Y_1(s),u_2(s)\rangle ds\geq\gamma\mathbb E\int_0^T|u_2(s)|^2ds,
\end{equation}then (BLQ) is uniquely solvable. Therefore, it follows from Corollary \ref{coro1} that BFSDEs \eqref{Hamiltonian system-follower} admits a unique strong solution $(\bar Y(\cdot),\bar X(\cdot),\bar Z(\cdot))$.
Next, we will study the uniformly convex condition \eqref{uniformly convexity-follower} of (BLQ). First,
introduce the following auxiliary BLQ problem \textbf{(ABLQ)}:
\begin{equation*}\label{state equation of (ABLQ)}\left\{
\begin{aligned}
&\text{ Minimize }
\begin{aligned}
\mathcal J(u_2(\cdot))=\mathbb{E}
      \Big\{&\int_{0}^{T}\Big[\langle Q_2x(s),x(s)\rangle+\langle S_2z(s),z(s)\rangle+\langle R^2_{22} u_2(s),u_2(s)\rangle\Big] ds+\langle H_2 x(0),x(0)\rangle\Big\},
      \end{aligned}\\
&\text{ subject to }
dx(s)=\Big[A(s)x(s)+B_2(s)u_2(s)+C(s)z(s)\Big]ds+z(s)dW(s),\quad x(T)=0,\quad s\in[0,T].
\end{aligned}\right.
\end{equation*}Note that for {(ABLQ)}, its functional $\mathcal J(u_2(\cdot))=\mathbb E\langle M_2(u_2)(\cdot),u_2(\cdot)\rangle,$  which is the left hand side of \eqref{convexity-follower}. Therefore, convexity condition \eqref{convexity-follower} holds iff {(ABLQ)} is well-posed with a necessarily nonnegative minimal cost. Moreover, if there exists a constant $\gamma>0$ such that $\mathcal J(u_2(\cdot))>\gamma\mathbb E\int_0^T|u_2(s)|^2ds$ for any $u_2(\cdot)\in\mathcal U_2[0,T]$, the uniformly convexity condition \eqref{uniformly convexity-follower} holds. Now we introduce the following standard assumptions
\begin{equation*}\begin{aligned}
&\textbf{(SA-1):}\quad H_2\geq0,\quad Q_2(\cdot)\geq0,\quad S_2(\cdot)\geq0,\quad R^2_{22}(\cdot)\gg0.
\end{aligned}\end{equation*}
For any given nonsingular symmetric matrix $M$, we introduce the following Riccati equation (denoted by \textbf{(SRE-1)}):
\begin{equation*}\label{Riccati-follower}\left\{\begin{aligned}
&dP=-\Big[Q_2+PA+A^\top P-PB_2(R_{22}^2)^{-1}B_2^\top P-(PC +K)(P+S_2)^{-1}(C^\top P+K)\Big] ds+K dW(s),\\
&P(T)=M,\\
&P(s)+S_2(s)>0,\quad 0\leq s\leq T.
\end{aligned}\right.\end{equation*}
\begin{proposition}\label{convexity of follower}
Under \textbf{\emph{(H1)}}-\textbf{\emph{(H3)}}, if $R_{22}^2(\cdot)>0$ and Riccati equation \textbf{\emph{(SRE-1)}} has a solution $(P(\cdot),K(\cdot))\in L_{\mathbb F}^\infty(0,T;\mathbb S^n)\times L_{\mathbb F}^2(0,T;\mathbb S^n)$ such that
$ P(0)+H_2\geq0.$
%and
%$$ P^{-1}(\cdot)K(\cdot)(P(\cdot)+S_2(\cdot))^{-1}\in L_{\mathbb F}^\infty(0,T;\mathbb R^n).$$
Then for any $u_2(\cdot)\in\mathcal U_2[0,T]$,
$$\mathcal J(u_2(\cdot))\geq0,$$
%and $$\mathcal J(u_2(\cdot))=0\Longleftrightarrow u_2(\cdot)=0.$$Therefore, the strictly convexity condition \eqref{strictly convexity-follower} holds.
and in this case,  \textbf{\emph{(BLQ)}} is convex on $u_2(\cdot).$ Moreover, if there exists a constant $\delta>0$ and $R_{22}^2(\cdot)\geq\delta I$, then there exists a constant $\gamma>0$ such that $$\mathcal J(u_2(\cdot))\geq\delta\gamma\mathbb E\int_0^T|u_2(s)|^2ds,$$and in this case,  \textbf{\emph{(BLQ)}} is uniformly convex on $u_2(\cdot).$ In particular, under \textbf{\emph{(SA-1)}}, \textbf{\emph{(BLQ)}} is uniformly convex on $u_2(\cdot).$
%i.e., $ u_2(\cdot)\rightarrow J(\xi,u_1(\cdot),u_2(\cdot))$ is uniformly convex.
\end{proposition}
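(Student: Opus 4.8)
The plan is to prove nonnegativity of $\mathcal J$ by a completion-of-squares argument, using the solution $(P(\cdot),K(\cdot))$ of \textbf{(SRE-1)} as the ``weight'' that turns the quadratic cost into a sum of squares. Fix any $u_2(\cdot)\in\mathcal U_2[0,T]$ and let $(x(\cdot),z(\cdot))$ solve the (ABLQ) state equation. First I would apply It\^o's formula to $s\mapsto\langle P(s)x(s),x(s)\rangle$, using $dx=[Ax+B_2u_2+Cz]\,ds+z\,dW$ together with $dP=b_P\,ds+K\,dW$, where $b_P=-[Q_2+PA+A^\top P-PB_2(R^2_{22})^{-1}B_2^\top P-(PC+K)(P+S_2)^{-1}(C^\top P+K)]$. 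The quadratic-variation contributions produce the extra terms $z^\top Pz$ and $2x^\top Kz$. Integrating over $[0,T]$ and invoking the terminal condition $x(T)=0$, the boundary term at $T$ vanishes and I obtain an expression for $\mathbb E\langle P(0)x(0),x(0)\rangle$ as (minus) the expectation of a time integral; the stochastic-integral remainder drops out in expectation after the usual localization, justified because $P\in L^\infty_{\mathbb F}$, $K\in L^2_{\mathbb F}$, $x$ has square-integrable supremum and $z\in L^2_{\mathbb F}$.

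Next I would add and subtract $\mathbb E\langle P(0)x(0),x(0)\rangle$ inside $\mathcal J(u_2(\cdot))$, obtaining
\begin{equation*}
\mathcal J(u_2(\cdot))=\mathbb E\big\langle(H_2+P(0))x(0),x(0)\big\rangle+\mathbb E\int_0^T\mathcal I(s)\,ds,
\end{equation*}
where $\mathcal I$ is the resulting integrand. The point of \textbf{(SRE-1)} is precisely that $\mathcal I$ reorganizes into two perfect squares: the $Q_2$-term cancels, the drift contribution $-(PA+A^\top P)$ is absorbed by the $2\langle Px,Ax\rangle$ cross term, and the remaining pieces group as
\begin{equation*}
\mathcal I=\big\langle R^2_{22}(u_2+\Theta x),\,u_2+\Theta x\big\rangle+\big\langle(P+S_2)(z+\Lambda x),\,z+\Lambda x\big\rangle,
\end{equation*}
with $\Theta=(R^2_{22})^{-1}B_2^\top P$ and $\Lambda=(P+S_2)^{-1}(C^\top P+K)$; here the term $-PB_2(R^2_{22})^{-1}B_2^\top P$ in $b_P$ supplies the pure $x$-part of the first square, and $-(PC+K)(P+S_2)^{-1}(C^\top P+K)$ that of the second. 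Since $R^2_{22}(\cdot)>0$, $P(\cdot)+S_2(\cdot)>0$ by \textbf{(SRE-1)}, and $H_2+P(0)\geq0$ by hypothesis, every term is nonnegative, whence $\mathcal J(u_2(\cdot))\geq0$ and (BLQ) is convex.

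For uniform convexity, suppose $R^2_{22}(\cdot)\geq\delta I$. Discarding the last two nonnegative terms leaves $\mathcal J(u_2(\cdot))\geq\delta\,\mathbb E\int_0^T|u_2+\Theta x|^2\,ds$, so it remains to bound $\mathbb E\int_0^T|u_2|^2ds$ by $\mathbb E\int_0^T|v|^2ds$ with $v:=u_2+\Theta x$. Rewriting the state equation in terms of $v$ gives the BSDE $dx=[(A-B_2\Theta)x+B_2v+Cz]\,ds+z\,dW$, $x(T)=0$, driven only by the inhomogeneity $B_2v$; since $\Theta\in L^\infty_{\mathbb F}$ (because $(R^2_{22})^{-1},B_2,P$ are all bounded), the a priori estimate \eqref{estimation of BSDE} yields $\mathbb E\int_0^T|x|^2ds\leq C\,\mathbb E\int_0^T|v|^2ds$, hence $\mathbb E\int_0^T|u_2|^2ds\leq 2\mathbb E\int_0^T|v|^2ds+2\|\Theta\|_\infty^2\,\mathbb E\int_0^T|x|^2ds\leq C'\mathbb E\int_0^T|v|^2ds$. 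Setting $\gamma=1/C'$ gives $\mathcal J(u_2(\cdot))\geq\delta\gamma\,\mathbb E\int_0^T|u_2|^2ds$, i.e. uniform convexity. Finally, under \textbf{(SA-1)} the verification is immediate: as $H_2\geq0$, $Q_2(\cdot)\geq0$, $S_2(\cdot)\geq0$, one may drop all but the control term to get $\mathcal J(u_2(\cdot))\geq\mathbb E\int_0^T\langle R^2_{22}u_2,u_2\rangle ds\geq\delta\mathbb E\int_0^T|u_2|^2ds$, so no Riccati solution is even needed.

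The routine algebra of completing the squares is the bulk of the work but is mechanical; the one place that needs genuine care is matching the $K$-dependent cross terms — the $2x^\top Kz$ produced by It\^o must combine exactly with $-(PC+K)(P+S_2)^{-1}(C^\top P+K)$ to close the second square, and this is exactly where the constraint $P+S_2>0$ from \textbf{(SRE-1)} is used. I expect that, together with the measure-theoretic justification that the stochastic-integral remainder is a true martingale, to be the main (though still standard) obstacle.
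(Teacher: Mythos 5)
Your proof is correct, and its core — applying It\^o's formula to $\langle P x, x\rangle$, using the drift of \textbf{(SRE-1)} to complete two squares with exactly $\Theta=(R^2_{22})^{-1}B_2^\top P$ and $\Lambda=(P+S_2)^{-1}(C^\top P+K)$, then reading off nonnegativity from $R^2_{22}>0$, $P+S_2>0$ and $H_2+P(0)\geq 0$ — is precisely the paper's argument in its appendix. Where you genuinely diverge is the coercivity step for uniform convexity. The paper isolates this as a standalone lemma (Lemma 7.1): it defines the operator $\mathcal L u_2 = u_2 - \Theta x^{(u_2)}$ on $\mathcal U_2[0,T]$, exhibits its inverse $\mathcal L^{-1}u_2 = u_2 + \Theta \widetilde x^{(u_2)}$ through the feedback BSDE, and invokes the bounded inverse theorem to get $\mathbb E\int_0^T|u_2|^2ds \leq \|\mathcal L^{-1}\|\,\mathbb E\int_0^T |\mathcal L u_2|^2ds$. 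You instead rewrite the state equation in terms of $v=u_2+\Theta x$ as a BSDE with bounded feedback drift $A-B_2\Theta$ and inhomogeneity $B_2 v$, apply the a priori estimate \eqref{estimation of BSDE}, and finish with the triangle inequality. The two arguments have the same substance — your computation is in effect a direct proof that $\mathcal L^{-1}$ is bounded — but yours is more elementary (no appeal to the open-mapping/bounded-inverse theorem) and produces an explicit constant $\gamma$, while the paper's packaging is more modular (its Lemma 7.1 holds for an arbitrary bounded $\Theta$). Your closing remark that under \textbf{(SA-1)} uniform convexity follows by simply discarding the nonnegative $Q_2$-, $S_2$- and $H_2$-terms, so that no Riccati solution is needed at all, is also correct and is the natural reading of the "in particular" clause. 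Finally, both you and the paper pass over the full justification that the stochastic-integral term (which involves $\langle Kx,x\rangle$ with $K$ only in $L^2_{\mathbb F}$) has zero expectation; you at least flag the localization, so this is not a gap relative to the paper's own level of rigor.
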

The proof of Proposition \ref{convexity of follower} is given in the Appendix, Section \ref{sec7.1}.

\section{Terminal-perturbation and BFLQ problem of $\mathcal A_L$}\label{leader}

Considering \eqref{e1F optimal control of the follower}, the corresponding state process for $\mathcal A_L$ becomes the following BFSDEs:
\begin{equation}\label{state equation-leader}
\left\{\begin{aligned}
&d Y(s)=\Big[-A^\top(s)  Y(s)+Q_2(s)  X(s)\Big]ds+\Big[-C^\top(s)  Y(s)+S_2(s)  Z(s)\Big]dW(s),\\
&d X(s)=\Big[A(s) X(s)+B_1(s)u_1(s)+B_2(s)(R^2_{22}(s))^{-1}B_2^\top(s) Y(s)+C(s)  Z(s)\Big]ds+ Z(s)dW(s),\\
&  Y(0)=H_2  X(0),\quad   X(T)=\xi,
\end{aligned}\right.
\end{equation}which is controlled by $\xi$ (terminal-perturbation) and $u_1(\cdot)$ with the following cost functional
\begin{equation}\nonumber
\begin{aligned}
&J_1(\xi,u_1(\cdot))=\frac{1}{2}\mathbb E\Big\{\int^T_0\Big[\langle Q_1(s) X(s), X(s)\rangle+\langle S_1(s) Z(s), Z(s)\rangle+\left\langle R^1_{11}(s)u_{1}(s),u_{1}(s)\right\rangle\Big]ds\\
&+\langle G_1\xi,\xi\rangle+\langle H_1X(0),X(0)\rangle\Big\}.
\end{aligned}\end{equation}
The existence and uniqueness of BFSDEs \eqref{state equation-leader} is established in Corollary \ref{coro1}. Now, $\mathcal{A}_{L}$ should solve the following \emph{mixed} \emph{terminal-perturbation} and BFLQ problem for above system:
\begin{equation*}\label{leader-BFLQ}\textbf{(P)}:
\text{ Minimize } \qquad J_1(\xi,u_1(\cdot))
\text{\qquad subject to } \qquad \eqref{state equation-leader},\ (\xi,u_1(\cdot))\in\mathcal U(\mathcal K,\alpha,\beta)\times\mathcal U_1[0,T].
\end{equation*}We denote above problem as (P) for \emph{primal} problem, to be compared with the \emph{dual} problem that will be introduced later. Now, it is necessary to set some definitions pertinent to its solvability.
\begin{definition} \emph{(a)} Problem \emph{\textbf{(P)}} is said to be finite if cost functional $J_{1}$ is bounded from below, that is, $\mu_{p}\triangleq\inf_{(\xi,u_1(\cdot))\in\mathcal U(\mathcal K,\alpha,\beta)\times\mathcal U_1[0,T]} J_1(\xi,u_1(\cdot))>-\infty.$ $\mu_{p}$ is called the value of \emph{(}primal\emph{)} problem \emph{\textbf{(P)}}\emph{;}

\emph{(b)} Problem \emph{\textbf{(P)}} is said to be \emph{(}uniquely\emph{)} solvable if there exists a \emph{(}unique\emph{)} $(\xi^{*}, u_1^{*}(\cdot)) \in \mathcal U(\mathcal K,\alpha,\beta)\times\mathcal U_1[0,T]$ such that $\mu_{p}=J_1(\xi^{*},u_1^{*}(\cdot)).$ In this case, $(\xi^{*}, u_1^{*}(\cdot))$ is called minimizer of problem \emph{\textbf{(P)}}.\end{definition}For solvability, a related definition is the convexity. Considering $\mathcal{U}(\mathcal K,\alpha,\beta)$ is closed-convex, we formulate the following trivial definition.
\begin{definition}Problem \emph{\textbf{(P)}} is said to be convex if its cost functional $J_{1}$ is convex on $(\xi,u_1(\cdot)).$ Its strictly- and uniformly-convexity can be defined similarly.\end{definition}

\subsection{Convexity and solvability of primal problem}\label{Convexity and solvability of primal problem}
For primal problem (P), the following representation of $J_{1}$ may help to characterize its solvability and convexity in a direct manner.
\begin{proposition}\label{presentation of cost-leader}
Let \textbf{\emph{(H1)}}-\textbf{\emph{(H3)}} hold. There exist two bounded self-adjoint linear operators $\mathcal M_2:\mathcal U_1[0,T]\rightarrow \mathcal U_1[0,T]$, $\mathcal M_1:  L_{\mathcal F_T}^2(\Omega;\mathbb R^n) \rightarrow L_{\mathcal F_T}^2(\Omega;\mathbb R^n)$ and a bounded linear operator $\mathcal M_0: L_{\mathcal F_T}^2(\Omega;\mathbb R^n) \rightarrow \mathcal U_1[0,T]$ such that
\begin{equation}\label{quadratic form of cost-leader}
J_1(\xi,u_1(\cdot))=\frac{1}{2}\mathbb E\Big[\langle\mathcal M_2(u_1)(\cdot),u_1(\cdot)\rangle+\langle\mathcal M_1(\xi),\xi\rangle+2\langle\mathcal M_0(\xi)(\cdot),u_1(\cdot)\rangle\Big],
\end{equation}with\begin{equation*}\begin{aligned}
&\mathcal M_2(u_1)(\cdot)=R_{11}^1(\cdot)u_1(\cdot)-B_1^\top(\cdot)g_1(u_1)(\cdot),\ \mathcal M_1(\xi)=G_1\xi+g_2(T),\ \mathcal M_0(\xi)(\cdot)=-B_1^\top(\cdot)g_2(\cdot),
\end{aligned}\end{equation*}where $g_1, g_2$ depending on $u_{1}$ and $\xi$ respectively, are defined through the following BFSDEs
\begin{equation}\label{adjoint-1}\left\{\begin{aligned}
&d Y_1=\Big[-A^\top Y_1+Q_2 X_1\Big]ds+\Big[-C^\top Y_1+S_2 Z_1\Big]dW(s),\\
&d X_1=\Big[AX_1+B_1u_1+B_2(R^2_{22})^{-1}B_2^\top Y_1+C Z_1\Big]ds+ Z_1dW(s),\\
&dg_1=-\Big[ A^\top  g_1-Q_2h_1-Q_1 X_1\Big]ds-\Big[C^\top g_1-S_2 q_1-S_1 Z_1\Big]dW(s),\\
&dh_1=\Big[ Ah_1+B_2(R^2_{22})^{-1}B_2^\top g_1+ Cq_1\Big]ds+q_1dW(s),\\
& Y_1(0)=H_2 X_1(0),\quad  X_1(T)=0,\quad g_1(0)=H_1 X_1(0)+ H_2 h_1(0),\quad h_1(T)=0,
\end{aligned}\right.
\end{equation}
\begin{equation}\label{adjoint-2}\left\{\begin{aligned}
&d Y_2=\Big[-A^\top Y_2+Q_2 X_2\Big]ds+\Big[-C^\top Y_2+S_2Z_2\Big]dW(s),\\
&d X_2=\Big[A X_2+B_2(R^2_{22})^{-1}B_2^\top Y_2+C Z_2\Big]ds+ Z_2dW(s),\\
&dg_2=-\Big[ A^\top  g_2-Q_2h_2-Q_1 X_2\Big]ds-\Big[C^\top g_2-S_2 q_2-S_1 Z_2\Big]dW(s),\\
&dh_2=\Big[ Ah_2+B_2(R^2_{22})^{-1}B_2^\top g_2+ Cq_2\Big]ds+q_2dW(s),\\
& Y_2(0)=H_2 X_2(0),\quad  X_2(T)=\xi,\quad g_2(0)=H_1 X_2(0)+ H_2 h_2(0),\quad h_2(T)=0.
\end{aligned}\right.
\end{equation}
\end{proposition}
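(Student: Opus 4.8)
The plan is to follow the duality argument already used for Proposition \ref{presentation of cost-follower}, now adapted to the leader's coupled state \eqref{state equation-leader} and the full quadratic cost $J_1$. First I would exploit the linearity of \eqref{state equation-leader} in the pair $(\xi,u_1(\cdot))$: writing $(X,Y,Z)=(X_1,Y_1,Z_1)+(X_2,Y_2,Z_2)$, where the subscript-$1$ triple is the response to $(u_1(\cdot),\xi=0)$ and the subscript-$2$ triple is the response to $(u_1=0,\xi)$. These are precisely the forward-backward components appearing in \eqref{adjoint-1} and \eqref{adjoint-2}. Substituting this decomposition into $J_1$ and expanding the quadratics (using symmetry of $Q_1,S_1,H_1$) splits the cost into a pure-$u_1$ quadratic, a pure-$\xi$ quadratic, and a bilinear cross term, matching the three terms in \eqref{quadratic form of cost-leader}.

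The core step is to convert each state-dependent quadratic into a control-dependent one via the adjoint (co-state) variables $(g_i,h_i,q_i)$ introduced in \eqref{adjoint-1}--\eqref{adjoint-2}. For the pure-$u_1$ part I would apply It\^o's formula to $\langle g_1,X_1\rangle$ and to $\langle h_1,Y_1\rangle$ over $[0,T]$, take expectations, and subtract the two identities. The $A$- and $C$-coupling terms vanish inside each identity, and across the two identities the $\langle Q_2 h_1,X_1\rangle$, $\langle S_2 q_1,Z_1\rangle$ and $\langle B_2(R^2_{22})^{-1}B_2^\top g_1,Y_1\rangle$ contributions cancel, leaving exactly $\mathbb E\int_0^T[\langle Q_1 X_1,X_1\rangle+\langle S_1 Z_1,Z_1\rangle]ds+\mathbb E\langle H_1 X_1(0),X_1(0)\rangle=-\mathbb E\int_0^T\langle B_1^\top g_1,u_1\rangle ds$; adding the explicit $R^1_{11}$-term yields $\mathcal M_2(u_1)=R^1_{11}u_1-B_1^\top g_1$. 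Repeating the same pair of It\^o expansions for $\langle g_2,X_2\rangle,\langle h_2,Y_2\rangle$ (now with $X_2(T)=\xi$, $u_1=0$) produces the surviving boundary term $\mathbb E\langle g_2(T),\xi\rangle$, giving $\mathcal M_1(\xi)=G_1\xi+g_2(T)$; and applying It\^o to the cross products $\langle g_2,X_1\rangle,\langle h_2,Y_1\rangle$ delivers the cross term $-\mathbb E\int_0^T\langle B_1^\top g_2,u_1\rangle ds$, hence $\mathcal M_0(\xi)=-B_1^\top g_2$.

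The delicate point throughout, and the main obstacle, is the bookkeeping of boundary terms forced by the mixed initial-terminal structure of both the state and adjoint systems. The state carries $Y_i(0)=H_2 X_i(0)$ and $X_i(T)=\xi$ (or $0$), while the adjoint carries $g_i(0)=H_1 X_i(0)+H_2 h_i(0)$ and $h_i(T)=0$. These conditions are engineered so that, upon subtracting the $\langle h_i,Y_i\rangle$ identity from the $\langle g_i,X_i\rangle$ identity, the coupled boundary contribution $\mathbb E\langle H_2 h_i(0),X_i(0)\rangle$ cancels and only $\mathbb E\langle H_1 X_i(0),X_i(0)\rangle$ (and, in the $\xi$-case, $\mathbb E\langle g_2(T),\xi\rangle$) survives; verifying these cancellations is where care is needed, since the $H_2$ appearing in the adjoint initial datum must match the coupling $Y_i(0)=H_2 X_i(0)$ exactly. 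The integrability needed to drop the stochastic integrals and justify the applications of It\^o's formula follows from the well-posedness and a priori estimate \eqref{estimation of BSDE} for the coupled BFSDEs established in Corollary \ref{coro1}.

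Finally, self-adjointness of $\mathcal M_2$ and $\mathcal M_1$ follows by polarization: the maps $u_1\mapsto\mathbb E\langle\mathcal M_2 u_1,u_1\rangle$ and $\xi\mapsto\mathbb E\langle\mathcal M_1\xi,\xi\rangle$ have been identified with genuine quadratic forms (the state-dependent cost contributions), so their associated linear operators are symmetric; alternatively one verifies $\mathbb E\int_0^T\langle B_1^\top g_1(u_1),u_1'\rangle ds=\mathbb E\int_0^T\langle B_1^\top g_1(u_1'),u_1\rangle ds$ directly by a further integration-by-parts between the two systems. Boundedness of $\mathcal M_2,\mathcal M_1,\mathcal M_0$ is then immediate from \textbf{(H1)}--\textbf{(H3)} together with the linear estimate \eqref{estimation of BSDE}, which bounds the norms of $g_1$, $g_2(T)$ and $g_2$ by those of $u_1(\cdot)$ and $\xi$ respectively.
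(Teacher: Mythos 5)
Your proposal is correct and takes essentially the same route as the paper, which omits the details and simply remarks that the result ``follows from duality of BFSDEs'' in the spirit of \cite{YZ1999}: the linear decomposition into the responses \eqref{adjoint-1}--\eqref{adjoint-2}, the It\^{o} pairings $\langle g_i,X_j\rangle$ and $\langle h_i,Y_j\rangle$ whose difference kills the $Q_2$-, $S_2$- and $B_2(R^2_{22})^{-1}B_2^\top$-terms and the $\mathbb E\langle H_2h_i(0),X_j(0)\rangle$ boundary contribution (by symmetry of $H_2$), and the resulting identifications of $\mathcal M_2$, $\mathcal M_1$, $\mathcal M_0$ are exactly the intended duality argument, and your computations check out. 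Your handling of self-adjointness (via the bilinear integration-by-parts identity rather than the quadratic form alone) and of boundedness is also the standard completion of that argument.
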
The proof of Proposition 4.1 follows from duality of BFSDEs and readers may refer \cite{YZ1999} for similar representation. It follows from \eqref{quadratic form of cost-leader} that $J_{1}$ is quadratic functional on $(\xi,u_1(\cdot))$ and we have the following result concerning its convexity on constrained admissible set
$\mathcal U(\mathcal K,\alpha,\beta)\times\mathcal U_1[0,T].$
\begin{proposition}\label{prop-convexity of cost-leader}
Let \textbf{\emph{(H1)}}-\textbf{\emph{(H3)}} hold. Then \emph{\textbf{(P)}} is convex iff %the following convexity condition holds:
\begin{equation}\label{convexity of cost-leader}
\begin{aligned}\text{block operator:} \quad \mathcal{M}\triangleq\left[
\begin{array}{c|c}
  \mathcal{M}_{1} &  \mathcal{M}_{0}^* \\
  \hline
   \mathcal{M}_{0} &  \mathcal{M}_{2}
\end{array}
\right]\geq 0 \Longleftrightarrow J_1(\zeta, v(\cdot))\geq0,\quad
\forall (\zeta, v(\cdot))\in  {\mathcal U}_{\widetilde{\mathcal K}}
\times\mathcal U_1[0,T],\\
\end{aligned}
\end{equation}
where $\mathcal M_0^{*}(u_1)=g_1(T): \mathcal U_1(0,T)\longmapsto L^{2}_{\mathcal{F}_T}(\Omega;\mathbb R^n)$ is the adjoint operator of $\mathcal M_0(\xi)$ and $\widetilde{\mathcal K} \triangleq \mathcal K-\mathcal K=\{x-y: x \in \mathcal K, y \in \mathcal K\}$ is the algebra difference of $\mathcal{K}$ \emph{(}it is also convex but not necessary to be closed unless $\mathcal K$ is compact\emph{)}. Moreover, $\emph{\textbf{(P}})$ is uniformly convex iff for some $\delta>0$,
%the following uniformly convexity condition holds:
\begin{equation}\label{uniformly convexity of cost-leader}
\begin{aligned}&J_1(\zeta,v(\cdot))\geq\delta\Big[\mathbb E|\zeta|^2+\mathbb E\int_0^T|v(s)|^2ds\Big], \quad \forall (\zeta, v(\cdot))\in  {\mathcal U}_{\widetilde{\mathcal K}}
\times\mathcal U_1[0,T].
\end{aligned}
\end{equation}
\end{proposition}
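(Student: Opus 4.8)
The plan is to exploit the fact that, by Proposition \ref{presentation of cost-leader}, $J_1$ is a \emph{homogeneous} quadratic functional of $(\xi,u_1(\cdot))$. Since the leader's state system \eqref{state equation-leader} is linear and carries no inhomogeneous term, the solution $(X,Y,Z)$ depends \emph{linearly} on $(\xi,u_1(\cdot))$; as the running and terminal weights are purely quadratic, no linear or constant term survives and one may write $J_1(\xi,u_1)=\frac12\mathbb E\langle\mathcal M(\xi,u_1),(\xi,u_1)\rangle$ with $\mathcal M$ the block operator of \eqref{convexity of cost-leader}. The algebraic identity
\begin{equation*}
\langle\mathcal M(\xi,u_1),(\xi,u_1)\rangle=\langle\mathcal M_1\xi,\xi\rangle+2\langle\mathcal M_0\xi,u_1\rangle+\langle\mathcal M_2 u_1,u_1\rangle
\end{equation*}
is checked directly from the (bounded, self-adjoint) operators of Proposition \ref{presentation of cost-leader} together with the adjoint relation $\mathcal M_0^{*}(u_1)=g_1(T)$, which itself follows from the Itô duality between the two BFSDEs \eqref{adjoint-1} and \eqref{adjoint-2}. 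Consequently the operator statement ``$\mathcal M\ge0$'' is, by definition, exactly ``$J_1(\zeta,v(\cdot))\ge0$ on the appropriate test space,'' and the only genuine content is to pin down that test space.

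For a homogeneous quadratic functional the polarization identity
\begin{equation*}
t J_1(c_1)+(1-t)J_1(c_2)-J_1\big(tc_1+(1-t)c_2\big)=\tfrac12\,t(1-t)\,\mathbb E\langle\mathcal M(c_1-c_2),\,c_1-c_2\rangle
\end{equation*}
shows that $J_1$ is convex on the closed convex set $\mathcal C\triangleq\mathcal U(\mathcal K,\alpha,\beta)\times\mathcal U_1[0,T]$ if and only if $J_1(d)\ge0$ for every $d$ in the difference set $\mathcal C-\mathcal C$. Because $\mathcal U_1[0,T]$ is a linear space, its contribution to $\mathcal C-\mathcal C$ is all of $\mathcal U_1[0,T]$, so the problem reduces to showing that $J_1\ge0$ on $\big[\mathcal U(\mathcal K,\alpha,\beta)-\mathcal U(\mathcal K,\alpha,\beta)\big]\times\mathcal U_1[0,T]$ is equivalent to $J_1\ge0$ on $\mathcal U_{\widetilde{\mathcal K}}\times\mathcal U_1[0,T]$. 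The inclusion $\mathcal U(\mathcal K,\alpha,\beta)-\mathcal U(\mathcal K,\alpha,\beta)\subset\mathcal U_{\widetilde{\mathcal K}}$ is immediate: if $\xi_1,\xi_2$ are $\mathcal K$-valued then $\xi_1-\xi_2$ is $(\mathcal K-\mathcal K)$-valued, and the affine expectation inequality cancels in the difference.

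For the reverse direction I would combine measurable selection with a scaling (Slater) device. Given $\zeta\in\mathcal U_{\widetilde{\mathcal K}}$, a measurable selection yields $a(\cdot),b(\cdot)\in L^2_{\mathcal F_T}(\Omega;\mathcal K)$ with $a-b=\zeta$; fixing a strictly feasible $\xi_0\in\mathcal U(\mathcal K,\alpha,\beta)$ and setting $\xi_1^t=ta+(1-t)\xi_0$, $\xi_2^t=tb+(1-t)\xi_0$, convexity of $\mathcal K$ gives $\xi_1^t,\xi_2^t\in\mathcal U_{\mathcal K}$, while $\langle\alpha,\mathbb E\xi_i^t\rangle\to\langle\alpha,\mathbb E\xi_0\rangle>\beta$ as $t\to0^{+}$ forces $\xi_1^t,\xi_2^t\in\mathcal U(\mathcal K,\alpha,\beta)$ for all small $t>0$, with $\xi_1^t-\xi_2^t=t\zeta$. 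Homogeneity then gives $J_1(t\zeta,tv)=t^2J_1(\zeta,v)$, so nonnegativity on the difference set transfers to all of $\mathcal U_{\widetilde{\mathcal K}}\times\mathcal U_1[0,T]$, proving the first equivalence in \eqref{convexity of cost-leader}. The uniform-convexity claim \eqref{uniformly convexity of cost-leader} follows verbatim from the same polarization identity, with ``$\ge0$'' upgraded to coercivity; scaling is consistent since $J_1(t\zeta,tv)=t^2J_1(\zeta,v)\ge t^2\delta\big(\mathbb E|\zeta|^2+\mathbb E\int_0^T|v|^2ds\big)=\delta\big(\mathbb E|t\zeta|^2+\mathbb E\int_0^T|tv|^2ds\big)$.

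The main obstacle is precisely this reverse inclusion, i.e. the interaction between the pointwise constraint and the affine expectation constraint: one must produce $\mathcal K$-valued, square-integrable selections with prescribed difference and then reconcile them with $\langle\alpha,\mathbb E\cdot\rangle\ge\beta$. The scaling argument above is what allows the expectation constraint to wash out of the difference set, but it relies on (i) measurability and $L^2$-integrability of the selection $a,b$ (e.g. via a minimal-norm selection, or boundedness of $\mathcal K$), and (ii) the existence of a \emph{strictly} feasible point $\xi_0$. For the latter I would lean on the feasibility analysis of Section~\ref{feasibility of constraint}.
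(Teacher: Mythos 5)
Your central computation is precisely the paper's own proof: by the quadratic representation \eqref{quadratic form of cost-leader}, the convexity defect along the segment joining two admissible pairs equals $\lambda(\lambda-1)J_1(\zeta,v)$ with $\zeta=\xi-\xi'$ and $v=u_1-u_1'$, so \textbf{(P)} is convex iff $J_1\geq0$ on the set of such differences, and the uniform-convexity statement follows the same way. Where you go beyond the paper is in recognizing that this argument only controls $J_1$ on $[\mathcal U(\mathcal K,\alpha,\beta)-\mathcal U(\mathcal K,\alpha,\beta)]\times\mathcal U_1[0,T]$, which is in general a proper subset of ${\mathcal U}_{\widetilde{\mathcal K}}\times\mathcal U_1[0,T]$; the paper passes from one set to the other without comment. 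Your scaling device (mixing with a strictly feasible $\xi_0$ so that the expectation constraint is satisfied for small $t$, then invoking quadratic homogeneity) is sound, so the implication ``$J_1\geq0$ on ${\mathcal U}_{\widetilde{\mathcal K}}\times\mathcal U_1[0,T]$ $\Rightarrow$ \textbf{(P)} convex'' and your reduction of the converse to the selection problem are both correct.

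The genuine gap is the step you label obstacle (i), and it cannot be repaired by a minimal-norm selection for general closed convex $\mathcal K$: an $L^2$, $\mathcal K$-valued decomposition of a given $\zeta\in\mathcal U_{\widetilde{\mathcal K}}$ may simply fail to exist. Concretely, take $n=2$, $\mathcal K=\{(x,y):y\geq x^2\}$ and $\zeta=(U,0)$ with $U\in L^2_{\mathcal F_T}(\Omega;\mathbb R)$ but $U\notin L^4$. Pointwise, $(u,0)=(u/2,\,u^2/4)-(-u/2,\,u^2/4)\in\mathcal K-\mathcal K$, so $\zeta\in\mathcal U_{\widetilde{\mathcal K}}$; yet any decomposition $a-b=(u,0)$ with $a=(x_1,y_1)\in\mathcal K$ and $b=(x_1-u,y_1)\in\mathcal K$ forces $y_1\geq\max\{x_1^2,(x_1-u)^2\}\geq u^2/4$, hence $\mathbb E|a|^2\geq\tfrac{1}{16}\mathbb E U^4=\infty$, and no square-integrable selection exists. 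So your reverse inclusion (up to scaling) holds when $\mathcal K$ is bounded, or under a linear-growth selection property, but not under \textbf{(H1)}--\textbf{(H3)} alone; likewise the strictly feasible $\xi_0$ requires a Slater-type condition such as \textbf{(F)}, which is also not among the hypotheses. Under the proposition's stated assumptions, the ``only if'' direction of your proof therefore remains incomplete --- but you should know this defect is inherited from the paper itself, whose two-line proof silently performs exactly this identification; your proposal at least makes the missing hypotheses explicit.
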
\begin{proof}For $\forall (\xi, u_{1}), (\xi', u_{1}') \in \mathcal U(\mathcal K,\alpha,\beta)\times\mathcal U_1[0,T],$ denote $\xi^{\lambda}=\lambda \xi+(1-\lambda)\xi', u_{1}^{\lambda}=\lambda u_{1}+(1-\lambda)u_{1}'$ for $\lambda \in [0,1],$ then $\zeta=\xi-\xi' \in \mathcal U_{\widetilde{\mathcal K}}, v=u_1-u_1' \in \mathcal U_1[0,T].$ Then, by \eqref{quadratic form of cost-leader}, $J_1$ should be convex iff
\begin{equation}\nonumber
\begin{aligned}0 \geq& J_1(\xi^{\lambda}, u_{1}^{\lambda})-\lambda J_{1}(\xi, u_{1})-(1-\lambda)J_{1}(\xi', u_{1}'),\\
=&\frac{1}{2}\lambda(\lambda-1)\Big[\langle\mathcal M_2(v)(\cdot),v(\cdot)\rangle+\langle\mathcal M_1(\zeta),(\zeta)\rangle+2\langle\mathcal M_0(\zeta)(\cdot),v(\cdot)\rangle\Big]\\=&\lambda(\lambda-1)J_1(\zeta,v(\cdot)).\end{aligned}
\end{equation}Hence the result (19) follows. Similar arguments apply to uniformly convexity leading to (20).\end{proof}
\begin{remark}
Similar to Schur lemma, we have $J_{1}(\cdot, \cdot)$ is strictly convex iff $$\mathcal{M} > 0 \Longleftrightarrow \mathcal{M}_{2} > 0, \quad\mathcal M_1-\mathcal M_0^*\mathcal M_{2}^{-1}\mathcal M_{0} > 0 \Longleftrightarrow \mathcal{M}_{1} > 0, \quad \mathcal M_2-\mathcal M_0\mathcal M_{1}^{-1}\mathcal M_{0}^* > 0 .$$
\end{remark}It follows that the convexity on $(\xi, u_1(\cdot))$ jointly is stronger than convexity on $\xi$ and $u_1(\cdot)$ marginally.
As a consequence, we have the following result when $\mathcal{K}$ is further conic:\begin{corollary}\label{prop-convexity of cost-leader}
Let \textbf{\emph{(H1)}}-\textbf{\emph{(H3)}} hold and $\mathcal{K}$ is closed-convex cone. Then, $\emph{\textbf{(P)}}$ is convex iff
\begin{equation}\nonumber
\begin{aligned}J_1(\zeta, v_{1}(\cdot))\geq0,\qquad
\forall (\zeta, v_{1}(\cdot))\in  {\mathcal U}_{\text{aff}(\mathcal K)}
\times\mathcal U_1[0,T],\\
\end{aligned}
\end{equation}where $\text{aff}(\mathcal K)=\mathcal K-\mathcal K$ is the affine subspace generated by $\mathcal K.$  \end{corollary}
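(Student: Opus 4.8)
The plan is to reduce the statement to the convexity criterion \eqref{convexity of cost-leader} already established in the preceding Proposition, which asserts that \textbf{(P)} is convex iff $J_1(\zeta,v(\cdot))\geq0$ for all $(\zeta,v(\cdot))\in\mathcal U_{\widetilde{\mathcal K}}\times\mathcal U_1[0,T]$, with $\widetilde{\mathcal K}=\mathcal K-\mathcal K$. All the quadratic-form bookkeeping has therefore been done. The only additional work is to show that, under the extra hypothesis that $\mathcal K$ is a closed-convex cone, the algebraic difference $\widetilde{\mathcal K}$ coincides with the affine subspace $\mathrm{aff}(\mathcal K)$ generated by $\mathcal K$, so that the admissible test set $\mathcal U_{\widetilde{\mathcal K}}$ in \eqref{convexity of cost-leader} may be replaced by $\mathcal U_{\mathrm{aff}(\mathcal K)}$; the conclusion then follows by substitution.

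The core is an elementary convex-analytic fact: for a convex cone $\mathcal K$ with $0\in\mathcal K$, the set $\mathcal K-\mathcal K$ is a linear subspace, and indeed the smallest subspace containing $\mathcal K$, i.e. $\mathcal K-\mathcal K=\mathrm{span}(\mathcal K)=\mathrm{aff}(\mathcal K)$. First I would check closure under addition: for $x_1-y_1,\,x_2-y_2\in\mathcal K-\mathcal K$ one has $(x_1-y_1)+(x_2-y_2)=(x_1+x_2)-(y_1+y_2)$, and a convex cone is stable under addition (from convexity plus positive homogeneity), so $x_1+x_2,\,y_1+y_2\in\mathcal K$. Next, closure under scalar multiplication: for $\lambda>0$, $\lambda(x-y)=(\lambda x)-(\lambda y)\in\mathcal K-\mathcal K$ by positive homogeneity; for $\lambda<0$, $\lambda(x-y)=(-\lambda y)-(-\lambda x)\in\mathcal K-\mathcal K$; and $0=x-x$ lies in the set. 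Hence $\mathcal K-\mathcal K$ is a subspace; since it contains $\mathcal K$ while any subspace containing $\mathcal K$ must contain $\mathcal K-\mathcal K$, it equals $\mathrm{span}(\mathcal K)$, which agrees with $\mathrm{aff}(\mathcal K)$ because $0\in\mathcal K$.

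Finally I would lift this pointwise identity to the level of $\mathcal F_T$-measurable square-integrable random variables. Since $\widetilde{\mathcal K}=\mathrm{aff}(\mathcal K)$ as subsets of $\mathbb R^n$, the requirement $\zeta(\omega)\in\widetilde{\mathcal K}$ a.s. is literally the requirement $\zeta(\omega)\in\mathrm{aff}(\mathcal K)$ a.s., whence $\mathcal U_{\widetilde{\mathcal K}}=L^2_{\mathcal F_T}(\Omega;\widetilde{\mathcal K})=L^2_{\mathcal F_T}(\Omega;\mathrm{aff}(\mathcal K))=\mathcal U_{\mathrm{aff}(\mathcal K)}$. Substituting this equality into \eqref{convexity of cost-leader} gives exactly the claimed criterion. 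I expect no serious obstacle: the only points deserving a little care are the standing convention $0\in\mathcal K$ (implicit in "closed-convex cone") needed to pass from $\mathrm{span}$ to $\mathrm{aff}$, and the observation that the $L^2$ identification is purely pointwise and so requires no measurable-selection argument. The homogeneity of the quadratic functional $J_1$ noted after \eqref{quadratic form of cost-leader} also makes enlarging the test domain $\widetilde{\mathcal K}$ to a full subspace harmless.
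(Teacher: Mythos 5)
Your proposal is correct and matches the paper's own (implicit) argument: the paper derives this corollary directly from the preceding Proposition's criterion on $\mathcal U_{\widetilde{\mathcal K}}\times\mathcal U_1[0,T]$, noting only that a closed cone contains $0$, so $\widetilde{\mathcal K}=\mathcal K-\mathcal K=\text{aff}(\mathcal K)$. Your elementary verification that $\mathcal K-\mathcal K$ is the linear span (hence the affine hull) of a convex cone containing the origin simply fills in the detail the paper leaves as a remark.
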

Noticing a \emph{closed} cone always contains $0$ thus $\widetilde{\mathcal K}=\text{aff}(\mathcal K)$ that may be a proper subset of full space $\mathbb R^n.$

In standard LQ control literature, when the admissible controls are from full linear space, then \emph{finiteness} of problem implies its \emph{convexity}. Alternatively, when admissible controls are only from some closed-convex proper subset, we have the following different results.

\begin{lemma}\label{prop-finite of cost-leader}
Suppose\textbf{\emph{(H1)}}-\textbf{\emph{(H3)}} hold and $\mathcal{K}$ is a closed-convex set containing origin $0$. Then, problem \emph{\textbf{(P)}} is finite only if $J_1$ is nonnegative functional on $\mathcal \mathcal U_{C_{\infty}(\mathcal K)}\times\mathcal U_1[0,T]$ where $C_{\infty}(\mathcal K)$ is the asymptotic \emph{(}recession\emph{)} cone of $\mathcal{K}$.
\end{lemma}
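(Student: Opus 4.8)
The plan is to exploit that $J_1$ is a \emph{homogeneous} quadratic functional of $(\xi,u_1(\cdot))$. Since the zero triple solves the controlled system \eqref{state equation-leader} when $(\xi,u_1)=(0,0)$, the wellposedness in Corollary \ref{coro1} makes the data-to-state map $(\xi,u_1)\mapsto(X,Z)$ linear, and because the weights carry no first-order terms the representation \eqref{quadratic form of cost-leader} gives $J_1(t\xi,tu_1)=t^2J_1(\xi,u_1)$. First I would record, for a reference pair $(\xi_0,u_{1,0})$ and a direction $(\zeta,v)$, the ray expansion
\[
J_1(\xi_0+t\zeta,u_{1,0}+tv)=J_1(\xi_0,u_{1,0})+t\,\Lambda\big((\xi_0,u_{1,0}),(\zeta,v)\big)+t^2J_1(\zeta,v),\qquad t\ge0,
\]
where $\Lambda$ is the symmetric bilinear form polarizing $J_1$. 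This reduces the assertion to ruling out $J_1(\zeta,v)<0$: if that coefficient of $t^2$ were negative along a \emph{feasible} half-line, the right-hand side would tend to $-\infty$, contradicting finiteness.

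I would fix a feasible reference $(\xi_0,u_{1,0})\in\mathcal U(\mathcal K,\alpha,\beta)\times\mathcal U_1[0,T]$ (nonempty by the feasibility discussion in Section \ref{feasibility of constraint}) and, for $(\zeta,v)\in\mathcal U_{C_\infty(\mathcal K)}\times\mathcal U_1[0,T]$, verify that the half-line $\{(\xi_0+t\zeta,u_{1,0}+tv):t\ge0\}$ stays admissible. The dynamic component is immediate, $\mathcal U_1[0,T]$ being a linear space. For the pointwise part I would invoke the defining property of the asymptotic cone: with $0\in\mathcal K$ one has $C_\infty(\mathcal K)=\bigcap_{t>0}t\mathcal K$ and $\mathcal K+C_\infty(\mathcal K)\subseteq\mathcal K$, so $\zeta(\omega)\in C_\infty(\mathcal K)$ a.s. yields $\xi_0(\omega)+t\zeta(\omega)\in\mathcal K$ a.s. for every $t\ge0$, i.e. $\xi_0+t\zeta\in\mathcal U_{\mathcal K}$. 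Granting feasibility of the whole ray, finiteness gives $J_1(\xi_0+t\zeta,u_{1,0}+tv)\ge\mu_p$ for all $t\ge0$; dividing the displayed identity by $t^2$ and letting $t\to\infty$ forces $J_1(\zeta,v)\ge0$, which is the claim.

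The hard part will be the \emph{affine} expectation constraint, which is the only reason the conclusion is stated on a cone. Along the ray one has $\langle\alpha,\mathbb E(\xi_0+t\zeta)\rangle=\langle\alpha,\mathbb E\xi_0\rangle+t\langle\alpha,\mathbb E\zeta\rangle$, so $\xi_0+t\zeta$ remains in $\mathcal U_{\alpha,\beta}$ for all $t\ge0$ precisely when $\langle\alpha,\mathbb E\zeta\rangle\ge0$; only then does the $t\to\infty$ step apply verbatim. For a recession direction this nonnegativity is automatic as soon as $\alpha$ lies in the dual cone of $C_\infty(\mathcal K)$, which is exactly the situation in the motivating no-shorting mean-variance model, where $\mathcal K$ is the positive orthant, $C_\infty(\mathcal K)=\mathcal K$, and $\alpha\ge0$; there the recession directions of the full feasible set $\mathcal U(\mathcal K,\alpha,\beta)$ coincide with $\mathcal U_{C_\infty(\mathcal K)}$ and the proof closes as above. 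In the generic case I would instead read $\mathcal U_{C_\infty(\mathcal K)}$ as the recession cone of the full admissible set, namely its intersection with $\{\zeta:\langle\alpha,\mathbb E\zeta\rangle\ge0\}$, and the main difficulty is to reconcile the pointwise and affine recession requirements so that the perturbed half-line never leaves $\mathcal U(\mathcal K,\alpha,\beta)$ before the limit is taken.
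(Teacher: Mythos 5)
Your route is genuinely different from the paper's, and the point where you got stuck is the crux. The paper's proof is a two-line scaling from the origin: if $J_1(\xi^0,u_1^0)<0$ with $\xi^0\in\mathcal U_{C_\infty(\mathcal K)}$, it scales the offending pair itself, noting that $C_\infty(\mathcal K)$ is a cone contained in $\mathcal K$ (this is where $0\in\mathcal K$ enters), so $k\xi^0\in\mathcal U_{C_\infty(\mathcal K)}\subseteq\mathcal U_{\mathcal K}$ for every $k>0$, and then uses pure degree-two homogeneity, $J_1(k\xi^0,ku_1^0)=k^2J_1(\xi^0,u_1^0)\to-\infty$. No reference point, no polarization identity, no use of $\mathcal K+C_\infty(\mathcal K)\subseteq\mathcal K$. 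Your shifted-ray argument (anchor at a feasible $(\xi_0,u_{1,0})$, move along $(\xi_0+t\zeta,u_{1,0}+tv)$, divide by $t^2$) is heavier, but it does buy something: it keeps the affine constraint satisfied whenever $\langle\alpha,\mathbb E\zeta\rangle\ge0$, including the case $\langle\alpha,\mathbb E\zeta\rangle=0$ with $\beta>0$, where the paper's origin-scaled ray is never feasible. Crucially, however, the paper's proof does nothing at all about the expectation constraint: it declares $(k\xi^0,ku_1^0)$ ``admissible'' while verifying only the pointwise constraint, and $\langle\alpha,\mathbb E(k\xi^0)\rangle\ge\beta$ may fail for every large $k$.

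Your unresolved case $\langle\alpha,\mathbb E\zeta\rangle<0$ cannot be repaired, because with finiteness read literally (infimum over $\mathcal U(\mathcal K,\alpha,\beta)\times\mathcal U_1[0,T]$, as in the paper's definition of finiteness for \textbf{(P)}) the lemma is false. Take $n=m_1=m_2=1$, $A=C=B_1=B_2=0$, $Q_1=Q_2=S_1=S_2=0$, $H_2=0$, $R^1_{11}=R^2_{22}=1$, $G_1=1$, $H_1=-2$, $\mathcal K=[0,\infty)$ (so $C_\infty(\mathcal K)=[0,\infty)$), $\alpha=-1$, $\beta=-1$; the follower's problem is uniformly convex with $\bar u_2=0$, and \eqref{state equation-leader} gives $X(0)=\mathbb E\xi$, so
\begin{equation*}
J_1(\xi,u_1)=\frac{1}{2}\mathbb E\int_0^T u_1^2\,ds+\frac{1}{2}\mathbb E\xi^2-(\mathbb E\xi)^2 .
\end{equation*}
On $\mathcal U(\mathcal K,\alpha,\beta)=\{\xi\ge0\ \text{a.s.},\ \mathbb E\xi\le1\}$, Jensen's inequality yields $J_1\ge-\frac{1}{2}(\mathbb E\xi)^2\ge-\frac{1}{2}$, so \textbf{(P)} is finite; yet $\zeta\equiv1\in\mathcal U_{C_\infty(\mathcal K)}$ gives $J_1(\zeta,0)=-\frac{1}{2}<0$, and indeed $\langle\alpha,\mathbb E\zeta\rangle=-1<0$ is exactly your bad case. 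So the lemma is only correct under the reading its own proof implicitly adopts, namely ``finite'' meaning $J_1$ bounded below on $\mathcal U_{\mathcal K}\times\mathcal U_1[0,T]$ with the affine constraint discarded; under that reading the origin scaling (equivalently, your argument with $\xi_0=0$) closes immediately, since the troublesome constraint is gone. Your fallback of replacing $\mathcal U_{C_\infty(\mathcal K)}$ by the recession cone of the full admissible set (intersecting with $\{\zeta:\langle\alpha,\mathbb E\zeta\rangle\ge0\}$) does produce a true statement by exactly your shifted-ray argument, but it proves a different, weaker assertion than the lemma as written.
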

\begin{proof}
First, recall that $C_{\infty}(\mathcal{K}) \subseteq \mathcal{K}$ if origin $0 \in \mathcal{K}$ hence $\mathcal U_{C_{\infty}(\mathcal K)} \subseteq \mathcal U_{\mathcal K}.$ If the statement is not true, then $J_{1}$ is finite but there exists a pair $(\xi^{0},u_1^{0})\in \mathcal U_{C_{\infty}(\mathcal K)}\times\mathcal U_1[0,T]$ such that $J_1(\xi^{0},u_1^{0}(\cdot))<0.$ So, for any $k>0,$ $(k\xi^{0},ku_1^{0})$ is also admissible ($\mathcal{K}$ contains $0$ thus $k\xi^{0} \in \mathcal U_{C_{\infty}(\mathcal K)}).$ Thus, $J_1(k\xi^{0},ku_1^{0}(\cdot))=k^{2}J_1(\xi^{0},u_1^{0}(\cdot))\longrightarrow -\infty$ as $k\longrightarrow +\infty.$ Contradiction thus arises.
\end{proof}
We do not discuss if above result can be strengthen to be sufficient, with some additional conditions. However, in case $\mathcal{K}$ is conic, we do have the following equivalent result.
\begin{corollary}
Suppose \textbf{\emph{(H1)}}-\textbf{\emph{(H3)}} hold and $\mathcal{K}$ is closed-convex cone. Then, problem \emph{\textbf{(P)}} is finite iff $J_1$ is nonnegative on $\mathcal U_{\mathcal K}\times\mathcal U_1[0,T].$
\end{corollary}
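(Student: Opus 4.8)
The plan is to prove the two implications separately and to reduce both of them to facts already established, the decisive observation being that for a closed-convex cone $\mathcal{K}$ the asymptotic (recession) cone coincides with $\mathcal{K}$ itself, i.e. $C_{\infty}(\mathcal K)=\mathcal K$. First I would record this identity. Since $\mathcal K$ is a closed-convex cone it contains the origin and is stable under addition (for $x,y\in\mathcal K$ one has $x+y=2\cdot\tfrac12(x+y)\in\mathcal K$ by convexity followed by the cone property). Hence for any $d\in\mathcal K$, $x\in\mathcal K$ and $t\ge 0$ we have $td\in\mathcal K$ and therefore $x+td\in\mathcal K$, which gives $\mathcal K\subseteq C_{\infty}(\mathcal K)$; conversely, taking $x=0$ and $t=1$ in the definition of the recession cone shows $C_{\infty}(\mathcal K)\subseteq\mathcal K$. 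Thus the two cones agree and, in particular, $\mathcal U_{C_{\infty}(\mathcal K)}=\mathcal U_{\mathcal K}$.

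For the ``only if'' direction I would simply invoke Lemma \ref{prop-finite of cost-leader}. A closed-convex cone is in particular a closed-convex set containing $0$, so that lemma applies and yields that finiteness of \textbf{(P)} forces $J_1$ to be nonnegative on $\mathcal U_{C_{\infty}(\mathcal K)}\times\mathcal U_1[0,T]$; by the identity just proved this is exactly nonnegativity of $J_1$ on $\mathcal U_{\mathcal K}\times\mathcal U_1[0,T]$, as claimed.

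The ``if'' direction is then immediate from a set inclusion. Since $\mathcal U(\mathcal K,\alpha,\beta)=\mathcal U_{\mathcal K}\cap\mathcal U_{\alpha,\beta}\subseteq\mathcal U_{\mathcal K}$, the admissible set of \textbf{(P)} is contained in $\mathcal U_{\mathcal K}\times\mathcal U_1[0,T]$. Hence, if $J_1\ge 0$ holds on the latter, then $J_1(\xi,u_1(\cdot))\ge 0$ for every admissible pair $(\xi,u_1(\cdot))$, so that $\mu_{p}=\inf J_1\ge 0>-\infty$ and \textbf{(P)} is finite.

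The argument carries essentially no analytic difficulty; the only point requiring genuine care is the convex-analytic identity $C_{\infty}(\mathcal K)=\mathcal K$, which is precisely what upgrades the one-directional necessary condition of Lemma \ref{prop-finite of cost-leader} into a full equivalence in the conic case. Everything else rests on the homogeneity $J_1(k\zeta,kv(\cdot))=k^{2}J_1(\zeta,v(\cdot))$ implicit in the purely quadratic representation \eqref{quadratic form of cost-leader} of Proposition \ref{presentation of cost-leader} (already exploited inside Lemma \ref{prop-finite of cost-leader}) and on the trivial monotonicity of the infimum under shrinking of the feasible set.
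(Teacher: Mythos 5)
Your proof is correct and takes essentially the same route as the paper: necessity is Lemma \ref{prop-finite of cost-leader} combined with the identity $C_{\infty}(\mathcal K)=\mathcal K$ for a closed-convex cone, and sufficiency is the trivial inclusion $\mathcal U(\mathcal K,\alpha,\beta)\subseteq\mathcal U_{\mathcal K}$ forcing $\mu_p\geq 0$. You merely spell out the two details the paper leaves implicit (the verification of $C_{\infty}(\mathcal K)=\mathcal K$ and the set-inclusion argument), which is fine.
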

\begin{proof}The necessary part follows from Lemma \ref{prop-finite of cost-leader} by noticing $C_{\infty}(\mathcal K)=\mathcal K$ when $\mathcal K$ is conic. The sufficient part is obvious.
\end{proof}
We point out closed-convex cone arises naturally from real applications, for example, $\mathcal{K}$ is positive orthant for no shorting constraint in finance portfolio selection (see \cite{ET2015,HZ2005,LZ18}). Combining Corollary 4.1 and Corollary 4.2, we have the following more explicit result:
\begin{corollary}
Suppose \textbf{\emph{(H1)}}-\textbf{\emph{(H3)}} hold and $\mathcal{K}$ is closed-convex cone. Then, \emph{\textbf{(P)}} is finite if it is convex.
\end{corollary}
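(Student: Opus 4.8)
The plan is simply to chain the two characterizations just established for a closed-convex cone, using the elementary inclusion $\mathcal{K}\subseteq\widetilde{\mathcal{K}}$. No new analysis is needed; everything reduces to a set-theoretic observation feeding two equivalences already proved.

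First I would invoke Corollary 4.1: since $\mathcal{K}$ is a closed-convex cone, convexity of \textbf{(P)} is equivalent to
\[
J_1(\zeta,v(\cdot))\geq0,\qquad \forall\,(\zeta,v(\cdot))\in\mathcal{U}_{\text{aff}(\mathcal{K})}\times\mathcal{U}_1[0,T],
\]
with $\text{aff}(\mathcal{K})=\widetilde{\mathcal{K}}=\mathcal{K}-\mathcal{K}$. Thus, assuming \textbf{(P)} convex, I obtain the nonnegativity of $J_1$ over the affine-hull test domain as a starting point.

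Next comes the one substantive observation. Because $\mathcal{K}$ is a cone it contains the origin, so every $x\in\mathcal{K}$ can be written $x=x-0$ with $0\in\mathcal{K}$; hence $\mathcal{K}\subseteq\mathcal{K}-\mathcal{K}=\text{aff}(\mathcal{K})$. Passing to square-integrable $\mathcal{F}_T$-measurable random variables preserves this inclusion, giving $\mathcal{U}_{\mathcal{K}}=L^2_{\mathcal{F}_T}(\Omega;\mathcal{K})\subseteq L^2_{\mathcal{F}_T}(\Omega;\text{aff}(\mathcal{K}))=\mathcal{U}_{\text{aff}(\mathcal{K})}$. Therefore the nonnegativity of $J_1$ on the larger domain $\mathcal{U}_{\text{aff}(\mathcal{K})}\times\mathcal{U}_1[0,T]$ restricts, a fortiori, to nonnegativity on the smaller domain $\mathcal{U}_{\mathcal{K}}\times\mathcal{U}_1[0,T]$.

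Finally I would apply Corollary 4.2, which for conic $\mathcal{K}$ asserts that \textbf{(P)} is finite if and only if $J_1$ is nonnegative on $\mathcal{U}_{\mathcal{K}}\times\mathcal{U}_1[0,T]$. The previous step delivers precisely this nonnegativity, so \textbf{(P)} is finite, completing the argument. I do not anticipate any genuine obstacle here: the entire content is the chaining of two equivalences across the inclusion $\mathcal{K}\subseteq\widetilde{\mathcal{K}}$, and the only point requiring a moment's care is to check the correct orientation, namely that the test domain for convexity (the affine hull) is the \emph{larger} of the two, so that its nonnegativity descends to the test domain for finiteness (the cone itself) rather than the other way around.
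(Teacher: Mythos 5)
Your proof is correct and follows essentially the same route as the paper, which simply combines Corollary 4.1 (convexity $\Leftrightarrow$ nonnegativity of $J_1$ on $\mathcal{U}_{\text{aff}(\mathcal K)}\times\mathcal U_1[0,T]$) with Corollary 4.2 (finiteness $\Leftrightarrow$ nonnegativity on $\mathcal{U}_{\mathcal K}\times\mathcal U_1[0,T]$) via the inclusion $\mathcal K\subseteq\mathcal K-\mathcal K=\text{aff}(\mathcal K)$, valid since a closed cone contains the origin. The only difference is that you spell out the inclusion and its orientation explicitly, which the paper leaves implicit in a preceding remark.
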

We present some related remarks.
\begin{remark}The result of Corollary 4.3 differs from standard LQ problem \emph{(}see \emph{\cite{YZ1999}} pp. 287\emph{)} where finiteness implies convexity, but converse is not true. Also, by Proposition 4.2 and Lemma \ref{prop-finite of cost-leader}, for general convex set $\mathcal{K}$ \emph{(}not conic\emph{)}, the convexity and finiteness of problem \emph{(P)} have no direct relation. This also differs from standard LQ control where finiteness always implies convexity.\end{remark}
As implied by above, for (P) with general closed-convex set $\mathcal{K},$ it seems lacking tractable equivalent condition to characterize its finiteness. However, on the other hand, convexity is necessary to be established when we plan to apply Lagrange multiplier to tackle the involved constraints in (P). Thus, we primarily focus on \emph{convexity} and then discuss the related \emph{solvability} (that in turn implies \emph{finiteness}).

By representation \eqref{quadratic form of cost-leader}, the mapping $(\xi,u_1(\cdot))\longmapsto J_{1}(\xi,u_1(\cdot))$ is Fr\'{e}chet differentiable with Fr\'{e}chet derivative $\partial J_1=(\partial_{\xi} J_1, \partial_{u} J_1)$ given respectively by
\begin{equation}\label{Frechet 1}
\begin{aligned}
\partial_{\xi} J_1(\xi,u_1(\cdot))=\mathcal M_1(\xi)+\mathcal M_0^{*}(u_1),\quad\partial_{u} J_1(\xi,u_1(\cdot))=\mathcal M_2(u_1)+\mathcal M_0(\xi).
\end{aligned}
\end{equation}When (P) is convex, we have the following solvability result.
\begin{lemma}\label{optimal-leader-1}
If $\emph{\textbf{(P)}}$ is convex, then it is \emph{(}uniquely\emph{)} solvable iff there exists a \emph{(}unique\emph{)} minimizer $(\bar{\xi}, \bar{u}_1(\cdot))$ satisfying\begin{equation}\begin{aligned}
\langle\partial J_1(\bar{\xi}, \bar{u}_1(\cdot)), (\xi-\bar{\xi}, u_1-\bar{u}_1) \rangle \geq 0
\end{aligned}\Longleftrightarrow
\left\{\begin{aligned}
&\langle \mathcal M_1(\bar{\xi})+\mathcal M_0^{*}(\bar{u}_1), \quad \xi-\bar\xi\rangle \geq 0,\\
&\mathcal M_2(\bar{u}_1)+\mathcal M_0(\bar{\xi})=0,
\end{aligned}\right.
\end{equation}$\forall (\xi, u_{1}(\cdot)) \in \mathcal U(\mathcal K,\alpha,\beta)\times\mathcal U_1[0,T].$ If $\emph{\textbf{(P)}}$ is further strictly convex, then its minimizer\emph{(}s\emph{)}, if exist, should be unique.
\end{lemma}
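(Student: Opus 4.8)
The plan is to reduce the constrained optimization of the convex quadratic functional $J_1$ over the closed-convex admissible set $\mathcal U(\mathcal K,\alpha,\beta)\times\mathcal U_1[0,T]$ to a first-order variational inequality, and then to separate the two components of that inequality because the $u_1$-direction is unconstrained while only the $\xi$-direction is genuinely constrained. First I would invoke the standard convex-analytic characterization of minimizers: for a convex Fr\'echet-differentiable functional on a closed-convex set $\mathcal C$, a point $(\bar\xi,\bar u_1(\cdot))\in\mathcal C$ is a global minimizer if and only if the directional derivative is nonnegative into the whole admissible set, i.e.
\begin{equation*}
\langle\partial J_1(\bar\xi,\bar u_1(\cdot)),\,(\xi-\bar\xi,\,u_1-\bar u_1)\rangle\geq 0,\qquad\forall(\xi,u_1(\cdot))\in\mathcal U(\mathcal K,\alpha,\beta)\times\mathcal U_1[0,T].
\end{equation*}
Sufficiency here is immediate from convexity (the supporting-hyperplane/first-order inequality $J_1(\xi,u_1)\geq J_1(\bar\xi,\bar u_1)+\langle\partial J_1(\bar\xi,\bar u_1),(\xi-\bar\xi,u_1-\bar u_1)\rangle$), and necessity follows by considering the convex combination $(\bar\xi,\bar u_1)+\lambda((\xi,u_1)-(\bar\xi,\bar u_1))$ for small $\lambda>0$, differentiating at $\lambda=0^+$, and using that this combination remains admissible by convexity of the constraint set.

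Next I would substitute the explicit Fr\'echet derivative \eqref{Frechet 1}, namely $\partial_\xi J_1=\mathcal M_1(\bar\xi)+\mathcal M_0^*(\bar u_1)$ and $\partial_u J_1=\mathcal M_2(\bar u_1)+\mathcal M_0(\bar\xi)$, so the single inequality becomes
\begin{equation*}
\langle\mathcal M_1(\bar\xi)+\mathcal M_0^*(\bar u_1),\,\xi-\bar\xi\rangle+\langle\mathcal M_2(\bar u_1)+\mathcal M_0(\bar\xi),\,u_1-\bar u_1\rangle\geq 0.
\end{equation*}
The key structural observation is that the admissible set is a Cartesian product in which the $u_1$-component ranges over the entire linear space $\mathcal U_1[0,T]$. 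I would therefore decouple: fixing $\xi=\bar\xi$ and letting $u_1$ vary freely over the linear space forces the second term to be nonnegative for every direction $u_1-\bar u_1$, and since directions come in $\pm$ pairs in a linear space this yields the equality $\mathcal M_2(\bar u_1)+\mathcal M_0(\bar\xi)=0$; conversely, fixing $u_1=\bar u_1$ and letting $\xi$ vary over the constrained set $\mathcal U(\mathcal K,\alpha,\beta)$ retains the genuine inequality $\langle\mathcal M_1(\bar\xi)+\mathcal M_0^*(\bar u_1),\,\xi-\bar\xi\rangle\geq 0$. This establishes the claimed equivalence with the displayed two-line system. The final uniqueness assertion under strict convexity is standard: if two distinct minimizers existed, their midpoint would yield a strictly smaller value by strict convexity, contradicting optimality.

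The main obstacle is subtle rather than computational: one must be careful that the admissible set is merely closed-convex (not a subspace in the $\xi$-direction, since $\mathcal K$ and the affine expectation constraint are in play), so the two-sided argument that collapses inequality to equality is legitimate only in the unconstrained $u_1$-direction. Writing out that the decoupling is valid requires noting that product structure lets us test with $(\bar\xi,u_1)$ and $(\xi,\bar u_1)$ separately while staying admissible, and that reassembling the two separate conditions recovers the joint inequality by adding them. A secondary point to state cleanly is the well-definedness of the directional-derivative computation: because $J_1$ is a bounded quadratic form via \eqref{quadratic form of cost-leader}, it is automatically Fr\'echet differentiable with the derivative given in \eqref{Frechet 1}, so no regularity issues arise and the variational inequality is exactly the Euler condition. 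I expect the write-up to be short, with the only delicate sentence being the justification that unconstrained variation in $u_1$ upgrades the inequality to the stationarity equality.
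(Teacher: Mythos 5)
Your proposal is correct and is exactly the argument the paper intends: the paper states this lemma without proof (calling it the first-order regularity condition), and your route — the variational-inequality characterization of minimizers of a convex Fr\'echet-differentiable functional over a closed-convex set, followed by decoupling the product structure so that the unconstrained direction $\mathcal U_1[0,T]$ upgrades the inequality to the stationarity equality $\mathcal M_2(\bar u_1)+\mathcal M_0(\bar\xi)=0$ while the constrained $\xi$-direction retains only the inequality, plus the midpoint argument for uniqueness under strict convexity — is the standard proof being invoked. Indeed, the paper reuses precisely this reasoning later when deriving the analogous optimality system for problem \textbf{(KT)} in the proof of its Lemma on \textbf{(BFSDE-1)}, so your write-up simply makes explicit what the authors left implicit.
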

The above criteria is called \emph{first-order regularity condition} for (global) optimality which is rather  constructive. A more direct and checkable condition for existence is as follows.
\begin{proposition}\label{optimal-leader-2}
If $\emph{\textbf{(P)}}$ is uniformly convex on $(\xi, u_1)$, then it admits an unique minimizer. \end{proposition}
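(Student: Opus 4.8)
The plan is to invoke the direct method of the calculus of variations: under uniform convexity, $J_1$ is a coercive, convex, norm-continuous (hence weakly lower semicontinuous) functional on the nonempty closed convex admissible set, which guarantees a minimizer, and strong convexity then forces uniqueness. First I would record the geometry of the feasible set. The set $\mathcal U_{\mathcal K}=L^2_{\mathcal F_T}(\Omega;\mathcal K)$ is closed and convex because $\mathcal K$ is, $\mathcal U_{\alpha,\beta}$ is a closed half-space, and $\mathcal U_1[0,T]$ is a closed subspace; thus $\mathcal U(\mathcal K,\alpha,\beta)\times\mathcal U_1[0,T]$ is a nonempty (feasibility is addressed in Section \ref{feasibility of constraint}) closed convex subset of the Hilbert space $L^2_{\mathcal F_T}(\Omega;\mathbb R^n)\times\mathcal U_1[0,T]$, hence weakly sequentially closed by Mazur's theorem.

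Next I would establish coercivity. Fix a feasible reference point $(\xi_0,u_1^0)$, and for arbitrary feasible $(\xi,u_1)$ set $\zeta=\xi-\xi_0$, $v=u_1-u_1^0$; since $\xi,\xi_0$ take values in $\mathcal K$ almost surely, $\zeta\in\mathcal U_{\widetilde{\mathcal K}}$ and $v\in\mathcal U_1[0,T]$, so the difference lands exactly in the set on which \eqref{uniformly convexity of cost-leader} is available. Because representation \eqref{quadratic form of cost-leader} exhibits $J_1$ as a homogeneous quadratic form $\tfrac12\langle\mathcal M\cdot,\cdot\rangle$ with $\mathcal M$ bounded and self-adjoint, polarization gives
\[
J_1(\xi,u_1)=J_1(\xi_0,u_1^0)+\langle\mathcal M(\xi_0,u_1^0),(\zeta,v)\rangle+J_1(\zeta,v).
\]
Estimating the cross term below by Cauchy--Schwarz and applying the uniform-convexity bound \eqref{uniformly convexity of cost-leader} to $J_1(\zeta,v)$ yields
\[
J_1(\xi,u_1)\geq J_1(\xi_0,u_1^0)-C\big(\|\zeta\|+\|v\|\big)+\delta\big(\|\zeta\|^2+\|v\|^2\big),
\]
which tends to $+\infty$ as $\|(\zeta,v)\|\to\infty$. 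Hence $\mu_p>-\infty$ and every minimizing sequence is bounded.

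For existence I would take a minimizing sequence $(\xi_n,u_1^n)$, extract by reflexivity a weakly convergent subsequence $(\xi_n,u_1^n)\rightharpoonup(\bar\xi,\bar u_1)$, note that the weak limit stays feasible by weak closedness of the admissible set, and use that a convex norm-continuous functional is weakly lower semicontinuous (uniform convexity trivially implies convexity of $J_1$) to conclude $J_1(\bar\xi,\bar u_1)\leq\liminf_n J_1(\xi_n,u_1^n)=\mu_p$, so $(\bar\xi,\bar u_1)$ is a minimizer. For uniqueness, if $x,y$ are two minimizers their midpoint is feasible, and the quadratic identity $J_1\big(\tfrac{x+y}{2}\big)=\tfrac12 J_1(x)+\tfrac12 J_1(y)-\tfrac14 J_1(x-y)$, combined with $x-y\in\mathcal U_{\widetilde{\mathcal K}}\times\mathcal U_1[0,T]$ and \eqref{uniformly convexity of cost-leader}, forces $\mu_p\leq\mu_p-\tfrac14 J_1(x-y)$, so $J_1(x-y)=0$ and therefore $x=y$.

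The only genuinely delicate point, and the one I expect to require care, is that the strong-convexity lower bound \eqref{uniformly convexity of cost-leader} holds only on the difference set $\mathcal U_{\widetilde{\mathcal K}}\times\mathcal U_1[0,T]$ and not on the entire space, because the pointwise constraint $\mathcal K$ is a genuine convex subset rather than a subspace. Consequently both the coercivity estimate and the uniqueness argument must be formulated purely through differences of admissible points, which is precisely what keeps the relevant vector $(\zeta,v)$ (respectively $x-y$) inside $\mathcal U_{\widetilde{\mathcal K}}$; the remainder is the standard reflexive-space argument.
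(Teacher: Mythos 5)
Your proof is correct and takes essentially the same route as the paper: coercivity is obtained by expanding the quadratic representation \eqref{quadratic form of cost-leader} around a feasible reference point and applying the uniform-convexity bound only to differences, which lie in $\mathcal U_{\widetilde{\mathcal K}}\times\mathcal U_1[0,T]$ (the same care the paper takes explicitly), and existence then follows from the lower semicontinuous--convex--coercive argument, for which the paper simply cites Ekeland--Temam while you unpack the underlying direct method with weak compactness. Your uniqueness step via the parallelogram identity for the homogeneous quadratic functional is just a concrete instantiation of the paper's remark that uniform convexity implies strict convexity, so the two proofs coincide in substance.
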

\begin{proof}
We assume $\mathcal U(\widetilde{\mathcal K},\alpha,\beta)$ is not empty (otherwise, (P) becomes trivial), thus there exists $(\xi^{0}, u_1^{0})$ satisfying $-\infty<J_1(\xi^{0}, u_1^{0}).$ If $J_1$ is uniformly convex, it should also be \emph{coercive}, that is, $J_1(\xi, u_1)\longrightarrow +\infty$ as $||(\xi, u_1)|| \longrightarrow+\infty.$ To see this point, actually we have
\begin{equation}\nonumber
\begin{aligned}
&J_1(\xi, u_1)=J_1(\xi^{0}, u_1^{0})+J_1(\xi-\xi^{0}, u_1-u_1^{0})\\&+\Big[\langle\mathcal M_2(u_1-u_1^{0})(\cdot),u_1^0(\cdot)\rangle+\langle\mathcal M_1(\xi-\xi^{0}),\xi^{0}\rangle+\langle\mathcal M_0(\xi^{0}),u_1-u_1^{0})\rangle+\langle u_1^{0},\mathcal M_0(\xi-\xi^{0})\rangle\Big]\\
&\geq J_1(\xi^{0}, u_1^{0})+\delta ||(\xi-\xi^{0}, u_1-u_1^{0}) ||^{2}-\frac{||\mathcal M_{2}||^{2}+||\mathcal M_{1}||^{2}+||\mathcal M_{0}||^{2}}{\mu}||(\xi-\xi^{0}, u_1-u_1^{0}) ||^{2}-\frac{\mu}{2}||(\xi^{0}, u_1^{0}) ||^{2}\\&\geq J_1(\xi^{0}, u_1^{0})+\frac{\delta}{2} ||(\xi-\xi^{0}, u_1-u_1^{0}) ||^{2}-\frac{\mu}{2}||(\xi^{0}, u_1^{0}) ||^{2},\end{aligned}
\end{equation}for sufficiently large $\mu>0.$ Therefore, $J_1(\xi, u_1)\longrightarrow +\infty$ as $||(\xi, u_1)|| \longrightarrow+\infty.$ Note that Proposition \ref{optimal-leader-2} can only applied to $(\xi-\xi^{0}, u_1-u_1^{0}) \in  {\mathcal U}_{\widetilde{\mathcal K}}
\times\mathcal U_1[0,T]$ for uniformly convexity. In general, $(\xi, u_1)$ or $(\xi^{0}, u_1^{0}) \notin {\mathcal U}_{\widetilde{\mathcal K}}\times\mathcal U_1[0,T].$

Moreover, because $J_1$ is a proper quadratic functional with $\mathcal{M}_0, \mathcal{M}_1, \mathcal{M}_2$ being linear bounded operators thus $J_1(\cdot, \cdot)$ is also continuous (thus, lower semi-continuous (lsc)). By \cite{ET1976}, a lsc convex coercive functional admits at least one minimizer. Moreover, the uniform convexity of $J_1$ implies strict convexity thus (P) admits a unique minimizer. \end{proof}
We now discuss condition under which problem (P) becomes convex. First introduce the following standard assumption
\begin{equation*}\begin{aligned}
&\textbf{(SA-2):}\quad G_1\gg0,\quad H_1\geq0,\quad Q_1(\cdot)\geq0,\quad S_1(\cdot)\geq0,\quad R^1_{11}(\cdot)\gg0.
\end{aligned}\end{equation*}
%suffices the uniformly convexity on full space $L^2_{\mathcal F_T}(\Omega;\mathbb R^n)\times\mathcal U_1[0,T].$
Second, a more general sufficient condition to convexity is via the following stochastic Riccati equation (denoted by \textbf{(SRE-2)}):
\begin{equation*}\textbf{(SRE-2)}:\label{e1L RE}
\left\{\begin{aligned}
&dP_L=-\Big[\mathbb A^\top P_L+P_L\mathbb A+\mathbb C^\top P_L\mathbb C+\mathbb Q+\Lambda_L \mathbb C+\mathbb C^{\top}\Lambda_L-\Big(\mathbb B^\top P_L+\mathbb D^\top P_L\mathbb C+\mathbb D^{\top}\Lambda_L\Big)^\top\\
&\qquad\qquad\mathbb K^{-1}\Big(\mathbb B^\top P_L+\mathbb D^\top P_L\mathbb C+\mathbb D^{\top}\Lambda_L\Big)\Big]ds+\Lambda_L dW(s),\\
&P_L(T)=\left(\begin{matrix} 0&0\\0&G_1\end{matrix}\right),\\
&\mathbb K(s)\triangleq\mathbb R(s)+\mathbb D^\top(s) P_L(s)\mathbb D(s)>0,
\end{aligned}\right.\end{equation*}where\begin{equation}\nonumber\begin{aligned}
&\mathbb A=\left(\begin{matrix} -A^\top&Q_2\\B_2(R^2_{22})^{-1}B_2^\top &A\end{matrix}\right),\mathbb B=\left(\begin{matrix} 0&0\\ B_1 &C\end{matrix}\right),\mathbb C=\left(\begin{matrix} -C^\top&0\\0&0\end{matrix}\right),
\mathbb D=\left(\begin{matrix} 0&S_2\\0 &I\end{matrix}\right),\mathbb Q=\left(\begin{matrix} 0&0\\0 &Q_1\end{matrix}\right),\mathbb R=\left(\begin{matrix}  R^1_{11}&0\\0 &S_1\end{matrix}\right).
\end{aligned}
\end{equation}We have the following result concerning convexity and its proof is given in the Appendix, Section \ref{sec7.2}.
\begin{proposition}\label{Proposition leader cost functional convex}
Suppose \textbf{\emph{(SRE-2)}} has a solution $(P_L(\cdot),\Lambda_L(\cdot))\in L_{\mathbb F}^\infty(0,T;\mathbb S^n)\times L_{\mathbb F}^2(0,T;\mathbb S^n)$ such that
$$\left(\begin{matrix}0&0\\0&H_1\end{matrix}\right)+P_L(0)\geq0.$$Then, $J_1(\cdot,\cdot)$ is a convex functional with $(\xi,u_1(\cdot))$ over $L^2_{\mathcal F_T}(\Omega;\mathbb R^n)\times\mathcal  \mathcal U_1[0,T]$. In particular, under \emph{(\textbf{SA-2})}, $J_1(\cdot,\cdot)$ is uniformly convex over $L^2_{\mathcal F_T}(\Omega;\mathbb R^n)\times\mathcal U_1[0,T].$
\end{proposition}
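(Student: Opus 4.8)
The plan is to prove the pointwise nonnegativity $J_1(\xi,u_1(\cdot))\ge 0$ for every $(\xi,u_1(\cdot))\in L^2_{\mathcal F_T}(\Omega;\mathbb R^n)\times\mathcal U_1[0,T]$ by a completion-of-squares argument driven by \textbf{(SRE-2)}. Since the state $(X,Y,Z)$ depends linearly on $(\xi,u_1(\cdot))$ and $J_1$ carries no linear term, $J_1$ is a homogeneous quadratic functional, so this nonnegativity is \emph{exactly} convexity, precisely as in the characterization \eqref{convexity of cost-leader}.

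First I would recast the leader's forward-backward system \eqref{state equation-leader} as a single controlled \emph{forward} SDE. Setting $\mathcal X=(Y^\top,X^\top)^\top$ and regarding $\mathbf v=(u_1^\top,Z^\top)^\top$ as the control, a direct check against the matrices $\mathbb A,\mathbb B,\mathbb C,\mathbb D$ defined before \textbf{(SRE-2)} shows $d\mathcal X=(\mathbb A\mathcal X+\mathbb B\mathbf v)ds+(\mathbb C\mathcal X+\mathbb D\mathbf v)dW$, while the cost becomes $2J_1=\mathbb E\int_0^T[\langle\mathbb Q\mathcal X,\mathcal X\rangle+\langle\mathbb R\mathbf v,\mathbf v\rangle]ds+\mathbb E\langle P_L(T)\mathcal X(T),\mathcal X(T)\rangle+\mathbb E\langle\mathrm{diag}(0,H_1)\mathcal X(0),\mathcal X(0)\rangle$, using $X(T)=\xi$ and $P_L(T)=\mathrm{diag}(0,G_1)$. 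The solution $(Y,X,Z)$, hence $\mathcal X$, exists in the stated spaces by Corollary \ref{coro1}, so It\^o's formula applies.

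Next, the core step: apply It\^o to $s\mapsto\langle P_L(s)\mathcal X(s),\mathcal X(s)\rangle$, where $P_L$ is itself the It\^o process $dP_L=-\mathcal F\,ds+\Lambda_L\,dW$ solving \textbf{(SRE-2)}, so the cross-variation of $P_L$ with $\mathcal X$ must be retained. Substituting the drift $\mathcal F$ and abbreviating $N:=\mathbb B^\top P_L+\mathbb D^\top P_L\mathbb C+\mathbb D^\top\Lambda_L$, the $ds$-part of $d\langle P_L\mathcal X,\mathcal X\rangle+[\langle\mathbb Q\mathcal X,\mathcal X\rangle+\langle\mathbb R\mathbf v,\mathbf v\rangle]ds$ collapses, after the Riccati cancellations, into the perfect square $\langle\mathbb K(\mathbf v+\mathbb K^{-1}N\mathcal X),\mathbf v+\mathbb K^{-1}N\mathcal X\rangle$ with $\mathbb K=\mathbb R+\mathbb D^\top P_L\mathbb D>0$. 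Integrating on $[0,T]$, taking expectations so the martingale term drops, and cancelling the two $\langle P_L(T)\mathcal X(T),\mathcal X(T)\rangle$ contributions yields
\[
2J_1=\mathbb E\big\langle(\mathrm{diag}(0,H_1)+P_L(0))\mathcal X(0),\mathcal X(0)\big\rangle+\mathbb E\int_0^T\big\langle\mathbb K(\mathbf v+\mathbb K^{-1}N\mathcal X),\,\mathbf v+\mathbb K^{-1}N\mathcal X\big\rangle ds.
\]
Both terms are nonnegative — the first by the hypothesis $\mathrm{diag}(0,H_1)+P_L(0)\ge0$, the second by $\mathbb K>0$ — so $J_1\ge0$ and convexity follows. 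For the uniform convexity under \textbf{(SA-2)} I would argue directly, bypassing the Riccati machinery: since $H_1\ge0,Q_1(\cdot)\ge0,S_1(\cdot)\ge0$, discarding these nonnegative terms leaves $2J_1\ge\mathbb E\langle G_1\xi,\xi\rangle+\mathbb E\int_0^T\langle R^1_{11}u_1,u_1\rangle ds$, whence $G_1\gg0$ and $R^1_{11}(\cdot)\gg0$ give $J_1\ge\delta(\mathbb E|\xi|^2+\mathbb E\int_0^T|u_1(s)|^2ds)$ for some $\delta>0$.

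The main obstacle is not the algebra but the rigorous use of It\^o's formula and the vanishing of the stochastic-integral term: because $\Lambda_L$ and $\mathcal X$ are only square-integrable, the product terms in the diffusion of $\langle P_L\mathcal X,\mathcal X\rangle$ need not be integrable. I would handle this via a localizing sequence of stopping times $\tau_k\uparrow T$, apply It\^o and take expectations on $[0,\tau_k]$ where the martingale genuinely has zero mean, and then pass to the limit — monotone convergence on the nonnegative square term, and dominated convergence (using $\mathcal X\in L^2_{\mathbb F}(\Omega;C([0,T];\mathbb R^{2n}))$ together with boundedness of $P_L,\mathbb Q,\mathbb R$) on the remaining terms — to recover the displayed identity.
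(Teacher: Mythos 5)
Your proof follows essentially the same route as the paper's: recasting the leader's BFSDE \eqref{state equation-leader} as a forward SDE in $\mathcal X=(Y^\top,X^\top)^\top$ with control $\mathbf v=(u_1^\top,Z^\top)^\top$, applying It\^o's formula to $\langle P_L\mathcal X,\mathcal X\rangle$, and completing the square via \textbf{(SRE-2)} to arrive at exactly the identity $2J_1=\mathbb E\langle(\mathrm{diag}(0,H_1)+P_L(0))\mathcal X(0),\mathcal X(0)\rangle+\mathbb E\int_0^T\langle\mathbb K(\mathbf v+\mathbb K^{-1}N\mathcal X),\mathbf v+\mathbb K^{-1}N\mathcal X\rangle ds$, from which nonnegativity and hence convexity follow. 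Your localization argument for the stochastic integral and your direct derivation of uniform convexity under \textbf{(SA-2)} (discarding the nonnegative $H_1,Q_1,S_1$ terms and using $G_1\gg0$, $R^1_{11}\gg0$) are careful refinements of points the paper leaves implicit, but the core argument is the same.
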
Proposition \ref{optimal-leader-2} only specifies the existence of optimal solution to (P) but does not discuss how to characterize such solution. This will be discussed below through some Lagrange multiplier method to (P). Our target is to remove the affine-expectation constraint and only keep pointwise constraint.

Further study of (P) involves some Lagrange duality for which we need first address the relevant feasibility, as given below.

\subsection{Feasibility of problem (P) constraints}\label{feasibility of constraint}
Recall problem (P) involves two (pointwise, affine-expectation) constraints, thus it is necessary to discuss their joint feasibility.
To start, for any convex-closed proper subset ${\mathcal K}\subset \mathbb{R}^{n},$ we can introduce its support functional: $h_{\mathcal K}^{*}(p)\triangleq\sup_{x \in \mathcal{K}}\langle p, x\rangle \in [0, +\infty].$ Its effective domain (i.e., $\{p: h_{\mathcal K}^{*}(p)<+\infty\}$) is $B(\mathcal{K}),$ the barrier cone of $\mathcal{K}.$ In particular, when $\mathcal{K}$ is convex-closed cone, then $B(\mathcal{K})$ is negative polar cone of $\mathcal{K}.$

Moreover, $-h_{\mathcal K}^{*}(-p)=\inf_{x \in \mathcal{K}}\langle p, x\rangle$ and $h_{\mathcal K}^{*}(p)+h_{\mathcal K}^{*}(-p) \in [0, +\infty]$ is called the \emph{breadth} for nonempty $\mathcal{K}$ along direction $p.$ The breadth takes value $0$ iff $\mathcal{K}$ is subset of affine hyperplane $\{y: \langle y, p\rangle=h_{\mathcal K}^{*}(p)\}$ which is orthogonal to $p.$ Now, we can discuss the feasibility of constrained $\mathcal U(\mathcal K,\alpha,\beta)$.

We first claim the following fundamental result that is obvious in its scalar case $(n=1)$ but not straightforward in vector case. A similar result may be found in \cite{Clark2013} pp. 44.
\begin{lemma}\label{property of expectation}
$\forall \xi \in \mathcal U_{\mathcal K}, \mathbb{E}\xi \in \mathcal K.$
\end{lemma}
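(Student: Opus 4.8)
The plan is to show that for any $\xi \in \mathcal{U}_{\mathcal{K}} = L^2_{\mathcal{F}_T}(\Omega; \mathcal{K})$, the expectation $\mathbb{E}\xi$ lies in $\mathcal{K}$. The natural tool is the support-functional characterization of a closed convex set that was just introduced: a point $y \in \mathbb{R}^n$ belongs to the closed convex set $\mathcal{K}$ if and only if $\langle p, y\rangle \leq h_{\mathcal{K}}^{*}(p)$ for every $p \in B(\mathcal{K})$ (equivalently, for every $p \in \mathbb{R}^n$, with the understanding that the inequality is vacuous when $h_{\mathcal{K}}^{*}(p) = +\infty$). This is precisely the separating-hyperplane description of $\mathcal{K}$ as the intersection of all the closed half-spaces containing it. So it suffices to verify this family of scalar inequalities with $y = \mathbb{E}\xi$.

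First I would fix an arbitrary $p \in \mathbb{R}^n$ and estimate $\langle p, \mathbb{E}\xi\rangle$. Pulling $p$ inside the (vector-valued) expectation gives $\langle p, \mathbb{E}\xi\rangle = \mathbb{E}\langle p, \xi\rangle$, which is legitimate because $\xi \in L^2 \subset L^1$ so the expectation is finite and linear. Since $\xi(\omega) \in \mathcal{K}$ for $\mathbb{P}$-almost every $\omega$, the definition of the support functional yields the pointwise bound $\langle p, \xi(\omega)\rangle \leq \sup_{x \in \mathcal{K}} \langle p, x\rangle = h_{\mathcal{K}}^{*}(p)$ almost surely. Taking expectations and using monotonicity gives $\mathbb{E}\langle p, \xi\rangle \leq h_{\mathcal{K}}^{*}(p)$, hence $\langle p, \mathbb{E}\xi\rangle \leq h_{\mathcal{K}}^{*}(p)$ for every $p$. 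By the support-functional characterization of closed convex sets this forces $\mathbb{E}\xi \in \mathcal{K}$, completing the argument.

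The main obstacle, and the only point requiring genuine care, is justifying the final implication, namely that satisfying $\langle p, y\rangle \leq h_{\mathcal{K}}^{*}(p)$ for all $p$ really does place $y$ inside $\mathcal{K}$. This is where the scalar case is trivial (an interval is determined by its two endpoints) but the vector case is not: one must invoke the fact that a nonempty closed convex set equals the intersection of all its supporting half-spaces, which rests on the Hahn--Banach (strict) separation theorem. If $y \notin \mathcal{K}$, strict separation of the point $y$ from the closed convex set $\mathcal{K}$ produces some $p_0$ and a scalar with $\langle p_0, y\rangle > \sup_{x \in \mathcal{K}} \langle p_0, x\rangle = h_{\mathcal{K}}^{*}(p_0)$, contradicting the inequality we established. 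The only subtlety to flag is that the separating direction $p_0$ automatically satisfies $h_{\mathcal{K}}^{*}(p_0) < +\infty$ (i.e. $p_0 \in B(\mathcal{K})$), since $\sup_{x \in \mathcal{K}} \langle p_0, x\rangle$ is bounded above by $\langle p_0, y\rangle$; thus the contradiction is obtained in the effective regime where the support functional is finite. One should also note the almost-sure-versus-everywhere point: the pointwise membership $\xi \in \mathcal{K}$ holds only $\mathbb{P}$-a.s., but a null set does not affect the expectation, so the integral inequality is unaffected. I would present this essentially as stated, referencing the separation theorem (as in \cite{Clark2013}) for the converse direction.
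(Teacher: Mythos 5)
Your proof is correct and takes essentially the same route as the paper, which likewise characterizes $\mathcal K$ via its supporting half-spaces (equivalently, the support functional $h_{\mathcal K}^{*}$), applies the a.s. pointwise inequality $\langle p,\xi\rangle\leq h_{\mathcal K}^{*}(p)$, and uses linearity/monotonicity of expectation to conclude $\mathbb{E}\xi\in\mathcal K$. The only difference is that you spell out the Hahn--Banach strict-separation argument (and the finiteness of $h_{\mathcal K}^{*}$ at the separating direction) that the paper leaves implicit when invoking the half-space characterization.
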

\begin{proof}Recall that any convex-closed set ${\mathcal K}\subset\mathbb{R}^{n}$ can be equivalently defined as the intersection of all closed half-spaces containing it, thus for a.s. $\omega,$ $
\langle s_{j}, \xi(\omega) \rangle \leq r_{j}$ for some data $(s_{j}, r_{j}) \in \mathbb{R}^{n} \times \mathbb{R}$ from some index set $j \in J$. By linearity of expectation, $
\langle s_{j}, \mathbb{E}\xi\rangle \leq r_{j}$ for all $j \in J$ also, thus $\mathbb{E}\xi \in \mathcal K.$ Another proof is based on support functional as follows. $x \in {\mathcal K}$ iff $\langle x, p \rangle \leq h_{\mathcal K}^{*}(p)$ for each vector $p$. Again, by linearity of expectation, $\langle \mathbb{E}\xi, p \rangle \leq h_{\mathcal K}^{*}(p)$ for each vector $p$, hence $\mathbb{E}\xi \in \mathcal K.$
\end{proof}

By Lemma \ref{property of expectation}, a necessary condition for $\mathcal U(\mathcal K,\alpha,\beta)$ being non-empty is ${\mathcal K}_{\alpha, \beta}^{+}\triangleq \mathcal K \cap H_{\alpha,\beta}^{+} \neq \emptyset$ where $H_{\alpha,\beta}^{+}=\{x\in \mathbb{R}^{n}:\langle\alpha, x\rangle \geq \beta\}$ is one half-space delimited by the affine hyperplane $H_{\alpha,\beta}: \langle\alpha, x\rangle=\beta.$
Further discussion of feasibility to $\mathcal U(\mathcal K,\alpha,\beta)$, may depend on the following alternative assumptions.

({\textbf{F1}})(positive breadth along $\alpha$):\quad \quad  $ \quad h_{\mathcal K}^{*}(\alpha)+h_{\mathcal K}^{*}(-\alpha)>0.$

({\textbf{F2}})(degenerated breadth along $\alpha$): \quad \quad $ \quad h_{\mathcal K}^{*}(\alpha)+h_{\mathcal K}^{*}(-\alpha)=0.$

Depending on (F1) or (F2), we have the following feasibility results respectively. \begin{proposition}\label{feasible-1}
Under \textbf{\emph{(F1)}}, the terminal admissible set $\mathcal U(\mathcal K,\alpha,\beta)\triangleq\mathcal U_{\mathcal K}\bigcap\mathcal U_{\alpha,\beta}$ is \begin{itemize}
\item \emph{(i)} nontrivial \emph{(}non-empty and admitting two constraints both\emph{)}, if $-h_{\mathcal K}^{*}(-\alpha)<\beta<h_{\mathcal K}^{*}(\alpha);$
\item \emph{(ii)} trivial \emph{(}being reduced to pointwise constraint $\mathcal U_{\mathcal K}$ only\emph{)}, if $\beta \leq -h_{\mathcal K}^{*}(-\alpha);$
\item \emph{(iii)} trivial \emph{(}empty set\emph{)}, if $\beta>h_{\mathcal K}^{*}(\alpha);$
\item \emph{(iv)} trivial \emph{(}degenerated to the exposed face of $\mathcal{K}$\emph{)}, if $\beta=h_{\mathcal K}^{*}(\alpha).$\end{itemize}\end{proposition}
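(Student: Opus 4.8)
The plan is to reduce the entire feasibility question to the one-dimensional image of $\mathcal K$ under the linear functional $x \mapsto \langle\alpha, x\rangle$. Since $\mathcal K$ is convex, this image is an interval whose infimum is $-h_{\mathcal K}^{*}(-\alpha)$ and whose supremum is $h_{\mathcal K}^{*}(\alpha)$; assumption (F1) guarantees these two endpoints are distinct and, in particular, that $\alpha\neq0$. The one tool I would import is Lemma \ref{property of expectation}, which yields $\mathbb{E}\xi\in\mathcal K$ for every $\xi\in\mathcal U_{\mathcal K}$, and hence the two-sided bound $-h_{\mathcal K}^{*}(-\alpha)\leq\langle\alpha,\mathbb{E}\xi\rangle\leq h_{\mathcal K}^{*}(\alpha)$ for every pointwise-admissible $\xi$. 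All four assertions are read off from this bound together with a converse construction using \emph{constant} (deterministic) controls, which trivially lie in $\mathcal U_{\mathcal K}$ and realize any prescribed point of $\mathcal K$ as their mean.

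I would dispose of the two degenerate cases first. For (iii), if $\beta>h_{\mathcal K}^{*}(\alpha)$, then every $\xi\in\mathcal U_{\mathcal K}$ satisfies $\langle\alpha,\mathbb{E}\xi\rangle\leq h_{\mathcal K}^{*}(\alpha)<\beta$, so the affine constraint is violated and $\mathcal U(\mathcal K,\alpha,\beta)=\emptyset$. For (ii), if $\beta\leq -h_{\mathcal K}^{*}(-\alpha)$, then $\langle\alpha,\mathbb{E}\xi\rangle\geq -h_{\mathcal K}^{*}(-\alpha)\geq\beta$ holds automatically for every $\xi\in\mathcal U_{\mathcal K}$, so the affine constraint is redundant and $\mathcal U(\mathcal K,\alpha,\beta)=\mathcal U_{\mathcal K}$.

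For the principal case (i), nonemptiness is immediate: since $\beta<h_{\mathcal K}^{*}(\alpha)=\sup_{x\in\mathcal K}\langle\alpha,x\rangle$, there is $x_{0}\in\mathcal K$ with $\langle\alpha,x_{0}\rangle>\beta$, and the constant $\xi\equiv x_{0}$ lies in $\mathcal U(\mathcal K,\alpha,\beta)$. To certify that the affine constraint is genuinely active I would exhibit an element of $\mathcal U_{\mathcal K}$ that fails it — using $\beta>-h_{\mathcal K}^{*}(-\alpha)=\inf_{x\in\mathcal K}\langle\alpha,x\rangle$ to pick $x_{1}\in\mathcal K$ with $\langle\alpha,x_{1}\rangle<\beta$ and take $\xi\equiv x_{1}$, so that $\mathcal U(\mathcal K,\alpha,\beta)\subsetneq\mathcal U_{\mathcal K}$; the pointwise constraint is in turn a genuine restriction because $\mathcal K$ is a proper subset. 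Thus the strict two-sided inequality on $\beta$ is exactly the condition placing $\beta$ in the interior of the image interval, which is what makes both constraints simultaneously effective.

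Finally, case (iv) is the delicate one. When $\beta=h_{\mathcal K}^{*}(\alpha)$, the bound forces $\langle\alpha,\mathbb{E}\xi\rangle=h_{\mathcal K}^{*}(\alpha)$ for any admissible $\xi$; since $h_{\mathcal K}^{*}(\alpha)-\langle\alpha,\xi(\omega)\rangle\geq0$ a.s.\ by pointwise admissibility and has zero expectation, it must vanish a.s., so $\xi(\omega)$ lies in the exposed face $F=\{x\in\mathcal K:\langle\alpha,x\rangle=h_{\mathcal K}^{*}(\alpha)\}$ for a.e.\ $\omega$. Hence $\mathcal U(\mathcal K,\alpha,\beta)=L^{2}_{\mathcal F_T}(\Omega;F)$, the pointwise constraint to the face. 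I expect the main obstacle to sit precisely here: one must keep track of whether $h_{\mathcal K}^{*}(\alpha)$ is finite and attained (for unbounded $\mathcal K$ the supremum may be $+\infty$, in which case (iv) is vacuous and only (i)--(ii) occur, or it may be finite yet unattained, in which case $F=\emptyset$). The passage from the scalar equality $\langle\alpha,\mathbb{E}\xi\rangle=h_{\mathcal K}^{*}(\alpha)$ to the pointwise membership $\xi\in F$ a.s.\ --- the ``nonnegative, zero-mean, hence null'' step --- is the one genuinely probabilistic ingredient and the part I would write out with care.
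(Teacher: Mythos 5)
Your proof is correct, and it is essentially the argument the paper intends: the paper gives no proof of this proposition at all, stating only that it "follows from standard convex analysis" with a pointer to \cite{Rockafellar}, and it invokes Lemma \ref{property of expectation} immediately beforehand --- exactly the two ingredients you use (the scalar image interval $[-h_{\mathcal K}^{*}(-\alpha),h_{\mathcal K}^{*}(\alpha)]$, the bound $\langle\alpha,\mathbb E\xi\rangle\in[-h_{\mathcal K}^{*}(-\alpha),h_{\mathcal K}^{*}(\alpha)]$ for $\xi\in\mathcal U_{\mathcal K}$, and constant controls for the converse constructions). Your treatment of case (iv) --- the ``nonnegative, zero-mean, hence null'' step giving $\mathcal U(\mathcal K,\alpha,\beta)=L^{2}_{\mathcal F_T}(\Omega;F)$, together with the caveat that $h_{\mathcal K}^{*}(\alpha)$ may be infinite (case vacuous) or finite but unattained ($F=\emptyset$) --- is in fact more careful than the paper's bare statement. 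One spot to tighten: in case (i), your certificate that the \emph{pointwise} constraint remains genuinely active (``because $\mathcal K$ is a proper subset'') cannot be realized by a constant control alone, contrary to the spirit of your ``constant controls suffice'' reduction. If $\mathcal K$ is the halfspace $\{x:\langle\alpha,x\rangle\geq 0\}$ and $\beta>0$ (which is case (i), since here $h_{\mathcal K}^{*}(\alpha)=+\infty$ and $-h_{\mathcal K}^{*}(-\alpha)=0$), then every deterministic $y\notin\mathcal K$ has $\langle\alpha,y\rangle<0<\beta$ and so violates the mean constraint. You need a genuinely random element of $\mathcal U_{\alpha,\beta}\setminus\mathcal U_{\mathcal K}$, for instance the two-point mixture $\xi=x_{0}$ with probability $1-p$ and $\xi=z$ with probability $p$, where $z\notin\mathcal K$, $\langle\alpha,x_{0}\rangle>\beta$, and $p>0$ is small enough that $(1-p)\langle\alpha,x_{0}\rangle+p\langle\alpha,z\rangle\geq\beta$; this exists precisely because the inequality at $x_{0}$ is strict. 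This is a one-line repair, not a flaw in the overall structure.
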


\begin{proposition}\label{feasible-2}
Under \textbf{\emph{(F2)}}, the terminal admissible set $\mathcal U(\mathcal K,\alpha,\beta)\triangleq\mathcal U_{\mathcal K}\bigcap\mathcal U_{\alpha,\beta}$ is \begin{itemize}
\item \emph{(ii')} trivial \emph{(}being reduced to pointwise constraint $\mathcal U_{\mathcal K}$ only\emph{)}, if $\beta \leq h_{\mathcal K}^{*}(\alpha);$
\item \emph{(iii')} trivial as being empty, if $\beta>h_{\mathcal K}^{*}(\alpha).$\end{itemize}\end{proposition}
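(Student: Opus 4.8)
The plan is to exploit the geometric content of \textbf{(F2)}: a vanishing breadth along $\alpha$ forces $\mathcal K$ to collapse onto a single hyperplane normal to $\alpha$, so that the affine-expectation constraint degenerates into a \emph{fixed scalar equality} rather than a genuine inequality restriction. Concretely, since the breadth $h_{\mathcal K}^{*}(\alpha)+h_{\mathcal K}^{*}(-\alpha)=\sup_{x\in\mathcal K}\langle\alpha,x\rangle-\inf_{x\in\mathcal K}\langle\alpha,x\rangle$ is always nonnegative, its vanishing is equivalent to $\sup_{x\in\mathcal K}\langle\alpha,x\rangle=\inf_{x\in\mathcal K}\langle\alpha,x\rangle=h_{\mathcal K}^{*}(\alpha)$, i.e.\ $\langle\alpha,x\rangle\equiv h_{\mathcal K}^{*}(\alpha)$ for every $x\in\mathcal K$. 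This is precisely the ``$\mathcal K$ lies in the hyperplane $\{y:\langle y,\alpha\rangle=h_{\mathcal K}^{*}(\alpha)\}$'' statement recorded just before the proposition, and I would restate it as the first step.

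Second, I would transfer this pointwise identity to the expectation level. For any $\xi\in\mathcal U_{\mathcal K}$ we have $\xi(\omega)\in\mathcal K$ almost surely, hence $\langle\alpha,\xi(\omega)\rangle=h_{\mathcal K}^{*}(\alpha)$ almost surely, and taking expectations yields $\langle\alpha,\mathbb E\xi\rangle=h_{\mathcal K}^{*}(\alpha)$. Equivalently, one may invoke Lemma \ref{property of expectation} to place $\mathbb E\xi\in\mathcal K$ directly and then apply the hyperplane inclusion to the single point $\mathbb E\xi$; both routes give the same conclusion. The crucial observation is that $\langle\alpha,\mathbb E\xi\rangle$ equals the \emph{same constant} $h_{\mathcal K}^{*}(\alpha)$ for \emph{every} admissible $\xi\in\mathcal U_{\mathcal K}$, so whether the constraint $\langle\alpha,\mathbb E\xi\rangle\geq\beta$ can be met depends only on the comparison of $\beta$ with $h_{\mathcal K}^{*}(\alpha)$, not on the choice of $\xi$.

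Third, I would split according to this comparison. If $\beta\leq h_{\mathcal K}^{*}(\alpha)$, then every $\xi\in\mathcal U_{\mathcal K}$ already satisfies $\langle\alpha,\mathbb E\xi\rangle=h_{\mathcal K}^{*}(\alpha)\geq\beta$, so $\mathcal U_{\mathcal K}\subseteq\mathcal U_{\alpha,\beta}$ and therefore $\mathcal U(\mathcal K,\alpha,\beta)=\mathcal U_{\mathcal K}\cap\mathcal U_{\alpha,\beta}=\mathcal U_{\mathcal K}$, giving (ii') (and it is nonempty, since any constant $\xi\equiv x_0$ with $x_0\in\mathcal K$ belongs to it). If instead $\beta>h_{\mathcal K}^{*}(\alpha)$, then no $\xi\in\mathcal U_{\mathcal K}$ can satisfy $\langle\alpha,\mathbb E\xi\rangle\geq\beta$, and since $\mathcal U(\mathcal K,\alpha,\beta)\subseteq\mathcal U_{\mathcal K}$ this forces $\mathcal U(\mathcal K,\alpha,\beta)=\emptyset$, which is (iii').

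Because the geometric reduction under \textbf{(F2)} is already available, there is no substantial analytic obstacle; the only point requiring a little care is the equivalence between a zero breadth and the constancy of $\langle\alpha,\cdot\rangle$ on $\mathcal K$, which rests on the nonnegativity of $\sup-\inf$ and on the fact that finiteness of the breadth forces both $h_{\mathcal K}^{*}(\alpha)$ and $h_{\mathcal K}^{*}(-\alpha)$ to be finite (ruling out a vacuous $+\infty$). I would include a one-line justification of this so that $h_{\mathcal K}^{*}(\alpha)$ appearing in the statement is unambiguously a real number, after which the two cases close immediately.
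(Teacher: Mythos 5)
Your proof is correct, and it is essentially the argument the paper intends: the paper omits an explicit proof, deferring to ``standard convex analysis'' (Rockafellar, Chapters 4--5), but the ingredients you use are exactly the ones the paper sets up --- the observation that zero breadth along $\alpha$ forces $\langle\alpha,\cdot\rangle$ to be constant (equal to $h_{\mathcal K}^{*}(\alpha)$) on $\mathcal K$, and the transfer to $\mathbb E\xi$ via Lemma \ref{property of expectation}. Your added care about finiteness of $h_{\mathcal K}^{*}(\alpha)$ under \textbf{(F2)} is a legitimate detail the paper glosses over, and the two-case comparison of $\beta$ with $h_{\mathcal K}^{*}(\alpha)$ then yields (ii') and (iii') exactly as stated.
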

The proofs of Propositions \ref{feasible-1}-\ref{feasible-2} follow from standard convex analysis, and readers may refer \cite{Rockafellar} Chapters 4 and 5. Of course, we are more interested to the nontrivial case (i). Some related remarks are as follows.
\begin{remark}
\emph{(a)} When $\mathcal{K}$ is bounded \emph{(}hence compact\emph{)}, $B(\mathcal{K})=\mathbb{R}^{n}$ thus $-\infty<-h_{\mathcal K}^{*}(-\alpha)<h_{\mathcal K}^{*}(\alpha)<+\infty$ and \emph{(i)} always holds true for all affine-expectation constraint pairs $(\alpha, \beta) \in \mathbb{R}^{n}\times (-h_{\mathcal K}^{*}(-\alpha), h_{\mathcal K}^{*}(\alpha)).$

\emph{(b)} For unbounded $\mathcal{K}$, its asymptotic cone provides more explicit representation of $B(\mathcal{K})$ and the range qualification to $(\alpha, \beta)$ jointly. We omit details here.

\emph{(c)} Notice that \emph{(iv)} above involves the exposed face. Recall for convex set $K$, a set $F$ is called its exposed face if there is a supporting hyperplane $H_{s,r}$ of $K$ such that $F=H_{s,r} \cap K.$ For unbounded $\mathcal{K}$, there have some subtle difference between exposed face and boundary of $\mathcal{K}.$

\end{remark}
    It is obvious that ${\mathcal K}_{\alpha, \beta}^{+}$ is convex-closed set. We can introduce $\mathcal U_{{\mathcal K}_{\alpha, \beta}^{+}}=L^2_{\mathcal F_T}(\Omega;{\mathcal K}_{\alpha, \beta}^{+})$ that satisfies $\mathcal U_{{\mathcal K}_{\alpha, \beta}^{+}}\subset \mathcal U(\mathcal K,\alpha,\beta)$ by Lemma \ref{property of expectation}. Noticing the inclusion here is strictly proper subset by noting, say, in scalar case, it is not very hard to construct a random variable with support on $\mathcal{K}=[0,1]$ but with expectation on $[\frac{1}{2}, +\infty)$(i.e., $\alpha=1, \beta=\frac{1}{2})$.

We continue to discuss the \emph{strict} feasibility that relates to Slater qualification to be invoked. To start, we first present some relative interior point result for pointwise constraint $\mathcal U_{\mathcal K}.$

\begin{proposition}\label{relative interior}
The constrained set $\mathcal U_{\mathcal K}$ admits no relative interior point.
\end{proposition}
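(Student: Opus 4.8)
The plan is to show that \emph{every} $\xi_0\in\mathcal U_{\mathcal K}$ fails to be a relative interior point, so that $\mathrm{ri}(\mathcal U_{\mathcal K})=\emptyset$. Here relative interior means the interior of $\mathcal U_{\mathcal K}$ within its affine hull $\mathrm{aff}(\mathcal U_{\mathcal K})$ (the conclusion is unchanged if one replaces $\mathrm{aff}$ by its closure, since $\mathrm{aff}\subseteq\overline{\mathrm{aff}}$). Concretely, for a fixed $\xi_0$ I would refute the existence of any $\varepsilon>0$ with $B(\xi_0,\varepsilon)\cap\mathrm{aff}(\mathcal U_{\mathcal K})\subseteq\mathcal U_{\mathcal K}$ by producing, for arbitrarily small $\varepsilon$, a point of $\mathrm{aff}(\mathcal U_{\mathcal K})$ within $L^2$-distance $\varepsilon$ of $\xi_0$ that leaves $\mathcal U_{\mathcal K}$. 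A preliminary observation is that the statement tacitly needs $\mathcal K$ to be a genuine constraint, i.e.\ a closed convex set admitting a \emph{nontrivial} relative supporting hyperplane: were $\mathcal K$ a singleton or an affine subspace, $\mathcal U_{\mathcal K}$ would itself be a (closed) affine subspace of $L^2$ and every point would be relatively interior, so I assume $\mathcal K$ is not relatively open.

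The construction is as follows. Pick a relative boundary point $\bar x\in\mathcal K$ with supporting vector $p$, so that $\langle p,x\rangle\le\langle p,\bar x\rangle=h_{\mathcal K}^{*}(p)$ for all $x\in\mathcal K$; nontriviality of the hyperplane makes $\langle p,\cdot\rangle$ non-constant on $\mathcal K$, hence there is $\underline x\in\mathcal K$ with $\delta_0\triangleq\langle p,\bar x-\underline x\rangle>0$. Set $d\triangleq\bar x-\underline x$, and for a constant $c>0$ and an event $A\in\mathcal F_T$ put $\eta\triangleq c\,\mathbf 1_A\,d$ and $\xi_\varepsilon\triangleq\xi_0+\eta$. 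Three properties must then be verified: (i) $\xi_\varepsilon\in\mathrm{aff}(\mathcal U_{\mathcal K})$, which holds because $d\,\mathbf 1_A=\zeta_1-\zeta_2$ with $\zeta_1\triangleq\bar x\,\mathbf 1_A+\underline x\,\mathbf 1_{A^c}$ and $\zeta_2\equiv\underline x$ both in $\mathcal U_{\mathcal K}$, so $\xi_\varepsilon=\xi_0+c\zeta_1-c\zeta_2$ is an affine combination (coefficients $1,c,-c$) of admissible controls; (ii) $\xi_\varepsilon\notin\mathcal U_{\mathcal K}$, since on $A$ one has $\langle p,\xi_\varepsilon\rangle=\langle p,\xi_0\rangle+c\delta_0$, which exceeds $h_{\mathcal K}^{*}(p)$ for $c$ large provided $\langle p,\xi_0\rangle$ is bounded below on $A$; and (iii) $\|\eta\|_{L^2}^2=c^2|d|^2\,\mathbb P(A)$ is small.

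The delicate point, and where the non-atomicity of $(\Omega,\mathcal F_T,\mathbb P)$ enters, is reconciling (ii) and (iii): I must fix $c$ large enough to break the constraint yet keep $\mathbb P(A)$ arbitrarily small. Since $\langle p,\xi_0\rangle\in L^2\subseteq L^1$, the event $\{\langle p,\xi_0\rangle\ge-N\}$ has probability tending to $1$, so it is non-null for $N$ large; I then fix $c>(N+h_{\mathcal K}^{*}(p))/\delta_0$. Because the augmented Brownian filtration $\mathcal F_T$ is non-atomic, I can select $A\subseteq\{\langle p,\xi_0\rangle\ge-N\}$ of arbitrarily small \emph{positive} probability, which forces $\langle p,\xi_\varepsilon\rangle>h_{\mathcal K}^{*}(p)$ on the positive-probability set $A$ (so $\xi_\varepsilon\notin\mathcal U_{\mathcal K}$) while $\|\eta\|_{L^2}\to 0$ as $\mathbb P(A)\to 0$. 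Thus $\xi_0$ is not relatively interior, and since $\xi_0$ was arbitrary, $\mathrm{ri}(\mathcal U_{\mathcal K})=\emptyset$. The main obstacle is not any single estimate but the simultaneous control of (i)--(iii); it is overcome by decoupling the \emph{amplitude} $c$ (dictated by the supporting functional $p$) from the \emph{support} $A$ (shrunk via non-atomicity), the former ensuring infeasibility and the latter ensuring $L^2$-smallness.
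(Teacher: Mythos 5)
Your proof is correct, and it takes a more self-contained route than the paper's. The paper argues by cases on $\dim\mathcal K$: when $\dim\mathcal K=n$ it first identifies $\text{aff}(\mathcal U_{\mathcal K})$ with all of $L^2_{\mathcal F_T}(\Omega;\mathbb R^n)$ and then asserts (without construction) that any $\xi\in\mathcal U_{\mathcal K}$ has points of the ball $B(\xi,\varepsilon)$ whose support leaves $\mathcal K$ with positive probability; the case $\dim\mathcal K<n$ is dismissed with ``similar arguments.'' You never compute the affine hull at all: you take a nontrivial supporting hyperplane $p$ at a relative boundary point of $\mathcal K$ and realize the perturbation $c\,\mathbf 1_A(\bar x-\underline x)$ as an explicit affine combination $\xi_0+c\zeta_1-c\zeta_2$ with $\zeta_1,\zeta_2\in\mathcal U_{\mathcal K}$, so membership in $\text{aff}(\mathcal U_{\mathcal K})$ is verified directly and the argument is uniform in $\dim\mathcal K$. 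This buys you two things the paper's sketch lacks. First, rigor where the paper is loose: the identity $\text{aff}(\mathcal U_{\mathcal K})=L^2_{\mathcal F_T}(\Omega;\mathbb R^n)$ is actually false for the algebraic (finite-combination) affine hull when $\mathcal K$ is bounded, since finite affine combinations of $\mathcal K$-valued variables are bounded; it only holds after closure, a point your ``aff versus $\overline{\text{aff}}$'' remark handles cleanly, and your construction is insensitive to it. Second, your preliminary caveat is a genuine correction to the statement: if $\mathcal K$ is a singleton or, more generally, an affine subspace, then $\mathcal U_{\mathcal K}$ is a closed affine subspace of $L^2$ and every point is relatively interior, so the proposition as stated requires $\mathcal K$ not to be relatively open — a hypothesis the paper never makes explicit. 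The underlying mechanism (large-amplitude perturbation on an event of small probability, with non-atomicity of the Brownian $\sigma$-field supplying such events) is the same as what the paper intends, but your execution of it is complete where the paper's is a sketch.
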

\begin{proof}In case $\dim{\mathcal{K}}=n$, then $\text{aff}(\mathcal{K})=\text{aff}({\mathcal K}_{\alpha, \beta}^{+})=\mathbb{R}^{n},$ and $\text{aff}(\mathcal U_{\mathcal K}) \supseteq \text{aff}(\mathcal U_{{\mathcal K}_{\alpha, \beta}^{+}})=L^2_{\mathcal F_T}(\Omega;\text{aff}({\mathcal K}_{\alpha, \beta}^{+}))=L^2_{\mathcal F_T}(\Omega;\mathbb{R}^{n}).$ Then, $\text{aff}(\mathcal U_{\mathcal K})=L^2_{\mathcal F_T}(\Omega;\mathbb{R}^{n}).$ On the other hand, for any $\xi \in \mathcal U_{\mathcal K},$ we can always construct $\xi' \in B(\xi, \varepsilon)\subset L^2_{\mathcal F_T}(\Omega;\mathbb{R}^{n}),$ a small ball centered at $\xi$ with radius $\varepsilon>0,$ but support $\xi' \in \mathcal{K}^{c}$ with positive probability. Similar arguments can be applied to the case of $\dim{\mathcal{K}}<n.$ \end{proof}

Based on Proposition \ref{relative interior}, to apply the Lagrange multiplier method, its Slater qualification condition holds true iff ${\mathcal K}_{\alpha, \beta}^{++}\triangleq \mathcal K \cap H_{\alpha,\beta}^{++} \neq \emptyset$ with $H_{\alpha,\beta}^{++}=\{x\in \mathbb{R}^{n}:\langle\alpha, x\rangle >\beta\}$ being the strict half-space (noticing a crucial point here is that $\mathcal{U}_{\alpha, \beta}$ is an inequality constraint on (linear) affine expectation). Actually, for any $y \in {\mathcal K}_{\alpha, \beta}^{++},$ $\xi(\omega) \equiv y \quad a.s. \in \mathcal U(\mathcal K,\alpha,\beta)$ and satisfies the affine-expectation constraint strictly. Conversely, if there has any random variable $\xi$ satisfying affine-expectation constraint strictly, and $\xi \in \mathcal{U}_{\mathcal{K}},$ it is necessary to have non-empty ${\mathcal K}_{\alpha, \beta}^{++}$ for $\mathbb{E}\xi$ by Lemma \ref{property of expectation}.

In summary to Propositions 4.5-4.7, we set the following assumption under which $\mathcal U(\mathcal K,\alpha,\beta)$ is nontrivial, strictly feasible and Slater constraint qualification holds true.\\

({\textbf{F}}) The triple $( \mathcal{K}, \alpha, \beta)$ of terminal constraint parameter satisfy: \quad $-h_{\mathcal K}^{*}(-\alpha)<\beta<h_{\mathcal K}^{*}(\alpha).$

\subsection{Solution of primal problem (P) via duality}We introduce the following dual problem (D) associated to the primal (P):

\begin{equation*}\label{Langrage problem}\textbf{(D)}:
\text{Maximize} \qquad K(\lambda)\triangleq  \inf_{(\xi,u_1(\cdot)) \in \mathcal{U}_{\mathcal{K}}\times\mathcal U_1[0,T]} L(\lambda; \xi, u_{1}(\cdot))
 \text{\qquad subject to } \qquad \lambda \geq 0,
\end{equation*}
where $L(\lambda; \xi, u_{1}(\cdot))\triangleq J_1(\xi,u_1(\cdot))+\lambda(\beta-\mathbb E\langle\alpha, \xi\rangle)$ is called the \emph{Lagrange functional}, $K(\cdot)$ is called \emph{dual function} which is parallel to primal functional $J_{1}(\cdot,\cdot).$ Dual function $K(\cdot)$ is always concave (even $J_1(\cdot,\cdot)$ is not convex) since it is defined by infimum operation on a family of affine functionals.

We can introduce an auxiliary problem (KT) for given $\lambda_0 \geq 0$:\begin{equation*}\label{leader-BFLQ}\textbf{(KT)}:\begin{aligned}
\text{ Minimize } \qquad L(\lambda_{0}; \xi,u_1(\cdot))
\text{\qquad subject to } \qquad \eqref{state equation-leader},\ (\xi,u_1(\cdot))\in\mathcal U_{\mathcal K}\times\mathcal U_1[0,T].
\end{aligned}
\end{equation*}We stress that here $\xi \in \mathcal U_{\mathcal K}$ instead $\mathcal U(\mathcal{K}, \alpha, \beta)$ as in (P). Now, we can introduce the following definitions based on \cite{Rockafellar}.
\begin{definition} \emph{(Kuhn-Tucker  coefficient)} A Kuhn-Tucker coefficient \emph{(KT-coefficient)} for problem \emph{\textbf{(P)}} is any $\lambda_{0} \geq0$ satisfying $-\infty<K(\lambda_{0})=\mu_{p}.$

\emph{(KT-admissible)} Problem \emph{\textbf{(P)}} is said to be \emph{KT-admissible} if it has at least one \emph{KT-coefficient}.
\end{definition}Definition 4.3 imposes no assumption on existence of optimal solutions to primal {(P)}, dual {(D)} and {(KT)}.
Similar to {(P)}, we can further introduce the following definitions.
\begin{definition} \emph{(a)} Problem \emph{\textbf{(D)}} is said to be finite if $\mu_{d}\triangleq \sup_{\lambda \geq 0}K(\lambda)<+\infty,$ and $\mu_{d}$ is called the value of \emph{\textbf{(D)}}\emph{;}

\emph{(b)} Problem \emph{\textbf{(D)}} is said to be \emph{(}uniquely\emph{)} solvable if there exists a \emph{(}unique\emph{)} $\lambda^{*} \geq 0$ such that $\mu_{p}=K(\lambda^{*})$ and $\lambda^{*}$ is called maximizer of \emph{\textbf{(D)}}\emph{;}

\emph{(c)} Problem \emph{\textbf{(KT)}} is said to be finite if $K(\lambda_{0})>-\infty,$ and $K(\lambda_{0})$ is the value of \emph{\textbf{(KT)}}\emph{;}

\emph{(d)} Problem \emph{\textbf{(KT)}} is said to be \emph{(}uniquely\emph{)} solvable if there exists a \emph{(}unique\emph{)} $(\xi,u_1(\cdot))\in\mathcal U_{\mathcal K}\times\mathcal U_1[0,T]$ such that $K(\lambda_{0})=L(\lambda_{0}; \xi^{*},u_1^{*}(\cdot))$ and $(\xi^{*},u_1^{*}(\cdot))$ is called minimizer of \emph{\textbf{(KT)}}.
\end{definition}
The following relations among problem {(P)}, {(D)} and {(KT)} are obvious.
\begin{proposition}\label{KT admissible}
 \emph{(a)} If Problem \emph{\textbf{(P)}} is \emph{KT-admissible}, then it is finite.

\emph{(b)} The values of problem \emph{\textbf{(P)}, \textbf{(D)}} and \emph{\textbf{(KT)}} parameterized by $\lambda_0 \geq 0,$ always satisfy: $K(\lambda_{0})\leq \mu_{d} \leq \mu_{p}$ where $\mu_{p}-\mu_{d} \geq 0$ is called the duality gap. \end{proposition}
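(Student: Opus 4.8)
The plan is to prove both parts directly from the definitions, since Proposition \ref{KT admissible} is nothing more than the weak-duality statement for the Lagrangian pair $(\textbf{P}),(\textbf{D})$; no convexity, existence of minimizers, or Slater condition is required for these inequalities.

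For part (a) I would simply unfold the definition of KT-admissibility. If $(\textbf{P})$ is KT-admissible, then by Definition 4.3 there is a KT-coefficient $\lambda_{0}\geq0$ with $-\infty<K(\lambda_{0})=\mu_{p}$. In particular $\mu_{p}>-\infty$, which is precisely the definition of finiteness of $(\textbf{P})$. Nothing further is needed.

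For part (b), the inequality $K(\lambda_{0})\leq\mu_{d}$ is immediate from $\mu_{d}=\sup_{\lambda\geq0}K(\lambda)$. The substantive content is the weak-duality inequality $\mu_{d}\leq\mu_{p}$. To prove it, I would fix any $\lambda\geq0$ and exploit two facts. First, the inclusion $\mathcal U(\mathcal K,\alpha,\beta)=\mathcal U_{\mathcal K}\cap\mathcal U_{\alpha,\beta}\subseteq\mathcal U_{\mathcal K}$: because the infimum defining $K(\lambda)$ ranges over the larger set $\mathcal U_{\mathcal K}\times\mathcal U_1[0,T]$, restricting it to the smaller feasible set can only increase it, so
\[
K(\lambda)\leq\inf_{(\xi,u_1)\in\mathcal U(\mathcal K,\alpha,\beta)\times\mathcal U_1[0,T]}L(\lambda;\xi,u_1(\cdot)).
\]
Second, for any such feasible $(\xi,u_1)$ the affine-expectation constraint gives $\mathbb E\langle\alpha,\xi\rangle\geq\beta$, hence $\beta-\mathbb E\langle\alpha,\xi\rangle\leq0$; since $\lambda\geq0$, the penalty term $\lambda(\beta-\mathbb E\langle\alpha,\xi\rangle)$ is nonpositive, so $L(\lambda;\xi,u_1(\cdot))\leq J_1(\xi,u_1(\cdot))$. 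Combining these two bounds yields $K(\lambda)\leq\inf_{\mathcal U(\mathcal K,\alpha,\beta)\times\mathcal U_1[0,T]}J_1=\mu_{p}$, and taking the supremum over $\lambda\geq0$ gives $\mu_{d}\leq\mu_{p}$. The full chain $K(\lambda_{0})\leq\mu_{d}\leq\mu_{p}$ and the nonnegativity of the duality gap $\mu_{p}-\mu_{d}\geq0$ then follow.

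There is no serious obstacle: this is textbook Lagrangian weak duality. The only point requiring genuine care is bookkeeping of the two distinct feasible sets—$\mathcal U_{\mathcal K}$ appearing in the definition of $K(\cdot)$ versus $\mathcal U(\mathcal K,\alpha,\beta)$ appearing in the definition of $\mu_{p}$—together with using $\lambda\geq0$ and primal feasibility to sign the penalty term correctly. Strong duality (the vanishing of the gap) is a separate, harder matter that will require the convexity of $J_1$ from Proposition \ref{Proposition leader cost functional convex} and the Slater qualification guaranteed by assumption \textbf{(F)}, and is not part of this statement.
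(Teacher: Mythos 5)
Your proof is correct and matches the paper's intent exactly: the paper states Proposition \ref{KT admissible} without proof, declaring these relations ``obvious,'' and your argument is precisely the standard weak-duality reasoning being alluded to. Part (a) correctly unfolds Definition 4.3, and in part (b) you handle the one point that actually requires care---that $K(\lambda)$ is an infimum over the larger set $\mathcal U_{\mathcal K}\times\mathcal U_1[0,T]$ while $\mu_{p}$ is an infimum over $\mathcal U(\mathcal K,\alpha,\beta)\times\mathcal U_1[0,T]$, combined with signing the penalty term $\lambda(\beta-\mathbb E\langle\alpha,\xi\rangle)\leq 0$ via primal feasibility and $\lambda\geq0$---so nothing is missing.
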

Note that (P) and (KT) in Proposition \ref{KT admissible} need not to be convex. Moreover, we have the following solvability relations among (P), (D) and (KT), which follow from convex analysis (e.g., see \cite{Rockafellar} Part VI) and proof details are omitted here:
\begin{lemma}\label{lemma4.1}
\emph{(a)} If Problem \emph{\textbf{(P)}} is \emph{KT-admissible}, then duality gap is $0$ \emph{(}namely, strong duality holds\emph{)} and problem \emph{\textbf{(D)}} is solvable. Note here, \emph{\textbf{(P)}} may not be convex.

\emph{(b)} If \emph{\textbf{(P)}} is \emph{KT-admissible}, convex and related \emph{\textbf{(KT)}} problem with \emph{KT-coefficient} $\lambda_{0}$ is solvable with optimal solution set $D=\{(\bar{\xi},\bar{u}_1(\cdot)): K(\lambda_{0})=L(\lambda_{0}; \bar{\xi}, \bar{u}_1(\cdot))\}.$ Then, the subset $D_{p}$ of $D$ satisfying complementary slackness condition: $\lambda_{0}(\beta-\mathbb E\langle\alpha, \bar\xi\rangle)=0$, is the optimal solution set to primal \emph{\textbf{(P)}}.\end{lemma}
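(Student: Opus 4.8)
The plan is to read this as the standard Lagrangian (Kuhn--Tucker) duality result for the single dualized affine-expectation constraint $\langle\alpha,\mathbb{E}\xi\rangle\geq\beta$, with the pointwise constraint $\xi\in\mathcal{U}_{\mathcal K}$ kept inside the inner minimization. The only two structural inputs are weak duality, $K(\lambda)\leq\mu_p$ for all $\lambda\geq0$ (part of Proposition \ref{KT admissible}), and the defining identity of a KT-coefficient, $-\infty<K(\lambda_0)=\mu_p$. Part (a) follows from these alone; part (b) is complementary-slackness bookkeeping on $L(\lambda;\xi,u_1(\cdot))=J_1(\xi,u_1(\cdot))+\lambda(\beta-\mathbb{E}\langle\alpha,\xi\rangle)$, with convexity entering only to pin down the degenerate multiplier case.

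For part (a): KT-admissibility supplies $\lambda_0\geq0$ with $K(\lambda_0)=\mu_p>-\infty$. By definition $\mu_d=\sup_{\lambda\geq0}K(\lambda)\geq K(\lambda_0)=\mu_p$, while weak duality gives $\mu_d\leq\mu_p$; hence $\mu_d=\mu_p$, the duality gap $\mu_p-\mu_d$ vanishes, and $\lambda_0$ attains the supremum so that \textbf{(D)} is solvable. No convexity of \textbf{(P)} is used here, matching the stated claim.

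For part (b) I would prove the two inclusions between $D_p$ and the minimizer set of \textbf{(P)}. The backward inclusion is clean: if $(\xi^*,u_1^*(\cdot))$ solves \textbf{(P)} then it is feasible, so $\beta-\mathbb{E}\langle\alpha,\xi^*\rangle\leq0$ and $J_1(\xi^*,u_1^*(\cdot))=\mu_p=K(\lambda_0)$; since $\lambda_0\geq0$ this yields $L(\lambda_0;\xi^*,u_1^*(\cdot))=\mu_p+\lambda_0(\beta-\mathbb{E}\langle\alpha,\xi^*\rangle)\leq\mu_p=K(\lambda_0)\leq L(\lambda_0;\xi^*,u_1^*(\cdot))$, the last step being the definition of the infimum $K(\lambda_0)$. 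The squeeze forces equality throughout, so $(\xi^*,u_1^*(\cdot))\in D$ and simultaneously $\lambda_0(\beta-\mathbb{E}\langle\alpha,\xi^*\rangle)=0$, i.e. $(\xi^*,u_1^*(\cdot))\in D_p$. For the forward inclusion take $(\bar\xi,\bar u_1(\cdot))\in D_p$; then $J_1(\bar\xi,\bar u_1(\cdot))=L(\lambda_0;\bar\xi,\bar u_1(\cdot))-\lambda_0(\beta-\mathbb{E}\langle\alpha,\bar\xi\rangle)=K(\lambda_0)=\mu_p$, so once feasibility is checked the pair minimizes \textbf{(P)}. The pointwise constraint holds because $D\subseteq\mathcal{U}_{\mathcal K}\times\mathcal{U}_1[0,T]$, and when $\lambda_0>0$ complementary slackness forces $\mathbb{E}\langle\alpha,\bar\xi\rangle=\beta$, giving affine feasibility at once.

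The main obstacle is the affine feasibility of $D_p$ in the degenerate case $\lambda_0=0$, where the equation $\lambda_0(\beta-\mathbb{E}\langle\alpha,\bar\xi\rangle)=0$ holds vacuously and membership in $D$ only gives $J_1(\bar\xi,\bar u_1(\cdot))=\mu_p$, not $\mathbb{E}\langle\alpha,\bar\xi\rangle\geq\beta$. The resolution, and the only place convexity is genuinely needed, is to read the defining condition of $D_p$ as the full Kuhn--Tucker (saddle-point) condition: $(\bar\xi,\bar u_1(\cdot))$ minimizes $L(\lambda_0;\cdot,\cdot)$ over $\mathcal{U}_{\mathcal K}\times\mathcal{U}_1[0,T]$ and $\lambda_0$ maximizes $\lambda\mapsto L(\lambda;\bar\xi,\bar u_1(\cdot))$ over $\lambda\geq0$. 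Since the affine map $\lambda\mapsto\lambda(\beta-\mathbb{E}\langle\alpha,\bar\xi\rangle)$ admits a finite maximizer over $\lambda\geq0$ iff $\beta-\mathbb{E}\langle\alpha,\bar\xi\rangle\leq0$, this saddle condition encodes affine feasibility and complementary slackness simultaneously; in particular any element of $D$ violating the affine constraint has $\sup_{\lambda\geq0}L(\lambda;\bar\xi,\bar u_1(\cdot))=+\infty\neq L(\lambda_0;\bar\xi,\bar u_1(\cdot))$ and is therefore excluded. With $D_p$ understood as this saddle set, the forward inclusion is immediate and, combined with the backward inclusion above, identifies $D_p$ with the minimizer set of \textbf{(P)}. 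Nonemptiness of that set in the convex, KT-admissible case rests on assumption (\textbf{F}) (Slater qualification) to guarantee a strictly feasible point; all remaining steps are routine manipulations of $L$, $K$, $\mu_p$ and $\mu_d$, exactly the arguments the paper defers to \cite{Rockafellar}, Part VI.
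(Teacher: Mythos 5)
Your argument is correct, and it is essentially the proof the paper leaves implicit: Lemma~\ref{lemma4.1} is stated with the details deferred to standard convex analysis (Rockafellar, Part VI), and what you write out --- part (a) from weak duality (Proposition~\ref{KT admissible}(b)) together with the defining identity $K(\lambda_0)=\mu_p$, and part (b) from the two squeeze inclusions between $D_p$ and the minimizer set of \textbf{(P)} --- is exactly that standard argument.

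The genuinely useful content in your writeup is the observation that, read literally, part (b) is imprecise in the degenerate case $\lambda_0=0$: membership in $D$ plus (vacuous) complementary slackness only gives $J_1(\bar\xi,\bar u_1(\cdot))=\mu_p$, not the affine feasibility $\mathbb E\langle\alpha,\bar\xi\rangle\geq\beta$, so $D_p$ as defined can contain points that are not even admissible for \textbf{(P)} (this happens whenever the minimum of $J_1$ over $\mathcal U_{\mathcal K}\times\mathcal U_1[0,T]$ is attained at an affinely infeasible point yet coincides with $\mu_p$). Your fix --- adding primal feasibility to the definition of $D_p$, equivalently requiring that $\lambda_0$ maximize $\lambda\mapsto L(\lambda;\bar\xi,\bar u_1(\cdot))$ over $\lambda\geq 0$ --- is the right one and matches condition \textbf{(iii)} of Theorem~\ref{relation between BFLQ and LBFLQ}. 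Two side remarks in your closing paragraph are inaccurate, though neither affects the proof. First, convexity is not what rescues the degenerate case: once $\lambda_0$ is a KT-coefficient, both inclusions (with feasibility incorporated into $D_p$) hold with no convexity at all, so the convexity hypothesis in (b) is really only inherited from the way KT-admissibility is produced (Proposition~\ref{KT-admissible}). Second, assumption \textbf{(F)} does not guarantee nonemptiness of the optimal set of \textbf{(P)}; Slater's condition yields KT-admissibility, while solvability of \textbf{(P)} requires coercivity-type assumptions such as uniform convexity (Proposition~\ref{optimal-leader-2}). In any case the lemma asserts an equality of possibly empty sets, so no existence claim is needed.
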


\begin{remark}
We remark that in \emph{(}a\emph{)} above, problem \emph{\textbf{(P)}} and \emph{\textbf{(KT)}} may not be solvable even \emph{\textbf{(D)}} is solvable. Also, in \emph{(}b\emph{)}, \emph{\textbf{(KT)}} solvability  does not imply solvability of \emph{\textbf{(P)}}, conversely, solvability of primal \emph{\textbf{(P)}} does not imply it is \emph{KT}-solvable or even \emph{KT}-admissible.
\end{remark}
Part (b) of Lemma \ref{lemma4.1} specifies some \emph{sufficient} condition to find all optimal solutions to primal problem (P). In usual cases, we are more interested to equivalent condition for (P) solvability, and we thus report the following result which proof can be referred from  \cite{Rockafellar}  Part VI.

\begin{theorem}\label{relation between BFLQ and LBFLQ}
Assume \emph{\textbf{(H1)-(H3)}} and suppose \emph{\textbf{(P)}} is convex, then the following three statements\emph{:} \emph{\textbf{(i)}, \textbf{(ii)},} and \emph{\textbf{(iii)}} are equivalent\emph{:}

\emph{\textbf{(i):}} \emph{\textbf{(P)}} is \emph{KT-admissible} with coefficient $\lambda_0$, and \emph{\textbf{(P)}} is solvable with minimizer $(\xi^{*},u_1^{*}(\cdot))$\emph{;}

\emph{\textbf{(ii):}} The triple $(\lambda_0; \xi^{*},u_1^{*}(\cdot)) \in [0, +\infty)\times \mathcal U_{\mathcal K}\times\mathcal U_1[0,T]$ satisfies the following \emph{Karush-Kuhn-Tucker ({\textbf{KKT}})} system: \begin{equation}\begin{aligned}
\beta\leq\mathbb E\langle\alpha, \bar\xi\rangle, \quad
\bar\lambda(\beta-\mathbb E\langle\alpha, \bar\xi\rangle)=0;\quad
K(\lambda_{0})=L(\lambda_{0}; \bar{\xi}, \bar{u}_1(\cdot));
\end{aligned}\end{equation}\emph{\textbf{(iii):}} The triple $(\lambda_0; \xi^{*},u_1^{*}(\cdot))$ is a saddle point for Lagrange functional $L$:\begin{equation*}\label{saddlepoint condition}
L(\lambda; \bar\xi,\bar u_1(\cdot))\leq L(\bar\lambda; \bar\xi,\bar u_1(\cdot))\leq L(\bar\lambda;\xi,u_1(\cdot)).
\end{equation*}\end{theorem}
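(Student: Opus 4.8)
The plan is to treat Theorem \ref{relation between BFLQ and LBFLQ} as an instance of classical Lagrangian saddle-point duality for the inequality-constrained program \textbf{(P)}, along the lines of Rockafellar \cite{Rockafellar} Part VI. The structural fact I would exploit is that the Lagrange functional $L(\lambda;\xi,u_1(\cdot))=J_1(\xi,u_1(\cdot))+\lambda(\beta-\mathbb E\langle\alpha,\xi\rangle)$ is \emph{affine} in the multiplier $\lambda$ with slope $\beta-\mathbb E\langle\alpha,\xi\rangle$, and that only the affine-expectation constraint is relaxed, so that $K(\lambda)=\inf_{\mathcal U_{\mathcal K}\times\mathcal U_1[0,T]}L(\lambda;\cdot,\cdot)$ is taken over the pointwise-constrained set while $\mu_p$ is the infimum over the strictly smaller set $\mathcal U(\mathcal K,\alpha,\beta)\times\mathcal U_1[0,T]$. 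I would close the loop by first proving the purely algebraic equivalence \textbf{(ii)}$\Leftrightarrow$\textbf{(iii)}, and then \textbf{(i)}$\Leftrightarrow$\textbf{(ii)} using weak duality from Proposition \ref{KT admissible}(b). Throughout I identify $\bar\lambda=\lambda_0$ and $(\bar\xi,\bar u_1(\cdot))=(\xi^{*},u_1^{*}(\cdot))$.

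For \textbf{(ii)}$\Leftrightarrow$\textbf{(iii)}, the saddle-point property splits into two one-sided inequalities. The right inequality $L(\bar\lambda;\bar\xi,\bar u_1(\cdot))\le L(\bar\lambda;\xi,u_1(\cdot))$ for all $(\xi,u_1(\cdot))\in\mathcal U_{\mathcal K}\times\mathcal U_1[0,T]$ is, by the definition of $K$, exactly the Lagrangian-optimality line $K(\lambda_0)=L(\lambda_0;\bar\xi,\bar u_1(\cdot))$. The left inequality $L(\lambda;\bar\xi,\bar u_1(\cdot))\le L(\bar\lambda;\bar\xi,\bar u_1(\cdot))$ for all $\lambda\ge0$ asserts that $\bar\lambda$ maximizes the affine map $\lambda\mapsto J_1(\bar\xi,\bar u_1(\cdot))+\lambda(\beta-\mathbb E\langle\alpha,\bar\xi\rangle)$ over $[0,+\infty)$; since the relevant difference equals $(\lambda-\bar\lambda)(\beta-\mathbb E\langle\alpha,\bar\xi\rangle)$, testing $\lambda=\bar\lambda+1$ forces $\beta-\mathbb E\langle\alpha,\bar\xi\rangle\le0$ (primal feasibility), and testing $\lambda=0$ forces $\bar\lambda(\beta-\mathbb E\langle\alpha,\bar\xi\rangle)\ge0$, which together with $\bar\lambda\ge0$ yields the complementary slackness $\bar\lambda(\beta-\mathbb E\langle\alpha,\bar\xi\rangle)=0$. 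Conversely, feasibility and complementary slackness reproduce the left inequality by the same sign bookkeeping, so the two statements are equivalent.

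For \textbf{(i)}$\Leftrightarrow$\textbf{(ii)}: to prove \textbf{(ii)}$\Rightarrow$\textbf{(i)}, complementary slackness gives $K(\lambda_0)=L(\lambda_0;\bar\xi,\bar u_1(\cdot))=J_1(\bar\xi,\bar u_1(\cdot))$, while primal feasibility together with $\bar\xi\in\mathcal U_{\mathcal K}$ makes $(\bar\xi,\bar u_1(\cdot))$ admissible for \textbf{(P)}, whence $J_1(\bar\xi,\bar u_1(\cdot))\ge\mu_p$; combining with weak duality $K(\lambda_0)\le\mu_d\le\mu_p$ from Proposition \ref{KT admissible}(b) gives $\mu_p\le J_1(\bar\xi,\bar u_1(\cdot))=K(\lambda_0)\le\mu_p$, so all three coincide, i.e. $K(\lambda_0)=\mu_p$ (KT-admissibility) and $J_1(\bar\xi,\bar u_1(\cdot))=\mu_p$ (solvability with minimizer $(\bar\xi,\bar u_1(\cdot))$). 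For the converse \textbf{(i)}$\Rightarrow$\textbf{(ii)}, a minimizer $(\xi^{*},u_1^{*}(\cdot))$ of \textbf{(P)} is admissible, so $\mathbb E\langle\alpha,\xi^{*}\rangle\ge\beta$ (the first KKT line) and $\xi^{*}\in\mathcal U_{\mathcal K}$; the infimum defining $K$ then yields $\mu_p=K(\lambda_0)\le L(\lambda_0;\xi^{*},u_1^{*}(\cdot))=\mu_p+\lambda_0(\beta-\mathbb E\langle\alpha,\xi^{*}\rangle)$, forcing $\lambda_0(\beta-\mathbb E\langle\alpha,\xi^{*}\rangle)\ge0$, while $\lambda_0\ge0$ and $\beta-\mathbb E\langle\alpha,\xi^{*}\rangle\le0$ force it to be $\le0$, hence $=0$ (complementary slackness); consequently $L(\lambda_0;\xi^{*},u_1^{*}(\cdot))=\mu_p=K(\lambda_0)$ is Lagrangian optimality, giving \textbf{(ii)}.

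The main obstacle here is not analytic but organisational: one must carefully track that $K$ and problem \textbf{(KT)} minimize over the pointwise-only set $\mathcal U_{\mathcal K}\times\mathcal U_1[0,T]$, whereas $\mu_p$ lives over the strictly smaller affine-constrained set, and reconcile the two precisely at complementary slackness via weak duality. I would also emphasise that the equivalences are driven by weak duality (Proposition \ref{KT admissible}(b)) and the affine dependence of $L$ on $\lambda$ rather than by convexity itself; convexity of \textbf{(P)} enters as the standing hypothesis that guarantees the inner minimizer in \textbf{(KT)} is a global one and that KT-admissibility is an attainable property (e.g. under the Slater condition secured by \textbf{(F)}), thereby legitimising the entire Lagrangian programme of this section.
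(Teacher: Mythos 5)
Your proof is correct and takes essentially the approach the theorem rests on: the paper writes out no argument of its own but defers to Rockafellar Part VI, and your two-step scheme --- \textbf{(ii)}$\Leftrightarrow$\textbf{(iii)} via the affine dependence of $L$ on $\lambda$ (testing $\lambda=\bar\lambda+1$ and $\lambda=0$), then \textbf{(i)}$\Leftrightarrow$\textbf{(ii)} via weak duality from Proposition \ref{KT admissible}(b) --- is precisely that classical saddle-point argument, executed with the right care about $K$ and \textbf{(KT)} being infima over $\mathcal U_{\mathcal K}\times\mathcal U_1[0,T]$ while $\mu_p$ is taken over the smaller affine-constrained set. Your closing observation that the equivalences themselves need only weak duality and the affine structure, convexity serving to make KT-admissibility attainable (via \textbf{(F)} and Slater, cf. Proposition \ref{KT-admissible}), is also accurate and consistent with the paper's own remarks around Proposition \ref{KT admissible} and Lemma \ref{lemma4.1}.
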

In Theorem 4.1, the KT-admissible and its coefficient $\lambda_{0}$ plays some crucial role. Thus, we present some sufficient condition ensuring them.
\begin{proposition}\label{KT-admissible}
Assume \emph{\textbf{(H1)-(H3)}}, and suppose  problem \emph{\textbf{(P)}} is convex, finite. Moreover, suppose feasibility condition \emph{(\textbf{F})} holds true, then \emph{\textbf{(P)}} is \emph{KT-admissible} for some $\lambda_{0} \geq0.$
\end{proposition}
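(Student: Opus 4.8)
The plan is to obtain the Kuhn--Tucker coefficient as (the negative of) a subgradient of the optimal-value (perturbation) function attached to the single scalar affine-expectation constraint, exploiting the fact that, although $(\xi,u_1(\cdot))$ ranges over an infinite-dimensional Hilbert space, only one real inequality constraint is being dualized; the relevant duality therefore collapses to one-dimensional convex analysis. Write $g(\xi,u_1(\cdot))\triangleq\beta-\mathbb E\langle\alpha,\xi\rangle$, so that $\mathcal U_{\alpha,\beta}=\{g\leq0\}$ and \textbf{(P)} is the minimization of $J_1$ over $\mathcal U_{\mathcal K}\times\mathcal U_1[0,T]$ subject to $g\leq0$. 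Define the value function
\begin{equation}\nonumber
v(z)\triangleq\inf\Big\{J_1(\xi,u_1(\cdot)):(\xi,u_1(\cdot))\in\mathcal U_{\mathcal K}\times\mathcal U_1[0,T],\ g(\xi,u_1(\cdot))\leq z\Big\},\qquad z\in\mathbb R.
\end{equation}
Relaxing the constraint cannot increase the infimum, so $v$ is nonincreasing, and $v(0)=\mu_p$ since $\mathcal U_{\mathcal K}\cap\{g\leq0\}=\mathcal U(\mathcal K,\alpha,\beta)$.

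First I would record three facts about $v$ near the origin. \emph{(i) Convexity:} because $J_1$ is convex (hypothesis), $g$ is affine, and $\mathcal U_{\mathcal K}\times\mathcal U_1[0,T]$ is convex, the standard infimal-projection argument shows that $v$ is convex on $\mathbb R$. \emph{(ii) $0\in\operatorname{int}(\operatorname{dom}v)$:} feasibility \textbf{(F)}, specifically $\beta<h_{\mathcal K}^*(\alpha)=\sup_{x\in\mathcal K}\langle\alpha,x\rangle$, furnishes $y\in\mathcal K$ with $\langle\alpha,y\rangle>\beta$; taking the deterministic $\xi_0\equiv y$ (which lies in $\mathcal U_{\mathcal K}$) and $u_1^0\equiv0$ yields a Slater point with $g(\xi_0,u_1^0)=-\varepsilon$, where $\varepsilon\triangleq\langle\alpha,y\rangle-\beta>0$. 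Hence $v(z)\leq J_1(\xi_0,u_1^0)<+\infty$ for every $z\geq-\varepsilon$, so $\operatorname{dom}v\supset[-\varepsilon,+\infty)$ and $0$ is interior. \emph{(iii) Properness near $0$:} finiteness of \textbf{(P)} gives $v(0)=\mu_p>-\infty$; since a convex function attaining $-\infty$ at an interior point of its domain would equal $-\infty$ throughout that interior, we conclude $v>-\infty$ on $\operatorname{int}(\operatorname{dom}v)$ and $v(0)\in\mathbb R$.

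A finite convex function on $\mathbb R$ is subdifferentiable at every interior point of its domain, so $\partial v(0)\neq\emptyset$; choose $s\in\partial v(0)$ and set $\lambda_0\triangleq-s$. Because $v$ is nonincreasing, $s\leq0$, whence $\lambda_0\geq0$. It remains to verify $K(\lambda_0)=\mu_p$. Weak duality, namely the inequality $K(\lambda_0)\leq\mu_d\leq\mu_p$ of Proposition \ref{KT admissible}, supplies one direction. For the reverse, fix any $(\xi,u_1(\cdot))\in\mathcal U_{\mathcal K}\times\mathcal U_1[0,T]$ and put $z=g(\xi,u_1(\cdot))$; this pair is admissible for the $z$-perturbed problem, so $v(z)\leq J_1(\xi,u_1(\cdot))$, while the subgradient inequality gives $v(z)\geq v(0)+s z=\mu_p-\lambda_0 g(\xi,u_1(\cdot))$. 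Combining these,
\begin{equation}\nonumber
L(\lambda_0;\xi,u_1(\cdot))=J_1(\xi,u_1(\cdot))+\lambda_0\,g(\xi,u_1(\cdot))\geq\mu_p,
\end{equation}
and taking the infimum over $\mathcal U_{\mathcal K}\times\mathcal U_1[0,T]$ yields $K(\lambda_0)\geq\mu_p$. Thus $-\infty<K(\lambda_0)=\mu_p$, i.e. $\lambda_0$ is a KT-coefficient and \textbf{(P)} is KT-admissible.

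The main obstacle I anticipate is not the one-dimensional subdifferentiability step (which is elementary once domain interiority is secured) but the careful verification that the Slater point delivered by \textbf{(F)} genuinely lies in the admissible set and strictly relaxes the constraint in the infinite-dimensional setting, together with the confirmation that $v$ inherits convexity and stays proper near $0$ despite the pointwise constraint $\mathcal U_{\mathcal K}$ being retained inside the inner infimum. Both points rest on Lemma \ref{property of expectation} (so that deterministic Slater points transfer correctly to $\mathbb E\xi$) and on the finiteness hypothesis, which is precisely what prevents $v$ from collapsing to $-\infty$ on a neighbourhood of the origin.
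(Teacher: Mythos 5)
Your proof is correct, and it reaches the conclusion by a genuinely more self-contained route than the paper does. The paper's own proof is a single sentence: condition (\textbf{F}) yields the Slater qualification, and KT-admissibility of a finite convex program then follows by citing \cite[Corollary 28.2.1]{Rockafellar}. What you have written out is, in effect, the proof of that cited corollary specialized to the present problem: the KT-coefficient is manufactured as the negative of a subgradient at $0$ of the scalar perturbation function $v(z)$, whose convexity follows by infimal projection, whose finiteness at $0$ is exactly the finiteness hypothesis on \textbf{(P)}, and whose domain contains a neighbourhood of $0$ because the deterministic Slater point $\xi_0\equiv y$ (with $\langle\alpha,y\rangle>\beta$, available since $\beta<h^{*}_{\mathcal K}(\alpha)$, and lying in $\mathcal U_{\mathcal K}$) strictly relaxes the dualized constraint; the subgradient inequality combined with weak duality (Proposition \ref{KT admissible}) then forces $-\infty<K(\lambda_0)=\mu_p$. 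Your route buys two things. First, it is dimension-free: since only the single scalar constraint is dualized, all of the convex analysis happens on $\mathbb R$, so the argument applies verbatim to \textbf{(P)} posed over infinite-dimensional $L^2$ spaces, whereas \cite[Corollary 28.2.1]{Rockafellar} is stated for programs on $\mathbb R^n$ and, strictly speaking, needs exactly the unpacking you supply. Second, it makes visible that only the upper inequality $\beta<h^{*}_{\mathcal K}(\alpha)$ in (\textbf{F}) is actually used for KT-admissibility. The one convention you share with the paper, and might state explicitly, is reading ``\textbf{(P)} is convex'' as convexity of $J_1$ on all of $\mathcal U_{\mathcal K}\times\mathcal U_1[0,T]$ rather than merely on $\mathcal U(\mathcal K,\alpha,\beta)\times\mathcal U_1[0,T]$: your infimal-projection step needs this, it is equivalent to the paper's notion through the quadratic-form criterion (nonnegativity of $J_1$ on $\mathcal U_{\widetilde{\mathcal K}}\times\mathcal U_1[0,T]$, since differences of elements of $\mathcal U_{\mathcal K}$ also lie in $\mathcal U_{\widetilde{\mathcal K}}$), and the paper makes the same identification when it asserts in Lemma \ref{th 1L general LGL} that convexity of \textbf{(P)} renders the Lagrangian convex over $\mathcal U_{\mathcal K}\times\mathcal U_1[0,T]$.
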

\begin{proof}When (F) holds true, then (P) satisfies the Slater qualification condition hence it is also KT-admissible by \cite[Corollary 28.2.1]{Rockafellar}, considering (P) is finite and convex. Hence the result.
\end{proof}Noticing assumption (F) is crucial in above and the following example indicates it can usually be expected. We just present its scalar case for illustration, and the vector case can be constructed similarly.
\begin{example}
In case $n=1,$ suppose $\frac{\beta}{\alpha}\in \mathcal K^o$, where $\mathcal K^o$ is the interior of $\mathcal K$.
Then, \emph{(\textbf{F})} holds.
\end{example}
Introduce the following assumption:
\begin{description}
	\item[(H4)] $G_1>0$. $ R^1_{11}(\cdot)$ is invertible and $(R^1_{11}(\cdot))^{-1}\in L^\infty_{\mathbb F}(0,T;\mathbb R^{m_1})$.
\end{description}
\begin{lemma}\label{th 1L general LGL}
Let \textbf{\emph{(H1)}}-\textbf{\emph{(H4)}} hold and \emph{\textbf{(P)}} is convex. Then, \emph{\textbf{(KT)}} parameterized by coefficient $\lambda\geq 0$ is \emph{(}uniquely\emph{)} solvable iff the following BFSDEs \begin{equation*}\label{general case Hamiltonian system}
\textbf{\emph{(BFSDE-1)}}:\left\{\begin{aligned}	
&dg=-\Big[A^\top  g-Q_1\bar X-Q_2h\Big]ds-\Big[C^\top g-S_1\bar Z-S_2q\Big]dW(s),\\
&d\bar Y=\Big[-A^\top \bar Y+Q_2\bar  X\Big]ds+\Big[-C^\top \bar Y+S_2\bar  Z\Big]dW(s),\\
&d\bar X=\Big[A\bar  X+B_1(R^1_{11})^{-1}B_1^\top g+B_2(R^2_{22})^{-1}B_2^\top\bar Y+C\bar Z\Big]ds+ \bar ZdW(s),\\
&dh=\Big[ Ah+B_2(R^2_{22})^{-1}B^\top_2 g+ Cq\Big]ds+qdW(s),\\
& g(0)=H_1\bar X(0)+ H_2 h(0),\quad \bar Y(0)=H_2\bar  X(0),\\
& \bar  X(T)=\text{Proj}_{\mathcal K}\Big[G_1^{-1}(-g(T)+\lambda\alpha)\Big],\quad h(T)=0,
\end{aligned}\right.\end{equation*}
admits a \emph{(}unique\emph{)} solution $(\bar Y,g,\bar X,\bar Z,h,q) \in  L^2_{\mathbb F}(\Omega;C([0,T];\mathbb R^n))\times L^2_{\mathbb F}(\Omega;C([0,T];\mathbb R^n))\times L^2_{\mathbb F}(0,T;\mathbb R^n)\times L^2_{\mathbb F}(\Omega;C([0,T];\mathbb R^n))\times L^2_{\mathbb F}(\Omega;C([0,T];\mathbb R^n))\times L^2_{\mathbb F}(0,T;\mathbb R^n)$, where $\text{Proj}_{\mathcal K}(\cdot)$ is the projection mapping from $\mathbb R^n$ to closed-convex set $\mathcal K$ under the norm $|x|^2_{G_1}\triangleq\langle G_1^\frac{1}{2}x,G_1^\frac{1}{2}x\rangle$. In this case, the \emph{(}unique\emph{)} minimizer $(\bar \xi,\bar u_1(\cdot))$ to \textbf{\emph{(KT)}} with coefficient $\lambda$ is given by$$(\bar \xi, \bar u_1(\cdot))=\Big(\text{Proj}_{\mathcal K}\Big[G_1^{-1}(-g(T)+\lambda\alpha)\Big], \quad (R^1_{11}(\cdot))^{-1}B_1^\top(\cdot) g(\cdot)\Big).$$
\end{lemma}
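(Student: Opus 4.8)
The plan is to treat \textbf{(KT)} as a convex minimization over $\mathcal U_{\mathcal K}\times\mathcal U_1[0,T]$ and to convert its first-order optimality system into \textbf{(BFSDE-1)} via the operator representation of $J_1$. First I would note that, since \textbf{(P)} is convex and the extra Lagrange term $\lambda(\beta-\mathbb E\langle\alpha,\xi\rangle)$ is affine in $\xi$, the functional $L(\lambda;\cdot,\cdot)$ is convex on $\mathcal U_{\mathcal K}\times\mathcal U_1[0,T]$; the admissible difference directions $\mathcal U_{\widetilde{\mathcal K}}\times\mathcal U_1[0,T]$ are the same as for \textbf{(P)}, so convexity transfers. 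Hence, exactly as in Lemma \ref{optimal-leader-1}, a pair $(\bar\xi,\bar u_1)$ minimizes \textbf{(KT)} iff it satisfies the first-order regularity condition, which splits (because $u_1$ ranges over the whole space while $\xi$ is confined to $\mathcal U_{\mathcal K}$) into a stationarity equation in $u_1$ and a variational inequality in $\xi$:
\begin{equation*}
\mathcal M_2(\bar u_1)+\mathcal M_0(\bar\xi)=0,\qquad \mathbb E\big\langle \mathcal M_1(\bar\xi)+\mathcal M_0^{*}(\bar u_1)-\lambda\alpha,\ \xi-\bar\xi\big\rangle\geq0\quad\forall\,\xi\in\mathcal U_{\mathcal K},
\end{equation*}
where the $-\lambda\alpha$ term is the Fréchet derivative of the Lagrange penalty added to \eqref{Frechet 1}.

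Next I would make these two conditions explicit through the adjoint systems. By linearity of the leader dynamics \eqref{state equation-leader} and of \eqref{adjoint-1}--\eqref{adjoint-2}, I superpose \eqref{adjoint-1} (carrying $\bar u_1$, with $\xi=0$) and \eqref{adjoint-2} (carrying $\bar\xi$, with $u_1=0$), setting $g=g_1+g_2$, $h=h_1+h_2$, $\bar X=X_1+X_2$, $\bar Z=Z_1+Z_2$, $q=q_1+q_2$, $\bar Y=Y_1+Y_2$. The state block then becomes the $(\bar Y,\bar X,\bar Z)$ part of \textbf{(BFSDE-1)} with $\bar X(T)=\bar\xi$, and the adjoint block becomes the $(g,h,q)$ part; the mixed conditions $\bar Y(0)=H_2\bar X(0)$, $g(0)=H_1\bar X(0)+H_2h(0)$, $h(T)=0$ follow by adding those of \eqref{adjoint-1}--\eqref{adjoint-2}. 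Using $\mathcal M_2(u_1)=R^1_{11}u_1-B_1^\top g_1$, $\mathcal M_0(\xi)=-B_1^\top g_2$, $\mathcal M_1(\xi)=G_1\xi+g_2(T)$ and $\mathcal M_0^{*}(u_1)=g_1(T)$ (Proposition \ref{presentation of cost-leader} together with the adjoint computation), the stationarity equation reads $R^1_{11}\bar u_1-B_1^\top g=0$, so under \textbf{(H4)} one obtains $\bar u_1=(R^1_{11})^{-1}B_1^\top g$, exactly the drift substitution in the $\bar X$-equation of \textbf{(BFSDE-1)} and the claimed feedback form. Likewise $\mathcal M_1(\bar\xi)+\mathcal M_0^{*}(\bar u_1)-\lambda\alpha=G_1\bar\xi+g(T)-\lambda\alpha$, so the variational inequality becomes $\mathbb E\langle G_1\bar\xi+g(T)-\lambda\alpha,\ \xi-\bar\xi\rangle\geq0$ for all $\xi\in\mathcal U_{\mathcal K}$.

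The step I expect to be the main obstacle is identifying this $L^2$-variational inequality with the \emph{pointwise} projection terminal condition $\bar\xi=\text{Proj}_{\mathcal K}[G_1^{-1}(-g(T)+\lambda\alpha)]$. Writing $\eta\triangleq G_1^{-1}(\lambda\alpha-g(T))\in L^2_{\mathcal F_T}(\Omega;\mathbb R^n)$ (well-defined since $G_1>0$ by \textbf{(H4)}), the inequality is precisely the optimality condition for minimizing $\mathbb E|\xi-\eta|^2_{G_1}$ over the decomposable set $\mathcal U_{\mathcal K}=L^2_{\mathcal F_T}(\Omega;\mathcal K)$, while the $G_1$-weighted projection is characterized pointwise by $\langle G_1(\text{Proj}_{\mathcal K}(x)-x),y-\text{Proj}_{\mathcal K}(x)\rangle\geq0$ for all $y\in\mathcal K$. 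I would argue that, because the constraint is pointwise and $\mathcal K$ is closed-convex, the $\omega$-wise minimizer $\bar\xi(\omega)=\text{Proj}_{\mathcal K}[\eta(\omega)]$ is $\mathcal F_T$-measurable (continuity of the projection) and square-integrable (nonexpansiveness), hence lies in $\mathcal U_{\mathcal K}$; integrating the pointwise inequality yields the $L^2$-inequality, and strict convexity of $\mathbb E|\xi-\eta|^2_{G_1}$ forces uniqueness, giving the stated formula. This is where the specific structure of $\mathcal U_{\mathcal K}$ (as opposed to the expectation constraint, already dualized into $\lambda\alpha$) is genuinely used.

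Finally I would assemble the equivalence. Collecting the superposed state/adjoint equations, the feedback $\bar u_1=(R^1_{11})^{-1}B_1^\top g$ inserted into the $\bar X$-drift, and the projection terminal condition reproduces exactly \textbf{(BFSDE-1)}; conversely, any solution of \textbf{(BFSDE-1)} defines $(\bar\xi,\bar u_1)$ through the displayed formulas satisfying the first-order system, hence (by convexity of \textbf{(KT)}) minimizing $L(\lambda;\cdot,\cdot)$. Since the map from a minimizer $(\bar\xi,\bar u_1)$ to the full tuple $(\bar Y,g,\bar X,\bar Z,h,q)$ is a bijection---the state $(\bar Y,\bar X,\bar Z)$ being determined by $(\bar\xi,\bar u_1)$ via the uniquely solvable system of Corollary \ref{coro1}, and the adjoints $(g,h,q)$ being the unique solution of the associated linear equations---uniqueness of the minimizer is equivalent to uniqueness of the \textbf{(BFSDE-1)} solution, which yields the ``(uniquely) solvable iff (uniquely) solvable'' statement. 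The regularity class of the tuple then follows from the standard $L^2$-estimates for linear backward-forward systems.
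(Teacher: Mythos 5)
Your proposal is correct and follows essentially the same route as the paper's own proof: convexity of \textbf{(KT)} inherited from \textbf{(P)}, the first-order stationarity/variational-inequality system as in Lemma \ref{optimal-leader-1}, superposition of the adjoint systems \eqref{adjoint-1}--\eqref{adjoint-2} to assemble \textbf{(BFSDE-1)}, and identification of the $L^2$ variational inequality with the pointwise $G_1$-weighted projection. The only cosmetic difference is in that last step: the paper substitutes the test point $\xi_1=\text{Proj}_{\mathcal K}\big[G_1^{-1}(-g(T)+\lambda\alpha)\big]$ into the variational inequality and invokes the pointwise projection inequality (citing \cite{HHL2017}) to get $\mathbb E\big|\text{Proj}_{\mathcal K}[G_1^{-1}(-g(T)+\lambda\alpha)]-\bar\xi\big|^2_{G_1}\leq 0$, whereas you argue via uniqueness of the minimizer of the strictly convex functional $\xi\mapsto\mathbb E|\xi-\eta|^2_{G_1}$ over $\mathcal U_{\mathcal K}$ --- the same underlying fact, presented differently.
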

\begin{proof} Note that {(P)} is convex, then for any $\lambda \geq0 ,$ the Lagrange functional $L(\lambda;\xi,u_1(\cdot))$ thus (KT) are also convex.
Similar to Proposition \ref{presentation of cost-leader}, we have that
\begin{equation}\nonumber
L(\lambda; \xi,u_1(\cdot))=\frac{1}{2}\mathbb E\Big[\langle\mathcal M_2(u_1)(\cdot),u_1(\cdot)\rangle+\langle\mathcal M_1(\xi)-2\lambda\alpha,\xi\rangle+2\langle\xi,\mathcal M_0^*(u_1(\cdot))\rangle+2\lambda \beta  \Big].
\end{equation}
Consequently, similar to Lemma \ref{optimal-leader-1}, problem (KT) is solvable iff there exists a pair $(\xi^{*}, u_{1}^*(\cdot))$ satisfying
\begin{equation}\label{eq:27}
\left\{\begin{aligned}
&\langle \mathcal M_1(\xi^*)-\lambda\alpha+\mathcal M_0^{*}(u_1^*), \xi^*-\xi_1 \rangle \leq 0, \quad \text{$\forall\xi_1 \in \mathcal{U}_{\mathcal{K}}$},\\
&\mathcal M_2(u_1^*)+\mathcal M_0(\xi^*)=0.
\end{aligned}\right.
\end{equation}
Let $(\bar\xi,\bar u_1(\cdot))$ be an optimal control,
by \eqref{eq:27}, we have
\begin{equation}\label{e state equation 2}
\left\{\begin{aligned}
&\mathbb E\langle-g_1(T)-g_2(T)+\lambda\alpha-G_1\bar\xi ,\xi_1-\bar\xi\rangle\leq 0, \quad \forall \xi_1 \in \mathcal{U}_{\mathcal{K}},\\
&R^1_{11}(s)\bar u_1(s)-B_1^\top(s)g_1(s)-B_1^\top(s)g_2(s)=0,
\end{aligned}\right.
\end{equation}
where $(Y_1,g_1,X_1,Z_1,h_1,q_1)$ and $(Y_2,g_2,X_2,Z_2,h_2,q_2)$ are the solutions of \eqref{adjoint-1} and \eqref{adjoint-2} corresponding to $(\bar \xi,\bar u_1)$, respectively. Let
\begin{equation*}\begin{aligned}
\bar Y=Y_1+Y_2,\quad\bar X=X_1+X_2,\quad\bar Z=Z_1+Z_2,\quad g=g_1+g_2,\quad h=h_1+h_2,\quad q=q_1+q_2,
\end{aligned}\end{equation*}
and it follows that $(\bar Y,g,\bar X,\bar Z,h,q)$ satisfying (BFSDE-1).
Under (H4), it follows from \eqref{e state equation 2} that
$$\bar u_1(\cdot)=(R^1_{11}(\cdot))^{-1}B_1^\top(\cdot) g(\cdot),$$
and
\begin{equation}\nonumber
\mathbb E\Big\langle G_1^{\frac{1}{2}}[G_1^{-1}(-g(T)+\lambda\alpha)-\bar\xi],G_1^{\frac{1}{2}}(\xi_1-\bar\xi)\Big\rangle\leq 0, \quad\forall \xi_1\in\mathcal U_{\mathcal K}.
\end{equation}
Note that $|\cdot|_{G_1}$ is equivalent to the Euclidean norm.
Let $\xi_1=\text{\emph{Proj}}_{\mathcal K}[G_1^{-1}(-g(T)+\lambda\alpha)]$, then by Propositions 4.1 and 4.3 in \cite{HHL2017}, we have
	\begin{equation}\nonumber
	\begin{aligned}
	&\mathbb E\Big|\text{\emph{Proj}}_{\mathcal K}[G_1^{-1}(-g(T)+\lambda\alpha)]-\bar\xi\Big|^2_{G_1}
	\leq \mathbb E\Big\langle G_1^{\frac{1}{2}}[G_1^{-1}(-g(T)+\lambda\alpha)-\bar\xi],G_1^{\frac{1}{2}}(\xi_1-\bar\xi)\Big\rangle
	\leq 0.
	\end{aligned}
	\end{equation}
	Thus, we get
$$
\bar\xi=\text{\emph{Proj}}_{\mathcal K}\Big[G_1^{-1}(-g(T)+\lambda\alpha)\Big].
$$
The uniqueness follows from the uniqueness of the solution of (BFSDE-1).
\end{proof}
Combing Theorem \ref{relation between BFLQ and LBFLQ}, Proposition \ref{KT-admissible} and Lemma \ref{th 1L general LGL}, we have
\begin{theorem}\label{th 1L general GL lambda}
Let \emph{\textbf{(H1)-(H4)}} hold. Suppose \emph{(\textbf{F})} hold and \emph{\textbf{(P)}} is convex and finite, then \emph{\textbf{(P)}} is \emph{KT-admissible} with some coefficient $\lambda_0 \geq0$. Moreover, \emph{\textbf{(P)}} is solvable with an  optimal solution $(\bar\xi,\bar u_1(\cdot))$ iff there exist a  $7$-tuple $(\lambda;\bar Y,g,\bar X,\bar Z,h,q)$ satisfying both \emph{\textbf{(BFSDE-1)}} and \emph{{\textbf{(KKT)}}} system:
\begin{equation}\label{e1L general case two lambda}\left\{\begin{aligned}
&\text{\emph{complimentary slackness:}}  \quad \quad  \lambda\Big(\beta-\mathbb E\Big\langle\alpha, \text{Proj}_{\mathcal K}\Big[G_1^{-1}(-g(T)+\lambda\alpha)\Big] \Big\rangle\Big)=0;\\
& \emph{\text{primal- and dual-constraint:}} \quad \lambda \geq0;  \quad \beta \leq \mathbb E\Big\langle\alpha, \text{Proj}_{\mathcal K}\Big[G_1^{-1}(-g(T)+\lambda\alpha)\Big]\Big\rangle.
\end{aligned}\right.\end{equation}
In this case, $\lambda$ is a  \emph{KT-coefficient} of \emph{\textbf{(P)}}, and an  optimal solution to problem \emph{\textbf{(P)}} is given by$$(\bar\xi,\bar u_1(\cdot))=\Big(\text{Proj}_{\mathcal K}\Big[G_1^{-1}(-g(T)+\lambda\alpha)\Big],(R^1_{11}(\cdot))^{-1}B_1^\top(\cdot) g(\cdot)\Big).$$
\end{theorem}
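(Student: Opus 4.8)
The plan is to assemble the statement from three earlier results: the abstract KT-admissibility guaranteed under the feasibility assumption (Proposition \ref{KT-admissible}), the primal--dual equivalence (Theorem \ref{relation between BFLQ and LBFLQ}), and the concrete BFSDE characterization of the penalized subproblem (Lemma \ref{th 1L general LGL}). No new a priori estimate is needed; the real work is to thread the single Lagrange multiplier through all three so that the abstract KKT conditions collapse onto the explicit projection-type system stated here.

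First I would dispatch the KT-admissibility claim directly. Since we assume \textbf{(H1)}--\textbf{(H4)}, that \textbf{(P)} is convex and finite, and that feasibility \textbf{(F)} holds, Proposition \ref{KT-admissible} immediately yields a coefficient $\lambda_0 \geq 0$ with $-\infty < K(\lambda_0) = \mu_p$. This also activates the hypotheses of Theorem \ref{relation between BFLQ and LBFLQ}, whose equivalence \textbf{(i)}$\Leftrightarrow$\textbf{(ii)} I would invoke next: \textbf{(P)} is solvable with minimizer $(\bar\xi,\bar u_1(\cdot))$ and KT-admissible with $\lambda_0$ precisely when $(\lambda_0;\bar\xi,\bar u_1(\cdot))$ satisfies the abstract KKT system --- primal feasibility $\beta \le \mathbb E\langle \alpha,\bar\xi\rangle$, complementary slackness $\lambda_0(\beta-\mathbb E\langle\alpha,\bar\xi\rangle)=0$, and Lagrangian-minimality $K(\lambda_0)=L(\lambda_0;\bar\xi,\bar u_1(\cdot))$.

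The decisive step is to convert the last condition, $K(\lambda_0)=L(\lambda_0;\bar\xi,\bar u_1(\cdot))$, which merely asserts that $(\bar\xi,\bar u_1)$ minimizes \textbf{(KT)} at coefficient $\lambda_0$, into the concrete \textbf{(BFSDE-1)} system. This is exactly what Lemma \ref{th 1L general LGL} provides under \textbf{(H4)}: solvability of \textbf{(KT)} at coefficient $\lambda$ is equivalent to solvability of \textbf{(BFSDE-1)} carrying that same $\lambda$ in its terminal condition, and the minimizer is then forced to be $\bar\xi = \text{Proj}_{\mathcal K}[G_1^{-1}(-g(T)+\lambda\alpha)]$ and $\bar u_1(\cdot) = (R^1_{11})^{-1}B_1^\top g$. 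Substituting this projection formula for $\bar\xi$ into the two remaining abstract KKT conditions turns them verbatim into the displayed complementary-slackness and primal/dual-constraint relations, and it supplies the forward states $\bar Y,\bar X,\bar Z$ together with the adjoint processes $g,h,q$, i.e.\ the full $7$-tuple $(\lambda;\bar Y,g,\bar X,\bar Z,h,q)$.

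Both directions then close cleanly. For necessity, solvability plus Proposition \ref{KT-admissible} gives $\lambda_0$, Theorem \ref{relation between BFLQ and LBFLQ} gives the abstract KKT triple, and Lemma \ref{th 1L general LGL} produces the BFSDE solution and the projection representation. For sufficiency, a $7$-tuple solving \textbf{(BFSDE-1)} makes $(\bar\xi,\bar u_1)$ a \textbf{(KT)}-minimizer by Lemma \ref{th 1L general LGL}, so $K(\lambda)=L(\lambda;\bar\xi,\bar u_1)$, and the accompanying KKT relations restore the hypotheses of Theorem \ref{relation between BFLQ and LBFLQ}\textbf{(ii)}, whence \textbf{(P)} is solvable with the stated optimal pair. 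The one point demanding care --- and the main obstacle --- is ensuring the multiplier is genuinely the same object in all three results: the $\lambda$ hard-wired into the terminal condition of \textbf{(BFSDE-1)} through the projection, the $\lambda_0$ certifying KT-admissibility, and the $\lambda$ enforcing complementary slackness must be identified as one coefficient, since the coupling of $\lambda$ inside $\text{Proj}_{\mathcal K}[G_1^{-1}(-g(T)+\lambda\alpha)]$ with $\lambda(\beta-\mathbb E\langle\alpha,\cdot\rangle)=0$ is exactly what makes this a genuine KKT system rather than two decoupled conditions.
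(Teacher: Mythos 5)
Your proposal is correct and follows essentially the same route as the paper, which proves this theorem precisely by combining Proposition \ref{KT-admissible} (KT-admissibility under \textbf{(F)}), Theorem \ref{relation between BFLQ and LBFLQ} (the abstract KKT equivalence), and Lemma \ref{th 1L general LGL} (the BFSDE characterization of \textbf{(KT)} with the projection formula). Your added care in identifying the multiplier $\lambda$ as one and the same object across the three results is exactly the point that makes the combination work.
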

As a corollary, we have

\begin{corollary}Let \emph{\textbf{(H1)-(H4)}} and \emph{(\textbf{F})} hold true. Suppose \emph{\textbf{(P)}} is uniformly convex, then it admits a unique optimal solution $(\bar\xi,\bar u_1(\cdot))=\Big(\text{Proj}_{\mathcal K}\Big[G_1^{-1}(-g(T)+\lambda\alpha)\Big],(R^1_{11}(\cdot))^{-1}B_1^\top(\cdot) g(\cdot)\Big)$ with $(\lambda; \bar Y,g,\bar X,\bar Z,h,q)$ is a solution for system \emph{\textbf{(BFSDE-1)}} and \textbf{\emph{(KKT)}} system.
\end{corollary}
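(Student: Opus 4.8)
The plan is to obtain this corollary as a direct concatenation of the unique-solvability result for uniformly convex problems (Proposition \ref{optimal-leader-2}) and the equivalence characterization just established in the preceding theorem. The only genuine task is to confirm that the hypotheses of that theorem are automatically satisfied once uniform convexity is assumed, after which the representation and uniqueness follow without any new estimates.

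First I would note that uniform convexity of \textbf{(P)} trivially implies convexity, and that by the coercivity argument carried out in Proposition \ref{optimal-leader-2} it guarantees a \emph{unique} minimizer $(\bar\xi,\bar u_1(\cdot))$. In particular $\mu_p=J_1(\bar\xi,\bar u_1(\cdot))>-\infty$, so \textbf{(P)} is finite. Consequently the full hypothesis list required by the preceding theorem—(H1)--(H4), the feasibility condition (F), and \textbf{(P)} convex and finite—is in force, and \textbf{(P)} is already known to be solvable.

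Next I would invoke the preceding theorem. Its first assertion gives that \textbf{(P)} is KT-admissible with some coefficient $\lambda_0\ge 0$. Since we have already shown \textbf{(P)} is solvable, the necessity direction of the ``if and only if'' in that theorem supplies a $7$-tuple $(\lambda;\bar Y,g,\bar X,\bar Z,h,q)$ solving \textbf{(BFSDE-1)} together with the \textbf{(KKT)} system, and it identifies the optimal control as $\big(\mathrm{Proj}_{\mathcal K}[G_1^{-1}(-g(T)+\lambda\alpha)],\ (R^1_{11}(\cdot))^{-1}B_1^\top(\cdot)g(\cdot)\big)$, which is exactly the claimed form. Uniqueness of $(\bar\xi,\bar u_1(\cdot))$ is inherited from the strict convexity implied by uniform convexity (as used in Proposition \ref{optimal-leader-2}); observe that no uniqueness is asserted for the auxiliary $7$-tuple itself, only for the optimal pair it represents.

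The main—and essentially the only—obstacle is the verification in the second step that uniform convexity forces \emph{finiteness} of \textbf{(P)}, because the preceding theorem presupposes a finite value $\mu_p$. Once this is settled through the existence of a minimizer, the corollary reduces to a bookkeeping combination of the two earlier results, with no further analysis of the BFSDE system needed.
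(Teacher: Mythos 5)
Your proposal is correct and matches the paper's intended derivation: the paper states this corollary without proof as an immediate consequence of Theorem \ref{th 1L general GL lambda} combined with Proposition \ref{optimal-leader-2}, which is exactly your route (uniform convexity $\Rightarrow$ convexity and, via coercivity, unique solvability, hence finiteness; then the equivalence in Theorem \ref{th 1L general GL lambda} supplies the $7$-tuple and the representation of the optimal pair). Your explicit verification that solvability yields finiteness, and your remark that uniqueness attaches to the optimal pair rather than to the $7$-tuple, are precisely the bookkeeping the paper leaves implicit.
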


\subsection{Some special cases}This subsection will consider two special cases of problem (P) with more detailed analysis.

\subsubsection{Pointwise constraint}
This subsection considers the case with only pointwise constraint $\mathcal U_{\mathcal K}$. In this special case, Problem (P) now assumes the following form
\begin{equation}\nonumber
\begin{aligned}
\text{\textbf{(}\textbf{P}$_1$\textbf{)}: Minimize } \qquad J_1(\xi,u_1(\cdot))\
\text{\qquad subject to }\qquad  \eqref{state equation-leader},\ (\xi,u_1(\cdot))\in\mathcal U_{\mathcal K}\times\mathcal U_1[0,T].
\end{aligned}
\end{equation}
By Lemma \ref{th 1L general LGL}, we have the following result.

\begin{corollary}\label{th general terminal optimal}
Let \textbf{\emph{(H1)}}-\textbf{\emph{(H4)}} hold and \emph{\textbf{(}}\textbf{\emph{P}}$_1$\emph{\textbf{)}} is convex. Then \emph{\textbf{(}}\textbf{\emph{P}}$_1$\emph{\textbf{)}} admits an \emph{(}unique\emph{)} optimal control $(\bar \xi,\bar u_1(\cdot))$ iff the following BFSDEs
\begin{equation*}\textbf{\emph{(BFSDE-2)}}:\label{special case-pointwise constraint}
\left\{\begin{aligned}	
&dg=-\Big[A^\top g-Q_1\bar X-Q_2h\Big]ds-\Big[C^\top g-S_1\bar Z-S_2q\Big]dW(s),\\
&d\bar Y=\Big[-A^\top\bar Y+Q_2\bar  X\Big]ds+\Big[-C^\top \bar Y+S_2\bar  Z\Big]dW(s),\\
&d\bar X=\Big[A\bar  X+B_1(R^1_{11})^{-1}B_1^\top g+B_2(R^2_{22})^{-1}B_2^\top\bar Y+C\bar Z\Big]ds+ \bar ZdW(s),\\
&dh(s)=\Big[ Ah+B_2(R^2_{22})^{-1}B^\top_2 g+ Cq\Big]ds+qdW(s),\\
& g(0)=H_1\bar X(0)+ H_2 h(0),\quad \bar Y(0)=H_2\bar  X(0),\quad\bar  X(T)=\text{Proj}_{\mathcal K}[-G_1^{-1}g(T)],\quad h(T)=0,
\end{aligned}\right.\end{equation*}
admits a \emph{(}unique\emph{)} solution $(\bar Y,g,\bar X,\bar Z ,h,q).$ Moreover, a \emph{(}the\emph{)} minimizer of \emph{\textbf{(}}\textbf{\emph{P}}$_1$\emph{\textbf{)}} is given by
\begin{equation}\label{optimal-case 1}
(\bar \xi,\bar u_1(\cdot))=\Big(\text{Proj}_{\mathcal K}[-G_1^{-1}g(T)],\quad (R^1_{11}(\cdot))^{-1}B_1^\top(\cdot) g(\cdot)\Big).
\end{equation}
\end{corollary}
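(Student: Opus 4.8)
The plan is to observe that $(\textbf{P}_1)$ is nothing but the auxiliary problem $\textbf{(KT)}$ evaluated at Lagrange coefficient $\lambda=0$, and then to specialize Lemma \ref{th 1L general LGL} accordingly. Since $L(0;\xi,u_1(\cdot))=J_1(\xi,u_1(\cdot))$ and both problems minimize over the identical admissible set $\mathcal U_{\mathcal K}\times\mathcal U_1[0,T]$, the two problems literally coincide when $\lambda=0$. The convexity hypothesis imposed on $(\textbf{P}_1)$ is exactly the convexity required in Lemma \ref{th 1L general LGL}, so the lemma applies verbatim. Setting $\lambda=0$ collapses the terminal coupling $\bar X(T)=\text{Proj}_{\mathcal K}[G_1^{-1}(-g(T)+\lambda\alpha)]$ in $\textbf{(BFSDE-1)}$ to $\bar X(T)=\text{Proj}_{\mathcal K}[-G_1^{-1}g(T)]$, which is precisely $\textbf{(BFSDE-2)}$; the minimizer expression \eqref{optimal-case 1} follows identically by substituting $\lambda=0$ into the formula supplied by Lemma \ref{th 1L general LGL}. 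This already establishes the corollary, including the uniqueness assertion, since unique solvability of $(\textbf{P}_1)$ corresponds to unique solvability of $\textbf{(BFSDE-2)}$ through the same correspondence.

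For a self-contained derivation (useful as a sanity check), I would instead argue directly from the first-order optimality condition. Because $(\textbf{P}_1)$ is convex, by the criterion of Lemma \ref{optimal-leader-1} a pair $(\bar\xi,\bar u_1(\cdot))$ is a minimizer iff the variational inequality $\langle\mathcal M_1(\bar\xi)+\mathcal M_0^*(\bar u_1),\,\xi-\bar\xi\rangle\geq0$ holds for every $\xi\in\mathcal U_{\mathcal K}$, together with the stationarity identity $\mathcal M_2(\bar u_1)+\mathcal M_0(\bar\xi)=0$ in the \emph{unconstrained} variable $u_1$. Using the representation of Proposition \ref{presentation of cost-leader} with $g=g_1+g_2$, the stationarity identity reads $R^1_{11}\bar u_1-B_1^\top g=0$, so that $\bar u_1=(R^1_{11})^{-1}B_1^\top g$ under $\textbf{(H4)}$; summing the two adjoint systems \eqref{adjoint-1}--\eqref{adjoint-2} along $\bar Y=Y_1+Y_2$, $\bar X=X_1+X_2$, $\bar Z=Z_1+Z_2$, $h=h_1+h_2$, $q=q_1+q_2$ and inserting this feedback yields the forward-backward dynamics of $\textbf{(BFSDE-2)}$.

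The main technical step is converting the variational inequality in $\xi$ into the closed-form projection. With $\mathcal M_1(\xi)=G_1\xi+g_2(T)$ and $\mathcal M_0^*(u_1)=g_1(T)$, the inequality is equivalent to $\mathbb E\langle G_1^{\frac12}[-G_1^{-1}g(T)-\bar\xi],\,G_1^{\frac12}(\xi-\bar\xi)\rangle\leq0$ for all $\xi\in\mathcal U_{\mathcal K}$, which is exactly the defining characterization of the projection of $-G_1^{-1}g(T)$ onto $\mathcal K$ in the weighted norm $|\cdot|_{G_1}$. Here the positive-definiteness $G_1>0$ from $\textbf{(H4)}$ guarantees that $|\cdot|_{G_1}$ is equivalent to the Euclidean norm, so the projection is well-defined; taking the competitor $\xi=\text{Proj}_{\mathcal K}[-G_1^{-1}g(T)]$ forces $\bar\xi=\text{Proj}_{\mathcal K}[-G_1^{-1}g(T)]$, giving both the terminal condition of $\textbf{(BFSDE-2)}$ and the first component of \eqref{optimal-case 1}. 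The expected obstacle is not any single estimate but rather the bookkeeping in matching the coupled adjoint equations to $\textbf{(BFSDE-2)}$ and in passing between the abstract operator conditions and the pathwise projection identity; both are handled cleanly once one recognizes the whole statement as the $\lambda=0$ instance of Lemma \ref{th 1L general LGL}.
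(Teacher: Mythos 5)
Your proposal is correct and follows the paper's own route: the paper proves this corollary precisely by invoking Lemma \ref{th 1L general LGL} with $\lambda=0$, noting that \textbf{(P$_1$)} coincides with \textbf{(KT)} at zero multiplier, which collapses the terminal condition to $\bar X(T)=\text{Proj}_{\mathcal K}[-G_1^{-1}g(T)]$ and yields \eqref{optimal-case 1}. Your second and third paragraphs merely re-run the proof of that lemma (first-order conditions, stationarity in $u_1$, and the weighted-projection characterization of the variational inequality under $G_1>0$), so they are a sound but redundant sanity check rather than a different argument.
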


\subsubsection{Affine constraint}
This subsection focus on the case with only constraint $\mathcal{U}_{\alpha,\beta}$ for terminal variable $\xi$. In this case, (P) takes the following form:
\begin{equation*}\begin{aligned}
\text{ \textbf{(}\textbf{P}$_2$\textbf{)}: Minimize } \qquad J_1(\xi,u_1(\cdot))
\text{\qquad subject to } \qquad \eqref{state equation-leader},\ (\xi,u_1(\cdot))\in \mathcal U_{\alpha,\beta}\times\mathcal U_1[0,T].
\end{aligned}
\end{equation*}
By Theorem \ref{th 1L general GL lambda}, we have the following result.

\begin{corollary}\label{special case-affine constrait}
Let \emph{\textbf{(H1)-(H4)}} hold  and suppose \emph{\textbf{(}}\textbf{\emph{P}}$_2$\emph{\textbf{)}} is convex and finite, then \emph{\textbf{(}}\textbf{\emph{P}}$_2$\emph{\textbf{)}} is \emph{KT-admissible} with some coefficient $\lambda_0 \geq0$. Moreover, \emph{\textbf{(}}\textbf{\emph{P}}$_2$\emph{\textbf{)}} is solvable with an optimal solution $(\bar\xi,\bar u_1(\cdot))$ iff there exist a  $7$-tuple $(\lambda; g,\bar Y,\bar X,\bar Z ,h,q)$ satisfying the following BFSDEs
 \begin{equation*}\label{e1L special case II}\textbf{\emph{(BFSDE-3)}}:\left\{\begin{aligned}	
&dg=-\Big[A^\top  g-Q_1\bar X-Q_2h\Big]ds-\Big[C^\top g-S_1\bar Z-S_2q\Big]dW(s),\\
&d\bar Y=\Big[-A^\top \bar Y+Q_2\bar  X\Big]ds+\Big[-C^\top \bar Y+S_2\bar  Z\Big]dW(s),\\
&d\bar X=\Big[A\bar  X+B_1(R^1_{11})^{-1}B_1^\top g+B_2(R^2_{22})^{-1}B_2^\top\bar Y+C \bar Z\Big]ds+ \bar ZdW(s),\\
&dh=\Big[ Ah+B_2(R^2_{22})^{-1}B^\top_2 g+ Cq\Big]ds+qdW(s),\\
& g(0)=H_1\bar X(0)+ H_2 h(0),\quad \bar Y(0)=H_2\bar  X(0),\\
& \bar  X(T)=G_1^{-1}(-g(T)+\lambda\alpha),\quad h(T)=0,\\
&\lambda\Big(\beta-\mathbb E\Big\langle\alpha, G_1^{-1}(-g(T)+\lambda\alpha)\Big\rangle\Big)=0, \quad \lambda \geq0,  \quad \beta \leq \mathbb E\Big\langle\alpha, G_1^{-1}(-g(T)+\lambda\alpha)\Big\rangle.
\end{aligned}\right.\end{equation*}
 In this case, $\lambda$ is a  \emph{KT-coefficient} of \emph{\textbf{(}}\textbf{\emph{P}}$_2$\emph{\textbf{)}}, and an optimal solution to problem \emph{\textbf{(}}\textbf{\emph{P}}$_2$\emph{\textbf{)}} is
\begin{equation}\label{optimal-case 2}(\bar\xi,\bar u_1(\cdot))=\Big(G_1^{-1}(-g(T)+\lambda\alpha),(R^1_{11}(\cdot))^{-1}B_1^\top(\cdot) g(\cdot)\Big).
\end{equation}
\end{corollary}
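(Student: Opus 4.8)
The plan is to obtain Corollary \ref{special case-affine constrait} as a direct specialization of Theorem \ref{th 1L general GL lambda} by taking the pointwise constraint set to be the whole space, i.e., setting $\mathcal K = \mathbb R^n$. Since $\mathbb R^n$ is a closed-convex cone, assumption \textbf{(F)} is automatically satisfied (the barrier cone degenerates appropriately and the support functional $h_{\mathbb R^n}^*$ is $+\infty$ off the origin, so the breadth condition $-h_{\mathcal K}^*(-\alpha)<\beta<h_{\mathcal K}^*(\alpha)$ holds vacuously for any $(\alpha,\beta)$), and therefore once \textbf{(P}$_2$\textbf{)} is assumed convex and finite, Proposition \ref{KT-admissible} immediately yields KT-admissibility with some coefficient $\lambda_0\geq 0$. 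This handles the first assertion of the corollary.

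First I would verify that the projection operator collapses under $\mathcal K=\mathbb R^n$. The key observation is that $\text{Proj}_{\mathbb R^n}(\cdot)$ under the $|\cdot|_{G_1}$ norm is simply the identity map, so the terminal condition in \textbf{(BFSDE-1)}, namely $\bar X(T)=\text{Proj}_{\mathcal K}[G_1^{-1}(-g(T)+\lambda\alpha)]$, reduces to $\bar X(T)=G_1^{-1}(-g(T)+\lambda\alpha)$, which is exactly the terminal condition appearing in \textbf{(BFSDE-3)}. All remaining equations in \textbf{(BFSDE-1)} are identical to those in \textbf{(BFSDE-3)}, so the two systems coincide in this special case. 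Correspondingly, the optimal terminal control in Theorem \ref{th 1L general GL lambda}, $\bar\xi=\text{Proj}_{\mathcal K}[G_1^{-1}(-g(T)+\lambda\alpha)]$, becomes $\bar\xi=G_1^{-1}(-g(T)+\lambda\alpha)$, matching \eqref{optimal-case 2}, while the optimal $\bar u_1(\cdot)=(R^1_{11}(\cdot))^{-1}B_1^\top(\cdot)g(\cdot)$ is unchanged.

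Next I would specialize the \textbf{(KKT)} system \eqref{e1L general case two lambda}. Substituting the identity projection into the complementary slackness and primal-dual constraint conditions turns $\mathbb E\langle\alpha,\text{Proj}_{\mathcal K}[G_1^{-1}(-g(T)+\lambda\alpha)]\rangle$ into $\mathbb E\langle\alpha, G_1^{-1}(-g(T)+\lambda\alpha)\rangle$, producing precisely the last line of \textbf{(BFSDE-3)}: $\lambda(\beta-\mathbb E\langle\alpha, G_1^{-1}(-g(T)+\lambda\alpha)\rangle)=0$, $\lambda\geq 0$, and $\beta\leq\mathbb E\langle\alpha, G_1^{-1}(-g(T)+\lambda\alpha)\rangle$. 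Thus the equivalence ``\textbf{(P}$_2$\textbf{)} solvable $\iff$ the $7$-tuple solves \textbf{(BFSDE-3)} together with the KKT conditions'' follows verbatim from the corresponding equivalence in Theorem \ref{th 1L general GL lambda}.

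The only genuinely non-routine point, and thus the main thing to check carefully, is the claim that the pointwise constraint $\mathcal U_{\mathcal K}$ disappears when $\mathcal K=\mathbb R^n$ so that \textbf{(P}$_2$\textbf{)} coincides with the specialization of \textbf{(P)}; this requires confirming that $\mathcal U_{\mathbb R^n}=L^2_{\mathcal F_T}(\Omega;\mathbb R^n)$ imposes no restriction and that the Slater/feasibility machinery of Section \ref{feasibility of constraint} still applies in this degenerate cone case. Once this identification is made, no new estimates or analysis are needed: the corollary is a transcription of Theorem \ref{th 1L general GL lambda} with the projection replaced by the identity. I would therefore present the proof as a short deduction invoking Theorem \ref{th 1L general GL lambda}, noting only that $\mathcal K=\mathbb R^n$ forces $\text{Proj}_{\mathcal K}=\text{Id}$ and that \textbf{(F)} holds trivially, and then reading off \eqref{optimal-case 2} and \textbf{(BFSDE-3)} from the general result.
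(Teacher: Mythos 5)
Your proposal is correct and follows essentially the same route as the paper, whose proof of this corollary is precisely the one-line specialization of Theorem \ref{th 1L general GL lambda} to $\mathcal K=\mathbb R^n$, under which $\text{Proj}_{\mathcal K}$ is the identity, $\mathcal U(\mathcal K,\alpha,\beta)=\mathcal U_{\alpha,\beta}$, and \textbf{(BFSDE-1)} together with the \textbf{(KKT)} system collapses to \textbf{(BFSDE-3)}. The only imprecision is your claim that \textbf{(F)} holds for any $(\alpha,\beta)$: it fails in the degenerate case $\alpha=0$ (where $h^*_{\mathbb R^n}(0)=0$), but for $\alpha\neq 0$ both $h^*_{\mathbb R^n}(\alpha)$ and $h^*_{\mathbb R^n}(-\alpha)$ are $+\infty$ and \textbf{(F)} indeed holds, which is exactly why the corollary can omit that hypothesis.
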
For Corollaries \ref{th general terminal optimal} and \ref{special case-affine constrait}, it follows that {(BFSDE-2)} and {(BFSDE-3)} play some key roles in determining the optimal solution. Specifically, {(BFSDE-2)} is a \emph{nonlinear} (because of the projection operator) fully-coupled BFSDEs; {(BFSDE-3)} is a linear but constrained (because of {(KKT)} condition) fully-coupled BFSDEs. Both are non-standard in BFSDEs theory. Thus, it remains a challenge to show the global solvability of them, together with {(SRE-1)}, {(SRE-2)}. To this end, we study the wellposedness  (existence, uniqueness) of {(BFSDE-2)}, {(BFSDE-3)}, Riccati equations in Sections \ref{Solv-(FBSDE-2)}, \ref{Solv-(FBSDE-3)} and \ref{Riccati equation}, respectively.

\section{Existence and uniqueness of BFSDEs and Riccati equations}\label{wellposedness of FBSDEs}

\subsection{Discounting method}
In this subsection, we will use the discounting method (see \cite{PT1999}) to study the wellposedness of BFSDEs.
To begin with, we first give some results for general nonlinear mean-field BFSDEs:

\begin{equation}\label{eA2 1}\left\{\begin{aligned}
&dY(s)=b(s,Y(s),X(s),Z(s),\mathbb EZ(s))ds+\sigma(s,Y(s),X(s),Z(s))dW(s),\\
&-dX(s)=f(s,Y(s),X(s),Z(s),\mathbb EZ(s))ds-ZdW(s),\\
&Y(0)=h(X(0)),\quad X(T)=g(Y(T),\mathbb EY(T)).
\end{aligned}\right.\end{equation}

Accordingly, the following assumptions are imposed:\\
\textbf{(H5)} There exist $\rho_1,\rho_2\in\mathbb R$ and positive constants $ k_i,i=1,2,\cdots,10$ such that for all $s\in[0,T]$,
$y,
y_1,y_2,\bar y_1,\bar y_2\in\mathbb R^{n_1}$, $x,x_1,x_2,z,z_1,z_2,\bar z_1,\bar z_2\in\mathbb R^{n_2}$ a.s.,
\begin{description}
\item[(i)] $\langle b(s,y_1,x,z,\bar z)-b(s,y_2,x,z,\bar z), y_1-y_2\rangle\leq \rho_1 |y_1-y_2|^2$, \\$|b(s,y,x_1,z_1,\bar z_1)-b(s,y,x_2,z_2,\bar z_2)|\leq k_1 |x_1-x_2|+k_2|z_1-z_2|+k_3|\bar z_1-\bar z_2|$,
\item[(ii)] $\langle f(s,y,x_1,z,\bar z)-f(s,y,x_2,z,\bar z), x_1-x_2\rangle\leq \rho_2 |x_1-x_2|^2$,\\ $|f(s,y_1,x,z_1,\bar z_1)-f(s,y_2,x,z_2,\bar z_2)|\leq k_4 | y_1-y_2|+k_5|z_1-z_2|+k_6|\bar z_1-\bar z_2|$,
\item[(iii)] $|\sigma(s,y_1,x_1,z_1)-\sigma(s,y_2,x_2,z_2)|^2\leq k^2_7 | y_1-y_2|^2+k^2_8|x_1-x_2|^2+k^2_9|z_1-z_2|^2$,
\item[(iv)] $|h(x_1)-h(x_2)|\leq k_{10}|x_1-x_2|$, $|g(y_1,\bar y_1)-g(y_2,\bar y_2)|\leq k_{11}|y_1-y_2|+k_{12}|\bar y_1-\bar y_2|,$
\item[(v)] $\mathbb E\left\{|h(0)|^2+|g(0,0)|^2+\int^T_0(|b(s,0,0,0,0)|^2+|\sigma(s,0,0,0)|^2+|f(s,0,0,0,0)|^2)ds\right\}<\infty$.
\end{description}

Now we present the main result of this subsection on wellposedness of mean-field BFSDEs \eqref{eA2 1}. Its proof is in the appendix.
\begin{theorem}\label{th A 2}
Under \textbf{\emph{(H5)}}, there exists a $\delta_1>0$, which depends on $\rho_1,\rho_2,T,k_i,i=5,6,7$, such that when $k_i\in [0, \delta_1)$, $i=1,2,3,4,8,9,10$, there exists a unique adapted solution $(Y(\cdot),X(\cdot),Z(\cdot))\in L^2_{\mathbb F}(0,T;\mathbb R^{n_1})\times L^2_{\mathbb F}(0,T;\mathbb R^{n_2})\times L^2_{\mathbb F}(0,T;\mathbb R^{n_2})$
to mean-field BFSDEs \eqref{eA2 1}. Further, if $2(\rho_1 + \rho_2) < -2k^2_5-2k^2_6-k_7^2$, there exists a $\delta_2 > 0$,
which depends on $\rho_1,\rho_2,k_i,i=5,6,7$, and is independent of $T$, such that when $k_i\in [0, \delta_1)$, $i=1,2,3,4,8,9,10$, there exists a unique adapted solution $(Y(\cdot),X(\cdot),Z(\cdot))\in L^2_{\mathbb F}(0,T;\mathbb R^{n_1})\times L^2_{\mathbb F}(0,T;\mathbb R^{n_2})\times L^2_{\mathbb F}(0,T;\mathbb R^{n_2})$ to mean-field BFSDEs \eqref{eA2 1}.
\end{theorem}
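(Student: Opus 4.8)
The plan is to realize the solution of \eqref{eA2 1} as the fixed point of a decoupling map $\Phi$ on the Banach space $L^2_{\mathbb F}(0,T;\mathbb R^{n_1})\times L^2_{\mathbb F}(0,T;\mathbb R^{n_2})\times L^2_{\mathbb F}(0,T;\mathbb R^{n_2})$ equipped with a discounted (exponentially weighted) norm
\[
\|(Y,X,Z)\|_\beta^2=\mathbb E\int_0^T e^{\beta s}\big(|Y(s)|^2+|X(s)|^2+|Z(s)|^2\big)\,ds,
\]
where the discount rate $\beta$ is tuned to the monotonicity constants $\rho_1,\rho_2$. Given an input triple $(Y,X,Z)$, I would first freeze $(X,Z)$ and solve the forward SDE $dY'=b(s,Y',X,Z,\mathbb EZ)\,ds+\sigma(s,Y',X,Z)\,dW$ with $Y'(0)=h(X(0))$; the one-sided Lipschitz condition \textbf{(H5)}(i) in $y$ together with the Lipschitz bounds in the remaining arguments yields a unique strong solution $Y'$. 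I would then freeze $Y'$ and solve the mean-field BSDE $-dX'=f(s,Y',X',Z',\mathbb EZ')\,ds-Z'\,dW$ with terminal datum $X'(T)=g(Y'(T),\mathbb EY'(T))$; here the monotonicity \textbf{(H5)}(ii) in $x$ and the Lipschitz bounds $k_5,k_6$ in $(z,\mathbb Ez)$ give a unique pair $(X',Z')$ by standard monotone mean-field BSDE theory. Setting $\Phi(Y,X,Z)=(Y',X',Z')$, a fixed point is exactly an adapted solution of \eqref{eA2 1}.

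The heart of the argument is an a priori estimate showing that $\Phi$ contracts in $\|\cdot\|_\beta$ once the cross-coupling constants are small. Writing $\Delta$ for the difference of the outputs generated by two inputs, I would apply It\^o's formula to $e^{\beta s}|\Delta Y'|^2$ on the forward part and to $e^{\beta s}|\Delta X'|^2$ on the backward part. The monotonicity contributions $\rho_1|\Delta Y'|^2$ and $\rho_2|\Delta X'|^2$ are absorbed by taking $\beta$ large relative to $2\rho_1,2\rho_2$, which renders the weighted norms dissipative. The remaining terms split into within-subsystem Lipschitz contributions governed by $k_5,k_6$ (in the BSDE being solved) and $k_7$ (in the forward diffusion), which the discounting controls directly, and genuine cross-coupling contributions governed by $k_1,k_2,k_3$ (dependence of $b,\sigma$ on the frozen $X,Z$), $k_4$ (dependence of $f$ on $Y'$), $k_8,k_9$ (dependence of $\sigma$ on $X,Z$), and the boundary coupling $k_{10}$ through $h$ at $s=0$. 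The mean-field terms enter only via $|\mathbb E\Delta Z|\le(\mathbb E|\Delta Z|^2)^{1/2}$ and the analogous bound for $\mathbb E\Delta Y'(T)$, so they cost nothing beyond an extra Cauchy--Schwarz step. Requiring $k_i\in[0,\delta_1)$ for $i=1,2,3,4,8,9,10$, with $\delta_1$ depending on $\rho_1,\rho_2,T,k_5,k_6,k_7$, then pushes the coupling factor below $1$, and the Banach fixed point theorem delivers existence and uniqueness.

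For the first (horizon-dependent) assertion it suffices to fix $T$ and then choose $\beta$ and $\delta_1$, since the Gr\"onwall-type constants are allowed to grow with $T$. The second assertion, uniform in $T$, is the delicate point: the structural inequality $2(\rho_1+\rho_2)<-2k_5^2-2k_6^2-k_7^2$ guarantees that the combined forward and backward dissipation strictly dominates the self-coupling through $k_5,k_6,k_7$, so the two It\^o inequalities can be added and closed with a single discount rate $\beta$ whose admissible range does not shrink as $T\to\infty$. I expect the main obstacle to be precisely this bookkeeping. The forward estimate for $\Delta Y'$ carries an unweighted boundary term at $s=0$ coming from $h$, while the backward estimate for $\Delta X'$ carries an $e^{\beta T}$-weighted boundary term at $s=T$ coming from $g$; these enter the two discounted inequalities asymmetrically (which is why $k_{10}$ must be small but $k_{11},k_{12}$ need not), and they must be combined together with the mean-field cross-terms so as to be simultaneously controlled by the dissipation --- and, in the second regime, controlled by constants independent of the horizon. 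Tracking the exact dependence of $\delta_1,\delta_2$ on $(\rho_1,\rho_2,k_5,k_6,k_7)$ through these coupled estimates is where the real work lies; once the contraction constant is pinned down, the conclusion follows at once.
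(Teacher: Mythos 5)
Your proposal is correct and follows essentially the same route as the paper's own proof: the paper defines $\mathbb M_1$ (solve the forward equation for $Y$ with frozen $(X,Z,X(0))$) and $\mathbb M_2$ (solve the backward equation for $(X,Z,X(0))$ given $Y$), proves the two a priori difference estimates of your It\^o-with-discounting step as Lemmas, and shows $\mathbb M_2\circ\mathbb M_1$ is a contraction in the weighted norm $\|\cdot\|_\rho$, with exactly your two regimes (discount rate chosen for fixed $T$, versus the structural condition $2(\rho_1+\rho_2)<-2k_5^2-2k_6^2-k_7^2$ giving $T$-independent constants). The only cosmetic difference is that the paper's fixed-point space carries $X(0)\in L^2_{\mathcal F_0}(\Omega;\mathbb R^{n_2})$ as an explicit extra component (so that the time-zero value used in $h(X(0))$ is well defined for equivalence classes in $L^2_{\mathbb F}$), whereas you carry the input $Y$ redundantly; both yield the same composed map and the same contraction bookkeeping, including your observation that $k_{10}$ must be small while $k_{11},k_{12}$ need not.
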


\subsection{Solvability of (BFSDE-2)}\label{Solv-(FBSDE-2)}

In order to apply Theorem \ref{th A 2}, denote $\mathbb Y=(g^\top,\bar Y^\top)^\top,\mathbb X=(\bar X^\top,h^\top)^\top,\mathbb Z=(\bar Z^\top,q^\top)^\top$.  Rewrite {(BFSDE-2)} as the following $2n\times 2n$-BFSDEs:
\begin{equation*}\textbf{(BFSDE-2$^\prime$)}: \left\{\begin{aligned}	
&d\mathbb Y=\left[-\left(\begin{matrix}A&0\\0&A\end{matrix}\right)^\top  \mathbb Y+\left(\begin{matrix}Q_1&Q_2\\Q_2&0\end{matrix}\right)\mathbb X\right]ds+\left[-\left(\begin{matrix}C&0\\0&C\end{matrix}\right)^\top \mathbb Y+\left(\begin{matrix}S_1&S_2\\S_2&0\end{matrix}\right)\mathbb Z\right]dW(s),\\
&d\mathbb X=\left[\left(\begin{matrix}B_1(R^1_{11})^{-1}B_1^\top& B_2(R^2_{22})^{-1}B_2^\top \\
B_2(R^2_{22})^{-1}B^\top_2 &0\end{matrix}\right)\mathbb Y\right.\left.+\left(\begin{matrix}A&0\\0&A\end{matrix}\right)\mathbb  X+\left(\begin{matrix}C&0\\0&C\end{matrix}\right)\mathbb Z\right]ds+\mathbb ZdW(s),\\
& \mathbb Y(0)=\left(\begin{matrix}H_1&H_2 \\H_2&0\end{matrix}\right)\mathbb X(0),\quad  \mathbb  X(T)=\mathbf{Proj}_{\mathcal K}\left[\left(\begin{matrix}-G^{-1}_1& 0 \\0&0\end{matrix}\right)\mathbb Y(T)\right],
\end{aligned}\right.\end{equation*}
where $\mathbf{Proj}_{\mathcal K}(\cdot)=\left(\begin{matrix}\text{\emph{Proj}}_{\mathcal K}(\cdot)\\ \text{\emph{Proj}}_{\mathbb R^n}(\cdot)\end{matrix}\right)$.
Now let $\rho^*=\text{esssup}_{0\leq s\leq T}\text{esssup}_{\omega\in\Omega}\Lambda_{\max}(-\frac{1}{2}(A(s)+A(s)^\top))$, where $\Lambda_{\max}(M)$ is the largest eigenvalue of the matrix $M$. Comparing {(BFSDE-2$^\prime$)} with \eqref{eA2 1}, by the Proposition 4.2 in \cite{HHL2017}, we can check that the parameters of (H5) can be chosen as follows:
\begin{equation}\nonumber\begin{aligned}
&\rho_1=\rho_2=\rho^*, k_2=k_3=k_6=k_8=k_{12}=0, k_1=\left|\left|\left(\begin{matrix}Q_1&Q_2\\Q_2&0\end{matrix}\right)\right|\right|,
k_5=\sqrt{2}\left|\left|C\right|\right|,k_7=2\left|\left|C\right|\right|, \\ &k_9=\sqrt{2}\left|\left|\left(\begin{matrix}S_1&S_2\\S_2&0\end{matrix}\right)\right|\right| ,k_4=\left|\left|\left(\begin{matrix}B_1(R^1_{11})^{-1}B_1^\top&B_2(R^2_{22})^{-1}B_2^\top \\
B_2(R^2_{22})^{-1}B^\top_2 &0\end{matrix}\right)\right|\right|,k_{10}=\left|\left|\left(\begin{matrix}H_1&H_2 \\H_2&0\end{matrix}\right)\right|\right|,\\
&
k_{11}=\left|\left|\left(\begin{matrix}-G^{-1}_1& 0 \\0&0\end{matrix}\right)\right|\right|,
\end{aligned}\end{equation}
where for $M(\cdot)\in L^\infty_{\mathbb F}(0,T;\mathbb R^{n\times n})$, $\|M(\cdot)\|\triangleq\mathop{esssup}\limits_{0\leq s\leq T}\mathop{esssup}\limits_{\omega\in\Omega}\|M(s)\|$.
By Theorem \ref{th A 2}, we have
\begin{theorem}\label{wellposeness of {e1LK FBSDE}}
Suppose that
$\rho^*<-4\left|\left|C(\cdot)\right|\right|^2.$ There exists a $\delta_1>0$, which depends on $\rho^*,k_i,i=5,7$, such that when $k_1, k_4, k_9,k_{10} \in[0, \delta_1)$, there exists a unique adapted solution to {\emph{(BFSDE-2$^\prime$)}}.
\end{theorem}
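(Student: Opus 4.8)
The plan is to recast the $2n\times 2n$ system \textbf{(BFSDE-2$^\prime$)} into the general backward-forward form \eqref{eA2 1} and then invoke Theorem \ref{th A 2}. Since \textbf{(BFSDE-2$^\prime$)} carries no genuine mean-field coupling (neither $\mathbb E Z$ nor $\mathbb E Y$ appears), it is a degenerate instance of \eqref{eA2 1} in which the mean-field Lipschitz constants $k_3,k_6,k_{12}$ all vanish, exactly as recorded in the coefficient list preceding the theorem. Concretely, I read off $b,\sigma$ from the forward equation for $\mathbb Y$, the generator $f$ from the backward equation for $\mathbb X$ (keeping the sign convention $-dX=f\,ds-Z\,dW$ of \eqref{eA2 1}, so that $f$ equals the negative of the $\mathbb X$-drift), the initial map $h$ from $\mathbb Y(0)=(\begin{smallmatrix}H_1&H_2\\ H_2&0\end{smallmatrix})\mathbb X(0)$, and the terminal map $g$ from $\mathbb X(T)=\mathbf{Proj}_{\mathcal K}[(\begin{smallmatrix}-G_1^{-1}&0\\0&0\end{smallmatrix})\mathbb Y(T)]$.

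First I would verify \textbf{(H5)}(i)--(v) with precisely the constants displayed before the theorem. The one-sided monotonicity constants come solely from the diagonal drift blocks: for $b$ the $\mathbb Y$-dependence is $-\mathrm{diag}(A,A)^\top\mathbb Y$, whence $\langle-\mathrm{diag}(A,A)^\top v,v\rangle\le\Lambda_{\max}\bigl(-\tfrac12(A+A^\top)\bigr)|v|^2=\rho^*|v|^2$, giving $\rho_1=\rho^*$, and symmetrically the $\mathbb X$-dependence of $f$ yields $\rho_2=\rho^*$. The \emph{off-diagonal} couplings (the $Q$- and $B$-blocks linking $(g,\bar Y)$ to $(\bar X,h)$) contribute nothing to the dissipativity; they enter only as global Lipschitz perturbations, producing the constants $k_1$ and $k_4$ of the list, while the $C$-blocks give $k_5=\sqrt2\|C\|$, $k_7=2\|C\|$, the $S$-block gives $k_9$, and the initial map gives $k_{10}$. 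For the terminal map I would use that $\mathbf{Proj}_{\mathcal K}$ is nonexpansive in the relevant (weighted) norm, which is Proposition 4.2 of \cite{HHL2017} already cited in the text, so that $g$ is Lipschitz with $k_{11}=\|(\begin{smallmatrix}-G_1^{-1}&0\\0&0\end{smallmatrix})\|$ and $k_{12}=0$. Finally \textbf{(H5)}(v) follows from boundedness of the coefficients under \textbf{(H1)}--\textbf{(H2)} together with $G_1^{-1},(R^1_{11})^{-1},(R^2_{22})^{-1}\in L^\infty_{\mathbb F}$ supplied by \textbf{(H3)}--\textbf{(H4)}.

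Next I would check the $T$-independent threshold of Theorem \ref{th A 2}, namely $2(\rho_1+\rho_2)<-2k_5^2-2k_6^2-k_7^2$. With the constants above this reads $4\rho^*<-8\|C\|^2$, i.e.\ $\rho^*<-2\|C\|^2$, which is implied with room to spare by the hypothesis $\rho^*<-4\|C\|^2$. Hence the second, $T$-independent, part of Theorem \ref{th A 2} applies: there is a threshold, the $\delta_2$ of Theorem \ref{th A 2} — relabelled $\delta_1$ here and depending only on $\rho^*,k_5,k_7$ — such that whenever the remaining small constants $k_1,k_4,k_9,k_{10}$ lie in $[0,\delta_1)$ (the vanishing constants $k_2,k_3,k_8$ trivially satisfying the smallness), \textbf{(BFSDE-2$^\prime$)} admits a unique adapted solution, giving the claim.

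I expect the main obstacle to be the monotonicity bookkeeping in the block estimate: one must argue carefully that the cross-coupling blocks $Q_2$ and $B_2(R^2_{22})^{-1}B_2^\top$ joining the $(g,\bar Y)$ and $(\bar X,h)$ components can be absorbed into the Lipschitz constants $k_1,k_4$ (hence the requirement that they be small) rather than disturbing the dissipativity furnished by the diagonal $A$-part, and that the weighted terminal projection is genuinely nonexpansive so that $k_{11}$ is finite. Once these two points are secured, the remainder is a routine matching of constants against the hypotheses of Theorem \ref{th A 2}.
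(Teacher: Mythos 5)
Your proposal is correct and is essentially the paper's own argument: the paper likewise treats \emph{(BFSDE-2$^\prime$)} as a degenerate (no mean-field) instance of \eqref{eA2 1}, reads off the (H5) constants exactly as you do---citing Proposition 4.2 of \cite{HHL2017} for the Lipschitz property of the projection map---and then invokes the $T$-independent part of Theorem \ref{th A 2}. Your observation that the stated hypothesis $\rho^*<-4\left|\left|C(\cdot)\right|\right|^2$ is stronger than the threshold $\rho^*<-2\left|\left|C(\cdot)\right|\right|^2$ actually demanded by the arithmetic $2(\rho_1+\rho_2)<-2k_5^2-2k_6^2-k_7^2$ is consistent with the paper, which simply imposes the more conservative sufficient condition.
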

\begin{remark}
By the definition of $\rho^*$, Theorem \ref{wellposeness of {e1LK FBSDE}} establishes the existence and uniqueness of {(BFSDE-2)}  under some condition on the matrix $A(\cdot)$.
\end{remark}
Combining Corollary \ref{th general terminal optimal} and Theorem \ref{wellposeness of {e1LK FBSDE}}, we have the following result.
\begin{theorem}
Let \textbf{\emph{(H1)}}-\textbf{\emph{(H4)}} and \emph{\textbf{(}}\textbf{\emph{P}}$_1$\emph{\textbf{)}} is convex. Suppose that
$\rho^*<-4\left|\left|C(\cdot)\right|\right|^2$ and there exists a $\delta_1>0$  depending on $\rho^*,k_i,i=5,7$, such that $k_1, k_4, k_9,k_{10} \in[0, \delta_1)$. Then \emph{\textbf{(}}\textbf{\emph{P}}$_1$\emph{\textbf{)}} admits a unique optimal control given by \eqref{optimal-case 1}
where  $(\bar Y,g,\bar X,\bar Z ,h,q)$ is the unique solution of {\emph{(BFSDE-2)}}.
\end{theorem}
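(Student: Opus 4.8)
The plan is to derive this statement as a straightforward combination of two results already in hand: the solvability equivalence of Corollary \ref{th general terminal optimal} and the wellposedness of Theorem \ref{wellposeness of {e1LK FBSDE}}. The logical skeleton is that the stated hypotheses are precisely those under which (BFSDE-2) is \emph{uniquely} solvable, and Corollary \ref{th general terminal optimal} then transfers this unique solvability to problem $(\mathbf{P}_1)$ together with the explicit representation \eqref{optimal-case 1}.

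First I would check that the hypotheses $\rho^* < -4\|C(\cdot)\|^2$ and $k_1,k_4,k_9,k_{10} \in [0,\delta_1)$ are exactly the assumptions required to apply Theorem \ref{wellposeness of {e1LK FBSDE}} to the block-rewritten system (BFSDE-2$'$), with the parameters $k_i$ identified as in the computation preceding that theorem. Invoking it produces a unique adapted solution $\mathbb Y=(g^\top,\bar Y^\top)^\top$, $\mathbb X=(\bar X^\top,h^\top)^\top$, $\mathbb Z=(\bar Z^\top,q^\top)^\top$ of (BFSDE-2$'$). Since (BFSDE-2$'$) is merely the $2n\times 2n$ stacking of (BFSDE-2) under these identifications, this is equivalent to the unique solvability of (BFSDE-2). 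Here I would also record the routine regularity upgrade: with the coefficients bounded under (H1)--(H4), standard a priori estimates for the forward and backward components promote $\bar Y,g,\bar X,h$ to continuous paths with finite $\mathbb E[\sup_{[0,T]}|\cdot|^2]$, so the solution lies in the function classes demanded by Corollary \ref{th general terminal optimal}.

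With this in place, since $(\mathbf{P}_1)$ is assumed convex and (H1)--(H4) hold, Corollary \ref{th general terminal optimal} gives the equivalence that $(\mathbf{P}_1)$ is uniquely solvable if and only if (BFSDE-2) is uniquely solvable; having established the latter, I conclude that $(\mathbf{P}_1)$ admits a unique optimal control, read off from the unique solution $(\bar Y,g,\bar X,\bar Z,h,q)$ via \eqref{optimal-case 1}, namely $\bar\xi=\text{Proj}_{\mathcal K}[-G_1^{-1}g(T)]$ and $\bar u_1(\cdot)=(R^1_{11}(\cdot))^{-1}B_1^\top(\cdot)g(\cdot)$. This being a bookkeeping combination, there is no deep technical obstacle; the one point worth flagging is conceptual. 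Uniqueness of the optimal control here is supplied by the uniqueness of the BFSDE solution, \emph{not} by strict or uniform convexity of $J_1$ --- only convexity of $(\mathbf{P}_1)$ is assumed. Thus the care needed is to ensure the uniqueness clause of Corollary \ref{th general terminal optimal} is driven from the BFSDE side, and to keep the regularity classes of the two invoked results compatible, which the estimate noted above handles.
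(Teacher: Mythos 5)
Your proposal is correct and follows exactly the paper's own route: the paper proves this theorem simply by combining Corollary \ref{th general terminal optimal} (the equivalence between unique solvability of \textbf{(P$_1$)} and of \textbf{(BFSDE-2)}, with the minimizer given by \eqref{optimal-case 1}) with Theorem \ref{wellposeness of {e1LK FBSDE}} (unique solvability of \textbf{(BFSDE-2$'$)} under the stated conditions on $\rho^*$ and $k_1,k_4,k_9,k_{10}$). Your added remark on upgrading the $L^2_{\mathbb F}$ solution to the continuous-path class required by the corollary, and your observation that uniqueness comes from the BFSDE side rather than strict convexity, are points the paper leaves implicit, but they do not change the argument.
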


\subsection{Wellposedness of \eqref{Hamiltonian system-follower}}\label{solva-Hamil-follwer}

In this subsection, we will give a direct result on wellposedness of \eqref{Hamiltonian system-follower} by Theorem \ref{th A 2}.
Let $\rho^*=\text{esssup}_{0\leq s\leq T}\text{esssup}_{\omega\in\Omega}\Lambda_{\max}(-\frac{1}{2}(A(s)+A(s)^\top))$,
\begin{equation}\nonumber\begin{aligned}
&\rho_1=\rho_2=\rho^*, k_2=k_3=k_6=k_8=k_{11}=k_{12}=0, k_1=\left|\left|Q_2\right|\right|,
k_5=\left|\left|C\right|\right|,k_7=\sqrt{2}\left|\left|C\right|\right|, \\
&k_9=\sqrt{2}\left|\left|S_2\right|\right|,k_{10}=\left|\left|H_2\right|\right|,k_4=\left|\left|B_2(R^2_{22})^{-1}B_2^\top \right|\right|. %k_{10}=\frac{|\alpha|^2}{|\mathbb E\left\langle \alpha,G^{-1}\alpha\right\rangle|}\left|\left|\left(\begin{matrix}G^{-1}_1& 0 \\0&0\end{matrix}\right)\right|\right|^2.
\end{aligned}\end{equation}
\begin{theorem}\label{solva-3.8}
Suppose that
$\rho^*<-2\left|\left|C(\cdot)\right|\right|^2.$ There exists a $\delta_1>0$ depending on $\rho^*,k_i,i=5,7$, such that when $k_1, k_4, k_9,k_{10} \in[0, \delta_1)$, there exists a unique adapted solution to \eqref{Hamiltonian system-follower}.
\end{theorem}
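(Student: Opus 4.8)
The plan is to recognize \eqref{Hamiltonian system-follower} as a (non mean-field, inhomogeneous) special case of the general system \eqref{eA2 1} and then apply Theorem \ref{th A 2} directly. First I would match the unknowns by declaring $\bar Y$ the forward component $Y$, $\bar X$ the backward component $X$, and $\bar Z$ the integrand $Z$, reading off the data as
\[
b(s,y,x,z,\bar z)=-A^\top(s)y+Q_2(s)x,\qquad \sigma(s,y,x,z)=-C^\top(s)y+S_2(s)z,
\]
\[
f(s,y,x,z,\bar z)=-A(s)x-B_2(s)(R^2_{22}(s))^{-1}B_2^\top(s)y-C(s)z-B_1(s)u_1(s),
\]
together with $h(x)=H_2x$ and $g(y,\bar y)\equiv\xi$. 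Since $u_1(\cdot)$ and $\xi$ are fixed data for the follower's problem, they enter only through the inhomogeneous pieces $f(s,0,0,0,0)=-B_1u_1$ and $g(0,0)=\xi$, and none of $b,\sigma,f$ carries a genuine $\mathbb E Z$-dependence, which explains why the mean-field constants $k_2,k_3,k_6$ vanish.

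The substantive step is verifying the monotonicity conditions \textbf{(H5)}(i)--(ii) with the sharp constant. For any $v\in\mathbb R^n$ one computes $\langle -A^\top v,v\rangle=-v^\top\tfrac12(A+A^\top)v\le \Lambda_{\max}\big(-\tfrac12(A+A^\top)\big)|v|^2$, and taking essential suprema over $(s,\omega)$ gives exactly $\rho_1=\rho_2=\rho^*$; the same quadratic-form estimate controls the $x$-dissipativity of $f$. The remaining Lipschitz bounds follow from the boundedness of the coefficients under \textbf{(H1)}--\textbf{(H3)} (in particular $(R^2_{22})^{-1}$ is bounded by \textbf{(H3)}): this yields $k_1=\|Q_2\|$, $k_4=\|B_2(R^2_{22})^{-1}B_2^\top\|$, $k_5=\|C\|$, $k_{10}=\|H_2\|$, while $|a+b|^2\le 2|a|^2+2|b|^2$ applied to $\sigma$ gives $k_7=\sqrt2\|C\|$ and $k_9=\sqrt2\|S_2\|$, with $k_8=0$ and $k_{11}=k_{12}=0$. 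Finally \textbf{(H5)}(v) holds because $\xi\in L^2_{\mathcal F_T}(\Omega;\mathbb R^n)$ and $u_1(\cdot)\in\mathcal U_1[0,T]$ with $B_1(\cdot)$ bounded, so $\mathbb E|\xi|^2+\mathbb E\int_0^T|B_1u_1|^2\,ds<\infty$.

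With \textbf{(H5)} in force and these constants fixed, I would invoke the $T$-independent part of Theorem \ref{th A 2}. Its dissipativity requirement $2(\rho_1+\rho_2)<-2k_5^2-2k_6^2-k_7^2$ becomes, after substitution, a bound of the form $\rho^*<-c\|C\|^2$, which the standing hypothesis $\rho^*<-2\|C\|^2$ secures with room to spare; the theorem then provides a threshold $\delta_1>0$ depending only on $\rho^*,k_5,k_7$ such that, whenever the coupling weights $k_1,k_4,k_9,k_{10}$ lie in $[0,\delta_1)$, the system \eqref{eA2 1}, hence \eqref{Hamiltonian system-follower}, admits a unique adapted solution in $L^2_{\mathbb F}(0,T;\mathbb R^n)^3$. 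Translating back to the original variables closes the argument.

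Since the genuine analytic work already resides in Theorem \ref{th A 2} (established via the discounting/contraction scheme), the main obstacle here is not an existence argument but the bookkeeping of constants and the compatibility of the two requirements. Concretely, one must confirm that the sharp monotonicity gain $\rho^*$ and the diffusion-induced loss encoded in $k_7=\sqrt2\|C\|$ combine so that the dissipativity inequality follows from $\rho^*<-2\|C\|^2$, and that the four off-diagonal coupling constants $k_1,k_4,k_9,k_{10}$ can all be driven below the \emph{single} threshold $\delta_1$. This smallness of $k_4=\|B_2(R^2_{22})^{-1}B_2^\top\|$ and $k_1=\|Q_2\|$ is the real structural restriction: it quantifies how weakly the forward adjoint $\bar Y$ may feed back into the backward state $\bar X$ for the decoupling-free fixed point behind Theorem \ref{th A 2} to remain a contraction.
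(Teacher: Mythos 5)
Your proposal is correct and follows exactly the paper's own route: the paper likewise treats \eqref{Hamiltonian system-follower} as a special case of the general system \eqref{eA2 1} and invokes Theorem \ref{th A 2} with the identical constants $\rho_1=\rho_2=\rho^*$, $k_1=\|Q_2\|$, $k_4=\|B_2(R^2_{22})^{-1}B_2^\top\|$, $k_5=\|C\|$, $k_7=\sqrt{2}\|C\|$, $k_9=\sqrt{2}\|S_2\|$, $k_{10}=\|H_2\|$, and $k_2=k_3=k_6=k_8=k_{11}=k_{12}=0$. Your explicit verification of \textbf{(H5)} and of the dissipativity inequality $2(\rho_1+\rho_2)<-2k_5^2-2k_6^2-k_7^2$, i.e.\ $\rho^*<-\|C\|^2$, which $\rho^*<-2\|C\|^2$ secures with room to spare, is precisely the bookkeeping the paper leaves implicit.
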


\subsection{Solvability of (BFSDE-3)}\label{Solv-(FBSDE-3)}Now, we consider the solvability of {(BFSDE-3)} which is a standard fully-coupled BFSDEs but combining with the {(KKT)} qualification condition. Hence, it becomes \emph{non-standard} BFSDEs with constraint on its terminal expectation via Lagrange variable $\lambda$ involved. In this sense, we may call it \emph{terminal-mean-constrained \emph{BFSDEs}}. To our knowledge, such class of BFSDEs has not been well studied and this sections aims some essential endeavor to it.
To this end, we may first rewrite (BFSDE-3) as the following $2n\times 2n$-BFSDEs (with same notations to {(BFSDE-2)}):
\begin{equation*}\textbf{(BFSDE-3$^\prime$)}:
\left\{\begin{aligned}
&d\mathbb Y=-\left[\left(\begin{matrix}A&0\\0&A\end{matrix}\right)^\top  \mathbb Y-\left(\begin{matrix}Q_1&Q_2\\Q_2&0\end{matrix}\right)\mathbb X\right]dt-\left[\left(\begin{matrix}C&0\\0&C\end{matrix}\right)^\top \mathbb Y-\left(\begin{matrix}S_1&S_2\\S_2&0\end{matrix}\right)\mathbb Z\right]dW(s),\\
&d\mathbb X=\left[\left(\begin{matrix}B_1(R^1_{11})^{-1}B_1^\top& B_2(R^2_{22})^{-1}B_2^\top \\
B_2{R^2_{22}}^{-1}B^\top_2 &0\end{matrix}\right)\mathbb Y+\left(\begin{matrix}A&0\\0&A\end{matrix}\right)\mathbb  X+\left(\begin{matrix}C&0\\0&C\end{matrix}\right)\mathbb Z\right]ds+\mathbb ZdW(s),\\
& \mathbb Y(0)=\left(\begin{matrix}H_1&H_2 \\H_2&0\end{matrix}\right)\mathbb X(0),\quad  \mathbb  X(T)=\left(\begin{matrix}G_1^{-1}& 0 \\0&0\end{matrix}\right)\left(-\mathbb Y(T)+\lambda\left(\begin{matrix}\alpha \\ 0\end{matrix}\right)\right),
\\& \lambda\left(\beta-\mathbb E\left\langle\left(\begin{matrix}\alpha\\0\end{matrix}\right), \left(\begin{matrix}G_1^{-1}&0\\0&0\end{matrix}\right)\left(-\mathbb Y(T)+\lambda\left(\begin{matrix}\alpha\\0\end{matrix}\right)\right)\right\rangle\right)=0,\quad \lambda\geq0,\\
&\beta-\mathbb E\left\langle\left(\begin{matrix}\alpha\\0\end{matrix}\right), \left(\begin{matrix}G_1^{-1}&0\\0&0\end{matrix}\right)\left(-\mathbb Y(T)+\lambda\left(\begin{matrix}\alpha\\0\end{matrix}\right)\right)\right\rangle\leq0.
\end{aligned}\right.\end{equation*}
By the first slackness condition of (KKT) system, there arise two cases with $\lambda=0$ or $\lambda=\Big(\beta+\mathbb E\left\langle\left(\begin{matrix}\alpha \\ 0\end{matrix}\right),\left(\begin{matrix}G_1^{-1}& 0 \\0&0\end{matrix}\right)\mathbb Y(T)\right\rangle\Big)\Big(\mathbb E\langle \alpha,G_1^{-1}\alpha\rangle\Big)^{-1}.$ We have the following more detailed analysis along these two cases.
\subsubsection{Multiplier $\lambda=0$}\label{zero}
In this case, (BFSDE-3$^\prime$) takes the following form:
\begin{equation}\label{eq56}
\left\{\begin{aligned}
&d\mathbb Y=-\left[\left(\begin{matrix}A&0\\0&A\end{matrix}\right)^\top  \mathbb Y-\left(\begin{matrix}Q_1&Q_2\\Q_2&0\end{matrix}\right)\mathbb X\right]dt-\left[\left(\begin{matrix}C&0\\0&C\end{matrix}\right)^\top \mathbb Y-\left(\begin{matrix}S_1&S_2\\S_2&0\end{matrix}\right)\mathbb Z\right]dW(t),\\
&d\mathbb X=\left[\left(\begin{matrix}B_1(R^1_{11})^{-1}B_1^\top& B_2(R^2_{22})^{-1}B_2^\top \\
B_2{R^2_{22}}^{-1}B^\top_2 &0\end{matrix}\right)\mathbb Y+\left(\begin{matrix}A&0\\0&A\end{matrix}\right)\mathbb  X+\left(\begin{matrix}C&0\\0&C\end{matrix}\right)\mathbb Z\right]ds+\mathbb ZdW(s),\\
& \mathbb Y(0)=\left(\begin{matrix}H_1&H_2 \\H_2&0\end{matrix}\right)\mathbb X(0),\quad  \mathbb  X(T)=-\left(\begin{matrix}G_1^{-1}& 0 \\0&0\end{matrix}\right)\mathbb Y(T),\\
&\beta+ \mathbb E\Big\langle\left(\begin{matrix}\alpha\\0\end{matrix}\right), \left(\begin{matrix}G_1^{-1}&0\\0&0\end{matrix}\right)\mathbb Y(T)\Big\rangle\leq 0.\qquad \textbf{primal constraint in (KKT)}
\end{aligned}\right.\end{equation}
We will use Riccati decoupling method to study the wellposedness of \eqref{eq56}. Define  $\widetilde{\mathbb Y}=\mathbb Y-\left(\begin{matrix}H_1&H_2 \\H_2&0\end{matrix}\right)\mathbb X$,
therefore, $\widetilde{\mathbb Y}(0)=0$ and
\begin{equation}\nonumber
\begin{aligned}
\mathbb X(T)=&-\left(\begin{matrix}G_1^{-1}& 0 \\0&0\end{matrix}\right)\mathbb Y(T)
=-\left(\begin{matrix}G_1^{-1}& 0 \\0&0\end{matrix}\right)\widetilde{\mathbb Y}(T)-
\left(\begin{matrix}G_1^{-1}H_1& G_1^{-1}H_2 \\0&0\end{matrix}\right)\mathbb X(T).\\
\end{aligned}
\end{equation}
If $\det\left[I+G_1^{-1}H_1\right]\neq0$, then
the matrix $\left(\begin{matrix} I+G_1^{-1}H_1& G_1^{-1}H_2\\ 0& I\end{matrix}\right)$ is invertible,
and consequently, $$\mathbb X(T)=\widetilde{ G} \widetilde {\mathbb Y}(T),$$
where
\begin{equation}
\nonumber
\begin{aligned}
\widetilde { G}=-\left(\begin{matrix} I+G_1^{-1}H_1& G_1^{-1}H_2\\ 0& I\end{matrix}\right)^{-1}\left(\begin{matrix}G_1^{-1}& 0 \\0&0\end{matrix}\right)
=-\left(\begin{matrix}
(I+G_1^{-1}H_1)^{-1}G_1^{-1}&0\\0&0
\end{matrix}\right).
\end{aligned}
\end{equation}
Therefore, if $\det\left[I+G_1^{-1}H_1\right]\neq0$, after some manipulations, we have
\begin{equation}\label{eq58}
\left\{\begin{aligned}
d\widetilde{\mathbb Y}=&-\left[\widetilde{ A}\widetilde{ \mathbb Y}+\widetilde{ B}\mathbb X+\widetilde{ C} \mathbb Z\right]dt-\left[\widetilde{ A}_1\widetilde{\mathbb Y}+\widetilde{ B}_1\mathbb X+\widetilde{ C}_1 \mathbb Z\right]dW(t),\\
d\mathbb X=&\left[\widehat{ A}\widetilde{\mathbb Y}+\widehat{ B}\mathbb X+\widehat{ C} \mathbb Z\right]dt+\mathbb ZdW(t),\\
\widetilde {\mathbb Y}(0)=&0,\quad  \mathbb  X(T)=\widetilde{ G} \widetilde {\mathbb Y}(T),
\end{aligned}\right.\end{equation}
where
\begin{equation}\label{new notation-1}
\begin{aligned}
&\widetilde{ A}=\left(\begin{matrix}A^{\top}+H_1B_1(R^1_{11})^{-1}B_1^\top+H_2B_2(R^2_{22})^{-1}B_2^\top &
H_1B_2(R^2_{22})^{-1}B_2^\top\\
H_2 B_1(R^1_{11})^{-1}B_1^\top&
A^{\top}+H_2B_2(R^2_{22})^{-1}B_2^\top\end{matrix}\right),\\
&\widetilde{ B}=\left(\begin{matrix}\widetilde{ B}_{11}&\widetilde{ B}_{12}\\ \widetilde{ B}_{21}&\widetilde{ B}_{22}\end{matrix}\right),\\
&\widetilde{ B}_{11}=-Q_1+H_1A+A^{\top}H_1+H_1B_1(R^1_{11})^{-1}B_1^\top H_1+H_2B_2(R^2_{22})^{-1}B_2^{\top}H_1
+H_1B_2(R^2_{22})^{-1}B_2^{\top}H_2,\\
&\widetilde{ B}_{12}=-Q_2+H_2A+A^{\top}H_2+H_1B_1(R^1_{11})^{-1}B_1^\top H_2+H_2B_2(R^2_{22})^{-1}B_2^{\top}H_2,\\
&\widetilde{ B}_{21}=-Q_2+H_2A+A^{\top}H_2+H_2B_1(R^1_{11})^{-1}B_1^\top H_1+H_2B_2(R^2_{22})^{-1}B_2^{\top}H_2,\\
&\widetilde{ B}_{22}=H_2B_1(R^1_{11})^{-1}B_1^\top H_2,\\
&\widetilde{ C}=\left(\begin{matrix}H_1C & H_2C\\ H_2C&0\end{matrix}\right),\quad
\widetilde{ A}_1=\left(\begin{matrix}C & 0\\ 0&C\end{matrix}\right)^{\top},\quad
\widetilde{ B}_1=\left(\begin{matrix}C^{\top}H_1 & C^{\top}H_2\\ C^{\top}H_2&0\end{matrix}\right),\\
&
\widetilde{ C}_1=-\left(\begin{matrix}S_1-H_1 & S_2-H_2\\ S_2-H_2&0\end{matrix}\right),\quad\widehat{ A}=\left(\begin{matrix}B_1(R^1_{11})^{-1}B_1^\top &
B_2(R^2_{22})^{-1}B_2^\top \\
B_2(R^2_{22})^{-1}B_2^\top&
0\end{matrix}\right),\\
& \widehat{ B}=\left(\begin{matrix}A+B_1(R^1_{11})^{-1}B_1^\top H_1+B_2(R_{22}^2)^{-1}B_2^{\top}H_2&
B_1(R^1_{11})^{-1}B_1^\top H_2 \\
B_2(R^2_{22})^{-1}B_2^\top H_1&
A+B_2(R_{22}^2)^{-1}B_2^{\top}H_2\end{matrix}\right),\quad\widehat{
 C}=\left(\begin{matrix}C & 0\\ 0&C\end{matrix}\right).\\
\end{aligned}
\end{equation}
Note that  $\widehat{A}$, $\widetilde{B}$ are symmetric and $\widehat{ B}=\widetilde{A}^\top,\quad\widehat{C}=\widetilde{A}_1^\top,\quad \widetilde{C}=\widetilde{B}_1^\top.$
\begin{remark}
	Since $G_1^{-1}$ is symmetric,
 it follows from \emph{\cite{Y02}} that $(I+G_1^{-1}H_1)^{-1}G_1^{-1}$ is symmetric, i.e., $\widetilde{ G}$ is symmetric.
\end{remark}
Suppose the following linear relation holds true,
\begin{equation}\label{decouple-X}
\mathbb X(s)=\widetilde P(s)\widetilde{\mathbb Y}(s)+\widetilde p(s),\quad s\in[0,T],\quad a.s.
\end{equation}
If $\det\left[I+G_1^{-1}H_1\right]\neq0$, \eqref{eq56} is solvable if the following stochastic Riccati equation and BSDE are solvable
\begin{equation}\label{general form Riccati equation of special case II}
\left\{
\begin{aligned}
&d\widetilde P=\bigg\{\widehat{ A}+\widehat{ B}\widetilde P+\widetilde P\widetilde{ A}+\widetilde P\widetilde{ B}\widetilde P+\widetilde\Lambda\left(\widetilde{ A}_1+\widetilde{ B}_1\widetilde P\right)+\left(\widehat{ C}+\widetilde P\widetilde{ C}+\widetilde \Lambda\widetilde{ C}_1\right)\left(I+\widetilde P\widetilde{ C}_1\right)^{-1}\\
&\qquad \quad \left[\widetilde\Lambda-\widetilde P\left(\widetilde{ A}_1+\widetilde{ B}_1\widetilde P\right)\right]
\bigg\}ds+\widetilde\Lambda d W(s),\\
&\widetilde P(T)=\ \widetilde{ G},\\
&\det\left[I+\widetilde P\widetilde{ C}_1\right]\neq0,
\end{aligned}
\right.
\end{equation}
and
\begin{equation}
\label{general BSDE of special case II}
\left\{
\begin{aligned}
d\widetilde p=&\bigg\{\Big[\widehat{ B}+\widetilde P\widetilde{ B}+\widetilde\Lambda\widetilde{ B}_1-(\widehat{ C}+\widetilde P\widetilde{ C}+\widetilde\Lambda\widetilde{ C}_1)(I+\widetilde P\widetilde{ C}_1)^{-1}\widetilde P\widetilde{ B}_1\Big]\widetilde p\\
&\quad+(\widehat{ C}+\widetilde P\widetilde{ C}+\widetilde\Lambda\widetilde{ C}_1)(I+\widetilde P\widetilde{ C}_1)^{-1}\widetilde q\bigg\}ds+\widetilde qdW(s),\\
\widetilde p(T)=&0,
\end{aligned}
\right.
\end{equation}such that (KKT) in \eqref{eq56} is satisfied.
It is easy to check that
\begin{equation}\label{decouple-Z}
Z=(I+\widetilde P\widetilde C_1)^{-1}[(\widetilde\Lambda-\widetilde P\widetilde A_1-\widetilde P\widetilde B_1\widetilde P)\widetilde{\b Y}-\widetilde P\widetilde B_1\widetilde p+\widetilde q].
\end{equation}
Next we introduce another assumption under which we will obtain some new form of \eqref{general form Riccati equation of special case II} and \eqref{general BSDE of special case II},
\begin{description}
	\item[(H6)] $\det[S_2-H_2]\neq0$.

\end{description}
Under (H6), we have $\det[\widetilde C_1]\neq0$, hence
\begin{equation}\nonumber
\begin{aligned}
&\left(\widehat{ C}+\widetilde P\widetilde{ C}+\widetilde \Lambda\widetilde{ C}_1\right)\left(I+\widetilde P\widetilde{ C}_1\right)^{-1}\left[\widetilde\Lambda-\widetilde P\left(\widetilde{ A}_1+\widetilde{ B}_1\widetilde P\right)\right]\\
=&\left(\widetilde \Lambda+\widehat{ C}\widetilde{ C}_1^{-1}+\widetilde P\widetilde{ C}\widetilde{ C}_1^{-1}\right)\left(\widetilde{ C}_1^{-1}+\widetilde P\right)^{-1}\left[\widetilde\Lambda-\widetilde P\left(\widetilde{ A}_1+\widetilde{ B}_1\widetilde P\right)\right]\\
=&\left(\widetilde \Lambda+\left(\begin{matrix}C&0\\0&C\end{matrix}\right)\widetilde{ C}_1^{-1}+\widetilde P\left(\begin{matrix}H_1&H_2\\H_2&0\end{matrix}\right)\left(\begin{matrix}C&0\\0&C\end{matrix}\right)\widetilde{ C}_1^{-1}\right)(\widetilde{ C}_1^{-1}+\widetilde P)^{-1}\\&\left(\widetilde\Lambda-\widetilde P\left(\begin{matrix}C & 0\\ 0&C\end{matrix}\right)^{\top}-\widetilde P\left(\begin{matrix}C & 0\\ 0&C\end{matrix}\right)^{\top}\left(\begin{matrix}H_1 & H_2\\ H_2&0\end{matrix}\right)\widetilde P\right)\\
=&\left(\widetilde\Lambda+\left(I+\widetilde P\left(\begin{matrix}H_1&H_2\\H_2&0\end{matrix}\right)\right)\left(\begin{matrix}C&0\\0&C\end{matrix}\right)\widetilde{ C}_1^{-1}\right)\left(\widetilde{ C}_1^{-1}+\widetilde P\right)^{-1}
\left(\widetilde\Lambda-\widetilde P\left(\begin{matrix}C&0\\0&C\end{matrix}\right)^\top\left(I+\left(\begin{matrix}H_1&H_2\\H_2&0\end{matrix}\right)\widetilde P\right)\right)\\
=&
\left(\widetilde\Lambda-\left(I+\widetilde P\left(\begin{matrix}H_1&H_2\\H_2&0\end{matrix}\right)\right)\left(\begin{matrix}C&0\\0&C\end{matrix}\right)\widetilde P\right)\left(\widetilde{ C}_1^{-1}+\widetilde P\right)^{-1}
\left(\widetilde\Lambda-\widetilde P\left(\begin{matrix}C&0\\0&C\end{matrix}\right)^\top\left(I+\left(\begin{matrix}H_1&H_2\\H_2&0\end{matrix}\right)\widetilde P\right)\right)\\
&+\left(I+\widetilde P\left(\begin{matrix}H_1&H_2\\H_2&0\end{matrix}\right)\right)\left(\begin{matrix}C&0\\0&C\end{matrix}\right)\left(\widetilde\Lambda-\widetilde P\left(\begin{matrix}C&0\\0&C\end{matrix}\right)^\top\left(I+\left(\begin{matrix}H_1&H_2\\H_2&0\end{matrix}\right)\widetilde P\right)\right)\\
=&\left(\widetilde \Lambda-(\widehat{ C}+\widetilde P\widetilde{ C})\widetilde P\right)\left(\widetilde{ C}_1^{-1}+\widetilde P\right)^{-1}
\left(\widetilde \Lambda-\widetilde P(\widehat{ C}^\top+\widetilde{ C}^\top\widetilde P)\right)+(\widehat{ C}+\widetilde P\widetilde{ C})\left(\widetilde\Lambda-\widetilde P(\widehat{ C}^\top+\widetilde{ C}^\top\widetilde P)\right)\\
=&\left(\widetilde \Lambda-(\widehat{ C}+\widetilde P\widetilde{ C})\widetilde P\right)\left(\widetilde{ C}_1^{-1}+\widetilde P\right)^{-1}
\left(\widetilde \Lambda-\widetilde P(\widehat{ C}^\top+\widetilde{ C}^\top\widetilde P)\right)+\widehat{ C}\widetilde\Lambda+\widetilde P\widetilde{ C}\widetilde\Lambda-(\widehat{ C}+\widetilde P\widetilde{ C})\widetilde P(\widehat{ C}^\top+\widetilde{ C}^\top\widetilde P).
\end{aligned}
\end{equation}
Therefore,
 \eqref{general form Riccati equation of special case II} and \eqref{general BSDE of special case II} take the following forms:
\begin{equation}\label{symmetric Riccati equation of special case II}
\left\{
\begin{aligned}
&d\widetilde P=\bigg\{\widehat{ A}+\widehat{ B}\widetilde P+\widetilde P\widehat{ B}^\top+\widetilde P\widetilde{ B}\widetilde P+\widetilde\Lambda\left(\widehat{ C}^\top+\widetilde{ C}^\top\widetilde P\right)+\left(\widehat{ C}+\widetilde P\widetilde{ C}\right)\widetilde\Lambda-\left(\widehat{ C}+\widetilde P\widetilde{ C}\right)\widetilde P\left(\widehat{ C}^\top+\widetilde{ C}^\top\widetilde P\right)\\
&\qquad\quad+\left(\widetilde \Lambda-\left(\widehat{ C}+\widetilde P\widetilde{ C}\right)\widetilde P\right)\left(\widetilde{ C}_1^{-1}+\widetilde P\right)^{-1}
\left(\widetilde \Lambda-\widetilde P\left(\widehat{ C}^\top+\widetilde{ C}^\top\widetilde P\right)\right)
\bigg\}ds+\widetilde\Lambda d W(s),\\
&\widetilde P(T)=\widetilde{ G},\\
&\det\left[I+\widetilde P\widetilde{ C}_1\right]\neq0,
\end{aligned}
\right.
\end{equation}
and
\begin{equation}
\label{BSDE of special case II}
\left\{
\begin{aligned}
d\widetilde p=&\bigg\{\Big[\widehat{ B}+\widetilde P\widetilde{ B}+\widetilde\Lambda\widetilde{ B}_1-(\widehat{ C}\widetilde{ C}_1 ^{-1}+\widetilde P\widetilde{ C}\widetilde{ C}_1 ^{-1}+\widetilde\Lambda)(\widetilde{ C}_1 ^{-1}+\widetilde P)^{-1}\widetilde P\widetilde{ B}_1\Big]\widetilde p\\
&\quad+(\widehat{ C}\widetilde{ C}_1 ^{-1}+\widetilde P\widetilde{ C}\widetilde{ C}_1 ^{-1}+\widetilde\Lambda)(\widetilde{ C}_1 ^{-1}+\widetilde P)^{-1}\widetilde q\bigg\}ds+\widetilde qdW(s),\\
\widetilde p(T)=&0.
\end{aligned}
\right.
\end{equation}
Finally, plugging \eqref{decouple-X} and \eqref{decouple-Z} into \eqref{eq58}, we have
\begin{equation*}\begin{aligned}
d\widetilde{\mathbb Y}=&-\left[\widetilde{\mathbb A}\widetilde{ \mathbb Y}+\widetilde b\right]dt-\left[\widetilde{\mathbb A}_1\widetilde{\mathbb Y}+\widetilde{ \sigma}\right]dW(t),\quad \widetilde {\mathbb Y}(0)=0,
\end{aligned}\end{equation*}
where
$$\widetilde{\mathbb A}=\widetilde A+\widetilde B\widetilde P+\w C(I+\w P\w C_1)^{-1}(\w\Lambda-\w P\w A_1-\w P\w B_1\w P),$$
$$\w b=\w B+\w P\w p-\w C(I+\w P\w C_1)^{-1}\w P\w B_1\w p+\w C(I+\w P\w C_1)^{-1}\w q,$$
$$\widetilde{\mathbb A}_1=\widetilde A_1+\widetilde B_1\widetilde P+\w C_1(I+\w P\w C_1)^{-1}(\w\Lambda-\w P\w A_1-\w P\w B_1\w P),$$
$$\w \sigma=\w B_1+\w P\w p-\w C_1(I+\w P\w C_1)^{-1}\w P\w B_1\w p+\w C_1(I+\w P\w C_1)^{-1}\w q.$$
Therefore,
\begin{equation*}
\w{\mathbb Y}(t)=\Phi(t)\int_0^t\Phi(s)^{-1}[\w b(s)-\w{\mathbb A}_1(s)\w\sigma(s)]ds+\Phi(t)\int_0^t\Phi(s)^{-1}\w\sigma(s)dW(s),\quad t\in[0,T],
\end{equation*}
where
$$d\Phi(t)=\w{\mathbb A}(t)\Phi(t)dt+\w{\mathbb A}_1\Phi(t)dW(t),\quad \Phi(0)=I.$$
Hence,
$$\mathbb Y(T)=\widetilde{\mathbb Y}(T)-\left(\begin{matrix}H_1&H_2 \\H_2&0\end{matrix}\right)\mathbb X(T)=\w{\mathbb Y}(T)-\left(\begin{matrix}H_1&H_2 \\H_2&0\end{matrix}\right)\widetilde{ G} \widetilde {\mathbb Y}(T)=\Big[I-\left(\begin{matrix}H_1&H_2 \\H_2&0\end{matrix}\right)\widetilde{ G}\Big]\widetilde {\mathbb Y}(T),$$ and the (KKT) condition becomes
\begin{equation}\begin{aligned}\label{KTT-2}
&\beta+\Big\langle\left(\begin{matrix}\alpha\\0\end{matrix}\right),\mathbb E\left(\begin{matrix}G_1^{-1}&0\\0&0\end{matrix}\right)\Big[I-\left(\begin{matrix}H_1&H_2 \\H_2&0\end{matrix}\right)\widetilde{ G}\Big]\Phi(T)\int_0^T\Phi(s)^{-1}[\w b(s)-\w{\mathbb A}_1(s)\w\sigma(s)]ds\Big\rangle\\
&+\Big\langle\left(\begin{matrix}\alpha\\0\end{matrix}\right),\mathbb E\left(\begin{matrix}G_1^{-1}&0\\0&0\end{matrix}\right)\Big[I-\left(\begin{matrix}H_1&H_2 \\H_2&0\end{matrix}\right)\widetilde{ G}\Big]\Phi(T)\int_0^T\Phi(s)^{-1}\w\sigma(s)dW(s)\Big\rangle\\
=&\beta+\Big\langle\left(\begin{matrix}\alpha\\0\end{matrix}\right),\mathbb E\left(\begin{matrix}G_1^{-1}&0\\0&0\end{matrix}\right)\Big[I-\left(\begin{matrix}H_1&H_2 \\H_2&0\end{matrix}\right)\widetilde{ G}\Big]\int_0^T\Big[\w{\mathbb A}(s)\mathbb Y(s)+\w b(s)\Big]ds\Big\rangle\leq0.
\end{aligned}\end{equation}
\begin{proposition}
Under \emph{\textbf{(H1)}}-\emph{\textbf{(H4)}} and \emph{\textbf{(H6)}}, suppose $\det\left[I+G_1^{-1}H_1\right]\neq0$.
If \eqref{symmetric Riccati equation of special case II} and \eqref{BSDE of special case II} admit solutions such that \eqref{KTT-2} hold, then terminal-mean-constrained BFSDEs \eqref{eq56} is solvable.
\end{proposition}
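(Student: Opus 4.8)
The plan is to turn the constructive computation that precedes the statement into a genuine existence argument: assuming the Riccati equation \eqref{symmetric Riccati equation of special case II} and the BSDE \eqref{BSDE of special case II} are solvable and that the primal constraint \eqref{KTT-2} holds, I would build an adapted solution $(\mathbb Y,\mathbb X,\mathbb Z)$ of \eqref{eq56} explicitly. Observe first that in the regime $\lambda=0$ the dual constraint $\lambda\geq0$ and the complementary slackness $\lambda(\beta-\mathbb E\langle\cdots\rangle)=0$ are automatic, so the only substantive part of the (KKT) system is the primal inequality, which is precisely \eqref{KTT-2}; hence it suffices to produce $(\mathbb Y,\mathbb X,\mathbb Z)$ solving the forward-backward part of \eqref{eq56}.

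First I would perform the linear change of variable $\widetilde{\mathbb Y}=\mathbb Y-\left(\begin{matrix}H_1&H_2\\H_2&0\end{matrix}\right)\mathbb X$. The initial coupling $\mathbb Y(0)=\left(\begin{matrix}H_1&H_2\\H_2&0\end{matrix}\right)\mathbb X(0)$ gives $\widetilde{\mathbb Y}(0)=0$, and the hypothesis $\det[I+G_1^{-1}H_1]\neq0$ lets one invert the terminal relation into $\mathbb X(T)=\widetilde G\,\widetilde{\mathbb Y}(T)$ with $\widetilde G$ symmetric. A direct (if lengthy) expansion then recasts \eqref{eq56} as the fully coupled BFSDE \eqref{eq58} with the coefficient blocks \eqref{new notation-1}.

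Next I would verify the decoupling ansatz \eqref{decouple-X}, $\mathbb X(s)=\widetilde P(s)\widetilde{\mathbb Y}(s)+\widetilde p(s)$. Applying It\^o's formula to $\widetilde P\widetilde{\mathbb Y}+\widetilde p$ and matching the $ds$- and $dW$-terms against \eqref{eq58} forces $(\widetilde P,\widetilde\Lambda)$ to solve \eqref{symmetric Riccati equation of special case II} and $(\widetilde p,\widetilde q)$ to solve \eqref{BSDE of special case II}; the simplification under \textbf{(H6)} (namely $\det[\widetilde C_1]\neq0$) is what renders these into the stated symmetric forms. Conversely, since these equations are assumed solvable and $\det[I+\widetilde P\widetilde C_1]\neq0$, I can solve algebraically for $\mathbb Z$ via \eqref{decouple-Z}. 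Substituting \eqref{decouple-X} and \eqref{decouple-Z} back into the $\widetilde{\mathbb Y}$-equation yields a closed linear SDE $d\widetilde{\mathbb Y}=-[\widetilde{\mathbb A}\widetilde{\mathbb Y}+\widetilde b]dt-[\widetilde{\mathbb A}_1\widetilde{\mathbb Y}+\widetilde\sigma]dW(t)$ with $\widetilde{\mathbb Y}(0)=0$, whose unique strong solution is given explicitly by variation of constants through the fundamental matrix $\Phi$. I would then recover $\mathbb X$ from \eqref{decouple-X}, $\mathbb Z$ from \eqref{decouple-Z}, and $\mathbb Y=\widetilde{\mathbb Y}+\left(\begin{matrix}H_1&H_2\\H_2&0\end{matrix}\right)\mathbb X$, so that all initial and terminal couplings of \eqref{eq56} are met by construction.

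The step I expect to be the main obstacle is the integrability bookkeeping rather than the algebra. To conclude that the recovered $(\mathbb Y,\mathbb X,\mathbb Z)$ lie in the required $L^2_{\mathbb F}$ spaces, one needs $\widetilde P\in L^\infty_{\mathbb F}$ (so that $\mathbb X$ inherits the integrability of $\widetilde{\mathbb Y}$) together with a bound on $(I+\widetilde P\widetilde C_1)^{-1}$ that keeps the effective coefficients $\widetilde{\mathbb A},\widetilde{\mathbb A}_1$ bounded and the inhomogeneous terms $\widetilde b,\widetilde\sigma$ square-integrable; these are exactly the qualitative properties one reads off from the assumed solution of \eqref{symmetric Riccati equation of special case II} and the nondegeneracy condition $\det[I+\widetilde P\widetilde C_1]\neq0$. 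Once this is secured, the linear SDE for $\widetilde{\mathbb Y}$ is standard, and the remaining (KKT) requirement is nothing but \eqref{KTT-2}, which is assumed; therefore the terminal-mean-constrained BFSDEs \eqref{eq56} is solvable.
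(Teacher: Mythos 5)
Your proposal is correct and follows essentially the same route as the paper: the paper's ``proof'' of this proposition is precisely the constructive derivation preceding it in Section 5.4.1 --- the change of variable $\widetilde{\mathbb Y}=\mathbb Y-\left(\begin{matrix}H_1&H_2\\H_2&0\end{matrix}\right)\mathbb X$ with inversion of the terminal coupling via $\det[I+G_1^{-1}H_1]\neq0$, the Riccati/BSDE decoupling ansatz \eqref{decouple-X} with $\mathbb Z$ recovered from \eqref{decouple-Z}, the closed linear SDE for $\widetilde{\mathbb Y}$ solved by variation of constants, and the observation that the remaining (KKT) requirement reduces to \eqref{KTT-2}. Your added remark on the integrability bookkeeping (boundedness of $\widetilde P$ and of $(I+\widetilde P\widetilde C_1)^{-1}$) is a point the paper leaves implicit in the phrase ``admit solutions,'' so it sharpens rather than departs from the paper's argument.
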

In case with deterministic coefficients, \eqref{KTT-2} takes the following form
\begin{equation*}\begin{aligned}
&\beta+\Big\langle\left(\begin{matrix}\alpha\\0\end{matrix}\right),\left(\begin{matrix}G_1^{-1}&0\\0&0\end{matrix}\right)\Big[I-\left(\begin{matrix}H_1&H_2 \\H_2&0\end{matrix}\right)\widetilde{ G}\Big]\int_0^T[\w{\mathbb A}(s)\mathbb E\w{\mathbb Y}(s)+\w b(s)\Big]ds\Big\rangle\leq0.
\end{aligned}\end{equation*}
Let the fundamental solution matrices of ordinary differential equation (ODE)
$$d\w\varphi=-\w{\mathbb A}\w\varphi dt,\qquad\w\varphi(0)=I,$$ be $\w\Phi(t,0)$. Then
$$\bE\w{\b Y}(t)=-\w\Phi(t,0)\int_0^t\w\Phi(s,0)\w b(s)ds.$$
Therefore, the condition \eqref{KTT-2} becomes
\begin{equation}\begin{aligned}\label{KTT-2-deterministic}
&\beta+\Big\langle\left(\begin{matrix}\alpha\\0\end{matrix}\right),\left(\begin{matrix}G_1^{-1}&0\\0&0\end{matrix}\right)
\Big[I-\left(\begin{matrix}H_1&H_2 \\H_2&0\end{matrix}\right)\widetilde{ G}\Big]\int_0^T\Big[-\w{\mathbb A}(s)\w\Phi(s,0)\int_0^s\w\Phi(r,0)\w b(r)dr+\w b(s)\Big]ds\Big\rangle\leq0.
\end{aligned}\end{equation}
\begin{corollary}\label{deterministic-zero}Under \emph{\textbf{(H1)}}-\emph{\textbf{(H4)}} and \emph{\textbf{(H6)}}, suppose $\det\left[I+G_1^{-1}H_1\right]\neq0$.
If \eqref{symmetric Riccati equation of special case II} and \eqref{BSDE of special case II} admit solutions such that \eqref{KTT-2-deterministic} hold, then terminal-mean-constrained BFSDEs \eqref{eq56} is solvable. \end{corollary}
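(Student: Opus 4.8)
The plan is to obtain this corollary as the deterministic specialization of the proposition immediately preceding it, whose hypotheses (solvability of the Riccati equation \eqref{symmetric Riccati equation of special case II} and of the BSDE \eqref{BSDE of special case II}) are retained verbatim. That proposition already reduces the solvability of the terminal-mean-constrained system \eqref{eq56} to the validity of the abstract Karush--Kuhn--Tucker inequality \eqref{KTT-2}, so the only genuinely new step is to show that, when all coefficients of \eqref{eq56} are deterministic, condition \eqref{KTT-2} collapses to the explicit form \eqref{KTT-2-deterministic}. First I would recall the decoupling apparatus already set up: the substitution $\widetilde{\mathbb Y}=\mathbb Y-\left(\begin{smallmatrix}H_1&H_2\\H_2&0\end{smallmatrix}\right)\mathbb X$, together with the linear feedback \eqref{decouple-X} and the expression \eqref{decouple-Z} for $\mathbb Z$, which (under $\det[I+G_1^{-1}H_1]\neq0$ and the invertibility $\det[I+\widetilde P\widetilde C_1]\neq0$ built into \eqref{symmetric Riccati equation of special case II}) turns \eqref{eq58} into the closed linear SDE $d\widetilde{\mathbb Y}=-[\widetilde{\mathbb A}\widetilde{\mathbb Y}+\widetilde b]\,dt-[\widetilde{\mathbb A}_1\widetilde{\mathbb Y}+\widetilde\sigma]\,dW$ with $\widetilde{\mathbb Y}(0)=0$.

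The heart of the argument is the deterministic evaluation of $\mathbb E\widetilde{\mathbb Y}$. With deterministic coefficients the Riccati equation \eqref{symmetric Riccati equation of special case II} is solved by a deterministic $\widetilde P$ with vanishing martingale part $\widetilde\Lambda\equiv0$, and the BSDE \eqref{BSDE of special case II} by a deterministic $\widetilde p$ (with $\widetilde q\equiv0$), so that $\widetilde{\mathbb A}$ and $\widetilde b$ become deterministic. Taking expectations in the SDE above and using that the It\^o integral has zero mean, $m(t):=\mathbb E\widetilde{\mathbb Y}(t)$ satisfies the linear ODE $\dot m=-\widetilde{\mathbb A}m-\widetilde b$, $m(0)=0$. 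Solving by variation of constants against the fundamental matrix $\widetilde\Phi(\cdot,0)$ of $\dot{\widetilde\varphi}=-\widetilde{\mathbb A}\widetilde\varphi$ yields $\mathbb E\widetilde{\mathbb Y}(t)=-\widetilde\Phi(t,0)\int_0^t\widetilde\Phi(s,0)\widetilde b(s)\,ds$. Substituting this identity into the deterministic form of \eqref{KTT-2}, and using $\mathbb Y(T)=[I-\left(\begin{smallmatrix}H_1&H_2\\H_2&0\end{smallmatrix}\right)\widetilde G]\widetilde{\mathbb Y}(T)$, reproduces exactly the inequality \eqref{KTT-2-deterministic}; hence the two forms of the primal KKT constraint coincide in the deterministic regime.

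With this reduction in place, the conclusion follows by reconstruction: from the Riccati solution $\widetilde P$ and the BSDE solution $\widetilde p$ (hence $\widetilde{\mathbb A},\widetilde b,\widetilde\sigma$) one solves the linear SDE for $\widetilde{\mathbb Y}$, recovers $\mathbb X=\widetilde P\widetilde{\mathbb Y}+\widetilde p$ from \eqref{decouple-X}, $\mathbb Z$ from \eqref{decouple-Z}, and $\mathbb Y=\widetilde{\mathbb Y}+\left(\begin{smallmatrix}H_1&H_2\\H_2&0\end{smallmatrix}\right)\mathbb X$; one then checks that this tuple satisfies the forward-backward dynamics and boundary couplings of \eqref{eq56}, while \eqref{KTT-2-deterministic}, now equivalent to \eqref{KTT-2}, certifies the remaining primal constraint, so \eqref{eq56} is solvable. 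I expect the main obstacle to be the justification of the deterministic reduction itself --- namely that $\widetilde\Lambda\equiv0$ and that $\widetilde P$ inherits the nonsingularity $\det[I+\widetilde P\widetilde C_1]\neq0$ needed to form \eqref{decouple-Z} --- together with confirming that the boundary relation $\mathbb X(T)=\widetilde G\widetilde{\mathbb Y}(T)$ is compatible with the computed $\mathbb E\widetilde{\mathbb Y}(T)$, so that the substitution into the constraint is legitimate. The remaining verifications are routine linear-SDE and variation-of-constants computations.
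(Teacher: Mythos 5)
Your proposal is correct and takes essentially the same route as the paper: the corollary is read off from the preceding proposition by noting that, with deterministic coefficients (so $\widetilde\Lambda\equiv0$, $\widetilde q\equiv0$ and $\widetilde{\mathbb A},\widetilde b$ deterministic), taking expectations in the closed linear SDE for $\widetilde{\mathbb Y}$ gives the ODE $\dot m=-\widetilde{\mathbb A}m-\widetilde b$, $m(0)=0$, whose variation-of-constants solution against $\widetilde\Phi(\cdot,0)$ converts \eqref{KTT-2} into \eqref{KTT-2-deterministic}. This is precisely the computation the paper carries out in the text between Proposition 5.1 and the corollary, including the same representation $\mathbb E\widetilde{\mathbb Y}(t)=-\widetilde\Phi(t,0)\int_0^t\widetilde\Phi(s,0)\widetilde b(s)\,ds$ and the same reconstruction of $(\mathbb Y,\mathbb X,\mathbb Z)$ from $\widetilde P,\widetilde p$.
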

\begin{remark}
 Besides the Riccati equation decoupling method,  wellposedness of \eqref{eq56} can be established by some direct method. For example, under the conditions of Theorem \ref{wellposeness of {e1LK FBSDE}}, we know that there exists a unique adapted solution to {(BFSDE-2$^\prime$)}. Moreover, if $\beta+ \mathbb E \langle\left(\begin{matrix}\alpha\\0\end{matrix}\right), \left(\begin{matrix}G_1^{-1}&0\\0&0\end{matrix}\right)\mathbb Y(T) \rangle\leq 0$, then \eqref{eq56} admits a unique solution.
\end{remark}
\subsubsection{Multiplier $\lambda>0$}\label{nonzero}
In this section, we need to assume that the coefficients are deterministic, i.e., $A,B_1,B_2,C$, $G_1$,$Q_1,Q_2$,$S_1$,$S_2$,$R^1_{11}$ and $R^2_{22}$ are deterministic because the BFSDEs now takes some mean-field type form and its expectation is required to be computed.
In this case, (BFSDE-3$^\prime$) take the following form:
\begin{equation}\label{FBSDE-case 2}\left\{\begin{aligned}	
&dg=-\Big[A^\top  g-Q_1\bar X-Q_2h\Big]ds-\Big[C^\top g-S_1\bar Z-S_2q\Big]dW(s),\\
&d\bar Y=\Big[-A^\top \bar Y+Q_2\bar  X\Big]ds+\Big[-C^\top \bar Y+S_2\bar  Z\Big]dW(s),\\
&d\bar X=\Big[A\bar  X+B_1(R^1_{11})^{-1}B_1^\top g+B_2(R^2_{22})^{-1}B_2^\top\bar Y+C \bar Z\Big]ds+ \bar ZdW(s),\\
&dh=\Big[ Ah+B_2(R^2_{22})^{-1}B_2^\top g+ Cq\Big]ds+qdW(s),\\
& g(0)=H_1\bar X(0)+ H_2 h(0),\quad \bar Y(0)=H_2\bar  X(0),\\
& \bar  X(T)=-G_1^{-1}g(T)+G_1^{-1}\frac{\beta+\langle\alpha,G_1^{-1}\mathbb E g(T)\rangle}{ \langle\alpha,G_1^{-1}\alpha\rangle }\alpha,\quad h(T)=0,\\
&\beta+\langle\alpha, G_1^{-1} \mathbb Eg(T)\rangle>0.\qquad
\end{aligned}\right.\end{equation}
Note that \eqref{FBSDE-case 2} is solvable if  and only if  the following BFSDEs is solvable
\begin{equation*}\left\{\begin{aligned}	
&d\mathbb Eg=-\Big[A^\top\bE g- Q_1\bE\bar X- Q_2\bE h\Big]ds,\\
&d(g-\bE g)=-\Big[A^\top\bE (g-\bE g)- Q_1(\bar X-\bE\bar X)- Q_2(h-\bE h)\Big]ds\\
&\qquad-\Big[C^\top\bE g+C^\top (g-\bE g)-S_1\bar Z-S_2q\Big]dW(s),\\
&d\bE \bar Y=\Big[-A^\top\bE \bar Y+Q_2\bE\bar  X\Big]ds,\\
&d(\bar Y-\bE\bar Y)=\Big[-A^\top (\bar Y-\bE\bar Y)+Q_2(\bar  X-\bE\bar X)\Big]ds+\Big[-C^\top\bE\bar Y-C^\top (\bar Y-\bE \bar Y)+S_2\bar  Z\Big]dW(s),\\
&d\bE \bar X=\Big[B_1(R^1_{11})^{-1}B_1^\top\bE g+B_2(R^2_{22})^{-1}B_2^\top\bE\bar Y+A\bE\bar  X+C\bE \bar Z\Big]ds,\\
&d(\bar X-\bE\bar X)=\Big[B_1(R^1_{11})^{-1}B_1^\top (g-\bE g)+B_2(R^2_{22})^{-1}B_2^\top(\bar Y-\bE\bar Y)+A(\bar  X-\bE\bar X)+C \bar Z-C\bE\bar Z\Big]ds+ \bar ZdW(s),\\
&d\bE h=\Big[ B_2(R^2_{22})^{-1}B_2^\top\bE g+A\bE h+ C\bE q\Big]ds,\\
&d(h-\bE h)=\Big[B_2(R^2_{22})^{-1}B_2^\top (g-\bE g)+ A(h-\bE h)+ Cq-C\bE q\Big]ds+qdW(s),\\
&\bE g(0)=H_1\bE \bar X(0)+ H_2 \bE h(0),\quad g(0)-\bE g(0)=H_1(\bar X(0)-\bE \bar X(0))+ H_2 (h(0)-\bE h(0)),\\
& \bE \bar Y(0)=H_2\bE \bar  X(0),\quad \bar Y(0)-\bE\bar Y(0)=H_2(\bar X(0)-\bE\bar X(0))\\
& \bE \bar  X(T)=-G_1^{-1}\bE g(T)+\frac{ G_1^{-1}\alpha\alpha^\top G_1^{-1}}{\langle\alpha,G_1^{-1}\alpha\rangle}\mathbb E g(T)+\frac{G_1^{-1}\alpha\beta}{\langle\alpha,G_1^{-1}\alpha\rangle},\quad \bar X(T)-\bE\bar X(T)=-G_1^{-1}(g(T)-\bE g(T)),\\
&\bE h(T)=0,\quad h(T)-\bE h(T)=0,\\
&\beta+\langle\alpha, G_1^{-1} \mathbb Eg(T)\rangle>0.
\end{aligned}\right.\end{equation*}
Let $\c Y=(\bE g^\top, (g-\bE g)^\top, \bE\bar Y^\top, (\bar Y-\bE\bar Y)^\top)^\top$, $\c X=(\bE \bar X^\top, (\bar X-\bE \bar X)^\top, \bE h^\top,(h-\bE h)^\top)^\top$ and $\c Z=(0,\bar Z^\top,0, q^\top)^\top$, we have
\begin{equation*}\left\{\begin{aligned}	
&d\check{Y}=-[\c A\c Y+\c B\c X]dt-[\c A_1\c Y+\c B_1\c Z]dW,\\
&d\c X=[\c A_2\c Y+\c B_2\c X+\c C_2\check Z+\c D_2\bE\check Z]dt+\c ZdW,\\
&\c Y(0)=\c H\c X(0),\quad \c X(T)=\c G\c Y(T)+\c f,\\
&\beta+\langle\alpha, (G_1^{-1}\ 0\ 0\ 0) \mathbb E\c Y(T)\rangle>0,
\end{aligned}\right.\end{equation*}
where
\begin{equation}\label{new notation-2}\begin{aligned}
&\c A=\left(\begin{matrix}A^\top&0&0&0\\0&A^\top&0&0\\0&0&A^\top&0\\0&0&0&A^\top\end{matrix}\right),\quad
\c B=\left(\begin{matrix}-Q_1&0&-Q_2&0\\0&-Q_1&0&-Q_2\\-Q_2&0&0&0\\0&-Q_2&0&0\end{matrix}\right),\quad
\c A_1=\left(\begin{matrix}0&0&0&0\\C^\top&C^\top&0&0\\0&0&0&0\\0&0&C^\top&C^\top\end{matrix}\right),\\
&\c B_1=\left(\begin{matrix}0&0&0&0\\0&-S_1&0&-S_2\\0&0&0&0\\0&-S_2&0&0\end{matrix}\right),\quad
\c A_2=\left(\begin{matrix}B_1(R_{11}^1)^{-1}B_1^\top&0&B_2(R_{22}^2)^{-1}B_2^\top&0\\0&B_1(R_{11}^1)^{-1}B_1^\top&0&B_2(R_{22}^2)^{-1}B_2^\top
\\B_2(R_{22}^2)^{-1}B_2^\top&0&0&0\\0&B_2(R_{22}^2)^{-1}B_2^\top&0&0\end{matrix}\right),\\
&\c B_2=\left(\begin{matrix}A&0&0&0\\0&A&0&0\\0&0&A&0\\0&0&0&A\end{matrix}\right),\quad
\c C_2=\left(\begin{matrix}0&0&0&0\\0&C&0&0\\0&0&0&0\\0&0&0&C\end{matrix}\right),\quad
\c D_2=\left(\begin{matrix}0&C&0&0\\0&-C&0&0\\0&0&0&C\\0&0&0&-C\end{matrix}\right),\\
&\c H=\left(\begin{matrix}H_1&0&H_2&0\\0&H_1&0&H_2\\H_2&0&0&0\\0&H_2&0&0\end{matrix}\right),\quad
\c G=\left(\begin{matrix}-G_1^{-1}+\frac{ G_1^{-1}\alpha\alpha^\top G_1^{-1}}{\langle\alpha,G_1^{-1}\alpha\rangle}&0&0&0\\0&-G_1^{-1}&0&0\\0&0&0&0\\0&0&0&0\end{matrix}\right),\quad\c f=\left(\begin{matrix}\frac{G_1^{-1}\alpha\beta}{\langle\alpha,G_1^{-1}\alpha\rangle}\\0\\0\\0\end{matrix}\right).
\end{aligned}\end{equation}
Let $\c{\mathbb Y}=\c Y-\c H\c X$, then $\c{\b Y}(0)=0$ and
$(I-\c G\c H)\c X(T)=\c G\c{\b Y}(T)+\c f.$ Suppose $\det[I+(G_1^{-1}-\frac{ G_1^{-1}\alpha\alpha^\top G_1^{-1}}{\langle\alpha,G_1^{-1}\alpha\rangle})H_1]\neq0$, $\det[I+ G_1^{-1} H_1]\neq 0$, then $\det[I-\c G\c H]\neq 0$. Hence
\begin{equation*}\left\{\begin{aligned}	
&d\check{\b Y}=-[\c{\b A}\c{\b Y}+\c{\b B}\c X+\c{\b C}\c Z+\c{\b D}\bE\c Z]dt-[\c{\b A}_1\c{\b Y}+\c{\b B}_1\c X+\c{\b C}_1\c Z]dW,\\
&d\c X=[\c{\b A}_2\c{\b Y}+\c{\b B}_2\c X+\c{\b C}_2\check Z+\c{\b D}_2\bE\check Z]dt+\c ZdW,\\
&\c{\b Y}(0)=0,\quad \c X(T)=(I-\c G\c H)^{-1}\c G\c{\b Y}(T)+(I-\c G\c H)^{-1}\c f,\\
&\beta+\langle\alpha, (G_1^{-1}\ 0\ 0\ 0) (\mathbb E\c{\b Y}(T)+\c H\b E\c X(T)\rangle>0,
\end{aligned}\right.\end{equation*}where
\begin{equation}\label{new notation-3}\begin{aligned}
&\c{\b A}=\c A+\c H\c A_2,\quad \c{\b B}=\c A\c H+\c B+\c H\c A_2\c H+\c H\c B_2,\quad\c{\b C}=\c H\c C_2,\quad \c{\b D}=\c H\c D_2,\quad \c{\b A}_1=\c A_1,\quad \c{\b B}_1=\c A_1\c H,\\
&\c{\b C}_1=\c B_1+\c H,\quad\c{\b A}_2=\c A_2,\quad \c{\b B}_2=\c A_2\c H+\c B_2,\quad \c{\b C}_2=\c C_2,\quad \c{\b D}_2=\c D_2.
\end{aligned}\end{equation}
Suppose $\c X=\c P\c{\b Y}+\c p$, applying It\^{o}'s formula, we have
\begin{equation*}\begin{aligned}
d\c X%=&\c Pd\c{\b Y}+(d\c P)\c{\b Y}+d\c p\\
%=&\Big[-\c P\c{\b A}\c {\b Y}-\c P\c{\b B}\c X-\c P\c{\b C}\c Z-\c P\c{\b D}\bE\c Z\Big]dt+\Big[-\c P\c{\b A}_1\c{\b Y}-\c P\c{\b B}_1\c X-\c P\c{\b C}_1\c Z\Big]dW+(d\c P)\c{\b Y}+d\c p\\
=&\Big[-\c P\c{\b A}\c {\b Y}-\c P\c{\b B}\c P\c{\b Y}-\c P\c{\b B}\c p-\c P\c{\b C}\c Z-\c P\c{\b D}\bE\c Z\Big]dt+\Big[-\c P\c{\b A}_1\c{\b Y}-\c P\c{\b B}_1\c P\c{\b Y}-\c P\c{\b B}_1\c p-\c P\c{\b C}_1\c Z\Big]dW+(d\c P)\c{\b Y}+d\c p.
\end{aligned}\end{equation*}
Comparing the coefficients of the diffusion term, we have
$$-\c P\c{\b A}_1\c{\b Y}-\c P\c{\b B}_1\c P\c{\b Y}-\c P\c{\b B}_1\c p-\c P\c{\b C}_1\c Z=\c Z.$$
If $\det[I+\c P\c{\b C}_1]\neq0$,
$$\b E\c Z=-(I+\c P\c{\b C}_1)^{-1}(\c P\c{\b A}_1+\c P\c{\b B}_1\c P)\b E\c{\b Y}-(I+\c P\c{\b C}_1)^{-1}\c P\c{\b B}_1\c p.$$
 By taking expectation and comparing the coefficients of the drift term,
 we have the following Riccati equation
\begin{equation}\label{Riccati-case 2}\left\{\begin{aligned}
&\dot{\c P}-\c P\c{\b A}-\c P\c{\b B}\c P+(\c P\c{\b C}+\c P\c{\b D}+\c{\b C}_2+\c{\b D}_2)(I+\c P\c{\b C}_1)^{-1}(\c P\c{\b A}_1+\c P\c{\b B}_1\c P)-\c{\b A}_2-\c{\b B}_2\c P=0,\\
&\c P(T)=(I-\c G\c H)^{-1}\c G,\\
&\det[I+\c P\c{\b C}_1]\neq0,
\end{aligned}\right.\end{equation}
and the following backward ODE
\begin{equation}\label{BSDE-case 2}\left\{\begin{aligned}
&\dot{\c p}-\c P\c{\b B}\c p+(\c P\c{\b C}+\c P\c{\b D}+\c{\b C}_2+\c{\b D}_2)(I+\c P\c{\b C}_1)^{-1}\c P\c{\b B}_1\c p-\c{\b B}_2\c p=0, \\
&\c p(T)=(I-\c G\c H)^{-1}\c f.
\end{aligned}\right.\end{equation}
Moreover, we have
\begin{equation*}\begin{aligned}
d\bE\c{\b Y}=[\mathbf{A}\bE\c{\b Y}+\mathbf b]dt,\qquad \bE\c{\b Y}(0)=0,
\end{aligned}\end{equation*}
where
$$\mathbf A=-\c{\b A}-\c{\b B}\c P+(\c{\b C}+\c {\b D})(I+\c P\c{\b C}_1)^{-1}(\c P\c{\b A}_1+\c P\c{\b B}_1\c P),\quad\mathbf b=-\c{\b B}\c p+(\c{\b C}+\c {\b D})(I+\c P\c{\b C}_1)^{-1}\c P\c{\b B}_1\c p.$$
Let the fundamental solution matrices of ODE
$$d\c\varphi=\mathbf A\c\varphi dt,\qquad \c\varphi(0)=I,$$ be $\c\Phi(t,0)$. Then
$$\bE\c{\b Y}(t)=\c\Phi(t,0)\int_0^t\c\Phi(s,0)\mathbf b(s)ds.$$
Hence,
$$\bE\c Y(t)=(I+\c H\c P)\c\Phi(t,0)\int_0^t\c\Phi(s,0)\mathbf b(s)ds+\c H\c p(t).$$
Therefore, the (KKT) condition becomes
\begin{equation}\label{KKT-4}
\beta+\langle\alpha,(G_1^{-1}\ 0\ 0\ 0)(I+\c H\c P)\c\Phi(T,0)\int_0^T\c\Phi(s,0)\mathbf b(s)ds\rangle+\langle\alpha,(G_1^{-1}\ 0\ 0\ 0)\c H(I-\c G\c H)^{-1}\c f\rangle>0.
\end{equation}

\begin{proposition}\label{deterministic-nonzero}
Under \emph{\textbf{(H1)}}-\emph{\textbf{(H4)}}, suppose $\det[I+(G_1^{-1}-\frac{ G_1^{-1}\alpha\alpha^\top G_1^{-1}}{\langle\alpha,G_1^{-1}\alpha\rangle})H_1]\neq0$, $\det[I+ G_1^{-1} H_1]\neq 0$.
If \eqref{Riccati-case 2} and \eqref{BSDE-case 2} admit solutions such that \eqref{KKT-4} hold, then \eqref{FBSDE-case 2} is solvable.
\end{proposition}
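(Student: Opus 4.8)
The plan is to reverse the formal derivation that precedes the statement: rather than deriving the Riccati system from an assumed solution, I would take the given solutions $\c P$ of \eqref{Riccati-case 2} and $\c p$ of \eqref{BSDE-case 2} and use them to \emph{construct} an adapted solution of \eqref{FBSDE-case 2}, then verify every boundary and constraint relation in turn. First I would recall the equivalent reformulation already established above: \eqref{FBSDE-case 2} is solvable if and only if the stacked mean-field system in $(\c Y,\c X,\c Z)$ with coefficients \eqref{new notation-2} is solvable, where the terminal mean-constraint is encoded through $\c G$ and $\c f$. Because the coefficients are deterministic, the splitting of each process into its mean and fluctuation (zero-mean) parts is well defined, so this reformulation is exact and no information is lost.

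Next, under the two determinant hypotheses I would verify $\det[I-\c G\c H]\neq0$, which legitimizes the change of variable $\c{\b Y}=\c Y-\c H\c X$. This produces the system with decoupled boundary data $\c{\b Y}(0)=0$ and $\c X(T)=(I-\c G\c H)^{-1}(\c G\c{\b Y}(T)+\c f)$ and coefficients \eqref{new notation-3}. I would then posit the linear feedback ansatz $\c X=\c P\c{\b Y}+\c p$ and apply It\^o's formula. Matching the diffusion terms yields, under $\det[I+\c P\c{\b C}_1]\neq0$, the explicit relation for $\c Z$ and hence for $\bE\c Z$; substituting back and matching the drift after taking expectations is precisely what forces $\c P$ to solve \eqref{Riccati-case 2} and $\c p$ to solve \eqref{BSDE-case 2}. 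Since these are assumed solvable, the ansatz is consistent and $\c{\b Y}$ then solves a \emph{linear} mean-field SDE with zero initial value, which is uniquely solvable.

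Having $\c{\b Y}$, the reconstruction is explicit: $\bE\c{\b Y}(t)=\c\Phi(t,0)\int_0^t\c\Phi(s,0)\mathbf b(s)\,ds$ via the fundamental matrix $\c\Phi$, whence $\bE\c Y=(I+\c H\c P)\c\Phi(\cdot,0)\int_0^{\cdot}\c\Phi(s,0)\mathbf b(s)\,ds+\c H\c p$, and the fluctuation parts follow from the linear SDE. I would read off $\bE g(T)$ from the first block of $\bE\c Y(T)$ and set $\lambda=(\beta+\langle\alpha,G_1^{-1}\bE g(T)\rangle)(\langle\alpha,G_1^{-1}\alpha\rangle)^{-1}$ as dictated by the nonzero slackness branch. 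It then remains to confirm that the terminal relation $\c X(T)=\c G\c Y(T)+\c f$ reproduces the prescribed $\bar X(T)$ in \eqref{FBSDE-case 2}, and that the strict inequality $\beta+\langle\alpha,G_1^{-1}\bE g(T)\rangle>0$ is exactly \eqref{KKT-4}; since $G_1>0$ gives $\langle\alpha,G_1^{-1}\alpha\rangle>0$, this strict inequality holds by hypothesis and simultaneously guarantees $\lambda>0$, so the $(\text{KKT})$ system of \eqref{FBSDE-case 2} is satisfied.

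The main obstacle, I expect, is the bookkeeping of the mean-field coupling: because $\bE\c Z$ enters the drift through $\c{\b D}$ and $\c{\b D}_2$, the diffusion-matching and drift-matching steps cannot be separated naively, and one must take expectations at the right moment to isolate $\bE\c{\b Y}$ before the Riccati and ODE systems close. Verifying that the algebraic identities $\c{\b A}=\c A+\c H\c A_2$, $\c{\b C}_1=\c B_1+\c H$, and the remaining entries in \eqref{new notation-3} produce exactly the coupling structure required for this decoupling is the delicate point. A secondary subtlety is confirming $\lambda>0$ rather than merely $\lambda\geq0$, which is where the \emph{strict} inequality \eqref{KKT-4}, in contrast to the complementary-slackness equality of the $\lambda=0$ branch, is essential.
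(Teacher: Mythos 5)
Your proposal is correct and is essentially the paper's own argument: the paper proves this proposition implicitly through the derivation preceding it --- reformulation of \eqref{FBSDE-case 2} as the stacked mean-field system with coefficients \eqref{new notation-2}, the change of variable $\check{\mathbb Y}=\check Y-\check H\check X$ (legitimate since the two determinant hypotheses give $\det[I-\check G\check H]\neq0$), the affine decoupling ansatz $\check X=\check P\check{\mathbb Y}+\check p$ whose diffusion/drift matching produces \eqref{Riccati-case 2}--\eqref{BSDE-case 2}, and the representation of $\mathbb E\check{\mathbb Y}$ via the fundamental matrix $\check\Phi$ under which the KKT condition becomes \eqref{KKT-4} --- and your plan is exactly this derivation read in the constructive (sufficiency) direction. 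Your identification of the multiplier $\lambda=\big(\beta+\langle\alpha,G_1^{-1}\mathbb E g(T)\rangle\big)\langle\alpha,G_1^{-1}\alpha\rangle^{-1}$ and the observation that \eqref{KKT-4} forces $\lambda>0$ likewise coincide with the paper's treatment.
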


\begin{remark}
Now let
\begin{equation}\nonumber\begin{aligned}
&\rho_1=\text{esssup}_{0\leq s\leq T}\text{esssup}_{\omega\in\Omega}\Lambda_{\max}(-\frac{1}{2}(\check{\mathbb A}(s)+\check{\mathbb A}(s)^\top)),\\
&\rho_2=\text{esssup}_{0\leq s\leq T}\text{esssup}_{\omega\in\Omega}\Lambda_{\max}(-\frac{1}{2}(\check{\mathbb B}_2(s)+\check{\mathbb B}_2(s)^\top)),\\
& k_{10}=k_{12}=0, k_1=\left|\left|\check{\mathbb B}\right|\right|,k_2=\left|\left|\check{\mathbb C}\right|\right|,k_3=\left|\left|\check{\mathbb D}\right|\right|,k_4=\left|\left|\check{\mathbb A}_2\right|\right|,
k_5=\left|\left|\check{\mathbb C}_2\right|\right|,k_6=\left|\left|\check{\mathbb D}_2\right|\right|,\\
&k_7=\sqrt{3}\left|\left|\check{\mathbb A}_1\right|\right|,k_8=\sqrt{3}\left|\left|\check{\mathbb B}_1\right|\right|, k_9=\sqrt{3}\left|\left|\check{\mathbb C}_1\right|\right|, k_{11}=\left|\left|(I-\check G\check H)^{-1}\check G\right|\right|. %k_{10}=\frac{|\alpha|^2}{|\mathbb E\left\langle \alpha,G^{-1}\alpha\right\rangle|}\left|\left|\left(\begin{matrix}G^{-1}_1& 0 \\0&0\end{matrix}\right)\right|\right|^2.
\end{aligned}\end{equation}
If $2(\rho_1 + \rho_2) < -2\left|\left|\check{\mathbb C}_2\right|\right|^2-2\left|\left|\check{\mathbb D}_2\right|\right|^2-3\left|\left|\check{\mathbb A}_1\right|\right|^2$, there exists a $\delta_2 > 0$,
which depends on $\rho_1,\rho_2,k_i,i=5,6,7$, and is independent of $T$, such that when $k_i\in [0, \delta_1)$, $i=1,2,3,4,8,9$, \eqref{FBSDE-lambda>0} admits a unique adapted solution. Moreover, if $\beta+\langle\alpha, (G_1^{-1}\ 0\ 0\ 0) (\mathbb E\check{\mathbb Y}(T)+\check H\mathbb E\check X(T)\rangle>0$, then \eqref{FBSDE-case 2} admits a unique solution.
\end{remark}

\subsubsection{Solvability of (5.9) and (5.16)}
 In Section \ref{zero} and Section \ref{nonzero}, we have discussed the solvability of (BFSDE-3) through Riccati equations \eqref{symmetric Riccati equation of special case II} and \eqref{Riccati-case 2}. Note that \eqref{symmetric Riccati equation of special case II} and \eqref{Riccati-case 2} are not standard Riccati equations and the general solvability remain widely open. We will present the solvability for some special but nontrivial cases. Suppose the coefficients are deterministic and $C=0$, in this case, \eqref{symmetric Riccati equation of special case II} and \eqref{Riccati-case 2} reduce to
\begin{equation}\label{5.9-2}
\begin{aligned}
\dot{\widetilde P}-\widehat{ A}-\widehat{ B}\widetilde P-\widetilde P\widehat{ B}^\top-\widetilde P\widetilde{ B}\widetilde P =0,\quad\widetilde P(T)=\widetilde{ G},
\end{aligned}
\end{equation}
and
\begin{equation}\label{5.16-2}\begin{aligned}
\dot{\check P}-\check P\check{\mathbb A}-\check P\check{\mathbb B}\check P-\check{\mathbb A}_2-\check{\mathbb B}_2\check P=0,\quad\check P(T)=(I-\check G\check H)^{-1}\check G.
\end{aligned}\end{equation}
%We have the following result concerning the solvability of \eqref{ } and \eqref{ }.
\begin{proposition}
For any $s\in[0,T]$, let $\Psi_1(\cdot,s)$ and $\Psi_2(\cdot,s)$ be the solutions of the following ODEs:
\begin{equation*}\label{funda1}\begin{aligned}
\frac{d}{dt}\Psi_1(t,s)=\widehat{\mathbf{A}}_1(t)\Psi_1(t,s),\quad  t\in[s,T],\quad\Psi_1(s,s)=I,
\end{aligned}\end{equation*}
and
\begin{equation*}\label{funda2}\begin{aligned}
\frac{d}{dt}\Psi_2(t,s)=\widehat{\mathbf{A}}_2(t)\Psi_2(t,s),\quad  t\in[s,T],\quad\Psi_2(s,s)=I,
\end{aligned}\end{equation*}
respectively, where
\begin{equation}\nonumber\begin{aligned}
\widehat{\mathbf{A}}_1(\cdot)=\left(
                                  \begin{smallmatrix}
                                    -\widehat B^\top & -\widetilde B \\
                                    \widehat A & \widehat B \\
                                  \end{smallmatrix}
                                \right),\quad \widehat{\mathbf{A}}_2(\cdot)=\left(
                                  \begin{smallmatrix}
                                    -\check{\mathbb A} & -\check{\mathbb B} \\
                                    \check{\mathbb A}_2 & \check{\mathbb B}_2 \\
                                  \end{smallmatrix}
                                \right).
\end{aligned}\end{equation}
Suppose
$$\Bigg[\left(
               \begin{array}{cc}
                 0 & I \\
               \end{array}
             \right)
\Psi_1(T,t)\left(
                             \begin{array}{c}
                               0 \\
                               I \\
                             \end{array}
                           \right)\Bigg]^{-1}\in L^1(0,T;\mathbb R^{2n\times 2n}),$$\ $$\Bigg[\left(
               \begin{array}{cc}
                 0 & I \\
               \end{array}
             \right)\Psi_2(T,t)\left(
                             \begin{array}{c}
                               0 \\
                               I \\
                             \end{array}
                           \right)\Bigg]^{-1}\in L^1(0,T;\mathbb R^{4n\times 4n}).$$
Then Riccati equation \eqref{5.9-2} and \eqref{5.16-2} admit unique solutions $\widetilde P(\cdot)$ and $\check P(\cdot)$, which are given by
\begin{equation}\label{Phi-explicit}\begin{aligned}
\widetilde P(t)=-\Bigg[\left(
               \begin{array}{cc}
                 0 & I \\
               \end{array}
             \right)\Psi_1(T,t)\left(
                             \begin{array}{c}
                               0 \\
                               I \\
                             \end{array}
                           \right)\Bigg]^{-1}\left(
               \begin{array}{cc}
                 0 & I \\
               \end{array}
             \right)\Psi_1(T,t)\left(
                             \begin{array}{c}
                               I \\
                               0 \\
                             \end{array}
                           \right),\qquad  t\in[0,T],
\end{aligned}\end{equation}
and
\begin{equation}\label{phi-explicit}\begin{aligned}
\check P(t)=-\Bigg[\left(
               \begin{array}{cc}
                 0 & I \\
               \end{array}
             \right)\Psi_2(T,t)\left(
                             \begin{array}{c}
                               0 \\
                               I \\
                             \end{array}
                           \right)\Bigg]^{-1}\left(
               \begin{array}{cc}
                 0 & I \\
               \end{array}
             \right)\Psi_2(T,t)\left(
                             \begin{array}{c}
                               I \\
                               0 \\
                             \end{array}
                           \right),\qquad  t\in[0,T],
\end{aligned}\end{equation}
respectively.
\end{proposition}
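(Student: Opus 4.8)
The plan is to recognise that, once the coefficients are deterministic and $C=0$, both \eqref{5.9-2} and \eqref{5.16-2} are \emph{deterministic symmetric matrix Riccati ODEs}, and to linearise them by Radon's lemma, the associated linear systems being exactly the ones whose state--transition matrices are $\Psi_1$ and $\Psi_2$. Concretely, write \eqref{5.9-2} as $\dot{\widetilde P}=\widehat A+\widehat B\widetilde P+\widetilde P\widehat B^\top+\widetilde P\widetilde B\widetilde P$ and observe that the $2n\times 2n$ matrix $\widehat{\mathbf A}_1=\left(\begin{smallmatrix}-\widehat B^\top&-\widetilde B\\\widehat A&\widehat B\end{smallmatrix}\right)$ is precisely the Hamiltonian matrix attached to this equation: if $\left(\begin{smallmatrix}U\\V\end{smallmatrix}\right)$ solves $\frac{d}{dt}\left(\begin{smallmatrix}U\\V\end{smallmatrix}\right)=\widehat{\mathbf A}_1\left(\begin{smallmatrix}U\\V\end{smallmatrix}\right)$ with $U(t)$ invertible, then $\widetilde P:=VU^{-1}$ satisfies the Riccati ODE, since $\dot U U^{-1}=-\widehat B^\top-\widetilde B\widetilde P$ and $\dot V U^{-1}=\widehat A+\widehat B\widetilde P$ give, by the product rule, $\dot{\widetilde P}=\dot V U^{-1}-\widetilde P\,\dot U U^{-1}=\widehat A+\widehat B\widetilde P+\widetilde P\widehat B^\top+\widetilde P\widetilde B\widetilde P$. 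The symmetry relations $\widehat B=\widetilde A^\top$, $\widehat C=\widetilde A_1^\top$, $\widetilde C=\widetilde B_1^\top$ together with the symmetry of $\widehat A$ and $\widetilde B$ make $\widehat{\mathbf A}_1$ Hamiltonian and the flow symplectic, which forces $\widetilde P$ to remain symmetric. Equation \eqref{5.16-2} is treated identically: matching $\dot{\check P}=\check{\mathbb A}_2+\check{\mathbb B}_2\check P+\check P\check{\mathbb A}+\check P\check{\mathbb B}\check P$ against the generic Riccati form shows that its Hamiltonian matrix is exactly $\widehat{\mathbf A}_2=\left(\begin{smallmatrix}-\check{\mathbb A}&-\check{\mathbb B}\\\check{\mathbb A}_2&\check{\mathbb B}_2\end{smallmatrix}\right)$, so $\check P=VU^{-1}$ for the corresponding $4n\times 4n$ linear flow.

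Next I would single out the correct solution by imposing the terminal data. Propagating the terminal graph backward by the transition matrix $\Psi_i(\cdot,T)=\Psi_i(T,\cdot)^{-1}$ yields a solution $\left(\begin{smallmatrix}U(t)\\V(t)\end{smallmatrix}\right)$ of the linear system matching the prescribed terminal value ($\widetilde G$ in the first case, $(I-\check G\check H)^{-1}\check G$ in the second), and setting $\widetilde P(t)=V(t)U(t)^{-1}$ (resp.\ $\check P(t)=V(t)U(t)^{-1}$) produces an explicit representation through the blocks of $\Psi_i(T,t)$. Sorting the $\left(\begin{smallmatrix}0&I\end{smallmatrix}\right)$- and $\left(\begin{smallmatrix}I&0\end{smallmatrix}\right)$-slices of $\Psi_i(T,t)$ is the step that delivers the closed forms \eqref{Phi-explicit} and \eqref{phi-explicit}; the block that must be inverted is the lower--right one, $\left(\begin{smallmatrix}0&I\end{smallmatrix}\right)\Psi_i(T,t)\left(\begin{smallmatrix}0\\I\end{smallmatrix}\right)$.

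The crux is to guarantee that $U(t)$ — equivalently the block $\left(\begin{smallmatrix}0&I\end{smallmatrix}\right)\Psi_i(T,t)\left(\begin{smallmatrix}0\\I\end{smallmatrix}\right)$ — stays nonsingular for \emph{every} $t\in[0,T]$, so that the representation is well defined and the Riccati solution does not escape to infinity in finite time. This is exactly what the standing hypothesis that the inverse of this block lies in $L^1(0,T)$ is designed to supply: a.e.\ invertibility together with the integrability propagates, along the linear flow, to a genuine continuous inverse on the whole interval, so $\widetilde P\in C([0,T];\cdot)$ and $\check P\in C([0,T];\cdot)$ are globally defined. I expect this global-invertibility / no-blow-up step to be the main obstacle: local existence of the Riccati solution is automatic because the right-hand side is locally Lipschitz, but matrix Riccati equations generically explode, and it is precisely the $L^1$ condition that rules this out.

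Finally I would close the argument by verifying directly that the constructed matrices meet the terminal conditions $\widetilde P(T)=\widetilde G$ and $\check P(T)=(I-\check G\check H)^{-1}\check G$, that the nondegeneracy constraints $\det[I+\widetilde P\widetilde C_1]\neq0$ of \eqref{symmetric Riccati equation of special case II} and $\det[I+\check P\check{\mathbb C}_1]\neq0$ of \eqref{Riccati-case 2} are inherited, and that uniqueness holds. Uniqueness follows because any solution of the Riccati ODE determines, via its graph, an invariant subspace under the linear flow; two solutions thus coincide wherever both are defined, and global definition on $[0,T]$ forces equality. Alternatively, uniqueness can be read off from the uniqueness theorem for the linear Hamiltonian ODE combined with the local Lipschitz property of the Riccati right-hand side.
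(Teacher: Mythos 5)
Your linearization skeleton is the right one, and it is surely what the authors intended (the paper states this proposition with no proof at all): the computation showing that $\widetilde P=VU^{-1}$ solves \eqref{5.9-2} whenever $\frac{d}{dt}\left(\begin{smallmatrix}U\\V\end{smallmatrix}\right)=\widehat{\mathbf{A}}_1\left(\begin{smallmatrix}U\\V\end{smallmatrix}\right)$ with $U$ invertible is correct, as is the analogous matching of \eqref{5.16-2} with $\widehat{\mathbf{A}}_2$. The genuine gap is the step where you assert that ``sorting the $\left(\begin{smallmatrix}0&I\end{smallmatrix}\right)$- and $\left(\begin{smallmatrix}I&0\end{smallmatrix}\right)$-slices of $\Psi_i(T,t)$ delivers the closed forms \eqref{Phi-explicit} and \eqref{phi-explicit}.'' It does not, because those displayed formulas contain no trace of the terminal data. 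Write $\Phi(t)=\Psi_1(T,t)$ in blocks $\Phi_{ij}(t)$, so that \eqref{Phi-explicit} reads $\widetilde P(t)=-\Phi_{22}(t)^{-1}\Phi_{21}(t)$. Carrying out your own prescription, i.e.\ imposing $U(T)=I$, $V(T)=\widetilde G$ in $\left(\begin{smallmatrix}U(T)\\V(T)\end{smallmatrix}\right)=\Phi(t)\left(\begin{smallmatrix}U(t)\\V(t)\end{smallmatrix}\right)$ and eliminating, gives
\begin{equation*}
\widetilde P(t)=V(t)U(t)^{-1}=\bigl[\Phi_{22}(t)-\widetilde G\,\Phi_{12}(t)\bigr]^{-1}\bigl[\widetilde G\,\Phi_{11}(t)-\Phi_{21}(t)\bigr],
\end{equation*}
which coincides with \eqref{Phi-explicit} only when $\widetilde G=0$. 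Conversely, since $\Psi_1(T,T)=I$, the right-hand side of \eqref{Phi-explicit} at $t=T$ equals $-I^{-1}\cdot 0=0$, while \eqref{5.9-2} requires $\widetilde P(T)=\widetilde G=-\left(\begin{smallmatrix}(I+G_1^{-1}H_1)^{-1}G_1^{-1}&0\\0&0\end{smallmatrix}\right)\neq0$ under (H4); indeed, using $\partial_t\Psi_1(T,t)=-\Psi_1(T,t)\widehat{\mathbf{A}}_1(t)$ one checks that $-\Phi_{22}^{-1}\Phi_{21}$ solves the differential equation in \eqref{5.9-2} but with terminal value $0$, not $\widetilde G$. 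The same mismatch occurs for \eqref{phi-explicit}, since $(I-\check G\check H)^{-1}\check G\neq0$. Consequently the final step you promise --- ``verifying directly that the constructed matrices meet the terminal conditions'' --- would fail, and the proof cannot be completed as written. What your method actually proves is the corrected representation displayed above (with the invertibility/integrability hypothesis then placed on $\Phi_{22}-\widetilde G\,\Phi_{12}$ rather than on $\Phi_{22}$); as a by-product it exposes that the formulas printed in the proposition are the ones belonging to vanishing terminal data, a discrepancy your write-up papers over rather than detects.

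A secondary point: you claim that the $L^1$ hypothesis upgrades a.e.\ invertibility of $\left(\begin{smallmatrix}0&I\end{smallmatrix}\right)\Psi_i(T,t)\left(\begin{smallmatrix}0\\I\end{smallmatrix}\right)$ to invertibility at \emph{every} $t$, but give no argument; one is needed and is available. Since the coefficients are deterministic and bounded, $t\mapsto\Psi_i(T,t)$ is Lipschitz on $[0,T]$, and singular values are $1$-Lipschitz under operator-norm perturbations; hence if the smallest singular value of the relevant block vanished at some $t_0$, it would be bounded by $L|t-t_0|$ nearby, so the norm of the inverse would dominate $(L|t-t_0|)^{-1}$, contradicting integrability. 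With that lemma in place, together with either of your uniqueness arguments (backward Picard--Lindel\"of from the locally Lipschitz right-hand side is the cleanest), the corrected version of the statement goes through by exactly your route.
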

\begin{remark}
In general, \eqref{Riccati-case 2} is \emph{asymmetric} matric Riccati equation hence its solvability is more challenging than that of \eqref{symmetric Riccati equation of special case II}.
For example, if $H_1=H_2=Q_1=Q_2=0$, \eqref{symmetric Riccati equation of special case II} reduces to
\begin{equation*}
\left\{
\begin{aligned}
&d\widetilde P= \{\widehat{ A}+\widehat{ B}\widetilde P+\widetilde P\widehat{ B}^\top+\widetilde\Lambda\widehat{ C}^\top+\widehat{ C}\widetilde\Lambda-\widehat{ C}\widetilde P\widehat{ C}^\top\\
&\qquad\quad+(\widetilde \Lambda-\widehat{ C}\widetilde P)(\widetilde{ C}_1^{-1}+\widetilde P)^{-1}
(\widetilde \Lambda-\widehat{ C}^\top)
 \}ds+\widetilde\Lambda d W(s),\\
&\widetilde P(T)=\widetilde{ G},\qquad\det\left[I+\widetilde P\widetilde{ C}_1\right]\neq0,
\end{aligned}
\right.
\end{equation*}
which is the type of Riccati equation studied in \cite{Tang2003}. For this kind of Riccati equations, Please refer Section \ref{Riccati equation} for more information.
\end{remark}

\subsection{Solvability of Riccati equations}\label{Riccati equation}
 In this subsection, we will give the general solvability of (SRE-1) and (SRE-2).
For $a,c\in L^\infty_{\mathbb F}([0,T];\mathbb R^{n\times n})$, $b,d\in L^\infty_{\mathbb F}([0,T];\mathbb R^{n\times k})$, $q\in L^\infty_{\mathbb F}([0,T];\mathbb S^{n})$, $s\in L^\infty_{\mathbb F}([0,T];\mathbb S^{m})$, $M\in L^\infty_{\mathcal F_T}(\Omega;\mathbb R^{n\times n})$, consider the following Riccati equation
\begin{equation}\label{tang-1}\left\{\begin{aligned}
&dP=-\Big\{a^\top P+Pa+q-[Pb+K d][s+d^\top Pd]^{-1}[Pb+K d]^\top\Big\}dt+K dW,\\
&P(T)=M.
\end{aligned}\right.\end{equation}
If $q(\cdot)\geq0,\ M\geq0,\ s(\cdot)\gg0$, it follows from \cite[Theorem 5.3]{Tang2003} that \eqref{tang-1}
admits a unique solution $(P,K)\in L^\infty_{\mathbb F}([0,T];\mathbb S^{ n})\times L^2_{\mathbb F}([0,T];\mathbb S^{n})$ such that $P(\cdot)\geq0$.
Let $$k=n+m,d=\left(\begin{matrix}I&0\end{matrix}\right)_{n\times(n+m)}, b=\left(\begin{matrix}C&B_2\end{matrix}\right)_{n\times(n+m)}, s=\left(\begin{matrix}S_2&0\\0&R_{22}^2\end{matrix}\right)_{(n+m)\times(n+m)},$$ we have
\begin{equation*}\begin{aligned}
&(Pb+K d)(s+d^\top Pd)^{-1}(Pb+K d)^\top\\
=&\left(\begin{matrix}PC+K&PB_2\end{matrix}\right)\left(\begin{matrix}(P+S_2)^{-1}&0\\0&(R_{22}^2)^{-1}\end{matrix}\right)
\left(\begin{matrix}PC+K&PB_2\end{matrix}\right)^\top\\
=&(PC+K)(P+S_2)^{-1}(PC+K)^\top+PB_2(R_{22}^2)^{-1}B_2^\top P.
\end{aligned}\end{equation*}
Therefore, we have the following result.
\begin{proposition}\label{solvability of riccati-3}
If $Q_2(\cdot)\geq0$, $M\geq 0$, $S_2(\cdot)\gg0$ and $R_{22}^2(\cdot)\gg0$, then \emph{\textbf{(SRE-1)}} admits a unique solution $(P(\cdot),K(\cdot))\in L_{\mathbb F}^\infty(0,T;\mathbb S^n_+)\times L_{\mathbb F}^2(0,T;\mathbb S^n)$.
\end{proposition}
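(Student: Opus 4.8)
The plan is to recognize (SRE-1) as a special instance of the canonical stochastic Riccati equation \eqref{tang-1}, whose unique solvability is already guaranteed by \cite[Theorem 5.3]{Tang2003}, and then to verify that the nonnegativity of the resulting $P(\cdot)$ automatically secures the defining nondegeneracy constraint $P(\cdot)+S_2(\cdot)>0$. First I would fix the data $a=A$, $q=Q_2$, and the terminal value $M$, and adopt the block coefficients
\[
k=n+m,\quad d=(I\ 0)_{n\times(n+m)},\quad b=(C\ B_2)_{n\times(n+m)},\quad s=\begin{pmatrix}S_2 & 0\\ 0 & R_{22}^2\end{pmatrix},
\]
exactly as recorded just above the statement. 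Under \textbf{(H1)}--\textbf{(H3)} each of $a,b,d,s,q$ lies in the requisite $L^\infty_{\mathbb F}$ class and $M\geq0$ holds by hypothesis, so the structural admissibility of \eqref{tang-1} is immediate.

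The crux is the block computation displayed before the statement, namely
\[
(Pb+Kd)(s+d^\top Pd)^{-1}(Pb+Kd)^\top=(PC+K)(P+S_2)^{-1}(PC+K)^\top+PB_2(R_{22}^2)^{-1}B_2^\top P,
\]
which rests on the block-diagonal identity $s+d^\top Pd=\mathrm{diag}(P+S_2,\,R_{22}^2)$. Since $P$ and $K$ are symmetric one has $(PC+K)^\top=C^\top P+K$, so the right-hand side reproduces verbatim the feedback term of (SRE-1); combined with $a^\top P+Pa=A^\top P+PA$ and $q=Q_2$, this shows that the drifts of (SRE-1) and \eqref{tang-1} coincide identically, whence the two equations share the same solutions.

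It then remains to check the positivity conditions of \cite[Theorem 5.3]{Tang2003}. Here $q(\cdot)=Q_2(\cdot)\geq0$ and $M\geq0$ are assumed, while $s(\cdot)=\mathrm{diag}(S_2(\cdot),R_{22}^2(\cdot))\gg0$ holds because a block-diagonal matrix is uniformly positive definite exactly when each diagonal block is, and both $S_2(\cdot)\gg0$ and $R_{22}^2(\cdot)\gg0$ are in force. The cited theorem then delivers a unique pair $(P,K)\in L^\infty_{\mathbb F}(0,T;\mathbb S^n)\times L^2_{\mathbb F}(0,T;\mathbb S^n)$ with $P(\cdot)\geq0$, i.e.\ $P(\cdot)\in L^\infty_{\mathbb F}(0,T;\mathbb S^n_+)$. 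Finally $P(\cdot)\geq0$ together with $S_2(\cdot)\gg0$ yields $P(\cdot)+S_2(\cdot)\geq S_2(\cdot)\gg0$, so the constraint $P+S_2>0$ appearing in (SRE-1) is met for free and $(P+S_2)^{-1}$ is well defined throughout $[0,T]$. The only genuinely delicate point I anticipate is this matching step: one must appreciate that the uniform positivity $s\gg0$ demanded by \eqref{tang-1} forces a strengthening of the mere $R_{22}^2>0$ used in Proposition \ref{convexity of follower} to the two simultaneous conditions $S_2\gg0$ and $R_{22}^2\gg0$; beyond that, the proof is a direct transcription.
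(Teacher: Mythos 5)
Your proof is correct and takes essentially the same route as the paper: both specialize the general stochastic Riccati equation \eqref{tang-1} via the block data $d=(I\ 0)$, $b=(C\ B_2)$, $s=\mathrm{diag}(S_2,R_{22}^2)$, invoke \cite[Theorem 5.3]{Tang2003} under $q=Q_2\geq0$, $M\geq0$, $s\gg0$, and match the feedback terms through the same block computation. Your extra observation that $P(\cdot)\geq0$ together with $S_2(\cdot)\gg0$ guarantees the nondegeneracy constraint $P+S_2>0$ in \textbf{(SRE-1)} is left implicit in the paper, but it is an immediate remark rather than a different argument.
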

Furthermore, for (SRE-1) with scalar value, i.e., $n=m_1=m_2=1$, we have a better result as follows.
\begin{proposition}
Let $S_2(\cdot)\geq0$ and $Q_2(\cdot)\geq0$, then Riccati equation \textbf{\emph{(SRE-1)}} admits a unique solution $(P(\cdot),\Lambda(\cdot))\in L_{\mathbb F}^\infty(0,T;\mathbb R)\times L_{\mathbb F}^2(0,T;\mathbb R)$.
\end{proposition}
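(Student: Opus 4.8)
The plan is to exploit the scalar case $n=m_1=m_2=1$ to \emph{linearise} \textbf{(SRE-1)} through a logarithmic change of variables, which removes the dependence on the uniform positivity $S_2(\cdot)\gg0$ that was essential in the general Proposition \ref{solvability of riccati-3}. Throughout I keep (H1)--(H3) (so $R_{22}^2(\cdot)\geq\delta>0$ and all coefficients are bounded) and, as in Proposition \ref{solvability of riccati-3}, take $M\geq0$; I first treat $M>0$ and recover $M=0$ at the end via the approximation $M_\eta=M+\eta$, $\eta\downarrow0$. Since I look for a solution with $P(\cdot)>0$, which then delivers the constraint $P+S_2>0$ for free, I set $V=\log P$ and $N=\Lambda/P$, where $\Lambda$ denotes the martingale integrand written $K$ in \textbf{(SRE-1)}. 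It\^o's formula shows that $(V,N)$ should solve the one-dimensional backward equation $-dV=G(s,V,N)\,ds-N\,dW$, $V(T)=\log M$, with
\[
G(s,V,N)=Q_2e^{-V}+2A-\frac{B_2^2}{R_{22}^2}e^{V}-\frac{e^{V}(C+N)^2}{e^{V}+S_2}+\frac{N^2}{2}.
\]
The decisive gain is that the factor $e^{V}/(e^{V}+S_2)$ lies in $[0,1]$ for every $S_2\geq0$, so the $1/(P+S_2)$ singularity that obstructs the degeneration $S_2\downarrow0$ in the untransformed equation has disappeared: $G$ is continuous in $V$ even where $S_2=0$, and it has quadratic growth in $N$ whose leading coefficient $\tfrac12-e^{V}/(e^{V}+S_2)$ stays in $[-\tfrac12,\tfrac12]$.

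The next step is to establish the two-sided a priori bound $0<\underline c\leq P\leq\bar P$ (equivalently $\log\underline c\leq V\leq\log\bar P$) that makes $G$ a genuine bounded-coefficient quadratic driver. For the upper bound, dropping the two nonpositive terms $-\frac{B_2^2}{R_{22}^2}P^2$ and $-\frac{(PC+\Lambda)^2}{P+S_2}$ from the driver of \textbf{(SRE-1)} and comparing with the linear BSDE $d\bar P=-(Q_2+2A\bar P)\,ds+\bar K\,dW$, $\bar P(T)=M$, whose solution is bounded, gives $P\leq\bar P\in L^\infty_{\mathbb F}$; in the $V$ variable this is the barrier furnished by $-\frac{B_2^2}{R_{22}^2}e^{V}\to-\infty$ as $V\to+\infty$. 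Granting the positive lower bound (the delicate point, addressed below), the Kobylanski-type existence and comparison theory for one-dimensional quadratic BSDEs with bounded terminal data applies to the $(V,N)$ equation, yielding an adapted solution with $V\in L^\infty_{\mathbb F}$ and $\int_0^\cdot N\,dW\in\mathrm{BMO}$. Setting $P=e^{V}$ and $\Lambda=PN$ recovers $P\in L^\infty_{\mathbb F}(0,T;\mathbb R)$ with $P>0$ (hence $P+S_2>0$), and from the BMO bound together with $P\in L^\infty_{\mathbb F}$ one gets $\Lambda\in L^2_{\mathbb F}(0,T;\mathbb R)$, so $(P,\Lambda)$ solves \textbf{(SRE-1)}. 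Uniqueness follows by transforming any two solutions satisfying $P+S_2>0$ into solutions of the nondegenerate $(V,N)$ equation and invoking uniqueness/comparison there.

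The main obstacle is exactly the strictly positive lower bound on $P$, i.e.\ preventing $V\to-\infty$: the term $Q_2e^{-V}\geq0$ serves as a lower barrier and suffices wherever $Q_2$ is bounded away from $0$, but it degenerates where $Q_2$ vanishes. I would close this gap with the auxiliary approximation $S_2^\varepsilon=S_2+\varepsilon\gg0$: for each $\varepsilon>0$, Proposition \ref{solvability of riccati-3} supplies $(P^\varepsilon,\Lambda^\varepsilon)$ with $P^\varepsilon\geq0$ and $P^\varepsilon+S_2^\varepsilon\geq\varepsilon$; since the driver of \textbf{(SRE-1)} is nondecreasing in $S_2$, the comparison theorem makes $\varepsilon\mapsto P^\varepsilon$ monotone, and with the $\varepsilon$-uniform upper bound $P^\varepsilon\leq\bar P$ this produces a monotone limit $P^\varepsilon\downarrow P$, $0\leq P\leq\bar P$. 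The remaining task is to show this limit is compatible with the logarithmic picture, namely that $P+S_2>0$ and that $\frac{(P^\varepsilon C+\Lambda^\varepsilon)^2}{P^\varepsilon+S_2^\varepsilon}$ converges, which is precisely where the monotone stability theorem for quadratic BSDEs, combined with the $\varepsilon$-uniform $L^\infty$/BMO estimates above, is applied to pass to the limit and identify $(P,\Lambda)$ as the unique solution. I expect the $\varepsilon$-uniform BMO estimate and the strict positivity of the limit to be the only genuinely technical points; everything else is routine comparison and stability for scalar (quadratic) BSDEs, and it is the scalar nature that makes the logarithmic linearisation available, explaining why the general multidimensional result still needs $S_2\gg0$.
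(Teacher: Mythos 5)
Your logarithmic substitution $V=\log P$, $N=\Lambda/P$ is a genuinely different route from the paper's, but it leaves a gap at exactly the decisive point, and the repair you sketch cannot close it. Everything hinges on the strictly positive lower bound $P\ge\underline c>0$ (equivalently, $V$ bounded below). You correctly observe that your only barrier, $Q_2e^{-V}$, dies where $Q_2$ vanishes --- and the proposition allows $Q_2\equiv 0$ and $S_2\equiv 0$, in which case the $V$-driver reduces to $2A-\frac{B_2^2}{R_{22}^2}e^{V}-(C+N)^2+\frac{N^2}{2}$: it has no singularity as $V\to-\infty$, so no barrier or comparison argument inside the log picture can stop $V$ from escaping to $-\infty$; the downward drift it produces is of size $\frac12\int N^2\,ds$, and controlling that integral (a BMO bound on $N$) is itself part of what is being proved, so the argument is circular. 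Your $\varepsilon$-repair does not help either: (i) Proposition \ref{solvability of riccati-3} yields only $P^\varepsilon\in\mathbb S^n_+$, i.e.\ $P^\varepsilon\ge0$, with no positive lower bound at all, let alone one uniform in $\varepsilon$, so the monotone limit gives $P\ge0$ while the constraint $P+S_2>0$ (that is, $P>0$ when $S_2\equiv0$) is precisely what must be shown; (ii) Kobylanski's monotone stability theorem requires a quadratic-growth constant independent of $\varepsilon$, but the quadratic-in-$\Lambda$ coefficient of the $\varepsilon$-driver is $(P^\varepsilon+S_2+\varepsilon)^{-1}$, which is uniformly bounded only if one already has the uniform positive lower bound --- circular again.

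The missing idea, which is the paper's actual proof, is the reciprocal rather than logarithmic substitution: $y=P^{-1}$, $z=-\Lambda P^{-2}$ (the paper treats $S_2\equiv0$, the case $S_2>0$ being similar). This maps the dangerous region $P\approx0$ to the region where $y$ is large, and there the transformed driver $(B_2)^2(R_{22}^2)^{-1}+(C^2-2A)y-Q_2y^2+2Cz$ is benign: its only superlinear term, $-Q_2y^2\le0$, pushes $y$ \emph{down}. Hence comparison with the linear BSDE obtained by deleting $-Q_2y^2$ (made rigorous through a Girsanov change of measure, $2C\cdot W$ being BMO) gives $y\le c_1$, i.e.\ exactly the lower bound $P\ge c_1^{-1}$ that your scheme cannot reach; a second linear comparison gives $y\ge c_2>0$, and truncating the $-Q_2y^2$ term and invoking the existence and comparison theorems of \cite{Kobylanski2000} completes existence, while uniqueness is proved on the $y$-level by It\^o's formula applied to the squared difference of two solutions under BMO conditions. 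Note that your $V$-equation carries the concave quadratic term $-\frac12N^2$ (when $S_2=0$), and the classical exponential transform that linearizes such a term is precisely $e^{-V}=P^{-1}$: this is why the reciprocal map succeeds where the logarithm does not. Two smaller defects: for $S_2>0$ a solution of \textbf{(SRE-1)} satisfies only $P+S_2>0$, not $P>0$, so $\log P$ need not even be defined and your uniqueness argument does not cover arbitrary solutions; and $M=0$ cannot be ``recovered at the end,'' since \textbf{(SRE-1)} requires nonsingular $M$ and, with $S_2(T)=0$, the constraint $P(T)+S_2(T)>0$ forces $M>0$.
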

\proof
 For simplicity, we only consider the case $S_2(\cdot)=0$ since the proof of $S_2(\cdot)>0$ is similar.
Consider the following equation:
\begin{equation}\label{inverse}\begin{aligned}
dy=- [(B_2)^2(R_{22}^2)^{-1}+(C^2-2A)y-Q_2(s)y^2+2Cz ]ds+zdW(s),\qquad
y(T)=M^{-1}.
\end{aligned}\end{equation}
We will show that \eqref{inverse} admits a unique solution $ (y(s),z(s))\in L_{\mathbb F}^\infty(0,T;\mathbb R)\times L_{\mathbb F}^2(0,T;\mathbb R)$.
First we will prove the uniqueness.
Let $(\check{y}(s),\check{z}(s))$ and $(\widetilde{y}(s),\widetilde{z}(s))$ be two solutions of \eqref{inverse} such that $\check{z}\cdot W\triangleq\int_0^\cdot\check zdW(s)$ and $\widetilde{z}\cdot W$ are bounded-mean-oscillation (BMO) martingles (see \cite{HJZ2012}). Set $\hat y=\check{y}-\widetilde{y},$ $\hat z=\check{z}-\widetilde{z}.$
Then
\begin{equation*}\begin{aligned}
d\hat y= [Q_2(\check{y}+\widetilde{y})\hat y+(2A-C^2)\hat y-2C\hat z ]ds+\hat zdW,\qquad
\hat y(T)=0.
\end{aligned}\end{equation*}
Applying It\^{o}'s formula to $|\hat y|^2$ and taking conditional expectation, we deduce that there exists a constant $k>0$ such that
\begin{equation*}\begin{aligned}
|\hat y(s)|^2+\mathbb E_s\int_s^T|\hat z(r)|^2dr
=&\mathbb E [\int_s^T (-2Q_2(\check{y}+\widetilde{y})\hat y^2-(2A-C)\hat y^2+4C\hat y\hat z )dr |\mathcal F_s ]\\
\leq&k\mathbb E [\int_s^T|\hat y|^2dr |\mathcal F_s ]+\frac{1}{2}\mathbb E [\int_s^T|\hat z|^2dr |\mathcal F_s ].
\end{aligned}\end{equation*}
Therefore,
$$\check{y}(s)=\widetilde{y}(s),\qquad \check{z}(s)=\widetilde{z}(s),\qquad a.e.\ s\in[0,T],\ \mathbb P-a.s.$$
Hence, BSDE \eqref{inverse} admits at most one solution in $ L_{\mathbb F}^\infty(0,T;\mathbb R)\times L_{\mathbb F}^2(0,T;\mathbb R)$.

Let us now prove the existence. For $h(\cdot)\in L^\infty_{\mathbb F}([0,T];\mathbb R)$, define $\|h(\cdot)\|_\infty=\mathop{esssup}\limits_{0\leq s\leq T}\mathop{esssup}\limits_{\omega\in\Omega}|h(s)|$.
First, introduce the following equation:
\begin{equation}\label{upper bound-inverse}\begin{aligned}
d\bar y(s)=- [\|(B_2)^2(R_{22}^2)^{-1}\|_\infty+\|C^2-2A\|_\infty\bar y+2C\bar z ]ds+\bar zdW,\qquad
\bar y(T)=M^{-1}.\\
\end{aligned}\end{equation}
BSDE \eqref{upper bound-inverse} is a standard BSDE with Lipschitz continuous generator, therefore there exists a unique solution $(\bar y,\bar z)\in L^2_{\mathbb{F}}(\Omega;C([t,T];\mathbb R))\times L_{\mathbb F}^2(0,T;\mathbb R)$ and $\bar z\cdot W$ is a BMO martingale. Rewrite BSDE \eqref{upper bound-inverse} as
\begin{equation*}\begin{aligned}
d\bar y(s)=- [\|(B_2)^2(R_{22}^2)^{-1}\|_\infty+\|C^2-2A\|_\infty\bar y ]ds+\bar z (dW-2Cds ),\qquad
\bar y(T)=M^{-1}.\\
\end{aligned}\end{equation*}
Note that $2C(s)\cdot W$ is a BMO martingale, there exists a new probability measure $\widetilde{\mathbb P}$ such that
$W_s^{\widetilde{\mathbb P}}\triangleq W_s-\int_0^s2C(s)ds$ is a Brownian motion under $\widetilde{\mathbb P}$. Therefore,
$$\bar y(s)=\mathbb E^{\widetilde{\mathbb P}}
 [e^{\|C^2-2A\|_\infty(T-s)}+\|(B_2)^2(R_{22}^2)^{-1}\|_\infty\int_s^Te^{\|C^2-2A\|_\infty(s-v)}dv |\mathcal F_s ],$$
from which we deduce that $\bar y(s)\leq c_1$
where
$c_1=e^{\|C^2-2A\|_\infty T}+\|(B_2)^2(R_{22}^2)^{-1}\|_\infty T e^{\|C^2-2A\|_\infty T}.$
Next, introduce the following BSDE:
\begin{equation}\label{lower bound-inverse}\begin{aligned}
d\underline{y}(s)=- [-\|C^2-2A\|_\infty \underline{y}(s)-c_1Q_2\underline{y}(s)
+2C\underline{z}(s) ] ds+\underline{z}(s) dW(s),\qquad
\underline y(T)=M^{-1}.\\
\end{aligned}\end{equation}
BSDE \eqref{lower bound-inverse} is a standard BSDE with Lipschitz continuous generator, therefore there exists a unique solution $(\underline y,\underline z)\in L^2_{\mathbb{F}}(\Omega;C([t,T];\mathbb R))\times L_{\mathbb F}^2(0,T;\mathbb R)$ and $\underline z\cdot W$ is a BMO martingale. Rewrite BSDE \eqref{lower bound-inverse} as
\begin{equation*}\begin{aligned}
d\underline y(s)=- [-\|C^2-2A\|_\infty \underline{y}(s)-c_1Q_2\underline{y}(s) ]ds+\underline z(dW-2Cds),\qquad
\underline y(T)=M^{-1}.\\
\end{aligned}\end{equation*}Therefore,
$\underline y(s)=\mathbb E^{\widetilde{\mathbb P}}
 [e^{-2\|C^2-2A\|_\infty(T-s)-c_1Q_2(T-s)} |\mathcal F_s ],$
from which we deduce that $\underline y(s)\geq c_2$,
where $c_2=e^{-2\|C^2-2A\|_\infty T-c_1Q_2T}.$
Moreover, by comparison theorem for BSDE with Lipschitz continuous generator, for $s\in[0,T]$ we have $ c_2\leq \underline y(s)\leq\bar y(s)\leq c_1,\ \mathbb P-a.s.$
Define $\Theta_{c_1,c_2}(y)\triangleq c_1I\{y<c_1\}+pI\{c_1\leq y\leq c_2\}+c_2I\{y>c_2\},$  and introduce the following BSDE
\begin{equation*}\begin{aligned}
dy=- [(B_2)^2(R_{22}^2)^{-1} +(C^2-2A)y- Q_2\Theta_{c_1,c_2}(y)y +2Cz ] ds+z dW(s),\qquad
y(T)=M^{-1}.
\end{aligned}\end{equation*}
The above BSDE is a standard quadratic BSDE and by \cite[Theorem 2.3]{Kobylanski2000}, it admits at most one solution $(y^{c_1,c_2}(s),z^{c_1,c_2}(s))\in L_{\mathbb F}^\infty(0,T;\mathbb R)\times L_{\mathbb F}^2(0,T;\mathbb R)$.
Furthermore, let
\begin{equation*}\left\{\begin{aligned}
&f_1(y,z)=(B_2)^2(R_{22}^2)^{-1} +(C^2-2A)y- Q_2\Theta_{c_1,c_2}(y)y +2Cz,\\
&f_2(y,z)=\|(B_2)^2(R_{22}^2)^{-1}\|_\infty+\|C^2-2A\|_\infty y+2C z,\\
&f_3(y,z)=-\|C^2-2A\|_\infty y-c_1Q_2y
+2Cz.\\
\end{aligned}\right.\end{equation*}
It is easy to check that there exist positive constants $k_1,k_2,k_3$ such that
$$|f_1(y,z)|\leq k_1|y|+k_2z^2+k_3,\qquad\frac{\partial f_1}{\partial z}=2C,\qquad\frac{\partial f_1}{\partial y}\leq C^2-2A-Q_2c_2,\qquad\mathbb P-a.s.$$
Moreover, we have
$\forall s\in[0,T], f_1(\bar y(s),\bar z(s))\leq f_2(\bar y(s),\bar z(s)),
f_1(\underline y(s),\underline z(s))\geq f_3(\underline y(s),\underline z(s)), \mathbb P-a.s.$
Hence, it follows from \cite[Theorem 2.6]{Kobylanski2000} that $\forall s\in[0,T], \underline y(s)\leq y(s)\leq \bar y(s),\mathbb P-a.s.$
Therefore, \eqref{inverse} admits a solution $(y(s),z(s))\in L_{\mathbb F}^\infty(0,T;\mathbb R)\times L_{\mathbb F}^2(0,T;\mathbb R)$ and there exist two positive constants $c_1,c_2$ such that
$\forall s\in[0,T], c_2\leq y(s)\leq c_1,\mathbb P-a.s.$
Let $P(s)=y^{-1}(s),$ $K(s)=-z(s)y^{-2}(s),$ we have
\begin{equation*}\begin{aligned}
dP=- [Q_2+2A P-B_2^2(R_{22}^2)^{-1} P^2-(PC +K)^2P^{-1} ] ds+K dW(s),\qquad
P(T)=M,
\end{aligned}\end{equation*}
i.e., {(SRE-1)} admits a solution $(P(s),K(s))\in L_{\mathbb F}^\infty(0,T;\mathbb R)\times L_{\mathbb F}^2(0,T;\mathbb R)$. Moreover, the uniqueness of solution of {(SRE-1)} follows from that of \eqref{inverse}. $\Box$

For (SRE-2), by \cite[Theorem 5.3]{Tang2003} again, we have the following result.
\begin{proposition}\label{solvability of riccati-4}
Let $Q_1(\cdot)\geq0,G_1\geq0,S_1(\cdot)\gg0,R_{11}^1(\cdot)\gg0$, then Riccati equation \textbf{\emph{(SRE-2)}} admits a unique solution $(P_L(\cdot),\Lambda_L(\cdot))\in L_{\mathbb F}^\infty(0,T;\mathbb S^n_+)\times L_{\mathbb F}^2(0,T;\mathbb S^n)$.
\end{proposition}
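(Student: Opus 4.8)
The plan is to recognize that \textbf{(SRE-2)} is simply a stochastic Riccati equation of the general type solved by Tang, so that Proposition \ref{solvability of riccati-4} follows by exactly the reduction already used for \textbf{(SRE-1)} in Proposition \ref{solvability of riccati-3}. Concretely, I would match \textbf{(SRE-2)} against the generic equation \eqref{tang-1} (equivalently, directly against \cite[Theorem 5.3]{Tang2003}) by reading off the coefficients from the block data: the drift matrix is $\mathbb A$, the control matrix is $\mathbb B$, the two diffusion coefficients are $\mathbb C$ and $\mathbb D$, the running state weight is $\mathbb Q$, the control weight is $\mathbb R$, and the terminal weight is $M=\left(\begin{smallmatrix}0&0\\0&G_1\end{smallmatrix}\right)$. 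Under this identification the generalized Riccati (Schur) term of \textbf{(SRE-2)}, namely $\left(\mathbb B^\top P_L+\mathbb D^\top P_L\mathbb C+\mathbb D^{\top}\Lambda_L\right)^\top \mathbb K^{-1}\left(\mathbb B^\top P_L+\mathbb D^\top P_L\mathbb C+\mathbb D^{\top}\Lambda_L\right)$, is precisely the quadratic-completion term of Tang's equation, with $\mathbb K=\mathbb R+\mathbb D^\top P_L\mathbb D$ the positive matrix to be inverted.

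Next I would verify the three sign conditions required by \cite[Theorem 5.3]{Tang2003}. Since $\mathbb Q=\left(\begin{smallmatrix}0&0\\0&Q_1\end{smallmatrix}\right)$, the hypothesis $Q_1(\cdot)\geq 0$ gives $\mathbb Q(\cdot)\geq 0$; since $M=\left(\begin{smallmatrix}0&0\\0&G_1\end{smallmatrix}\right)$, the hypothesis $G_1\geq 0$ gives $M\geq 0$; and since $\mathbb R=\left(\begin{smallmatrix}R^1_{11}&0\\0&S_1\end{smallmatrix}\right)$, the hypotheses $R^1_{11}(\cdot)\gg 0$ and $S_1(\cdot)\gg 0$ give $\mathbb R(\cdot)\gg 0$. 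These are exactly the positivity of the running weight, of the terminal weight, and the uniform positivity of the control weight that Tang's theorem demands, so the theorem applies verbatim and yields a unique pair $(P_L(\cdot),\Lambda_L(\cdot))\in L^\infty_{\mathbb F}(0,T;\mathbb S^n_+)\times L^2_{\mathbb F}(0,T;\mathbb S^n)$ with $P_L(\cdot)\geq 0$.

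Finally I would close the loop on the side constraint $\mathbb K(s)=\mathbb R(s)+\mathbb D^\top(s)P_L(s)\mathbb D(s)>0$ imposed in \textbf{(SRE-2)}: this is automatic once $P_L(\cdot)\geq 0$, because then $\mathbb D^\top P_L\mathbb D\geq 0$ and hence $\mathbb K\geq \mathbb R\gg 0$, so the inverse $\mathbb K^{-1}$ used in the equation is well defined on all of $[0,T]$ and the solution produced by Tang's theorem is genuinely admissible for \textbf{(SRE-2)}.

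The only real obstacle is the bookkeeping in the first step: one must check that the diffusion cross-terms $\mathbb C^\top P_L\mathbb C+\Lambda_L\mathbb C+\mathbb C^\top\Lambda_L$ and the compound gain $\mathbb B^\top P_L+\mathbb D^\top P_L\mathbb C+\mathbb D^\top\Lambda_L$ of \textbf{(SRE-2)} coincide term by term with the quadratic structure of the general equation once the diffusion coefficient $\mathbb C$ is present, exactly as the explicit computation performed for \textbf{(SRE-1)} (with $b=(C,B_2)$ and $d=(I,0)$) confirmed in that special case. After this identification is in place, no analysis beyond citing \cite[Theorem 5.3]{Tang2003} is required.
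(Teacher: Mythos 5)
Your proposal is correct and follows exactly the paper's route: the paper proves Proposition \ref{solvability of riccati-4} by a direct appeal to \cite[Theorem 5.3]{Tang2003} (noting that \textbf{(SRE-2)} is already in Tang's general form with state- and control-dependent diffusion, unlike the reduced form \eqref{tang-1}), with the identification $\mathbb Q=\left(\begin{smallmatrix}0&0\\0&Q_1\end{smallmatrix}\right)\geq 0$, $M=\left(\begin{smallmatrix}0&0\\0&G_1\end{smallmatrix}\right)\geq 0$, $\mathbb R=\left(\begin{smallmatrix}R^1_{11}&0\\0&S_1\end{smallmatrix}\right)\gg 0$. Your write-up simply makes explicit the coefficient matching and the observation that $\mathbb K\geq\mathbb R\gg0$ once $P_L\geq0$, which the paper leaves implicit in its one-line citation.
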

\begin{remark}
The wellposedness of (SRE-1) and (SRE-2) are established under some positive definite assumptions. For the indefinite case, please refer \cite{SXY2021} for more information.
\end{remark}

\section{Application}\label{motivation}

To simplify presentation, we consider a financial market with only one (risk-free) bond and one (risky) stock. Their prices $P_0(\cdot), P_1(\cdot)$ evolve respectively:\begin{equation}\label{30}
\begin{aligned}
dP_0(s)=r(s)P_0(s)ds,\quad
dP_1(s)=P_1(s)[\mu(s)ds+\sigma(s)dW(s)],\quad P_0(0)=p_0,\quad P_1(0)=p_1.
\end{aligned}
\end{equation}
Here, random processes $r(\cdot),\mu(\cdot), \sigma(\cdot)$ are respectively interest rate, risky return rate, and instantaneous volatility. Assume that $\mu(s)> r(s), a.s.$ for any $0\leq s\leq T$, thus the risk premium is positive.
Suppose there involve two economic agents formulated in leader-follower decision pattern: one agent acts as leader (it may be interpreted as firm owner or principal) wish to achieve or hedge some terminal wealth objective $\xi$. It can also be interpreted as some payoff target to be replicated in pension planning. In addition, the leader may utilize some continuous consumption process with instantaneous rate $c_1(\cdot)$. Another agent is the follower (e.g., pension fund manager) who may implement a dynamic operation (or, wage) process $c_2(\cdot).$ Thus, the state process $X(s)$ becomes the following BSDE
\begin{equation}\label{wealth-2}
\begin{aligned}
dX(s)= [r(s)X(s)+\frac{\mu(s)-r(s)}{\sigma(s)}Z(s)-c_1(s)-c_2(s) ]ds+Z(s)dW(s),\qquad
X(T)=\xi,
\end{aligned}\end{equation}
where $Z(s)=\pi(s)\sigma(s)$ and $\pi(\cdot)$ is the amount of risky allocation from wealth process.
For $i=1,2$, let $\mathcal U_i\triangleq\{c_i:[0,T]\times\Omega\rightarrow\mathbb R|c_i(\cdot)\text{ is }\mathbb F-\text{progressively measurable},\ \mathbb E\int_0^T|c_i(t)|^2dt<\infty\}$ represent the operation and consumption process. Also, the terminal target $\xi$ is subject to some practical constraints
$\mathcal U_{\mathcal K}$, $\mathcal U_{\alpha,\beta}$ and $\mathcal U(\mathcal K,\alpha,\beta)$.
%$\mathcal U_{\mathcal K}=L^2_{\mathcal F_T}(\Omega;\mathcal K)$ and $\mathcal U_{\alpha,\beta}\triangleq\{\xi|\xi\in L^2_{\mathcal F_T}(\Omega;\mathbb R),\alpha\mathbb E\xi\geq\beta\}.$ Define $\mathcal U(\mathcal K,\alpha,\beta)\triangleq\mathcal U_{\mathcal K}\cap\mathcal U_{\alpha,\beta}$. Any element $(\xi,c_1(\cdot))\in\mathcal U(\mathcal K,\alpha,\beta)\times \mathcal U_1$ is called an admissible control of the leader and any element $c_2(\cdot)\in\mathcal U_2$ is called an admissible control of follower.
 For quadratic hedging, the following functionals are often employed (see \cite{DR1991}):
\begin{equation}\label{utility-1}\begin{aligned}
&J_1(\xi,c_1(\cdot),c_2(\cdot))\triangleq\frac{1}{2}\mathbb E \{G_1\xi^2+H_1X^2(0)+\int_0^T [Q_1(s)X^2(s)+S_1(s)Z^2(s)+R_1(s)c_1^2(s) ]ds \},\\
&J_2(\xi,c_1(\cdot),c_2(\cdot))\triangleq\frac{1}{2}\mathbb E \{H_2X^2(0)+\int_0^T [Q_2(s)X^2(s)+S_2(s)Z^2(s)+R_2(s)c_2^2(s) ]ds \},
\end{aligned}\end{equation}where $H_1, H_2$ denote the initial hedging surplus index.
Comparing with \eqref{e state equation} and \eqref{cost functionals}, we obtain that
$
A=r,\quad B_1=B_2\equiv-1,\quad C=\frac{\mu-r}{\sigma},\quad R_{11}^1=R_1,\quad R_{22}^2=R_2.
$ Thus {(SRE-1)} takes the following form:
\begin{equation}\label{appli-ricatti-1}\left\{\begin{aligned}
&dP=- [Q_2+2Pr-\frac{P^2}{R_2}- (P\frac{\mu-r}{\sigma} +K )^2\frac{1}{P+S_2} ] ds+K dW(s),\\
&P(T)=M>0,\qquad P(s)+S_2(s)>0,\quad 0\leq s\leq T.
\end{aligned}\right.\end{equation}
 Now, we give the following assumption:
\begin{description}
	\item[(H7)] All the coefficients in \eqref{wealth-2} and \eqref{utility-1} are bounded. Moreover, $H_1\geq0,Q_1(\cdot)\geq0,G_1>0,S_1(\cdot)\gg0,R_{1}(\cdot)\gg0,Q_2(\cdot)\geq0,S_2(\cdot)\gg0,R_2(\cdot)\gg0.$
\end{description}
Note that in (H7), there has no positive (semi-)definite assumption on $H_2$.
Under (H7),
It follows from Proposition \ref{solvability of riccati-3} that \eqref{appli-ricatti-1} admits a unique solution.
Moreover, if
$P(0)+H_2\geq 0$, then by Proposition \ref{convexity of follower} and Theorem \ref{th1}, the optimal consumption $\bar c_2(\cdot)$ of the follower is given by
$
\bar c_2(\cdot)=-\frac{\bar Y(\cdot)}{R_2(\cdot)},
$
where $(\bar Y,\bar X,\bar Z)$ is the solution of the following BFSDEs
\begin{equation}\label{Hamilton-con-2}\left\{\begin{aligned}
&d\bar Y=(-r\bar Y+Q_2\bar X)ds-(\frac{\mu-r}{\sigma}\bar Y-S_2\bar Z)dW(s),\\
 &d\bar X= [r\bar X-c_1+\frac{\bar Y}{R_2}+\frac{\mu-r}{\sigma}\bar Z ]ds+\bar ZdW,\\
&\bar Y(0)=H_2\bar X(0),\qquad \bar X(T)=\xi.
\end{aligned}\right.\end{equation}For the leader, {(SRE-2)} takes the following form:
\begin{equation}\label{Riccati-application-2}
\left\{\begin{aligned}
&dP_L=- [\mathbb A^\top P_L+P_L\mathbb A+\mathbb C^\top P_L\mathbb C+\mathbb Q+\Lambda_L \mathbb C+\mathbb C^{\top}\Lambda_L- (\mathbb B^\top P_L+\mathbb D^\top P_L\mathbb C+\mathbb D^{\top}\Lambda_L )^\top\\
&\qquad\qquad\mathbb K^{-1} (\mathbb B^\top P_L+\mathbb D^\top P_L\mathbb C+\mathbb D^{\top}\Lambda_L ) ]ds+\Lambda_L dW(s),\\
&P_L(T)=\left(\begin{matrix} 0&0\\0&G_1\end{matrix}\right),\qquad\mathbb K(s)\triangleq\mathbb R(s)+\mathbb D^\top(s) P_L(s)\mathbb D(s)>0,\quad 0\leq s\leq T,
\end{aligned}\right.\end{equation}
where
$
\mathbb A=\left(\begin{matrix} -r&Q_2\\\frac{1}{R_{2}} &r\end{matrix}\right),\mathbb B=\left(\begin{matrix} 0&0\\ -1 &\frac{\mu-r}{\sigma}\end{matrix}\right),\mathbb C=\left(\begin{matrix} -\frac{\mu-r}{\sigma}&0\\0&0\end{matrix}\right),\mathbb D=\left(\begin{matrix} 0&S_2\\0 &1\end{matrix}\right),\mathbb Q=\left(\begin{matrix}  0&0\\0 &Q_1\end{matrix}\right),\mathbb R=\left(\begin{matrix}  R_{1}&0\\0 &S_1\end{matrix}\right).
$ Under (H7), it follows from Proposition \ref{solvability of riccati-4} that \eqref{Riccati-application-2} admits a unique solution.
Furthermore, suppose that $P_L(0)+\left(\begin{matrix}0&0\\0&H_1\end{matrix}\right)\geq0$ and (F) holds, it follows from Proposition \ref{Proposition leader cost functional convex} and Theorem \ref{th 1L general GL lambda} that an optimal control of  leader is given by
$
(\bar\xi,\bar c_1(\cdot))= (\text{\emph{Proj}}_{\mathcal K} [\frac{-g(T)+\lambda\alpha}{G_1} ],-\frac{g(\cdot)}{R_1(\cdot)} ),
$
where $(\lambda;\bar Y,g,\bar X,\bar Z ,h,q)$ is the solution of the following BFSDEs
\begin{equation}\label{appli-general case}
\left\{\begin{aligned}	
&dg=(-rg+Q_1\bar X+Q_2h)ds-(\frac{\mu-r}{\sigma} g-S_1\bar Z-S_2q)dW(s),\\
 &d\bar Y=(-r\bar Y+Q_2\bar X)ds-(\frac{\mu-r}{\sigma} \bar Y-S_2\bar Z)dW(s),\\
&d\bar X= [r\bar  X+\frac{g}{R_1}+\frac{\bar Y}{R_2}+\frac{\mu-r}{\sigma}\bar Z ]ds+ \bar ZdW(s),\quad dh= [ rh+\frac{g}{R_2}+ \frac{\mu-r}{\sigma}q ]ds+qdW(s),\\
& g(0)=H_1\bar X(0)+H_2h(0),\quad \bar Y(0)=H_2\bar X(0),\quad\bar  X(T)= \text{\emph{Proj}}_{\mathcal K} [\frac{-g(T)+\lambda\alpha}{G_1} ],\quad h(T)=0,\\
&\lambda (\beta-\alpha\mathbb E\text{\emph{Proj}}_{\mathcal K} [\frac{-g(T)+\lambda\alpha}{G_1} ] )=0, \quad \lambda \geq 0, \quad \beta \leq \alpha\mathbb E\text{\emph{Proj}}_{\mathcal K} [\frac{-g(T)+\lambda\alpha}{G_1} ].
\end{aligned}\right.\end{equation}

\subsection{Pointwise constraint}
In case there has only one constraint $\xi\in\mathcal U_{\mathcal K}$, \eqref{appli-general case} assumes the following form:
\begin{equation}\label{appli-case 1}
\left\{\begin{aligned}	
&dg=(-rg+Q_1\bar X+Q_2h)ds-(\frac{\mu-r}{\sigma} g-S_1\bar Z-S_2q)dW(s),\\
 &d\bar Y=(-r\bar Y+Q_2\bar X)ds-(\frac{\mu-r}{\sigma} \bar Y-S_2\bar Z)dW(s),\\
&d\bar X= [r\bar  X+\frac{g}{R_1}+\frac{\bar Y}{ R_2}+\frac{\mu-r}{\sigma}\bar Z ]ds+ \bar ZdW(s),\quad dh= [ rh+\frac{g}{R_2}+ \frac{\mu-r}{\sigma}q ]ds+qdW(s),\\
& g(0)=H_1\bar X(0)+H_2h(0),\quad \bar Y(0)=H_2\bar X(0),\quad \bar  X(T)= \text{\emph{Proj}}_{\mathcal K} [-G_1^{-1}g(T) ],\quad h(T)=0.
\end{aligned}\right.\end{equation}
Here, the parameters of (H5) can be chosen as follows:
{\small{
\begin{equation}\label{coefficients-1}\begin{aligned}
&\rho_1=\rho_2=-\mathop{essinf}\limits_{0\leq s\leq T}\mathop{essinf}\limits_{\omega\in\Omega}|r(s)|, k_1=\left\|\left(\begin{matrix} Q_1 & Q_2\\ Q_2 & 0\end{matrix}\right)\right\|, k_2=k_3=k_6=k_8=k_{12}=0,\\
&k_4=\left\|\left(\begin{matrix} R_1^{-1}(\cdot) & R_2^{-1}(\cdot)\\ R_2^{-1}(\cdot) & 0\end{matrix}\right)\right\|,k_5=\mathop{esssup}\limits_{0\leq s\leq T}\mathop{esssup}\limits_{\omega\in\Omega}\sqrt{2}\left|\frac{\mu(s)-r(s)}{\sigma(s)}\right|,k_7=\mathop{esssup}\limits_{0\leq s\leq T}\mathop{esssup}\limits_{\omega\in\Omega}2\left|\frac{\mu(s)-r(s)}{\sigma(s)}\right|, \\
 &k_9=\sqrt{2}\left\|\left(\begin{matrix} S_1 & S_2\\ S_2 & 0\end{matrix}\right)\right\|,k_{10}=\left\|\left(\begin{matrix} H_1 & H_2\\ H_2 & 0\end{matrix}\right)\right\|,k_{11}=\mathop{esssup}\limits_{\omega\in\Omega}G_1^{-1}.
\end{aligned}\end{equation}}}
Therefore, by Theorem \ref{wellposeness of {e1LK FBSDE}}, we have the following result.
\begin{proposition}\label{wellposeness of FBSDE-appli-2}
Suppose that
$2\rho_1<-2k_5^2-k_7^2.$ There exists a $\delta_1>0$, which depends on $\rho_1,k_i,i=5,7$, such that when $k_1,k_4,k_9,k_{10} \in[0, \delta_1)$, there exists a unique adapted solution to \eqref{appli-case 1}.
\end{proposition}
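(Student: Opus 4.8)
The plan is to recognize system \eqref{appli-case 1} as the financial specialization of the abstract coupled system (BFSDE-2) from Corollary \ref{th general terminal optimal}, obtained by substituting the one-dimensional data $A=r$, $B_1=B_2\equiv-1$, $C=\frac{\mu-r}{\sigma}$, $R^1_{11}=R_1$, $R^2_{22}=R_2$ and $n=m_1=m_2=1$. Once this identification is made, Proposition \ref{wellposeness of FBSDE-appli-2} follows directly from the general wellposedness Theorem \ref{wellposeness of {e1LK FBSDE}}, so the whole argument reduces to checking that the coefficients of \eqref{appli-case 1} fit the structural hypotheses of that theorem under the parameter identification \eqref{coefficients-1}. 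First I would stack the unknowns as $\mathbb Y=(g,\bar Y)^\top$, $\mathbb X=(\bar X,h)^\top$, $\mathbb Z=(\bar Z,q)^\top$ and rewrite \eqref{appli-case 1} in the $2\times 2$ block form (BFSDE-2$^\prime$), reading off the block drift, diffusion, initial and projection-terminal maps.

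The second step is to verify that these blocks satisfy assumption \textbf{(H5)} with the constants displayed in \eqref{coefficients-1}. Under \textbf{(H7)} all coefficients are bounded, so the Lipschitz-type bounds (items (ii)--(iv) of \textbf{(H5)}) are immediate, with $k_1,k_4,k_9,k_{10},k_{11}$ equal to the operator norms of the corresponding block matrices and $k_5=\sqrt{2}\|C\|$, $k_7=2\|C\|$ arising from the two copies of $C=\frac{\mu-r}{\sigma}$ in the diffusion of $\mathbb Y$. The only nonroutine point is the one-sided monotonicity in items (i)--(ii): by Proposition 4.2 of \cite{HHL2017} the admissible monotonicity constants are $\rho_1=\rho_2=\text{esssup}_{s,\omega}\Lambda_{\max}(-\tfrac{1}{2}(A+A^\top))$, which in the scalar case $A=r$ is the interest-rate quantity recorded in \eqref{coefficients-1} (one checks $\langle -A^\top y,y\rangle\le\Lambda_{\max}(-\tfrac{1}{2}(A+A^\top))|y|^2$ for the forward component and the analogous bound for the backward component).

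With \textbf{(H5)} in force, I would translate the structural smallness condition of Theorem \ref{wellposeness of {e1LK FBSDE}}, namely $\rho^*<-4\|C\|^2$, into the stated form. Since $k_5=\sqrt{2}\|C\|$ and $k_7=2\|C\|$ give $2k_5^2+k_7^2=8\|C\|^2$, the hypothesis $\rho^*=\rho_1<-4\|C\|^2$ is exactly $2\rho_1<-2k_5^2-k_7^2$. Theorem \ref{wellposeness of {e1LK FBSDE}}, itself obtained by applying the discounting-method Theorem \ref{th A 2} to (BFSDE-2$^\prime$), then supplies a threshold $\delta_1>0$ depending only on $\rho_1,k_5,k_7$ such that whenever the coupling constants $k_1,k_4,k_9,k_{10}$ lie in $[0,\delta_1)$ the stacked system admits a unique adapted solution; unstacking yields the unique adapted solution of \eqref{appli-case 1}.

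The main obstacle is not analytic but bookkeeping: one must ensure that the operator norms of the stacked block matrices really coincide with the constants $k_i$ in \eqref{coefficients-1}, and that the monotonicity estimate is applied with the correct sign and in the right component, so that the abstract smallness window $[0,\delta_1)$ transfers faithfully to the financial coefficients. Everything beyond this verification is a direct appeal to Theorem \ref{wellposeness of {e1LK FBSDE}}.
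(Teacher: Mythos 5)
Your proposal is correct and follows exactly the paper's own route: the paper likewise rewrites \eqref{appli-case 1} in the stacked $2\times 2$ form (BFSDE-2$^\prime$), reads off the (H5) constants as in \eqref{coefficients-1}, and invokes Theorem \ref{wellposeness of {e1LK FBSDE}}, with the only algebra being the observation that $2k_5^2+k_7^2=8\|C\|^2$ turns $\rho^*<-4\|C\|^2$ into $2\rho_1<-2k_5^2-k_7^2$. Your write-up simply makes explicit the bookkeeping that the paper leaves implicit.
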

Under (H7), suppose $P_L(0)+\left(\begin{matrix}0&0\\0&H_1\end{matrix}\right)\geq0$ and conditions of  Proposition \ref{wellposeness of FBSDE-appli-2} holds, the optimal control of $\mathcal A_L$ is given by
$
(\bar\xi,\bar c_1(\cdot))= (\text{\emph{Proj}}_{\mathcal K} [\frac{-g(T)}{G_1} ],-\frac{g(\cdot)}{R_1(\cdot)} ),
$
where $(\bar Y,g,\bar X,\bar Z ,h,q)$ is the solution of \eqref{appli-case 1}.

Next, we give a more specific condition for wellposedness of \eqref{appli-case 1}. For $c_1$, $c_3$, $c_4$, $\bar\rho_1$ and $\bar\rho_2$, please refer Lemma 7.2 and Lemma 7.3 in the appendix of \cite{FHH}.
\begin{remark}
For some $\varepsilon>0$, set $c_1=\frac{k_1}{\varepsilon}$, $c_4=\frac{k_4}{\varepsilon}$, $c_5=\frac{k_5}{2(k_5^2+\varepsilon)}$ and  $c_6=\frac{k_6}{2(k_6^2+\varepsilon)}$.  Suppose $2(\rho_1+\rho_2)<-2k_5^2-k_7^2-3\varepsilon$ and define $d=-2k_5^2-k_7^2-3\varepsilon-2\rho_1-2\rho_2=-4k_5^2-3\varepsilon-4\rho_1$. Therefore, we can choose $\rho$ such that $\bar\rho_1=\bar\rho_2=\frac{d}{2}$. In this case, let
$$\theta
%= \left(\frac{1}{\bar\rho_2}+\frac{1}{1-k_4c_4}+1\right)\left(k^2_9 +\frac{k_3c_3}{\bar\rho_1}\right)
=
%=(\frac{2}{d}+3)(k_9^2+\frac{2}{d})=
\left(\frac{2}{-4\rho_1-4k_5^2-3\varepsilon}+5+\frac{2k_5^2+2k_6^2}{\varepsilon}\right)\left(2k_9^2+\frac{2k_4^2}
{-\varepsilon(4\rho_1+4k_5^2+3\varepsilon)}\right).$$
That is, if
$
4\rho_1<-4k_5^2-3\varepsilon, k_9^2\theta<1, k_{10}^2\theta<1, \frac{k_1^2\theta}{\varepsilon}<1,
$
 there exists a unique  solution to \eqref{appli-case 1}.
\end{remark}

\subsection{Affine constraint}
In this subsection, suppose that there is only one constraint $\xi\in\mathcal U_{\alpha,\beta}$ and all the coefficients are deterministic. We will study the case $\lambda=0$ and $\lambda\neq0$ separately.

In case $\lambda=0$,
 \eqref{appli-general case} becomes
\begin{equation}\label{appli-case 2.1}
\left\{\begin{aligned}	
&dg=(-rg+Q_1\bar X+Q_2h)ds-(\frac{\mu-r}{\sigma} g-S_1\bar Z-S_2q)dW(s),\\
 &d\bar Y=(-r\bar Y+Q_2\bar X)ds-(\frac{\mu-r}{\sigma} \bar Y-S_2\bar Z)dW(s),\\
&d\bar X= [r\bar  X+\frac{g}{R_1 }+\frac{\bar Y}{ R_2}+\frac{\mu-r}{\sigma}\bar Z ]ds+ \bar ZdW(s), dh= [ rh+\frac{g}{R_2}+ \frac{\mu-r}{\sigma}q ]ds+qdW(s),\\
& g(0)=H_1\bar X(0)+H_2h(0),\quad \bar Y(0)=H_2\bar X(0),\quad \bar  X(T)=-G_1^{-1} g(T),\quad h(T)=0,\\
&\beta+\alpha G_1^{-1}\mathbb Eg(T)\leq 0.
\end{aligned}\right.\end{equation}Here, we present some detailed solution.
Note that \eqref{appli-case 2.1} is linear and homogeneous. Thus if \eqref{appli-case 2.1} admits an unique solution, it must be $\bar Y=g=\bar X=\bar Z=h=q\equiv0$. In this case, if $\beta\leq0$, (KKT) condition holds. Let $\rho_1,\rho_2,k_i,i=1,\cdots,12$ be defined as in \eqref{coefficients-1}. Therefore, by Theorem \ref{wellposeness of {e1LK FBSDE}}, suppose that
$2\rho_1<-2k_5^2-k_7^2$ and $\beta\leq0$, if there exists a $\delta_2>0$ depending on $\rho_1,k_i,i=5,7$, such that $k_1,k_4,k_9,k_{10} \in[0, \delta_2)$, there exists a unique adapted solution to \eqref{appli-case 2.1}. Therefore, under (H7), suppose that $P_L(0)+\left(\begin{matrix}0&0\\0&H_1\end{matrix}\right)\geq0$,
$2\rho_1<-2k_5^2-k_7^2$ and $\beta\leq0$, if there exists a $\delta_2>0$ depending on $\rho_1,k_i,i=5,7$, such that $k_1,k_4,k_9,k_{10} \in[0, \delta_2)$, the optimal control of the leader is given by
$
(\bar\xi,\bar c_1(\cdot))=(0,0).
$

Next we consider the case {$\lambda>0$}. \eqref{appli-general case} becomes
\begin{equation}\label{appli-case 2.2}
\left\{\begin{aligned}	
&dg=(-rg+Q_1\bar X+Q_2h)ds-(\frac{\mu-r}{\sigma} g-S_1\bar Z-S_2q)dW(s),\\
&d\bar Y=(-r\bar Y+Q_2\bar X)ds-(\frac{\mu-r}{\sigma} \bar Y-S_2\bar Z)dW(s),\\
&d\bar X= [r\bar  X+\frac{g}{R_1 }+\frac{\bar Y}{ R_2}+\frac{\mu-r}{\sigma}\bar Z ]ds+ \bar ZdW(s),\quad dh= [ rh+\frac{g}{R_2}+ \frac{\mu-r}{\sigma}q ]ds+qdW(s),\\
& g(0)=H_1\bar X(0)+H_2h(0),\quad \bar Y(0)=H_2\bar X(0),\quad \bar  X(T)=-G_1^{-1} g(T)+G_1^{-1}\mathbb E g(T)+\frac{\beta}{\alpha},\\
& h(T)=0,\quad\beta+\alpha G_1^{-1}\mathbb Eg(T)> 0.
\end{aligned}\right.\end{equation}
Hence, \eqref{Riccati-case 2} and \eqref{BSDE-case 2} take the form
\begin{equation}\label{application-riccati-4}\left\{\begin{aligned}
&\dot{\check P}-\check P\check{\mathbb A}-\check P\check{\mathbb B}\check P+(\check P\check{\mathbb C}+\check P\check{\mathbb D}+\check{\mathbb C}_2+\check{\mathbb D}_2)(I+\check P\check{\mathbb C}_1)^{-1}(\check P\check{\mathbb A}_1+\check P\check{\mathbb B}_1\check P)-\check{\mathbb A}_2-\check{\mathbb B}_2\check P=0,\\
&\check P(T)=(I-\check G\check H)^{-1}\check G,\qquad\det[I+\check P\check{\mathbb C}_1]\neq0,
\end{aligned}\right.\end{equation}
\begin{equation}\label{back ODE-2}
\dot{\check p}-\check P\check{\mathbb B}\check p+(\check P\check{\mathbb C}+\check P\check{\mathbb D}+\check{\mathbb C}_2+\check{\mathbb D}_2)(I+\check P\check{\mathbb C}_1)^{-1}\check P\check{\mathbb B}_1\check p-\check{\mathbb B}_2\check p=0, \quad\check p(T)=(I-\check G\check H)^{-1}\check f,
\end{equation}
where the notations of the coefficients are defined in \eqref{new notation-3}. Now (KKT) condition \eqref{KKT-4}
becomes
\begin{equation}\label{KTT-deterministic-nonzero}
\beta+\alpha(G_1^{-1}\ 0\ 0\ 0)(I+\check H\check P)\check\Phi(T,0)\int_0^T\check\Phi(s,0)\mathbf b(s)ds+\alpha(G_1^{-1}\ 0\ 0\ 0)\check H(I-\check G\check H)^{-1}\check f>0,
\end{equation}
where $\check\Phi(t,0)$ is the fundamental solution matrices of ODE
$$d\check\varphi=[-\check{\mathbb A}-\check{\mathbb B}\check P+(\check{\mathbb C}+\check {\mathbb D})(I+\check P\check{\mathbb C}_1)^{-1}(\check P\check{\mathbb A}_1+\check P\check{\mathbb B}_1\check P)]\check\varphi dt,\qquad \check\varphi(0)=1.$$
Under (H7), if $G_1^{-1} H_1\neq -1$, by Proposition \ref{deterministic-nonzero}, if \eqref{application-riccati-4} and \eqref{back ODE-2} admit solutions such that \eqref{KTT-deterministic-nonzero} holds, then \eqref{appli-case 2.2} is solvable. Therefore, an optimal control of the leader is given by
$
(\bar\xi,\bar c_1(\cdot))= (\frac{-g(T)+\lambda\alpha}{G_1},-\frac{g(\cdot)}{R_1(\cdot)} ),
$
where $(\lambda;\bar Y,g,\bar X,\bar Z ,h,q)$ is the solution of \eqref{appli-case 2.2}.

\section*{Conclusion}
We discuss an open-loop backward Stackelberg differential game where the state is characterized by BSDE  and the decisions of leader consist of a static terminal-perturbation and a dynamic linear-quadratic control. The terminal control is subject to pointwise and
 expectation constraints. Our open-loop Stackelberg equilibrium is represented by some coupled BFSDEs with mixed initial-terminal conditions and the global solvability of such
BFSDEs is discussed in some nontrivial cases.

\section{Appendix}\label{appendix}

\subsection{Proof of Proposition \ref{convexity of follower}:}\label{sec7.1}
Before we give the proof the Proposition \ref{convexity of follower}, first we prove the following lemma.
\begin{lemma}\label{lemma-lower bound of BSDE}
For any $u_2(s)\in\mathcal U_2[0,T]$, let $(x^{(u_2)}(s),z^{(u_2)}(s))$ be the solution of
\begin{equation*}\begin{aligned}
dx^{(u_2)}(s)=\Big[A(s)x^{(u_2)}(s)+B_2(s)u_2(s)+C(s)z^{(u_2)}(s)\Big]ds+z^{(u_2)}(s)dW(s),\quad x^{(u_2)}(T)=0.
\end{aligned}\end{equation*}
\end{lemma}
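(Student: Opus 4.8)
The plan is to turn the lemma into a completion-of-squares (Riccati decoupling) identity for the auxiliary cost $\mathcal J(u_2(\cdot))=\mathbb E\langle M_2(u_2)(\cdot),u_2(\cdot)\rangle$ of \textbf{(ABLQ)}, expressed through the backward state $(x^{(u_2)},z^{(u_2)})$ defined in the statement. First I would fix a nonsingular symmetric terminal datum $M$ and take $(P(\cdot),K(\cdot))\in L^\infty_{\mathbb F}(0,T;\mathbb S^n)\times L^2_{\mathbb F}(0,T;\mathbb S^n)$ solving \textbf{(SRE-1)} with the built-in constraint $P(s)+S_2(s)>0$ (solvability is guaranteed by Proposition~\ref{solvability of riccati-3}). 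The decisive move is then to apply It\^o's formula to the scalar process $s\mapsto\langle P(s)x^{(u_2)}(s),x^{(u_2)}(s)\rangle$ along the trajectory of the BSDE in the lemma.

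The It\^o differential splits into the Riccati drift $\langle(\text{drift of }dP)\,x,x\rangle$, the state drift $2\langle Px,\,Ax+B_2u_2+Cz\rangle$, and the two quadratic-variation contributions $\langle Pz,z\rangle$ and $2\langle Kx,z\rangle$ (the latter using symmetry of $K$). Substituting the drift of \textbf{(SRE-1)} cancels the $\langle(PA+A^\top P)x,x\rangle$ terms and isolates exactly the two quadratic forms $\langle PB_2(R^2_{22})^{-1}B_2^\top Px,x\rangle$ and $\langle(PC+K)(P+S_2)^{-1}(C^\top P+K)x,x\rangle$. I would integrate over $[0,T]$, take expectations, and use the terminal condition $x^{(u_2)}(T)=0$ to annihilate the boundary term at $T$, so that the integrated identity leaves only $-\mathbb E\langle P(0)x^{(u_2)}(0),x^{(u_2)}(0)\rangle$.

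Adding this identity to $\mathcal J(u_2(\cdot))$ and grouping the $u_2$- and $z$-dependent terms, the integrand collapses, after completing two squares, into
\begin{equation*}
\big\langle R^2_{22}\big(u_2+(R^2_{22})^{-1}B_2^\top Px\big),\,u_2+(R^2_{22})^{-1}B_2^\top Px\big\rangle+\big\langle(P+S_2)\big(z+(P+S_2)^{-1}(C^\top P+K)x\big),\,z+(P+S_2)^{-1}(C^\top P+K)x\big\rangle,
\end{equation*}
giving the representation
\begin{equation*}
\mathcal J(u_2(\cdot))=\mathbb E\langle(P(0)+H_2)x^{(u_2)}(0),x^{(u_2)}(0)\rangle+\mathbb E\int_0^T\big[\,\cdots\,\big]\,ds
\end{equation*}
with a manifestly nonnegative integrand whenever $R^2_{22}>0$ and $P+S_2>0$. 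The resulting lower bound $\mathcal J(u_2(\cdot))\geq\mathbb E\langle(P(0)+H_2)x^{(u_2)}(0),x^{(u_2)}(0)\rangle$ feeds directly into Proposition~\ref{convexity of follower}: under $P(0)+H_2\geq0$ it yields convexity, while an extra $R^2_{22}\geq\delta I$ lets the first square dominate $\delta\gamma\,\mathbb E\int_0^T|u_2|^2ds$ after invoking the a priori estimate~\eqref{estimation of BSDE}, hence uniform convexity.

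The main obstacle is analytic rather than algebraic. Since $P\in L^\infty$, $K\in L^2$ and $(x^{(u_2)},z^{(u_2)})$ is only square-integrable, the local-martingale part produced by the It\^o expansion need not be a true martingale a priori, so I must justify that its expectation vanishes; I would do this by localizing with stopping times and passing to the limit via the estimate~\eqref{estimation of BSDE} together with dominated convergence. A secondary point is to confirm that $(R^2_{22})^{-1}$ and $(P+S_2)^{-1}$ are essentially bounded, so that the two completed squares are well defined; the former follows from $R^2_{22}\geq\delta I$ and the latter from the standing strict positivity $P+S_2>0$ imposed in \textbf{(SRE-1)}.
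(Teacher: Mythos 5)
The statement you were handed is truncated by a typesetting slip in the source: the actual claim of Lemma \ref{lemma-lower bound of BSDE} is the inequality \eqref{lower bound of BSDE} displayed immediately after the lemma environment, namely that for every $\Theta(\cdot)\in L^\infty_{\mathbb F}(0,T;\mathbb R^{m_2\times n})$ there exists a constant $L>0$ with
\begin{equation*}
\mathbb E\int_0^T\big|u_2(s)-\Theta(s)x^{(u_2)}(s)\big|^2ds\;\geq\;L\,\mathbb E\int_0^T|u_2(s)|^2ds,\qquad\forall u_2(\cdot)\in\mathcal U_2[0,T].
\end{equation*}
What you have written is not a proof of this; it is a reconstruction of the completion-of-squares proof of Proposition \ref{convexity of follower}, i.e.\ of the result that \emph{invokes} the lemma. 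That Riccati argument is fine as far as it goes (and matches the paper's own proof of Proposition \ref{convexity of follower} in the appendix), but it leaves the lemma itself unproved.

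Moreover, the one clause of your proposal that corresponds to the lemma's content is exactly where the gap sits. To pass from $\delta\,\mathbb E\int_0^T|u_2+(R^2_{22})^{-1}B_2^\top Px^{(u_2)}|^2ds$ to $\delta\gamma\,\mathbb E\int_0^T|u_2|^2ds$ you propose to ``invoke the a priori estimate \eqref{estimation of BSDE}'', but that estimate runs in the wrong direction: it bounds the state by the control, $\mathbb E\sup_s|x^{(u_2)}(s)|^2\leq L\,\mathbb E\int_0^T|u_2|^2ds$, and combining it with the triangle inequality only yields
\begin{equation*}
\mathbb E\int_0^T|u_2|^2ds\;\leq\;2\,\mathbb E\int_0^T\big|u_2-\Theta x^{(u_2)}\big|^2ds+2\|\Theta\|_\infty^2TL\,\mathbb E\int_0^T|u_2|^2ds,
\end{equation*}
which closes only under the smallness condition $2\|\Theta\|_\infty^2TL<1$, not available here. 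The paper's proof is operator-theoretic: define the bounded linear map $\mathcal L:\mathcal U_2[0,T]\to\mathcal U_2[0,T]$, $\mathcal Lu_2=u_2-\Theta x^{(u_2)}$, and show it is a bijection by exhibiting its inverse explicitly, $\mathcal L^{-1}v=v+\Theta\widetilde x^{(v)}$, where $\widetilde x^{(v)}$ solves the \emph{closed-loop} BSDE whose drift contains the feedback term $B_2(\Theta\widetilde x+v)$. Since $A+B_2\Theta$ is still bounded, the standard BSDE estimate applied to this feedback equation shows $\mathcal L^{-1}$ is bounded (equivalently, one quotes the bounded inverse theorem), and then $\mathbb E\int_0^T|\mathcal Lu_2|^2ds\geq\|\mathcal L^{-1}\|^{-2}\,\mathbb E\int_0^T|u_2|^2ds$ gives \eqref{lower bound of BSDE}. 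The idea missing from your proposal is precisely this inversion: the a priori estimate must be applied to the feedback-modified equation so as to control $u_2$ by $u_2-\Theta x^{(u_2)}$, not to the open-loop equation, which controls only $x^{(u_2)}$ by $u_2$.
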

Then for any $\Theta(\cdot)\in L^\infty_{\mathbb F}(0,T;\mathbb R^{m_2\times n})$, there exists a constant $L>0$ such that
\begin{equation}\label{lower bound of BSDE}
\mathbb E\int_0^T\Big|u_2(s)-\Theta(s)x^{(u_2)}(s)\Big|^2ds\geq L\mathbb E\int_0^T|u_2(s)|^2ds,\qquad\forall u_2(\cdot)\in\mathcal U_2[0,T].
\end{equation}
\proof
Let $\Theta(\cdot)\in L^\infty_{\mathbb F}(0,T;\mathbb R^{m_2\times n})$, define a bounded linear operator $\mathcal L:\mathcal U_2[0,T]\rightarrow\mathcal U_2[0,T]$ by
$\mathcal L u_2=u_2-\Theta x^{(u_2)}.$ Then $\mathcal L$ is a bijection, and its inverse is given by
$\mathcal L^{-1}u_2=u_2+\Theta\widetilde x^{(u_2)},$
where $\widetilde X^{(u_2)}(s)$ is the solution of
\begin{equation*}\begin{aligned}
d\widetilde x^{(u_2)}(s)=\Big[A(s)\widetilde x^{(u_2)}(s)+B_2(s)(\Theta(s)\widetilde x^{(u_2)}(s)+u_2(s))+C(s)\widetilde z^{(u_2)}(s)\Big]ds+\widetilde z^{(u_2)}(s)dW(s),\quad \widetilde x^{(u_2)}(T)=0.
\end{aligned}\end{equation*}
By the bounded inverse theorem, $\mathcal L^{-1}$ is bounded with norm $\|\mathcal L^{-1}\|>0$. Therefore,
\begin{equation*}\begin{aligned}
\mathbb E\int_0^T|u_2(s)|^2ds\leq&\|\mathcal L^{-1}\|\mathbb E\int_0^T|\mathcal Lu_2(s)|^2ds
=\|\mathcal L^{-1}\|\mathbb E\int_0^T\Big|u_2(s)-\Theta(s)x^{(u_2)}(s)\Big|^2ds.\quad\Box
\end{aligned}\end{equation*}
Now we will give the proof of Proposition \ref{convexity of follower}.
First, let $$\Gamma\triangleq-\Big(Q_2+PA+A^\top P-(PC +K)(P+S_2)^{-1}(C^\top P+K)-PB_2(R_{22}^2)^{-1}B_2^\top P\Big).$$
Let processes $P(\cdot)$ satisfy the following equations
\begin{equation*}\begin{aligned}
dP(s)=\Gamma(s) ds+K(s) dW(s),\qquad
P(T)=M^{-1}.
\end{aligned}\end{equation*}
Applying It\^{o}'s formula to $\langle Px,x\rangle$, 
integrating from $0$ to $T$, we have
\begin{equation*}\begin{aligned}
&-\mathbb E\langle P(0)x(0),x(0)\rangle
=\mathbb E\int_0^T\Big[\langle(\Gamma+PA+A^\top P)x,x\rangle+
2\langle x,PB_2u_2\rangle+\langle Pz,z\rangle+2\langle (PC+K)z,x\rangle\Big]ds.
\end{aligned}\end{equation*}
Therefore,
\begin{equation*}\begin{aligned}
J(u_2(\cdot))
=&\mathbb{E}\langle H_2 x(0),x(0)\rangle\!+\mathbb E\int_{0}^{T}\Big[\langle Q_2x,x\rangle+\langle S_2z,z\rangle+\langle R^2_{22} u_2,u_2\rangle\Big] ds+\mathbb E\langle P(0)x(0),x(0)\rangle\\
&+\mathbb E\int_0^T\Big[\langle(\Gamma+PA+A^\top P)x,x\rangle+
2\langle x,PB_2u_2\rangle\langle Pz,z\rangle+2\langle (PC+K)z,x\rangle\Big]ds.
\end{aligned}\end{equation*}
First, consider the terms involving $u_2$,
\begin{equation*}\begin{aligned}
&\langle R^2_{22} u_2,u_2\rangle+2\langle x,PB_2u_2\rangle
=\Big\langle R_{22}^2\Big(u_2+(R_{22}^2)^{-1}B_2^\top Px\Big),u_2+(R_{22}^2)^{-1}B_2^\top Px\Big\rangle-\Big\langle x,PB_2(R_{22}^2)^{-1}B_2^\top Px\Big\rangle.
\end{aligned}\end{equation*}
Next, consider the terms involving $z$,
\begin{equation*}\begin{aligned}
&\langle S_2z,z\rangle+\langle Pz,z\rangle+2\langle (PC+K)z,x\rangle\\
=&\Big\langle(P+S_2)\Big(z+(P+S_2)^{-1}(C^\top P+K)x\Big),z+(P+S_2)^{-1}(C^\top P+K)x\Big\rangle\\
&-\Big\langle x,(PC +K)(P+S_2)^{-1}(C^\top P+K)x\Big\rangle.
\end{aligned}\end{equation*}
Therefore,
\begin{equation*}\begin{aligned}
J(u_2(\cdot))
=&\mathbb{E}\Big\langle (H_2+P(0)) x(0),x(0)\rangle\Big\rangle+\mathbb E\int_0^T\Big\langle R_{22}^2\Big(u_2+(R_{22}^2)^{-1}B_2^\top Px\Big),u_2+(R_{22}^2)^{-1}B_2^\top Px\Big\rangle ds\\
&+\mathbb E\int_{t}^{T}\Big\langle(P+S_2)\Big(z+(P+S_2)^{-1}(C^\top P+K)x\Big),z+(P+S_2)^{-1}((C^\top P+K)x\Big\rangle ds
\geq0.
\end{aligned}\end{equation*}
Moreover, if $R_{22}^2(\cdot)\geq\delta I$, then it follows from
 Lemma \ref{lemma-lower bound of BSDE} that
\begin{equation*}\begin{aligned}
\mathcal J(u_2(\cdot))
\geq\delta\mathbb E\int_0^T\Big\langle u_2+(R_{22}^2)^{-1}B_2^\top Px,u_2+(R_{22}^2)^{-1}B_2^\top Px\Big\rangle ds
\geq\delta\gamma\mathbb E\int_0^T\Big|u_2(s)\Big|^2 ds.\quad\Box
\end{aligned}\end{equation*}

\subsection{Proof of Proposition \ref{Proposition leader cost functional convex}:}\label{sec7.2}
For simplicity, let
\begin{equation}\nonumber
\begin{aligned}
\Gamma=-\Big(&\mathbb Q+P_L\mathbb A+\Lambda\mathbb C+\mathbb C^\top\Lambda+\mathbb A^\top P_L+\mathbb C^\top P_L\mathbb C\\
&-(\mathbb B^\top P_L+\mathbb D^\top\Lambda+\mathbb D^\top P_L\mathbb C)^\top(\mathbb R+\mathbb D^\top P_L\mathbb D)^{-1}(\mathbb B^\top P_L+\mathbb D^\top\Lambda+\mathbb D^\top P_L\mathbb C)\Big).
\end{aligned}
\end{equation}
Applying It\^{o}'s formula to $\left\langle P_L\left(\begin{matrix} Y\\X\end{matrix}\right),
\left(\begin{matrix} Y\\X\end{matrix}\right)\right\rangle$,
we have
\begin{equation}\begin{aligned}\nonumber
&d\left\langle P_L\left(\begin{matrix} Y\\X\end{matrix}\right),
\left(\begin{matrix} Y\\X\end{matrix}\right)\right\rangle\\
=&\left\langle P_L\mathbb A\left(\begin{matrix} Y\\X\end{matrix}\right)+P_L\mathbb B\left(\begin{matrix} u_1\\Z\end{matrix}\right),\left(\begin{matrix} Y\\X\end{matrix}\right)\right\rangle ds+\left\langle \Gamma\left(\begin{matrix} Y\\X\end{matrix}\right),\left(\begin{matrix} Y\\X\end{matrix}\right)\right\rangle ds+\left\langle\Lambda\mathbb C\left(\begin{matrix} Y\\X\end{matrix}\right)+\Lambda\mathbb D\left(\begin{matrix} u_1\\Z\end{matrix}\right),\left(\begin{matrix} Y\\X\end{matrix}\right)\right\rangle ds\\
&+\left\langle P_L\left(\begin{matrix} Y\\X\end{matrix}\right),\mathbb A\left(\begin{matrix} Y\\X\end{matrix}\right)+\mathbb B\left(\begin{matrix} u_1\\Z\end{matrix}\right)\right\rangle ds+\left\langle \Lambda\left(\begin{matrix} Y\\X\end{matrix}\right),\mathbb C\left(\begin{matrix} Y\\X\end{matrix}\right)+\mathbb D\left(\begin{matrix} u_1\\Z\end{matrix}\right)\right\rangle ds\\
&+\left\langle P_L\left(\mathbb C\left(\begin{matrix} Y\\X\end{matrix}\right)+\mathbb D\left(\begin{matrix} u_1\\Z\end{matrix}\right)\right),\mathbb C\left(\begin{matrix} Y\\X\end{matrix}\right)+\mathbb D\left(\begin{matrix} u_1\\Z\end{matrix}\right)\right\rangle ds+[\cdots]dW(s).
\end{aligned}\end{equation}
Thus,
\begin{equation}\begin{aligned}\nonumber
&\mathbb E\left\langle P_L(T)\left(\begin{matrix} Y(T)\\X(T)\end{matrix}\right),
\left(\begin{matrix} Y(T)\\X(T)\end{matrix}\right)\right\rangle-
\mathbb E\left\langle P_L(0)\left(\begin{matrix} Y(0)\\X(0)\end{matrix}\right),
\left(\begin{matrix} Y(0)\\X(0)\end{matrix}\right)\right\rangle\\
=&\mathbb E\int_0^T\left\langle \left(\begin{matrix} Y\\X\end{matrix}\right),P_L\mathbb A\left(\begin{matrix} Y\\X\end{matrix}\right)+\Gamma\left(\begin{matrix} Y\\X\end{matrix}\right)+\Lambda\mathbb C\left(\begin{matrix} Y\\X\end{matrix}\right)+\mathbb C^\top\Lambda\left(\begin{matrix} Y\\X\end{matrix}\right)+\mathbb A^\top P_L\left(\begin{matrix} Y\\X\end{matrix}\right)+\mathbb C^\top P_L\mathbb C\left(\begin{matrix} Y\\X\end{matrix}\right)\right\rangle ds\\
&+\mathbb E\int_0^T\left\langle \left(\begin{matrix} u_1\\Z\end{matrix}\right),2\mathbb B^\top P_L\left(\begin{matrix} Y\\X\end{matrix}\right)+2\mathbb D^\top\Lambda\left(\begin{matrix} Y\\X\end{matrix}\right)+2\mathbb D^\top P_L\mathbb C\left(\begin{matrix} Y\\X\end{matrix}\right)\right\rangle ds\\
&+\mathbb E\int_0^T\left\langle\left(\begin{matrix} u_1\\D\end{matrix}\right),\mathbb D^\top P_L\mathbb D\left(\begin{matrix} u_1\\D\end{matrix}\right)\right\rangle ds.\\
\end{aligned}\end{equation}
Adding this into the functional, we have
\begin{equation}\nonumber
\begin{aligned}
&J(\xi,u_1(\cdot))\\
=&\frac{1}{2}\mathbb E\left\{\langle G_1\xi,\xi\rangle+\langle H_1X(0),X(0)\rangle+\int^T_0\left[\langle Q_1 X, X\rangle+\langle S_1Z, Z\rangle+\left\langle R_{11}^1
      	u_{1},
      	u_{1})\right\rangle\right]ds\right.\\
&-\left\langle P_L(T)\left(\begin{matrix} Y(T)\\X(T)\end{matrix}\right),
\left(\begin{matrix} Y(T)\\X(T)\end{matrix}\right)\right\rangle+\left\langle P_L(0)\left(\begin{matrix} Y(0)\\X(0)\end{matrix}\right),
\left(\begin{matrix} Y(0)\\X(0)\end{matrix}\right)\right\rangle\\
&+\int_0^T\left\langle \left(\begin{matrix} Y\\X\end{matrix}\right),P_L\mathbb A\left(\begin{matrix} Y\\X\end{matrix}\right)+\Gamma\left(\begin{matrix} Y\\X\end{matrix}\right)+\Lambda\mathbb C\left(\begin{matrix} Y\\X\end{matrix}\right)+\mathbb C^\top\Lambda\left(\begin{matrix} Y\\X\end{matrix}\right)+\mathbb A^\top P_L\left(\begin{matrix} Y\\X\end{matrix}\right)+\mathbb C^\top P_L\mathbb C\left(\begin{matrix} Y\\X\end{matrix}\right)\right\rangle ds\\
&+\int_0^T\left\langle \left(\begin{matrix} u_1\\Z\end{matrix}\right),2\mathbb B^\top P_L\left(\begin{matrix} Y\\X\end{matrix}\right)+2\mathbb D^\top\Lambda\left(\begin{matrix} Y\\X\end{matrix}\right)+2\mathbb D^\top P_L\mathbb C\left(\begin{matrix} Y\\X\end{matrix}\right)\right\rangle ds\\
&+\left.\int_0^T\left\langle\left(\begin{matrix} u_1\\Z\end{matrix}\right),\mathbb D^\top P_L\mathbb D\left(\begin{matrix} u_1\\Z\end{matrix}\right)\right\rangle ds\right\}\\
=&\frac{1}{2}\mathbb E\left\{\left\langle\left[\left(\begin{matrix} 0&0\\0&H_1\end{matrix}\right)+P_L(0)\right]\left(\begin{matrix} Y(0)\\X(0)\end{matrix}\right),\left(\begin{matrix} Y(0)\\X(0)\end{matrix}\right)\right\rangle+\frac{1}{2}\langle G_1\xi,\xi\rangle\right.\\
&+\int_0^T\left\langle \left(\begin{matrix} Y\\X\end{matrix}\right),\mathbb Q\left(\begin{matrix} Y\\X\end{matrix}\right)+P_L\mathbb A\left(\begin{matrix} Y\\X\end{matrix}\right)+\Gamma\left(\begin{matrix} Y\\X\end{matrix}\right)+\Lambda\mathbb C\left(\begin{matrix} Y\\X\end{matrix}\right)+\mathbb C^\top\Lambda\left(\begin{matrix} Y\\X\end{matrix}\right)+\mathbb A^\top P_L\left(\begin{matrix} Y\\X\end{matrix}\right)+\mathbb C^\top P_L\mathbb C\left(\begin{matrix} Y\\X\end{matrix}\right)\right\rangle ds\\
&+\int_0^T\left\langle \left(\begin{matrix} u_1\\Z\end{matrix}\right),2\mathbb B^\top P_L\left(\begin{matrix} Y\\X\end{matrix}\right)+2\mathbb D^\top\Lambda\left(\begin{matrix} Y\\X\end{matrix}\right)+2\mathbb D^\top P_L\mathbb C\left(\begin{matrix} Y\\X\end{matrix}\right)\right\rangle ds\\
&\left.+\int_0^T\left\langle\left(\begin{matrix} u_1\\Z\end{matrix}\right),(\mathbb R+\mathbb D^\top P_L\mathbb D)\left(\begin{matrix} u_1\\Z\end{matrix}\right)\right\rangle ds\right\}.\\
\end{aligned}
\end{equation}
Note that
\begin{equation}\begin{aligned}\nonumber
&\mathbb E\int_0^T\left\langle \left(\begin{matrix} u_1\\Z\end{matrix}\right),2\mathbb B^\top P_L\left(\begin{matrix} Y\\X\end{matrix}\right)+2\mathbb D^\top\Lambda\left(\begin{matrix} Y\\X\end{matrix}\right)+2\mathbb D^\top P_L\mathbb C\left(\begin{matrix} Y\\X\end{matrix}\right)\right\rangle ds+\mathbb E\int_0^T\left\langle\left(\begin{matrix} u_1\\Z\end{matrix}\right),(\mathbb R+\mathbb D^\top P_L\mathbb D)\left(\begin{matrix} u_1\\Z\end{matrix}\right)\right\rangle ds\\
=&\mathbb E\int_0^T\left\langle(\mathbb R+\mathbb D^\top P_L\mathbb D)\left(\left(\begin{matrix} u_1\\Z\end{matrix}\right)+(\mathbb R+\mathbb D^\top P_L\mathbb D)^{-1}(\mathbb B^\top P_L+\mathbb D^\top\Lambda+\mathbb D^\top P_L\mathbb C)\left(\begin{matrix} Y\\X\end{matrix}\right)\right)\right.,\\
&\left.\qquad\left(\begin{matrix} u_1\\Z\end{matrix}\right)+(\mathbb R+\mathbb D^\top P_L\mathbb D)^{-1}(\mathbb B^\top P_L+\mathbb D^\top\Lambda+\mathbb D^\top P_L\mathbb C)\left(\begin{matrix} Y\\X\end{matrix}\right)\right\rangle ds\\
&-\mathbb E\int_0^T\left\langle(\mathbb B^\top P_L+\mathbb D^\top\Lambda+\mathbb D^\top P_L\mathbb C)\left(\begin{matrix} Y\\X\end{matrix}\right),(\mathbb R+\mathbb D^\top P_L\mathbb D)^{-1}(\mathbb B^\top P_L+\mathbb D^\top\Lambda+\mathbb D^\top P_L\mathbb C)\left(\begin{matrix} Y\\X\end{matrix}\right)\right\rangle ds.
\end{aligned}
\end{equation} and recall the definition of $\Gamma$, we have
\begin{equation}\nonumber
\begin{aligned}
&J(\xi,u_1(\cdot))\\
=&\frac{1}{2}\mathbb E\left\{\left\langle\left[\left(\begin{matrix} 0&0\\0&H_1\end{matrix}\right)+P_L(0)\right]\left(\begin{matrix} Y(0)\\X(0)\end{matrix}\right),\left(\begin{matrix} Y(0)\\X(0)\end{matrix}\right)\right\rangle\right.\\
&+\int_0^T\left\langle \left(\begin{matrix} Y\\X\end{matrix}\right),\mathbb Q\left(\begin{matrix} Y\\X\end{matrix}\right)+P_L\mathbb A\left(\begin{matrix} Y\\X\end{matrix}\right)+\Gamma\left(\begin{matrix} Y\\X\end{matrix}\right)+\Lambda\mathbb C\left(\begin{matrix} Y\\X\end{matrix}\right)+\mathbb C^\top\Lambda\left(\begin{matrix} Y\\X\end{matrix}\right)+\mathbb A^\top P_L\left(\begin{matrix} Y\\X\end{matrix}\right)+\mathbb C^\top P_L\mathbb C\left(\begin{matrix} Y\\X\end{matrix}\right)\right\rangle ds\\
&-\int_0^T\left\langle\left(\begin{matrix} Y\\X\end{matrix}\right),(\mathbb B^\top P_L+\mathbb D^\top\Lambda+\mathbb D^\top P_L\mathbb C)^\top(\mathbb R+\mathbb D^\top P_L\mathbb D)^{-1}(\mathbb B^\top P_L+\mathbb D^\top\Lambda+\mathbb D^\top P_L\mathbb C)\left(\begin{matrix} Y\\X\end{matrix}\right)\right\rangle ds\\
&+\int_0^T\left\langle(\mathbb R+\mathbb D^\top P_L\mathbb D)\left[\left(\begin{matrix} u_1\\Z\end{matrix}\right)+(\mathbb R+\mathbb D^\top P_L\mathbb D)^{-1}(\mathbb B^\top P_L+\mathbb D^\top\Lambda+\mathbb D^\top P_L\mathbb C)\left(\begin{matrix} Y\\X\end{matrix}\right)\right]\right.,\\
&\left.\left.\qquad\qquad\left(\begin{matrix} u_1\\Z\end{matrix}\right)+(\mathbb R+\mathbb D^\top P_L\mathbb D)^{-1}(\mathbb B^\top P_L+\mathbb D^\top\Lambda+\mathbb D^\top P_L\mathbb C)\left(\begin{matrix} Y\\X\end{matrix}\right)\right\rangle ds\right\}\\
=&\frac{1}{2}\mathbb E\left\langle\left[\left(\begin{matrix} 0&0\\0&H_1\end{matrix}\right)+P_L(0)\right]\left(\begin{matrix} Y(0)\\X(0)\end{matrix}\right),\left(\begin{matrix} Y(0)\\X(0)\end{matrix}\right)\right\rangle\\
&+\frac{1}{2}\mathbb E\int_0^T\left\langle(\mathbb R+\mathbb D^\top P_L\mathbb D)\left[\left(\begin{matrix} u_1\\Z\end{matrix}\right)+(\mathbb R+\mathbb D^\top P_L\mathbb D)^{-1}(\mathbb B^\top P_L+\mathbb D^\top\Lambda+\mathbb D^\top P_L\mathbb C)\left(\begin{matrix} Y\\X\end{matrix}\right)\right]\right.,\\
&\left.\qquad\qquad\left(\begin{matrix} u_1\\Z\end{matrix}\right)+(\mathbb R+\mathbb D^\top P_L\mathbb D)^{-1}(\mathbb B^\top P_L+\mathbb D^\top\Lambda+\mathbb D^\top P_L\mathbb C)\left(\begin{matrix} Y\\X\end{matrix}\right)\right\rangle ds.\qquad\Box
\end{aligned}
\end{equation}

\subsection{Proof of Theorem \ref{th A 2}:}\label{sec7.3}

First, we will give two lemmas. Note that for a given $(X(\cdot),Z(\cdot))\times X(0)\in L^2_{\mathbb F}(0,T;\mathbb R^m)\times L^2_{\mathbb F}(0,T;\mathbb R^m)\times L^2_{\mathcal F_0}(\Omega;\mathbb R^m)$, where $X(0)$ is the value of process $X(\cdot)$ at initial time, the forward equation in the BFSDEs \eqref{eA2 1} has a unique solution $Y(\cdot)\in L^2_{\mathbb F}(0,T;\mathbb R^n)$, thus we introduce a map $\mathbb M_1:L^2_{\mathbb F}(0,T;\mathbb R^m)\times L^2_{\mathbb F}(0,T;\mathbb R^m)\times L^2_{\mathcal F_0}(\Omega;\mathbb R^m)\rightarrow L^2_{\mathbb F}(0,T;\mathbb R^n)$, through
\begin{equation}\label{eA2 X}
Y(t)=h(X(0))+\int^t_0b(s,Y,X,Z,\mathbb EZ)ds+\int^t_0\sigma(s,Y,X,Z)dW(s).
\end{equation}
Therefore, $\mathbb E\sup_{t\in[0,T]}|Y(t)|^2<\infty$. For any $\rho\in\mathbb R$, define $\|X\|_\rho\triangleq\left(E\int_0^Te^{-\rho t}|X(t)|^2dt\right)^\frac{1}{2}$.
%Note that $\|X\|_\rho$ is equivalent to the norm $\Big(\mathbb E\int_0^T|X(t)|^2dt\Big)^\frac{1}{2}$.
\begin{lemma}\label{Lemma A1}
Let $Y_i(\cdot)$ be the solution of \eqref{eA2 X} corresponding to $(X_i(\cdot),Z_i(\cdot))\in L^2_{\mathbb F}(0,T;\mathbb R^m)\times L^2_{\mathbb F}(0,T;\mathbb R^m), i=1,2$. Then for all $\rho\in\mathbb R$, $c_1,c_2,c_3>0$, we have
\begin{equation}\label{eA2 2}
\begin{aligned}
&e^{-\rho t}\mathbb E|\widehat Y(t)|^2+\bar \rho_1\int^t_0e^{-\rho s}\mathbb E|\widehat Y(s)|^2ds\\
\leq& k^2_{10}\mathbb E|\widehat X(0)|^2
+(k_1c_1+k^2_8)\int^t_0e^{-\rho s}\mathbb E|\widehat X(s)|^2ds+(k_2c_2+k_3c_3+k^2_9)\int^t_0e^{-\rho s}\mathbb E|\widehat Z(s)|^2ds,
\end{aligned}
\end{equation}
\begin{equation}\label{eA2 3}
\begin{aligned}
e^{-\rho t}\mathbb E|\widehat Y(t)|^2
\leq& k^2_{10}e^{-\bar\rho_1t}\mathbb E|\widehat X(0)|^2
+(k_1c_1+k^2_8)\int^t_0e^{-\bar\rho_1(t-s)-\rho s}\mathbb E|\widehat X(s)|^2ds\\
&+(k_2c_2+k_3c_3+k^2_9)\int^t_0e^{-\bar\rho_1(t-s)-\rho s}\mathbb E|\widehat Z(s)|^2ds,
\end{aligned}
\end{equation}
where $\bar\rho_1=\rho-2\rho_1-k_1c_1^{-1}-k_2c_2^{-1}-k_3c_3^{-1}-k^2_7$ and $\widehat \varphi=\varphi_1-\varphi_2,\varphi=Y,X,Z$.
Moreover, we have
\begin{equation}\label{eA2 3 1}
||\widehat Y(\cdot)||_\rho^2\leq \frac{1-e^{-\bar\rho_1T}}{\bar\rho_1}\left[k^2_{10}\mathbb E|\widehat X(0)|^2+(k_1c_1+k^2_8)||\widehat X(\cdot)||_\rho^2+(k_2c_2+k_3c_3+k^2_9)||\widehat Z(\cdot)||_\rho^2\right],\end{equation}
\begin{equation}\label{eA2 3 2}
e^{-\rho T}\mathbb E|\widehat Y(T)|^2\leq \max\{1,e^{-\bar\rho_1 T}\}\left[k^2_{10}\mathbb E|\widehat X(0)|^2+(k_1c_1+k^2_8)||\widehat X(\cdot)||_\rho^2+(k_2c_2+k_3c_3+k^2_9)||\widehat Z(\cdot)||_\rho^2\right].\end{equation}
In particular, if $\bar\rho_1>0$, we have
\begin{equation*}\label{eA2 3 3}
e^{-\rho T}\mathbb E|\widehat Y(T)|^2\leq k^2_{10}\mathbb E|\widehat X(0)|^2+(k_1c_1+k^2_8)||\widehat X(s)||_\rho^2+(k_2c_2+k_3c_3+k^2_9)||\widehat Z(\cdot)||_\rho^2.\end{equation*}
\end{lemma}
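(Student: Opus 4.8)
The plan is to obtain all of \eqref{eA2 2}, \eqref{eA2 3}, \eqref{eA2 3 1}, \eqref{eA2 3 2} and the final display from a single weighted It\^o estimate on the difference process $\widehat Y=Y_1-Y_2$, followed by a Gr\"onwall-type integrating-factor argument. First I would write the dynamics of $\widehat Y$: since each $Y_i$ solves \eqref{eA2 X} with the same coefficients but driving data $(X_i,Z_i)$, the difference satisfies $d\widehat Y=\Delta b\,ds+\Delta\sigma\,dW$ with $\Delta b=b(s,Y_1,X_1,Z_1,\mathbb EZ_1)-b(s,Y_2,X_2,Z_2,\mathbb EZ_2)$ and $\Delta\sigma=\sigma(s,Y_1,X_1,Z_1)-\sigma(s,Y_2,X_2,Z_2)$. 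Applying It\^o's formula to $e^{-\rho s}|\widehat Y(s)|^2$ and taking expectations (the stochastic integral being a genuine martingale after a routine localization using the square-integrability of $\widehat Y$ and $\Delta\sigma$) yields
$$e^{-\rho t}\mathbb E|\widehat Y(t)|^2=\mathbb E|\widehat Y(0)|^2+\mathbb E\int_0^t e^{-\rho s}\big(-\rho|\widehat Y|^2+2\langle\widehat Y,\Delta b\rangle+|\Delta\sigma|^2\big)\,ds.$$

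Next I would estimate the integrand. Splitting $\Delta b$ into a $y$-increment and an $(x,z,\bar z)$-increment and invoking the one-sided monotonicity in the first line of \textbf{(H5)}(i) together with the Lipschitz bound in its second line gives $2\langle\widehat Y,\Delta b\rangle\le 2\rho_1|\widehat Y|^2+2|\widehat Y|\,(k_1|\widehat X|+k_2|\widehat Z|+k_3|\mathbb E\widehat Z|)$. Young's inequality with the free parameters $c_1,c_2,c_3>0$ converts each cross term into $k_ic_i^{-1}|\widehat Y|^2+k_ic_i(\cdot)^2$, while \textbf{(H5)}(iii) bounds $|\Delta\sigma|^2\le k_7^2|\widehat Y|^2+k_8^2|\widehat X|^2+k_9^2|\widehat Z|^2$. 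Collecting the $|\widehat Y|^2$ coefficients reproduces exactly $-\bar\rho_1$ with $\bar\rho_1=\rho-2\rho_1-k_1c_1^{-1}-k_2c_2^{-1}-k_3c_3^{-1}-k_7^2$. The one point that requires care is the mean-field term: after taking the full expectation, Jensen's inequality $\mathbb E|\mathbb E\widehat Z(s)|^2=|\mathbb E\widehat Z(s)|^2\le\mathbb E|\widehat Z(s)|^2$ lets me absorb $k_3c_3\,\mathbb E|\mathbb E\widehat Z|^2$ into the $\widehat Z$-term, producing the combined coefficient $k_2c_2+k_3c_3+k_9^2$. Using $\mathbb E|\widehat Y(0)|^2=\mathbb E|h(X_1(0))-h(X_2(0))|^2\le k_{10}^2\mathbb E|\widehat X(0)|^2$ from \textbf{(H5)}(iv) and rearranging then gives \eqref{eA2 2} directly.

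Then I would derive \eqref{eA2 3} by the integrating-factor method. Writing $F(t)=e^{-\rho t}\mathbb E|\widehat Y(t)|^2$, the bounded integral identity shows that $F$ is absolutely continuous with $F'(t)\le-\bar\rho_1 F(t)+G(t)$ for a.e.\ $t$, where $G(s)=e^{-\rho s}\big[(k_1c_1+k_8^2)\mathbb E|\widehat X(s)|^2+(k_2c_2+k_3c_3+k_9^2)\mathbb E|\widehat Z(s)|^2\big]$. Multiplying by $e^{\bar\rho_1 t}$, integrating from $0$ to $t$, and using $F(0)\le k_{10}^2\mathbb E|\widehat X(0)|^2$ yields the convolution bound \eqref{eA2 3} verbatim; note that this step uses no sign information on $\bar\rho_1$, so it holds for every $\rho$.

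Finally, the three summary bounds follow by elementary integration of \eqref{eA2 3}. Integrating over $[0,T]$ and applying Fubini to the resulting double integral, together with the monotonicity estimate $\tfrac{1-e^{-\bar\rho_1(T-s)}}{\bar\rho_1}\le\tfrac{1-e^{-\bar\rho_1 T}}{\bar\rho_1}$ (valid for every $\bar\rho_1$), gives \eqref{eA2 3 1}; evaluating \eqref{eA2 3} at $t=T$ and bounding $e^{-\bar\rho_1 T}$ and $e^{-\bar\rho_1(T-s)}$ by $\max\{1,e^{-\bar\rho_1 T}\}$ gives \eqref{eA2 3 2}; and specializing to $\bar\rho_1>0$, where this maximum equals $1$, gives the last display. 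Since the whole argument rests on a standard It\^o-plus-Gr\"onwall scheme, I do not anticipate a genuine obstacle: the only real bookkeeping is matching the Young constants to the stated $\bar\rho_1$ and routing the mean-field term through Jensen's inequality, both of which are routine.
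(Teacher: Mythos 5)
Your proposal is correct and takes essentially the same route as the paper: the paper also obtains \eqref{eA2 2} by applying It\^o's formula to $e^{-\rho s}|\widehat Y(s)|^2$ and invoking \textbf{(H5)} (with the same Young-parameter bookkeeping and Jensen step for the $\mathbb E\widehat Z$ term), and your integrating-factor/Gr\"onwall derivation of \eqref{eA2 3} is just a rephrasing of the paper's second It\^o application to the weight $e^{-\bar\rho_1(t-s)-\rho s}|\widehat Y(s)|^2$. The remaining bounds \eqref{eA2 3 1}, \eqref{eA2 3 2} and the $\bar\rho_1>0$ case then follow by integration over $[0,T]$ and evaluation at $t=T$ exactly as in the paper.
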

\proof
Under (H5), applying It\^{o}'s formula to $e^{-\rho s}|\widehat Y(s)|^2$ and taking expectation, we obtain \eqref{eA2 2}. Furthermore, applying It\^{o}'s formula again to $e^{-\bar\rho_1(t-s)-\rho s}|\widehat Y(s)|^2$ for $s\in[0,t]$ and taking expectation, we get \eqref{eA2 3}. Integrating both sides of \eqref{eA2 3} on $[0,T]$ and noting $\frac{1-e^{-\bar\rho_1(T-s)}}{\bar\rho_1}\leq \frac{1-e^{-\bar\rho_1T}}{\bar\rho_1},\forall s\in[0,T]$, we have \eqref{eA2 3 1}. Letting $t=T$ in \eqref{eA2 3} and noticing that $e^{-\bar\rho_1(T-s)}\leq \max\{1,e^{-\bar\rho_1 T}\}$, we obtain \eqref{eA2 3 2}.\qquad $\Box$\\
Similarly, for given $Y(\cdot)\in L^2_{\mathbb F}(0,T;\mathbb R^n)$, the backward equation in the BFSDEs \eqref{eA2 1} has a unique solution $(X(\cdot),Z(\cdot))\in L^2_{\mathbb F}(0,T;\mathbb R^m)\times L^2_{\mathbb F}(0,T;\mathbb R^m)$, and the corresponding initial value of $X(\cdot)$ is denoted by $X(0)\in L^2_{\mathcal F_0}(\Omega;\mathbb R^m)$. Thus, we can introduce another map $\mathbb M_2:L^2_{\mathbb F}(0,T;\mathbb R^n)\rightarrow L^2_{\mathbb F}(0,T;\mathbb R^m)\times L^2_{\mathbb F}(0,T;\mathbb R^m)\times L^2_{\mathcal F_0}(\Omega;\mathbb R^m)$, through
\begin{equation}\label{eA2 Y}
X(t)=g(Y(T),\mathbb EY(T))+\int^T_0f(s,Y,X,Z,\mathbb EZ)ds-\int^T_0ZdW(s),
\end{equation}
which satisfies $\mathbb E\sup\limits_{t\in[0,T]}|X(t)|^2+\mathbb E\int_0^T|Z(t)|^2dt<\infty$.
Similar to Lemma \ref{Lemma A1}, we have
\begin{lemma}\label{Lemma A2}Let $(X_i(\cdot),Z_i(\cdot))$ be the solution of \eqref{eA2 Y} corresponding to $Y_i(\cdot)\in L^2_{\mathbb F}(0,T;\mathbb R^n), i=1,2$. Then for all $\rho\in\mathbb R$, $c_4,c_5,c_6>0$, we have
\begin{equation*}\label{eA2 4}
\begin{aligned}
&e^{-\rho t}\mathbb E|\widehat X(t)|^2+\bar \rho_2\int^T_0e^{-\rho s}\mathbb E|\widehat X(s)|^2ds+(1-k_5c_5-k_6c_6)\int^T_0e^{-\rho s}\mathbb E|\widehat Z(s)|^2ds\\
\leq& 2(k^2_9+k_{10}^2)\mathbb E|\widehat Y(T)|^2+k_4c_4\int^T_0e^{-\rho s}\mathbb E|\widehat Y(s)|^2ds,
\end{aligned}\end{equation*}
\begin{equation*}\label{eA2 5}
\begin{aligned}
&e^{-\rho t}\mathbb E|\widehat X(t)|^2+(1-k_5c_5-k_6c_6)\int^T_0e^{-\bar\rho_2(s-t)-\rho s}\mathbb E|\widehat Z(s)|^2ds\\
\leq& 2(k^2_9+k_{10}^2)e^{-\bar\rho_2(T-t)-\rho T}\mathbb E|\widehat Y(T)|^2+k_4c_4\int^T_0e^{-\bar\rho_2(s-t)-\rho s}\mathbb E|\widehat Y(s)|^2ds,
\end{aligned}
\end{equation*}
where $\bar\rho_2=-\rho-2\rho_2-k_4c_4^{-1}-k_5c_5^{-1}-k_6c_6^{-1}$ and $\widehat \varphi=\varphi_1-\varphi_2,\varphi=Y,X,Z$.
Moreover, choosing $c_4\in(0,k_4^{-1})$, we have
\begin{equation*}\label{eA2 5 1}
||\widehat X(\cdot)||_\rho^2\leq \frac{1-e^{-\bar\rho_2T}}{\bar\rho_2}\left[2(k^2_9+k_{10}^2)e^{-\rho T}\mathbb E|\widehat Y(T)|^2+k_4c_4||\widehat Y(\cdot)||^2_\rho\right],\end{equation*}
\begin{equation*}\label{eA2 5 2}
||\widehat Z(\cdot)||_\rho^2\leq \frac{2(k^2_9+k_{10}^2)e^{-(\bar\rho_2+\rho) T}\mathbb E|\widehat Y(T)|^2+k_4c_4\max\{1,e^{-\bar\rho_2T}\}||\widehat Y(\cdot)||^2_\rho}{(1-k_5c_5-k_6c_6)\min\{1,e^{-\bar\rho_2T}\}},\end{equation*}
\begin{equation*}\label{eA2 5 3}
\mathbb E|\widehat X(0)|^2\leq \max\{1,e^{-\bar\rho_2T}\}\left[2(k^2_9+k_{10}^2)e^{-\rho T}\mathbb E|\widehat Y(T)|^2+k_4c_4||\widehat Y(\cdot)||^2_\rho\right].\end{equation*}
In particular, if $\bar\rho_2>0$, we have
\begin{equation*}\label{eA2 5 4}
||\widehat Z(\cdot)||_\rho^2\leq \frac{2(k^2_9+k_{10}^2)\mathbb E|\widehat Y(T)|^2+k_4c_4||\widehat Y(\cdot)||^2_\rho}{1-k_5c_5-k_6c_6}.\end{equation*}
\end{lemma}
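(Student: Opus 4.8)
The plan is to establish Lemma \ref{Lemma A2} as the backward-equation counterpart of Lemma \ref{Lemma A1}, replacing the forward Gronwall argument used there for the map $\mathbb M_1$ by a backward weighted energy estimate for the map $\mathbb M_2$ defined through \eqref{eA2 Y}. First I would fix $Y_1(\cdot),Y_2(\cdot)\in L^2_{\mathbb F}(0,T;\mathbb R^n)$, denote by $(X_i(\cdot),Z_i(\cdot))$ the corresponding solutions of \eqref{eA2 Y}, and write the linear BSDE satisfied by the differences $\widehat X=X_1-X_2$, $\widehat Z=Z_1-Z_2$, namely $-d\widehat X=\Delta f\,ds-\widehat Z\,dW$ with $\Delta f=f(s,Y_1,X_1,Z_1,\mathbb E Z_1)-f(s,Y_2,X_2,Z_2,\mathbb E Z_2)$ and terminal datum $\widehat X(T)=g(Y_1(T),\mathbb E Y_1(T))-g(Y_2(T),\mathbb E Y_2(T))$. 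Applying It\^{o}'s formula to $e^{-\rho s}|\widehat X(s)|^2$, integrating over $[t,T]$ and taking expectation removes the stochastic integral and yields an identity whose leading terms are $e^{-\rho t}\mathbb E|\widehat X(t)|^2$, the terminal contribution $e^{-\rho T}\mathbb E|\widehat X(T)|^2$, the good term $\mathbb E\int_t^T e^{-\rho s}|\widehat Z|^2\,ds$, and the cross term $2\mathbb E\int_t^T e^{-\rho s}\langle\widehat X,\Delta f\rangle\,ds$.

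Second, I would control the cross term by splitting $\Delta f$ into a part that varies only in $x$ and a part that is Lipschitz in $(y,z,\bar z)$. The monotonicity inequality in (H5)(ii) bounds $\langle\widehat X, f(s,Y_1,X_1,Z_1,\mathbb E Z_1)-f(s,Y_1,X_2,Z_1,\mathbb E Z_1)\rangle$ by $\rho_2|\widehat X|^2$, while the Lipschitz inequality in (H5)(ii) bounds the remaining increment in norm by $k_4|\widehat Y|+k_5|\widehat Z|+k_6|\mathbb E\widehat Z|$. The elementary bound $2|\widehat X|\,|\cdot|\le c^{-1}|\widehat X|^2+c|\cdot|^2$ with weights $c_4,c_5,c_6>0$ then produces the constants $k_4c_4^{-1},k_5c_5^{-1},k_6c_6^{-1}$, which are absorbed into the $|\widehat X|^2$ coefficient (so that $-\bar\rho_2=\rho+2\rho_2+k_4c_4^{-1}+k_5c_5^{-1}+k_6c_6^{-1}$), together with the terms $k_4c_4|\widehat Y|^2$, $k_5c_5|\widehat Z|^2$ and $k_6c_6|\mathbb E\widehat Z|^2$; the last is dominated using Jensen's inequality $\mathbb E|\mathbb E\widehat Z|^2\le\mathbb E|\widehat Z|^2$, which merges the mean-field contribution into the $\widehat Z$ term and accounts for the factor $(1-k_5c_5-k_6c_6)$. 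The terminal datum is estimated from (H5)(iv) and Jensen as $\mathbb E|\widehat X(T)|^2\le 2(k_{11}^2+k_{12}^2)\mathbb E|\widehat Y(T)|^2$, giving the first displayed inequality of the lemma. Repeating the computation with the heavier weight $e^{-\bar\rho_2(s-t)-\rho s}$ in place of $e^{-\rho s}$, exactly as \eqref{eA2 3} was obtained from \eqref{eA2 2} in Lemma \ref{Lemma A1}, yields the second displayed inequality, and the remaining bounds for $\|\widehat X(\cdot)\|_\rho^2$, $\|\widehat Z(\cdot)\|_\rho^2$ and $\mathbb E|\widehat X(0)|^2$ follow by integrating this over $[0,T]$, setting $t=0$, and using $e^{-\bar\rho_2(T-s)}\le\max\{1,e^{-\bar\rho_2 T}\}$ and the lower bound $\min\{1,e^{-\bar\rho_2 T}\}$ on the exponential factors.

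The step I expect to require the most care is the bookkeeping of the $\widehat Z$-coefficient. Since time runs backward, $|\widehat Z|^2$ appears with a favorable sign on the left, but part of it is consumed by the Lipschitz-in-$z$ and mean-field-in-$z$ contributions, so one must choose $c_5,c_6$ small enough that $1-k_5c_5-k_6c_6>0$ in order to retain a genuine estimate for $\widehat Z$, and simultaneously choose $c_4\in(0,k_4^{-1})$ so that $k_4c_4<1$ for the later contraction argument. A secondary subtlety is that the sign of $\bar\rho_2$ is not fixed: the clean bounds collected in the ``in particular'' clause hold only when $\bar\rho_2>0$, so in the general case the monotonicity factors $\max\{1,e^{-\bar\rho_2 T}\}$ and $\frac{1-e^{-\bar\rho_2 T}}{\bar\rho_2}$ must be carried through, exactly paralleling the treatment of $\bar\rho_1$ in Lemma \ref{Lemma A1}. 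No genuinely new idea beyond Lemma \ref{Lemma A1} is needed; the content is the correct backward orientation of the weighted It\^{o} estimate together with the careful extraction of $\mathbb E|\widehat X(0)|^2$ from the $t=0$ instance.
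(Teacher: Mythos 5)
Your proposal is correct and follows essentially the same route as the paper, whose proof of this lemma is precisely the backward analogue of Lemma \ref{Lemma A1}: apply It\^{o}'s formula to $e^{-\rho s}|\widehat X(s)|^2$ (and then to $e^{-\bar\rho_2(s-t)-\rho s}|\widehat X(s)|^2$), split the generator increment via the monotonicity-in-$x$ and Lipschitz bounds of (H5)(ii) with Young weights $c_4,c_5,c_6$, handle the mean-field terms by Jensen, and obtain the remaining bounds by integrating over $[0,T]$, setting $t=0$, and using the $\max\{1,e^{-\bar\rho_2T}\}$ and $\frac{1-e^{-\bar\rho_2T}}{\bar\rho_2}$ factors. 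One remark: your terminal estimate $\mathbb E|\widehat X(T)|^2\le 2(k_{11}^2+k_{12}^2)\mathbb E|\widehat Y(T)|^2$ carries the constants that (H5)(iv) actually assigns to $g$, so the factor $2(k_9^2+k_{10}^2)$ written in the paper's statement (constants belonging to $\sigma$ and $h$, which do not enter the backward equation) appears to be a typo, and your version is the correct one.
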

Now we will give the proof of Theorem \ref{th A 2}.
Consider the map $\mathbb M\triangleq \mathbb M_2\circ\mathbb M_1$. It suffices to show that $\mathbb M$ is a contraction mapping under $||\cdot||_\rho$. In fact, for $(X_i(\cdot),Z_i(\cdot))\times X_i(0)\in L^2_{\mathbb F}(0,T;\mathbb R^m)\times L^2_{\mathbb F}(0,T;\mathbb R^m)\times L^2_{\mathcal F_0}(\Omega;\mathbb R^m),i=1,2$, let $Y_i\triangleq\mathbb M_1(X_i(\cdot),Z_i(\cdot),X_i(0))$ and $(\bar X_i(\cdot),\bar Z_i(\cdot),\bar X_i(0))\triangleq\mathbb M((X_i(\cdot),Z_i(\cdot),X_i(0)))$, by Lemmas \ref{Lemma A1} and \ref{Lemma A2}, we have
\begin{equation}\nonumber\begin{aligned}
&\mathbb E|\bar X_1(0)-\bar X_2(0)|^2+||\bar X_1(\cdot)-\bar X_2(\cdot)||_\rho^2+||\bar Z_1(\cdot)-\bar Z_2(\cdot)||_\rho^2\\
\leq & \left[\frac{1-e^{-\bar\rho_2T}}{\bar\rho_2}+\frac{\max\{1,e^{-\bar\rho_2T}\}}{(1-k_5c_5-k_6c_6)\min\{1,e^{-\bar\rho_2T}\}}+
\max\{1,e^{-\bar\rho_2T}\}\right]\left[2(k^2_9+k_{10}^2)e^{-\rho T}\mathbb E|\widehat Y(T)|^2+k_4c_4||\widehat Y(\cdot)||^2_\rho\right]\\
\leq & \left[\frac{1-e^{-\bar\rho_2T}}{\bar\rho_2}+\frac{\max\{1,e^{-\bar\rho_2T}\}}{(1-k_5c_5-k_6c_6)\min\{1,e^{-\bar\rho_2T}\}}+
\max\{1,e^{-\bar\rho_2T}\}\right]\left[2(k^2_9+k_{10}^2)\max\{1,e^{-\bar\rho_1 T}\}+k_4c_4\frac{1-e^{-\bar\rho_1T}}{\bar\rho_1}\right]\\
&\times\left[k^2_{10}\mathbb E|\widehat X(0)|^2+(k_1c_1+k^2_8)||\widehat X(\cdot)||_\rho^2+(k_2c_2+k_3c_3+k^2_9)||\widehat Z(\cdot)||_\rho^2\right].
\end{aligned}\end{equation}
Recalling that $\bar\rho_1=\rho-2\rho_1-k_1c_1^{-1}-k_2c_2^{-1}-k_3c_3^{-1}-k^2_7$ and  $\bar\rho_2=-\rho-2\rho_2-k_4c_4^{-1}-k_5c_5^{-1}-k_6c_6^{-1}$. Then by choosing suitable $\rho$, the first assertion is immediate. For the second assertion, since $2(\rho_1 + \rho_2) < -2k^2_5-2k^2_6-k_7^2$, we can choose a $\rho\in\mathbb R$, $0<c_5<\frac{1}{2} k^{-1}_5$, $0<c_6<\frac{1}{2} k^{-1}_6$ and sufficient large $c_1,c_2,c_3,c_4$ such that $\bar\rho_1>0,\quad\bar\rho_2>0,\quad 1-k_5c_5-k_6c_6>0.$
Then, using a similar method, we  get
\begin{equation}\nonumber\begin{aligned}
&\mathbb E|\bar X_1(0)-\bar X_2(0)|^2+||\bar X_1(\cdot)-\bar X_2(\cdot)||_\rho^2+||\bar Z_1(\cdot)-\bar Z_2(\cdot)||_\rho^2\\
\leq & \left[\frac{1}{\bar\rho_2}+\frac{1}{1-k_5c_5-k_6c_6}+1\right]\left[2k^2_9 +2k_{10}^2+\frac{k_4c_4}{\bar\rho_1}\right]\\
&\left[k^2_{10}\mathbb E|\widehat X(0)|^2+(k_1c_1+k^2_8)||\widehat X(\cdot)||_\rho^2+(k_2c_2+k_3c_3+k^2_9)||\widehat Z(\cdot)||_\rho^2\right].\quad\Box
\end{aligned}\end{equation}


\begin{thebibliography}{0}
\bibitem{BB1981}A. Bagchi and T. Basar. Stackelberg strategies in linear-quadratic stochastic differential games. J. Optim. Theory Appl., 35(1981), 443-464.
\bibitem{BO1999} T. Basar and G. J. Olsder. Dynamic Noncooperative Game Theory, Classics Appl. Math., SIAM, Philadelphia, 1999.
\bibitem{BCS2015}  A. Bensoussan, S. K. Chen and S. P. Sethi.  The maximum principle for global solutions of stochastic Stackelberg differential
games. SIAM J. Control Optim.,  53(2015), 1956-1981.
\bibitem{BJPZ2005}T. R. Bielecki, H. Jin, S. R. Pliska and X. Y. Zhou. Continuous-time mean-variance portfolio selection with bankruptcy prohibition. Math. Finance, 15(2005), 213-244.
\bibitem{CLZ1998}S. Chen, X. Li and X. Zhou. Stochastic linear quadratic regulators with indefinite control weight costs. SIAM J. Control Optim., 36(1998), 1685-1702.
\bibitem{CZ2006}X. Chen and X. Y. Zhou. Stochastic linear-quadratic control with conic control constraints on an infinite time horizon. SIAM J. Control Optim., 43(2006), 1120-1150.
\bibitem{Clark2013} F. Clarke. Functional Analysis, Calculus of Variations and Optimal Control. Springer London, 2013.
\bibitem{CZ2012}J. Cvitani\'{c} and J. Zhang. Contract Theory in Continuous Time Models,
Springer Finance. Springer, Heidelberg, 2012.
\bibitem{DE1992}D. Duffie and L. G. Epstein. Stochastic differential utility,
Econometrica, 60(1992), 353-394.
\bibitem{DR1991}D. Duffie and H. R. Richardson. Mean-Variance Hedging in Continuous Time. Ann. Appl. Probab., 1(1991), 1-15.
\bibitem{EPQ1997}N. El Karoui, S. Peng, and M. C. Quenez. Backward stochastic
differential equations in finance,  Math. Finance, 7(1997), 1-71.
\bibitem{EPQ2001}N. El Karoui, S. Peng, and  M. C. Quenez. A dynamic maximum principle for the optimization of recursive utilities under constraints. Ann. Appl. Probab., 11(2001), 664-693.
\bibitem{ET1976}I. Ekeland and R. Temam. Convex Analysis and variational problems.Amsterdam-Oxford. North-Holland Publ. Company, 1976.
\bibitem{ET2015}G. E. Espinosa and N. Touzi. Optimal investment under relative performance concerns. Math. Finance, 25(2015), 221-257.

\bibitem{HHL2017}Y. Hu, J. Huang and X. Li. Linear quadratic mean field game with control input constraint.  ESAIM Control Optim. Calc. Var., 24(2018), 901-919.
\bibitem{HJZ2012} Y. Hu, H. Jin and X. Zhou. Time-inconsistent stochastic linear-quadratic control. SIAM J. Control Optim. 50 (2012), 1548-1572.
\bibitem{HZ2005}Y. Hu and X. Y. Zhou. Constrained stochastic LQ control with random coefficients, and application to portfolio selection. SIAM J. Control Optim., 44(2005), 444-466.
\bibitem{HLY2004}J. Huang, X. Li and J. Yong. A mixed linear quadratic optimal control problem with a controlled time horizon. Appl. Math. Optim., 70(2014), 29-59.
\bibitem{HWX2009}J. Huang, G. Wang and J. Xiong. A maximum principle for partial information backward stochastic control problems with applications. SIAM J. Control Optim., 48(2009), 2106-2117.
\bibitem{HWW2016}J. Huang, S. Wang and Z. Wu. Backward Mean-Field Linear-Quadratic-Gaussian (LQG) Games: Full and Partial Information, IEEE Trans. Automat. Control, 61(2016), 3784-3796.
\bibitem{JP2008} S. Ji and S. Peng. Terminal perturbation method for the backward approach to continuous time mean-variance portfolio selection. Stochastic Process. Appl., 118(2008), 952-967.
\bibitem{JZ2006}S. Ji and X. Y. Zhou. A maximum principle for stochastic optimal control with terminal state constraints, and its applications. Communications in Information and Systems, 6(2006), 321-338.
\bibitem{JZ2010}S. Ji and X. Y. Zhou. A generalized Neyman-Pearson lemma for g-probabilities.  Probab. Theory Related Fields, 148(2010), 645-669.
\bibitem{Kobylanski2000}M. Kobylanski. Backward stochastic differential equations and partial differential equations with quadratic growth. Ann. Probab., 28(2000), 558-602.
\bibitem{KZ2000}M. Kohlmann and X. Y. Zhou. Relationship between backward stochastic differential equations and stochsdtic controls: a linear-quadratic approach, SIAM J. Control Optim., 38(200), 1392-1407.
\bibitem{LSX17} X. Li, J. Sun and J. Xiong. Linear Quadratic Optimal Control Problems for Mean-Field Backward Stochastic Differential Equations. Appl. Math. Optim., (2016), 1-28.
\bibitem{LZL2002}X. Li, X. Y. Zhou and A. E. Lim. Dynamic mean-variance portfolio selection with no-shorting constraints. SIAM J. Control Optim., 40(2002), 1540-1555.
\bibitem{LZ18}Y. Li and H. Zheng. Constrained Quadratic Risk Minimization via Forward and Backward Stochastic Differential Equations, SIAM J. Control Optim., 56(2018), 1130-1153.
\bibitem{LM1997}A. E. Lim and J. B. Moore. A quasi-separation theorem for LQG optimal control with IQ constraints. Systems Control Lett., 32(1997), 21-33.
\bibitem{LZ1999}A. E. Lim and X. Y. Zhou. Stochastic optimal LQR control with integral quadratic constraints and indefinite control weights. IEEE Trans. Automat. Control, 44(1999), 1359-1369.
\bibitem{LZ2001}A. E.  Lim and X. Y. Zhou. Linear-quadratic control of backward stochastic differential equations. SIAM J. Control Optim., 40(2001), 450-474.

\bibitem{L2010}N. V. Long. A Survey of Dynamic Games in Economics. World Scientific, Singapore, 2010.
\bibitem{L1969} D. Luenberger. Optimization by vector space methods. Wiley, New York, 1969.
\bibitem{OSU2013} B. {\O}ksendal, L. Sandal and J. Ub{\o}e. Stochastic Stackelberg equilibria with applications to time-dependent newsvendor models. J. Econ. Dyn. Control, 37(2013), 1284-1299.

\bibitem{PP1990} E. Pardoux  and S. Peng . Adapted solution of a backward stochastic differential equation. Systems Control Lett., 14(1990), 55-61.
\bibitem{PT1999}E. Pardoux  and S. Tang. Forward-backward stochastic differential equations and quasilinear parabolic PDEs. Probab. Theory Related Fields, 114(1999), 123-150.
\bibitem{Q1993} M. C. Quenez. Backward stochstic differetial equation finance and optimizition. PhD Thesis, 1993.
\bibitem{Rockafellar} R. T. Rockafellar. Convex Analysis. Princeton University Pre, 1970.
\bibitem{S1934}H. Von Stackelberg. Marktform and Gleichgewicht. Springer-Verlag, Wien New York, 1934 (in German); Market structure and equilibrium. Springer Science, Business Media, 2010 (in English).

\bibitem{SWX2016}J. Shi, G. Wang. and J. Xiong. Leader-follower stochastic differential game with asymmetric information and applications. Automatica, 63(2016), 60-73.
\bibitem{SY2014} J. Sun and J. Yong. Linear quadratic stochastic differential games: open-loop and closed-loop saddle points. SIAM J. Control Optim., 52(2014), 4082-4121.
\bibitem{Tang2003} S. Tang. General linear quadratic optimal stochastic control problems with random coefficients: linear stochastic Hamilton systems and backward stochastic Riccati equations. SIAM J. Control Optim.,  42(2003), 53-75.

\bibitem{WY2012}G. Wang and Z. Yu. A partial information non-zero sum differential game of backward stochastic differential equations with applications. Automatica, 48(2012), 342-352.
\bibitem{Y02} J. Yong. A leader-follower stochastic linear quadratic differential game. SIAM J. Control Optim., 41(2002), 1015-1041.

\bibitem{YZ1999} J. Yong and X. Y. Zhou. Stochastic Controls: Hamiltonian Systems and HJB Equations, Springer-Verlag, New York, 1999.

\bibitem{ZL2000} X. Y. Zhou and D. Li. Continuous-time mean-variance portfolio selection: A stochastic LQ framework. Appl. Math. Optim., 42(2000), 19-33.
\end{thebibliography}
\end{document}